\definecolor{brightpink}{rgb}{1.0, 0.0, 0.5}
\theoremstyle{plain}
\newtheorem{thm}{Theorem}[section]
\newtheorem{prop}[thm]{Proposition}
\newtheorem{cor}[thm]{Corollary}
\newtheorem{lem}[thm]{Lemma}
\newtheorem{example}[thm]{Example}
\newtheorem{definition}[thm]{Definition}
\newtheorem{remark}[thm]{Remark}
\def\diag{ \begin{tikzpicture} \draw[dashed] (-.12,-.12) -- (.42, .42); \end{tikzpicture} }
\newcommand*\circled[2]{\tikz[baseline=(char.base)]{
            \node[shape=circle,draw=#2,inner sep=2pt] (char) {#1};}}
\newcommand{\speciallattice}{
\resizebox{4cm}{!}{
\begin{tikzpicture}[baseline=(current bounding box.center)]
\draw (0,0) -- (0,3); \draw (1,0) -- (1,3); \draw (2,0) -- (2,3); \draw (3,0) -- (3,3); 
\draw (0,0) -- (3,0); \draw (0,1) -- (3,1); \draw (0,2) -- (3,2); \draw (0,3) -- (3,3);
\node[left] at (0,0.5) {$\0$}; \node[left] at (0,1.5) {$\vdots$}; \node[left] at (0,2.5) {$\0$}; 
\node[right] at (3,0.5) {$\0$}; \node[right] at (3,1.5) {$\vdots$}; \node[right] at (3,2.5) {$\0$}; 
\node[above] at (0.5,3) {$\beta(r)$}; \node[above] at (1.5,3) {$\hdots$}; \node[above] at (2.5,3) {$\beta(s)$};
\node[below] at (0.5,0) {$\gamma(r)$}; \node[below] at (1.5,0) {$\hdots$}; \node[below] at (2.5,0) {$\gamma(s)$};
\node at (0.5,0.5) {$x_1$}; \node at (1.5,0.5) {$\hdots$}; \node at (2.5,0.5) {$x_1$};
\node at (0.5,2.5) {$x_n$}; \node at (1.5,2.5) {$\hdots$}; \node at (2.5,2.5) {$x_n$};
\node at (0.5,1.5) {$\vdots$}; \node at (2.5,1.5) {$\vdots$}; 
\end{tikzpicture}}
}
\newcommand{\exone}[7]{
	\begin{tabular}{c}
	\resizebox{4cm}{!}{
	\ytableausetup{nosmalltableaux}
	\ytableausetup{nobaseline}
	\begin{ytableau}
	\none & \none & \none & \none & \none & \none[-1] & \none[0] & \none[1] & \none[2] \\	
	\none & \none &\none & \none & \none[\diag] &\none[\diag] &\none[\diag] &\none[\diag]  \\
	\none & \none & \none & #6 &\none[\diag] &\none[\diag] &\none[\diag] &\none \\
	\none & \none & \none[\diag] & *(lightgray) &#4 &#5 &\none &\none \\
	\none & \none[\diag] & \none[\diag] &\none[\diag] &\none[\diag]&\none &\none & \none \\
	*(lightgray) &#2 &#3 &\none[\diag] &\none &\none & \none & \none \\
	*(lightgray) & *(lightgray) &#1 &\none &\none & \none & \none & \none
	\end{ytableau}}
	\\\\
	\resizebox{3cm}{!}{
	\begin{tikzpicture}
	\draw (0,0) -- (0,2); \draw (1,0) -- (1,2); \draw (2,0) -- (2,2); \draw (3,0) -- (3,2); \draw (4,0) -- (4,2); \draw (5,0) -- (5,2);
	\draw (0,0) -- (5,0); \draw (0,1) -- (5,1); \draw (0,2) -- (5,2);
	\draw[blue] (3.5,0)--(3.5,0.5+#1-1)--(4.5,0.5+#1-1)--(4.5,2);
	\draw[blue] (1.4,0)--(1.4,0.6+#2-1)--(2.4,0.6+#2-1)--(2.4,0.6+#3-1)--(3.4,0.6+#3-1)--(3.4,2);
	\draw[red] (2.7,0)--(2.7,0.3+#4-1)--(3.7,0.3+#4-1)--(3.7,0.3+#5-1)--(4.7,0.3+#5-1)--(4.7,2);
	\draw[red] (0.6,0)--(0.6,0.4+#6-1)--(1.6,0.4+#6-1)--(1.6,2);
	\end{tikzpicture}}
	\\
	#7
	\end{tabular}
}
\newcommand{\modexone}[6]{
	\begin{tabular}{ccc}
	\resizebox{4cm}{!}{
	\ytableausetup{nosmalltableaux}
	\ytableausetup{nobaseline}
	\begin{ytableau}
	\none & \none & \none & \none & \none & \none[-1] & \none[0] & \none[1] & \none[2] \\	
	\none & \none &\none & \none & \none[\diag] &\none[\diag] &\none[\diag] &\none[\diag]  \\
	\none & \none & \none & #6 &\none[\diag] &\none[\diag] &\none[\diag] &\none \\
	\none & \none & \none[\diag] & *(lightgray) &#4 &#5 &\none &\none \\
	\none & \none[\diag] & \none[\diag] &\none[\diag] &\none[\diag]&\none &\none & \none \\
	*(lightgray) &#2 &#3 &\none[\diag] &\none &\none & \none & \none \\
	*(lightgray) & *(lightgray) &#1 &\none &\none & \none & \none & \none
	\end{ytableau}} & $\leftrightarrow$ &
	\resizebox{4cm}{!}{
	\begin{tikzpicture}[baseline=(current bounding box.center)]
	\draw (0,0) -- (0,2); \draw (1,0) -- (1,2); \draw (2,0) -- (2,2); \draw (3,0) -- (3,2); \draw (4,0) -- (4,2); \draw (5,0) -- (5,2);
	\draw (0,0) -- (5,0); \draw (0,1) -- (5,1); \draw (0,2) -- (5,2);
	\node[above] at (0.5,2) {$-1$}; \node[above] at (1.5,2) {$0$}; \node[above] at (2.5,2) {$1$}; \node[above] at (3.5,2) {$2$}; \node[above] at (4.5,2) {$3$}; 
	\draw[blue] (3.5,0)--(3.5,0.5+#1-1)--(4.5,0.5+#1-1)--(4.5,2);
	\draw[blue] (1.4,0)--(1.4,0.6+#2-1)--(2.4,0.6+#2-1)--(2.4,0.6+#3-1)--(3.4,0.6+#3-1)--(3.4,2);
	\draw[red] (2.7,0)--(2.7,0.3+#4-1)--(3.7,0.3+#4-1)--(3.7,0.3+#5-1)--(4.7,0.3+#5-1)--(4.7,2);
	\draw[red] (0.6,0)--(0.6,0.4+#6-1)--(1.6,0.4+#6-1)--(1.6,2);
	\end{tikzpicture}}
	\end{tabular}
}
\newcommand{\Lemptyexone}{
	\resizebox{4cm}{!}{
	\ytableausetup{nosmalltableaux}
	\ytableausetup{nobaseline}
	\begin{ytableau}
	\none & \none & \none & \none & \none & \none[-1] & \none[0] & \none[1] & \none[2] \\	
	\none & \none &\none & \none & \none[\diag] &\none[\diag] &\none[\diag] &\none[\diag]  \\
	\none & \none & \none & &\none[\diag] &\none[\diag] &\none[\diag] &\none \\
	\none & \none & \none[\diag] & *(lightgray) & & &\none &\none \\
	\none & \none[\diag] & \none[\diag] &\none[\diag] &\none[\diag]&\none &\none & \none \\
	*(lightgray) & & &\none[\diag] &\none &\none & \none & \none \\
	*(lightgray) & *(lightgray) & &\none &\none & \none & \none & \none
	\end{ytableau}} 
}
\newcommand{\Remptyexone}{
	\resizebox{6cm}{!}{
	\begin{tikzpicture}[baseline=(current bounding box.center)]
	\draw (0,0) -- (0,2); \draw (1,0) -- (1,2); \draw (2,0) -- (2,2); \draw (3,0) -- (3,2); \draw (4,0) -- (4,2); \draw (5,0) -- (5,2);
	\draw (0,0) -- (5,0); \draw (0,1) -- (5,1); \draw (0,2) -- (5,2);
	\node[left] at (0,0.5) {$(0,0)$}; \node[left] at (0,1.5) {$(0,0)$};
	\node[right] at (5,0.5) {$(0,0)$}; \node[right] at (5,1.5) {$(0,0)$};
	\node[above] at (0.5,2.5) {$-1$}; \node[above] at (1.5,2.5) {$0$}; \node[above] at (2.5,2.5) {$1$}; \node[above] at (3.5,2.5) {$2$}; \node[above] at (4.5,2.5) {$3$}; 
	\node[above] at (0.5,2) {$(0,0)$}; \node[above] at (1.5,2) {$(0,1)$}; \node[above] at (2.5,2) {$(0,0)$}; \node[above] at (3.5,2) {$(1,0)$}; \node[above] at (4.5,2) {$(1,1)$};
	\node[below] at (0.5,0) {$(0,1)$}; \node[below] at (1.5,0) {$(1,0)$}; \node[below] at (2.5,0) {$(0,1)$}; \node[below] at (3.5,0) {$(1,0)$}; \node[below] at (4.5,0) {$(0,0)$};
	\node at (0.5,0.5) {$x_1$}; \node at (1.5,0.5) {$x_1$}; \node at (2.5,0.5) {$x_1$}; \node at (3.5,0.5) {$x_1$}; \node at (4.5,0.5) {$x_1$};
	\node at (0.5,1.5) {$x_2$}; \node at (1.5,1.5) {$x_2$}; \node at (2.5,1.5) {$x_2$}; \node at (3.5,1.5) {$x_2$}; \node at (4.5,1.5) {$x_2$};
	\end{tikzpicture}}
}
\newcommand{\extwo}[6]{
	\begin{tabular}{c}
	\resizebox{4cm}{!}{
	\ytableausetup{nosmalltableaux}
	\ytableausetup{nobaseline}
	\begin{ytableau}
	\none & \none & \none & \none & \none[0] & \none[1] & \none[2] \\	
	\none & \none & \none &\none[\diag] &\none[\diag] &\none[\diag] &\none \\
	\none & \none & #4 &#5 &\none[\diag]] &\none \\
    \none & \none[\diag] & \none[\diag] &\none[\diag] &\none &\none & \none & \none \\
	#1 & #2 & #3 &\none &\none & \none & \none & \none
	\end{ytableau}}
	\\\\
	\resizebox{3cm}{!}{
	\begin{tikzpicture}
	\draw (0,0) -- (0,2); \draw (1,0) -- (1,2); \draw (2,0) -- (2,2); \draw (3,0) -- (3,2); \draw (4,0) -- (4,2);
	\draw (0,0) -- (4,0); \draw (0,1) -- (4,1); \draw (0,2) -- (4,2);
	\draw[blue] (0.4,0)--(0.4,0.6+#1-1)--(1.4,0.6+#1-1)--(1.4,0.6+#2-1)--(2.4,0.6+#2-1)--(2.4,0.6+#3-1)--(3.4,0.6+#3-1)--(3.4,2);
	\draw[red] (0.7,0)--(0.7,0.3+#4-1)--(1.7,0.3+#4-1)--(1.7,0.3+#5-1)--(2.7,0.3+#5-1)--(2.7,2);
	\end{tikzpicture}}
	\\
	#6
	\end{tabular}
}
\newcommand{\twobox}{
\draw (0,0)--(0,2)--(1,2)--(1,0)--(0,0); \draw (0,1)--(1,1);
}
\newcommand{\gauche}{
\draw (0,0.5) -- (1,1.5); \draw (0,1.5) -- (1,0.5); \draw (1,0) grid (2,2);
}
\newcommand{\droite}{
\draw (2,0.5) -- (1,1.5); \draw (2,1.5) -- (1,0.5); \draw (0,0) grid (1,2);
}
\newcommand\qbin[3]{\left[\begin{matrix} #1 \\ #2 \end{matrix} \right]_{#3}}
\definecolor{brightpink}{rgb}{1.0, 0.0, 0.5}
\newcommand{\bbullet}{\textcolor{brown}{\bullet}}
\newcommand{\gbullet}{\textcolor{green}{\bullet}}
\newcommand{\pbullet}{\textcolor{brightpink}{\bullet}}
\newcommand{\bdiamond}{\textcolor{brown}{\blacktriangle}}
\newcommand{\gdiamond}{\textcolor{green}{\blacktriangle}}
\newcommand{\pdiamond}{\textcolor{brightpink}{\blacktriangle}}
\newcommand{\bg}{{\bm{\beta}/\bm{\gamma}}}
\newcommand{\N}{\mathbb{N}}
\newcommand{\0}{\mathbf{0}}
\newcommand{\1}{\mathbf{1}}
\newcommand{\I}{\bm{I}}
\newcommand{\J}{\bm{J}}
\newcommand{\K}{\bm{K}}
\renewcommand{\L}{\bm{L}}
\newcommand{\Z}{\mathbb{Z}}
\newcommand{\mc}[1]{{\mathcal #1}}
\DeclareMathOperator{\inv}{inv}
\DeclareMathOperator{\coinv}{coinv}
\DeclareMathOperator{\rev}{rev}
\DeclareMathOperator{\SSYT}{SSYT}
\DeclareMathOperator{\weight}{weight}
\DeclareMathOperator{\band}{band}
\title{A Vertex Model for LLT Polynomials}
\author{Sylvie Corteel}
\author{Andrew Gitlin}
\author{David Keating}
\author{Jeremy Meza}
\affil{Department of Mathematics, UC Berkeley USA\\
{\small\ttfamily\{corteel, andrew\_gitlin, dkeating, jdmeza\}@berkeley.edu}}
\date{\today}
\begin{document}

\maketitle

\begin{center}
    {\it Covid and shelter\\ California's awesome sky\\
A vertex model}
\end{center}

\abstract{We describe a novel Yang-Baxter integrable vertex model. From this vertex model we construct a certain class of partition functions that we show are essentially equal to the LLT polynomials of Lascoux, Leclerc, and Thibon. Using the vertex model formalism, we give alternate proofs of many properties of these polynomials, including symmetry and a Cauchy identity.}

\section{Introduction}

LLT polynomials were originally introduced by Lascoux, Leclerc and Thibon (and for whom the polynomials are eponymously named) in \cite{LLT}. They are a class of symmetric polynomials, and can be seen as a $t$-deformation of products of (skew) Schur functions. The original motivation for LLT polynomials was to study certain plethysm coefficients, and they were defined via a relationship with the Fock space representation of a quantum affine Lie algebra. The original definition expresses the LLT polynomials as a sum over $k$-ribbon tableaux, weighted with a spin statistic which arises naturally in this representation \cite{LLT,iij13}. Bylund and Haiman discovered an alternative way to model LLT polynomials, instead indexed by $k$-tuples of skew Young diagrams, weighted with an inversion statistic. The Bylund–Haiman model is described in \cite{HHLRU}. The relationship between the two definitions uses the Stanton-White correspondence \cite{stanton1985schensted}, which sends ribbon tableaux to tuples of semistandard Young tableaux. For more information on LLT polynomials, see the website of Alexandersson \cite{Alexandersson}.

While not apparent from either definition, LLT polynomials possess many astonishing properties, to name a few:
\begin{enumerate}[label=\Roman*.]
    \item \label{item-symmetric} they are symmetric in the variables $X=\{x_1,\ldots ,x_n\}$ \cite{LLT};
    \item \label{item-HL} when their indexing tuple of partitions consist of single rows, they are equal to the modified Hall-Littlewood polynomials $\widetilde{H}_\mu(X;t)$ \cite{LLT};
    \item \label{item-Cauchy} they satisfy a Cauchy-like identity \cite{lam2005ribbon}.
    \item \label{item-SP} they are Schur-positive, that is they expand as a $\N[t]$-linear combination of Schur polynomials \cite{leclerc2000, GrojnowskiHaiman}.
\end{enumerate}

The goal of the present work is to study the defining relation of the LLT polynomials, from the perspective of integrable vertex models. The study of integrable systems is a classical subject, see e.g. \cite{baxter2016exactly, reshetikhin2010lectures}. Also known as vertex models, ice models, or multiline queues, they have recently enjoyed an advent into the world of (non)symmetric polynomials \cite{brubaker2011schur, WHEELER2016543, BorodinWheeler}. Recently these models were generalized to colored vertex models \cite{borodin2018coloured,brubaker2019colored,brubaker2019colored2, buciumas2020colored, corteel2018multiline, garbali2020modified} and polyqueue tableaux \cite{corteel2018multiline, ayyer2020stationary}. We extend this catalog of vertex models to the case of LLT polynomials. 

On one hand the model itself is quite simple and elegant: it is the superposition of $k$ five vertex models and the parameter $t$ follows certain natural interactions of the $k$ models. On the other hand, the question of integrability proved to be deceptively cumbersome (see the proof of the Yang-Baxter equation in Appendix \ref{YBE-proof}). Nevertheless, we prove several new results on LLT polynomials and give new (combinatorial) proofs of several of the known listed properties above. Our motivations for doing so are fourfold:
\begin{enumerate}
    \item When $k=1$, LLT polynomials are Schur polynomials.
    Their combinatorial definition can be in terms of semistandard young tableaux or equivalently non intersecting paths. The corresponding vertex model is the five vertex model which has been widely studied. See for example \cite{burenev2020determinant, de2018limit, gulacsi1993phase, kapitonov2008five}.
    \item When the $k$-tuple of partitions are single rows, our model gives a vertex model for 
    the Hall-Littlewood polynomials. This model is simpler than the one given in \cite{garbali2020modified}.
    \item We are able to define some non symmetric analogue of LLT polynomials and to generalize LLT polynomials with an extra parameter $q$. A special case of this new family are the modified Macdonald polynomials. These results will appear in the second paper of this series \cite{CGKM2}.
    \item Last but not least we expect to generalize asymptotic results on lozenge tilings and Schur polynomials \cite{gorin2015asymptotics, novak2015lozenge, petrov2014asymptotics, petrov2015asymptotics} by computing the limit shape of our vertex models. These results will appear in the third paper of this series \cite{CGKM3}.
\end{enumerate}

Our main result can be summarized as follows.
\begin{thm}
Let $\bg$ be a tuple of skew partitions. There is an integrable vertex model whose partition function $\mc{Z}_{\bg}(x_1, \ldots, x_n;t)$ is precisely the coinversion LLT polynomial $\mc{L}_{\bg}(x_1, \ldots, x_n; t)$.
\end{thm}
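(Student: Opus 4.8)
The plan is to prove the two halves of the statement in turn: that an explicitly defined lattice model has partition function equal to $\mc{L}_{\bg}$, and that this model is Yang--Baxter integrable. For the model itself, following the picture of a superposition of $k$ five-vertex models, I would take a rectangular lattice with one row per variable $x_1,\dots,x_n$ and one column per relevant content diagonal, and let each edge carry a state in $\{0,1\}^k$ recording, for each of the $k$ colors, whether a path of that color occupies the edge. Each color being conserved, a configuration is a superposition of $k$ families of lattice paths. To each vertex I would assign a Boltzmann weight (the $L$-matrix entry) depending on the row parameter $x_i$ and on $t$: the $x_i$-degree records vertical occupation in row $i$, while a factor of $t$ is accrued exactly at those vertices where two paths of different colors meet in a prescribed configuration. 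The partition function $\mc{Z}_{\bg}(x_1,\dots,x_n;t)$ is then the sum, over all admissible configurations whose top and bottom column boundaries are fixed by $\bm{\beta}$ and $\bm{\gamma}$ and whose left/right boundaries are empty, of the product of all vertex weights.

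Next I would identify $\mc{Z}_{\bg}$ with $\mc{L}_{\bg}$ through a weight-preserving bijection. The single-color, $t=1$ case is the classical correspondence between five-vertex configurations and semistandard Young tableaux of a skew shape, obtained by reading the heights at which the non-intersecting paths turn. Superimposing the $k$ colored copies, I would check that admissible configurations with the prescribed boundary are in bijection with semistandard fillings $T$ of the tuple $\bg$. The essential point is the statistic match: under this bijection the $x$-monomial of a configuration is $x^{\weight(T)}$, and---more delicately---the total power of $t$ equals $\coinv(T)$, which I would prove by exhibiting a bijection between coinversion pairs of cells and the vertices at which two differently-colored paths interact.

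For integrability I would produce an $R$-matrix $R(x_i,x_j)$ intertwining the transfer operators of two rows and verify the $RLL$ relation $R(x_i,x_j)\,L(x_i)\,L(x_j)=L(x_j)\,L(x_i)\,R(x_i,x_j)$ as an identity of operators on the relevant tensor product of edge-state spaces. Once this holds, the standard train argument slides the $R$-vertex across a two-row strip and yields invariance of $\mc{Z}_{\bg}$ under $x_i\leftrightarrow x_{i+1}$; since adjacent transpositions generate $S_n$, this simultaneously reproves the symmetry of the coinversion LLT polynomial. The $R$-matrix itself I would determine by requiring it to reduce to the known five-vertex $R$-matrix at $t=1$ and by imposing the relation on small configurations, then confirm it in full generality.

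I expect the verification of the Yang--Baxter equation to be the principal obstacle. In contrast to the uncolored five-vertex model, the superposition produces a large number of distinct vertex types, indexed by the incoming and outgoing color occupations, and the $t$-interactions make the $R$-matrix entries genuinely $t$-dependent, so that neither guessing the correct $R$-weights nor checking the $RLL$ relation is routine; the verification must proceed case by case, which is the cumbersome computation deferred to the appendix. A secondary difficulty, feeding into both the bijection and the symmetry argument, is to pin down exactly which vertex interactions carry a factor of $t$, so that the model's $t$-statistic equals $\coinv$ rather than $\inv$; fixing this convention correctly is precisely what singles out the coinversion polynomial.
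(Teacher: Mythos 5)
Your proposal follows essentially the same route as the paper: the same superposition of $k$ five-vertex models with edge states in $\{0,1\}^k$ and $t$ accrued when a right-exiting color meets a larger color at a vertex, the same row-by-row bijection between tuples of semistandard tableaux and colored lattice paths matching weights $x^T t^{\coinv(T)}$ (with coinversion triples paired to $t$-contributing vertex interactions), and the same $RLL$/Yang--Baxter relation plus train argument for integrability and symmetry. The differences are only in execution --- the paper writes down the explicit $L$- and $R$-weights and proves the Yang--Baxter equation by induction on the number of colors --- not in approach.
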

Here, we present a new formulation of LLT polynomials, which we call a \textit{coinversion LLT polynomial}. They serve as a generating function for $k$-tuples of semistandard Young tableaux, weighted with a \textit{coinversion} statistic. The definition is easily seen to be equivalent to the inversion definition after inverting $t$ and multiplying by a suitable power of $t$. This new formulation was detailed to the fourth author in personal correspondence with M. Haiman, and can be reviewed in the first of a recent series of publications \cite{shuffle}. In the sections below, we strive to follow the notation therein, but deviate slightly in order to give a self-contained treatment of the material.

We take the time now to expound on the details of properties \ref{item-symmetric}-\ref{item-SP} and outline the organization of the paper. 

\noindent \textbf{\ref{item-symmetric} Symmetric.}
The definition of LLT polynomials as spin-generating functions arises naturally in the study of the representation theory of the Fock space for $\mathcal{U}_q(\widehat{\mathfrak{sl}}_n)$. There are natural vertex operators on this space whose action on basis elements is captured by the LLT polynomials. The fact that LLT polynomials are symmetric follows from the commutativity of these vertex operators. Later, a purely combinatorial proof was given using the inverison variant LLT polynomials \cite{HHLsym}.

As innocuous as it is, the fact that LLT polynomials are symmetric has led to several important uses in combinatorics. In \cite{HHLRU}, the authors conjectured a combinatorial formula for the Frobenius character of the ring of diagonal coinvariants (known later as the shuffle conjecture). This Frobenius character is inherently symmetric, owing to a natural $S_n$-module structure for the diagonal coinvariant ring. The combinatorial formula was shown to expand into LLT polynomials, thus witnessing its symmetricity. A similar argument was used later in \cite{HHLsym} to show that a proposed monomial expansion for Macdonald polynomials was indeed symmetric.

We use the integrability of our vertex model to provide another proof that LLT polynomials are symmetric. Our proof is combinatorial, modulo the underlying representation theory governing the Yang-Baxter equation.

\noindent \textbf{\ref{item-HL} Single Rows.}
From the definition of coinversion LLT polynomials (Definition \ref{coinv-LLT} below), one can easily see that at $t=1$, the definition devolves into a product of Schur functions. Hence, the LLT polynomial $\mc{L}_{\bm{\mu}}(X;t)$ indexed by the tuple of partitions $\bm{\mu} = (\mu^{(1)}, \ldots, \mu^{(k)})$ gives a $t$-analog $c^{\lambda}_{\mu^{(1)}, \ldots, \mu^{(k)}}(t)$ of the classical Littlewood-Richardson coefficients.

When the partitions $\mu^{(j)}$ have only one part $\mu_j$, then $c^{\lambda}_{\mu_1, \ldots, \mu_k}(t)$ coincides (up to a power of $t$) with the Kostka-Foulkes polynomial $K_{\lambda, \mu}(t)$. The Kostka-Foulkes polynomials have seemingly endless appearances in representation theory and combinatorics; in particular they are equal to the coefficients of a Schur function on the basis of transformed Hall-Littlewood polynomials $H_\mu(X;t)$, themselves defined as the dual basis to the classical Hall-Littlewood polynomials $\{P_\mu(X;t)\}$ with respect to the standard Hall inner product on the ring of symmetric functions. Hence, the identity
\begin{equation} \label{eq:L-HL}
    \mc{L}_{\bm{\mu}}(X;t) = t^d H_\mu(X;t)
\end{equation}
holds, for some integer $d$. The original proof of (\ref{eq:L-HL}) relies on the geometry of an underlying flag variety. We apply our vertex model to provide an alternate, combinatorial proof.

\noindent \textbf{\ref{item-Cauchy} Cauchy identity.}
A Cauchy identity was given in \cite{lam2005ribbon} for the original spin-generating LLT polynomials. We prove the following Cauchy identity for coinversion LLT polynomials
\begin{equation} \label{intro-cauchy}
 \sum_{\bm{\lambda}} \mathcal{L}_{\bm{\lambda}}(X_n;t)\mathcal{L}_{\bm{\lambda}^{\text{rot}}}(Y_n;t) = \prod_{i,j=1}^n\prod_{m=0}^{k-1} \left(1-x_iy_j t^m \right)^{-1}
\end{equation}
where $\bm{\lambda}^{\text{rot}}$ denotes the tuple of partitions gotten by rotating each partition 180 degrees and then reversing the order. As remarked in \cite{lam2005ribbon}, the reader is warned that (\ref{intro-cauchy}) does not imply that the LLT polynomials form an orthogonal basis under some inner product, as they are not linearly independent.

\noindent \textbf{\ref{item-SP} Schur positivity.}
It was shown in \cite{leclerc2000} that when the LLT polynomials are indexed by tuples of partitions, then their coefficients in the Schur basis are certain affine Kazhdan-Lusztig polynomials. As it is known that these Kazhdan-Lusztig polynomials have non-negative coefficients, the result implies that LLT polynomials are Schur-positive. This argument was extended in \cite{GrojnowskiHaiman} to arbitrary skew partitions, and moreover generalized to any complex reductive Lie group. Unfortunately, it is not clear how the vertex model formalism can be used to tackle the notion of positivity.

Our paper is organized as follows. In Section \ref{sec:LLT} we define the coinversion LLT polynomials and review the necessary combinatorial preliminaries. In Section \ref{sec:latticemodel} we construct our vertex model and show its partition function is a coinversion LLT polynomial. The integrability of our model is stated in Section \ref{sec:YBE}, with the proof postponed to Appendix \ref{YBE-proof}. In Section \ref{sec:one-row} we apply our model to show that our LLT polynomials coincide with Hall-Littlewood polynomials, and in Section \ref{sec:Cauchy} we use the model to show they satisfy the above Cauchy identity. Finally, as our coinversion LLT polynomials are a new variant of the familiar LLT polynomials in the current literature, we opt to provide several examples for comparison in Appendix \ref{AppendixExamples}.

\subsection*{Acknowledgements.} JM wants to thank Mark Haiman for constructive comments and
explanations during the elaboration of the work.
All authors want to thank the participants of the reading seminar of the Spring 2020 at UC Berkeley
on "Combinatorial and Integrable Models for Macdonald polynomials".
SC, AG and DK are partially funded by the UC Berkeley start-up funds.

\section{LLT Polynomials \label{sec:LLT}}

In this section we review the theory of LLT polynomials and set notation.

Fix $n$ and let $\lambda = (\lambda_1 \geq \cdots \geq \lambda_m \geq 0)$ be a partition with $m$ parts. Note that we consider our partitions to have a fixed number of parts, but allow for the possibility of parts of zero. We let the length $\ell(\lambda)$ be the number of non-zero parts of $\lambda$. We associate to $\lambda$ its Young (or Ferrers) diagram $D(\lambda) \subseteq \Z \times \Z$, given as
\[ D(\lambda) = \{(i,j) \mid 1 \leq i \leq \ell(\lambda), \; 1 \leq j \leq \lambda_i \} \]
We draw our diagrams in French notation, in the first quadrant, such as below
\[ \lambda = (4,2,1), \qquad D(\lambda) = 
\ytableausetup{aligntableaux=center}
\begin{ytableau} \\ & \\ & & \bullet & \end{ytableau} \]
We refer to the elements in $D(\lambda)$ as \textbf{cells} or \textbf{boxes}. The cell labelled above has coordinates (1,3). 

In what follows we will use $\lambda$ and $D(\lambda)$ interchangeably, when it will not cause confusion. We will also make frequent use of the staircase partition $\rho_m := (m-1, \ldots, 1, 0)$, which has the property that $\lambda + \rho_m$ has distinct parts, for any partition $\lambda$ with $m$ non-negative parts. In what follows we will also use the finite set of variables $X_n = \{x_1, \ldots, x_n\}$. For ease of notation, we drop the subscripts $n, m$ when it is clear from context.

The \textbf{content} of a cell $u = (i,j)$ in row $i$ and column $j$ of any Young diagram is $c(u) = j-i$. Given a tuple $\bg = (\beta^{(1)}/\gamma^{(1)}, \ldots, \beta^{(k)}/\gamma^{(k)})$ of skew partitions, define a semistandard Young tableau $T$ of shape $\bg$ to be a semistandard Young tableau on each $\beta^{(j)}/\gamma^{(j)}$, that is,
\[ \SSYT(\bg) = \SSYT(\beta^{(1)}/\gamma^{(1)}) \times \cdots \times \SSYT(\beta^{(k)}/\gamma^{(k)}) \]
We can picture this as placing the Young diagrams diagonally ``on content lines" with the first shape in the South-West direction and the last shape in the North-East direction. See Example~\ref{exampleLLT} below.

\begin{example}
	Let $\bg = ((3,1), (2,2,2)/(1,1,1), (1), (2,1)/(2))$. The top row labels the contents of each line.
	
	\ytableausetup{nosmalltableaux}
	\ytableausetup{nobaseline}
	\begin{center}
	\begin{ytableau}
		\none & \none & \none & \none & \none & \none & \none & \none & \none & \none[-3] & \none[-2] & \none[-1] & \none[0] & \none[1] & \none[2] \\			
		\none & \none & \none & \none & \none & \none & \none & \none &\none[\diag] &\none[\diag] &\none[\diag] &\none[\diag] & \none[\diag] & \none[\diag] \\
		\none & \none & \none & \none & \none & \none & \none &\none[\diag] &\none[\diag] & 3 &\none[\diag] &\none[\diag] & \none[\diag] & \none[\diag] \\
		\none & \none & \none & \none & \none &\none &\none[\diag] &\none[\diag] & \none[\diag] & *(lightgray) & *(lightgray) &\none[\diag] & \none[\diag] \\
		\none & \none & \none & \none &\none &\none[\diag] &\none[\diag] &\none[\diag] &\none[\diag] &\none[\diag] &\none[\diag] &\none[\diag] & \none \\
		\none & \none & \none &\none &\none[\diag] &\none[\diag] &\none[\diag] & 7 &\none[\diag] &\none[\diag] &\none[\diag] &\none \\
		\none & \none &\none &\none[\diag] &\none[\diag] &\none[\diag] &\none[\diag] &\none[\diag] &\none[\diag] &\none[\diag] &\none &\none \\
		\none & \none &\none[\diag] & *(lightgray) & 6 &\none[\diag] &\none[\diag] &\none[\diag] &\none[\diag] &\none &\none & \none \\
		\none & \none[\diag] & \none[\diag] & *(lightgray) & 4 &\none[\diag] &\none[\diag] &\none[\diag] &\none &\none & \none & \none \\
		\none[\diag] & \none[\diag] & \none[\diag] & *(lightgray) & 1 &\none[\diag] &\none[\diag] &\none &\none & \none & \none & \none \\
		\none[\diag] & \none[\diag] & \none[\diag] &\none[\diag] &\none[\diag] &\none[\diag] &\none &\none & \none & \none & \none & \none \\
		8 &\none[\diag] &\none[\diag] &\none[\diag] &\none[\diag] &\none &\none & \none & \none & \none & \none & \none \\
		2 & 5 & 9 &\none[\diag] &\none &\none & \none & \none & \none & \none & \none & \none \\
	\end{ytableau}	
	\end{center}
	\label{exampleLLT}
\end{example}

Let $T = (T^{(1)}, \ldots, T^{(k)})$ be a SSYT on a tuple of skew partitions. Given a cell $u$ in $T^{(r)}$, we define the \textbf{adjusted content} to be $\tilde{c}(u) = c(u)k + r-1$. We choose the reading order on cells so that their adjusted contents increase. In other words, we read from smallest to largest content line, moving along a fixed content line from the SW to NE direction.

We say two cells \textbf{attack} each other if their adjusted contents differ by less than $k$. In other words, two cells attack each other if either (1) they are on the same content line or (2) they are on adjacent content lines, with the cell on the larger content line in an earlier shape. We define an \textbf{attacking inversion} of $T$ to be a pair of attacking boxes with different entries in which the larger entry precedes the smaller in reading order.

\begin{definition} \label{inv-LLT}
Let $\bg$ be a tuple of skew partitions. The inversion LLT polynomial is the generating function
\[ \mathcal{G}_{\bg}(X; t) = \sum_{T \in \SSYT(\bg)} t^{\inv(T)} x^T \]
where $\inv(T)$ is the number of attacking inversions of $T$.
\end{definition}

\begin{remark}
Definition \ref{inv-LLT} was first given in \cite{HHLRU}, however is not evidently related to the original spin-generating functions $G^{(k)}_{\lambda/\mu}(X;q)$ defined in \cite{LLT}. The connection materializes via a correspondence due to Stanton and White \cite{stanton1985schensted}, which is a weight-preserving bijection between semistandard ribbon tableaux and tuples of semistandard Young tableaux on the quotient shape. Details on $k$-quotients, $k$-cores, and $k$-ribbons can be found for example in \cite{macdonald1998symmetric}.

It was shown in \cite{HHLRU} that if $\bg = \text{quot}(\lambda/\mu)$, then there is some constant $e$ depending only on the shape $\bg$ such that
\begin{equation} G^{(k)}_{\lambda/\mu}(X; t) = t^e \mc{G}_{\bg}(X; t^{-2}) \label{spin-inv} \end{equation}
\end{remark}

As is the case for Macdonald polynomials, the number of attacking inversions can be reformulated as the number of inversion triples, which we now define. Given a tuple $\bg$ of skew partitions, we say that three cells $u, v, w \in \Z \times \Z$ form a \textbf{triple} of $\bg$ if (i) $v \in \bg$, (ii) they are situated as below
\begin{equation} \label{triple}
\ytableausetup{nobaseline}
\begin{ytableau}
\none & \none & \none &\none & u & w  \\
\none & \none & \none & \none & \none[\diag]  \\
\none & \none & \none  & \none[\diag] \\
\none & \none & v \\
\end{ytableau}
\end{equation}
namely with $v$ and $w$ on the same content line and $w$ in a later shape, and $u$ on a content line one smaller, in the same row as $w$, and (iii) if $u, w$ are in row $r$ of $\beta^{(j)}/\gamma^{(j)}$, then $u$ and $w$ must be between the cells $(r, \gamma^{(j)}_r-1), (r, \beta^{(j)}_r+1)$, inclusive. It is important to note that while $v$ must be a cell in $\bg$, we allow the cells $u$ and $w$ to not be in any of the skew shapes, in which case $u$ must be at the end of some row in $\bm{\gamma}$ and $w$ must be the cell directly to the right of the end of some row in $\bm{\beta}$. 

\begin{definition}
Let $\bg$ be a tuple of skew partitions and let $T \in \SSYT(\bg)$. Let $a, b, c$ be the entries in the cells of a triple $(u, v, w)$, where we set $a = 0$ and $c=\infty$ if the respective cell is not in $\bg$. Given the triple of entries
\ytableausetup{nobaseline}
\[
\begin{ytableau}
\none & \none & \none &\none & a & c  \\
\none & \none & \none & \none & \none[\diag]  \\
\none & \none & \none  & \none[\diag] \\
\none & \none & b \\
\end{ytableau}
\]
we say this is a \textbf{coinversion triple} of $T$ if $a \leq b \leq c$ and it is an \textbf{inversion triple} if $b < a \leq c$ or $a \leq c < b$.
\label{inv-coinv-triple}
\end{definition}

There are 7 coinversion triples in Example \ref{exampleLLT} above: (0, 2, 4), (0, 2, 7), (3,4,$\infty$), (0,4,7), (4,5,$\infty$), (1,9,$\infty$), and (0,9,$\infty$). However, we note that Definition \ref{inv-coinv-triple}, and that of a triple, depends not merely on the tuple of skew partitions $\bg$, but on the individual tuples of partitions $\bm{\beta}$, $\bm{\gamma}$. Indeed, if in Example \ref{exampleLLT}, we made the superficial change in the third skew shape from (1)/(0) to (2,2)/(2,1), then we would introduce another coinversion triple $(0,9,\infty)$. Likewise if we consider the third shape being instead (1,0)/(0,0), then we introduce the coinversion triples $(0,8,\infty)$ and $(0,6,\infty)$. It's easily seen that any extra coinversion triples present are independent of the filling $T$.

\begin{definition} \label{coinv-LLT}
Let $\bg$ be a tuple of skew partitions. The coinversion LLT polynomial is the generating function
\[ \mc{L}_{\bg}(X; t) = \sum_{T \in \SSYT(\bg)} t^{\coinv(T)} x^T \]
where $\coinv(T)$ is the number of coinversion triples of $T$.
\end{definition}

In light of the preceding remarks, we note that if $\bg$ and $\bm{\beta}'/\bm{\gamma}'$ are two representations of the same skew shapes, then their coinversion LLT polynomials differ by an overall power of $t$.

Note that in a semistandard filling $T$ on some tuple of skew partitions, a pair of attacking entries forms an inversion if and only if they are in a (unique) inversion triple. Indeed, if $b < a \leq c$, then $(a,b)$ is an attacking inversion, and likewise if $a \leq c < b$, then $(b,c)$ is an attacking inversion. Hence, we have the identity
\begin{equation} 
\mc{L}_{\bg}(X; t) = t^m \mc{G}_{\bg}(X; t^{-1}) \label{inv-coinv} \end{equation}
where $m$ is the total number of triples in $\bg$. Explicit formulae for $m$ will be given in Section~\ref{sec:one-row}.

\begin{remark}
A simplified version of Definition \ref{coinv-LLT}, in which each shape in $\bg$ consists of a single row, can be found in \cite{shuffle}. There, the coinversion LLT polynomials are first defined, via the action of a Hecke algebra, as a polynomial truncation of a certain formal power series. It is then shown that this algebraic definition results in the combinatorial definition above.

The formal power series in question consists of terms that are $GL_n$ characters. This is the reason why our polynomials depend on the individual tuples of partitions $\bm{\beta}$, $\bm{\gamma}$. Indeed, the reader is welcome to view $\bm{\beta}$ and $\bm{\gamma}$ not as partitions, but really as dominant weights of $GL_n$, i.e. non-increasing lists of integers. In fact, this ``LLT series" can be defined for any complex reductive Lie group, see \cite{GrojnowskiHaiman}.
\end{remark}

\section{Lattice Model \label{sec:latticemodel}}

In this section we define our lattice model. We begin by defining the local weights of the model. We then show that for a certain choice of boundary conditions the partition function of the model is equal to the coinversion LLT polynomials defined above.

\indent For any vector $\I = (I_1,...,I_k) \in \{0,1\}^k$ and indices $i,j \in [k]$, we define
\[ |\I| = \sum_{m = 1}^k I_m, \hspace{1cm} \I_{[i,j]} = \sum_{m=i}^j I_m \]
\noindent where the empty sum is 0.  The \textbf{L-matrix} is an infinite-dimensional matrix whose components are represented as faces (sometimes called vertices), with each face assigned a label in $\{0,1\}^k$ as follows:
\[ L_x(\I,\J;\K,\L) = 
\resizebox{2cm}{!}{
\begin{tikzpicture}[baseline=(current bounding box.center)]
\draw (0,0) -- (0,1); \draw (1,0) -- (1,1); 
\draw (0,0) -- (1,0); \draw (0,1) -- (1,1); 
\node[below] at (0.5,0) {$\I$}; \node[above] at (0.5,1) {$\K$};
\node[left] at (0,0.5) {$\J$}; \node[right] at (1,0.5) {$\L$}; 
\node at (0.5,0.5) {$x$};
\end{tikzpicture}}, 
\hspace{1cm} \I,\J,\K,\L \in \{0,1\}^k.
\]
\noindent The face weights are given by
\[ L_x(\I,\J;\K,\L) = x^{|\L|} \prod_{\substack{m \in [k] \\ L_m=1}} t^{\I_{[m+1,k]}+\J_{[m+1,k]}} \]
\noindent whenever $\I+\J = \K+\L$ and there is no $i \in [k]$ such that $I_i = J_i = 1$, and $L_x(\I,\J;\K,\L) = 0$ otherwise.  

\indent We can think of the faces in terms of colored paths which enter via the left/bottom edges and which exit via the right/top edges.  Whenever an edge assumes the value $\I = (I_1,...,I_k)$, then for each index $i \in [k]$, there is a path of color $i$ incident at the edge if and only if $I_i = 1$.  The constraint $\I+\J = \K+\L$ is a conservation property, meaning that the paths entering and the paths exiting must be the same.  The constraint that there be no indices $i \in [k]$ such that $I_i = J_i = 1$ means that there can be at most one path of any given color.  If these two constraints are satisfied, then the face weight can be expressed as
\begin{equation} \label{FaceWeight}
L_x(\I,\J;\K,\L) = x^{\substack{\text{\# colors exiting the} \\ \text{vertex to the right}}} \prod_{\substack{\text{colors $i$ exiting the} \\ \text{vertex to the right}}} t^{\substack{\text{\# colors larger than $i$ that} \\ \text{appear in the vertex}}}.
\end{equation}
\noindent When $k=1$, this is the non-intersecting path model (also known as the five vertex model), as illustrated in \cite[Thm. 7.16.1]{stanley1999enumerative}. 

\begin{example}
Let $k = 3$.  Let blue be color 1, red be color 2, and green be color 3.  Then
\[ L_x((1,0,0),(0,1,1);(0,0,1),(1,1,0)) = 
\resizebox{2cm}{!}{
\begin{tikzpicture}[baseline=(current bounding box.center)]
\draw (0,0) -- (0,1); \draw (1,0) -- (1,1); 
\draw (0,0) -- (1,0); \draw (0,1) -- (1,1); 
\node[below] at (0.5,0) {$(1,0,0)$}; \node[above] at (0.5,1) {$(0,0,1)$};
\node[left] at (0,0.8) {$(0,$}; \node[left] at (0,0.5) {$1,$}; \node[left] at (0,0.2) {$1)$}; 
\node[right] at (1,0.8) {$(1,$}; \node[right] at (1,0.5) {$1,$}; \node[right] at (1,0.2) {$0)$}; 
\node at (0.5,0.5) {$x$};
\draw[blue] (0.2,0)--(0.2,0.8)--(1,0.8);
\draw[red] (0,0.5)--(1,0.5);
\draw[green] (0,0.2)--(0.8,0.2)--(0.8,1);
\end{tikzpicture}}
= x^2t^3. \]
\end{example}

\indent A \textbf{lattice} is a rectangular grid of faces, with the variables and the labels on the outer edges specified, but with the labels on the internal edges unspecified.  A \textbf{lattice configuration} is a lattice with the labels on the internal edges specified, such that the weight of each face is non-zero.  The weight of a lattice configuration is the product of the face weights.  Thinking of lattice configurations in terms of colored paths, the constraint that the weight of each face be non-zero is equivalent to the following two constraints.
\begin{enumerate}
\item At each vertex, the paths entering and the paths exiting must be the same.
\item At each vertex, there can be at most one path of any given color.
\end{enumerate}

\begin{example}
Let blue be color 1, red be color 2, and green be color 3.  Then
\[ 
\resizebox{3cm}{!}{
\begin{tikzpicture}[baseline=(current bounding box.center)]
\draw (0,0) -- (0,2); \draw (1,0) -- (1,2); \draw (2,0) -- (2,2);
\draw (0,0) -- (2,0); \draw (0,1) -- (2,1); \draw (0,2) -- (2,2);
\node at (0.2,0) {$1$}; \node at (0.5,0) {$0$}; \node at (0.8,0) {$0$}; 
\node at (0.2,1) {$0$}; \node at (0.5,1) {$0$}; \node at (0.8,1) {$1$};
\node at (0,0.8) {$0$}; \node at (0,0.5) {$1$}; \node at (0,0.2) {$1$}; 
\node at (1,0.8) {$1$}; \node at (1,0.5) {$1$}; \node at (1,0.2) {$0$}; 
\node at (0.5,0.5) {$x_1$};
\draw[blue] (0.2,0)--(0.2,0.8)--(1,0.8);
\draw[red] (0,0.5)--(1,0.5);
\draw[green] (0,0.2)--(0.8,0.2)--(0.8,1);
\node at (1.2,0) {$0$}; \node at (1.5,0) {$0$}; \node at (1.8,0) {$0$}; 
\node at (1.2,1) {$0$}; \node at (1.5,1) {$1$}; \node at (1.8,1) {$0$};
\node at (2,0.8) {$1$}; \node at (2,0.5) {$0$}; \node at (2,0.2) {$0$}; 
\node at (1.5,0.5) {$x_1$};
\draw[blue] (1,0.8)--(2,0.8);
\draw[red] (1,0.5)--(1.5,0.5)--(1.5,1);
\node at (0,1.8) {$1$}; \node at (0,1.5) {$1$}; \node at (0,1.2) {$0$}; 
\node at (0.2,2) {$0$}; \node at (0.5,2) {$1$}; \node at (0.8,2) {$0$}; 
\node at (1,1.8) {$1$}; \node at (1,1.5) {$0$}; \node at (1,1.2) {$1$}; 
\node at (0.5,1.5) {$x_2$};
\draw[blue] (0,1.8)--(1,1.8);
\draw[red] (0,1.5)--(0.5,1.5)--(0.5,2);
\draw[green] (0.8,1)--(0.8,1.2)--(1,1.2);
\node at (1.2,2) {$0$}; \node at (1.5,2) {$1$}; \node at (1.8,2) {$0$};
\node at (2,1.8) {$1$}; \node at (2,1.5) {$0$}; \node at (2,1.2) {$1$}; 
\node at (1.5,1.5) {$x_3$};
\draw[blue] (1,1.8)--(2,1.8);
\draw[red] (1.5,1)--(1.5,2);
\draw[green] (1,1.2)--(2,1.2);
\end{tikzpicture}}
=
\begin{tabular}{c}
$x_2^2t^2 \cdot x_3^2 t^2$ \\
$\cdot x_1^2 t^3 \cdot x_1t$
\end{tabular}
=
x_1^3x_2^2x_3^2t^8.
\]
\end{example}

\indent Given a tuple of partitions $\mu = (\mu^{(1)},...,\mu^{(k)})$ and an integer $i$, let $\mu(i) \in \{0,1\}^k$ be the vector whose $j$-th component is 1 if and only if
\[ i = \mu^{(j)}_m - m + 1 \]
\noindent for some $m \in [\ell(\mu^{(j)})]$, for each index $j \in [k]$.  Let $\bg = (\beta^{(1)}/\gamma^{(1)},...,\beta^{(k)}/\gamma^{(k)})$ be a tuple of skew partitions.  For each $i$, we assume $\beta^{(i)}$ and $\gamma^{(i)}$ have the same number of non-negative parts. Let
\begin{align*}
    &r = r(\bg) = \min \{ i \in \Z : \gamma(i) \neq \0\}, \\
    &s = s(\bg) = \max \{ i \in \Z : \beta(i) \neq \0\}.
\end{align*}
With this notation, we introduce a lattice that will be of particular interest to us:
\begin{equation} L_{\bg} := \speciallattice  \label{speciallattice} \end{equation}
Following \cite{ChrisThesis}, we define the \textbf{bandwidth} $\band(\bg):=s(\bg)-r(\bg)$ to be one less the number of columns in this lattice.  To simplify notation we will often replace $\gamma(r)\ldots\gamma(s)$ with $\bm{\gamma}$ and $\beta(r)\ldots\beta(s)$ with $\bm{\beta}$ to indicate the top and bottom boundary conditions of the lattice.

\begin{example}
Let $\bg = ((3,3)/(2,1), (3,1)/(1,0))$ and $n = 2$.  
\[ \bg = \Lemptyexone \]
We compute $r = -1$ and $s = 3$, and 
\[ L_{\bg} = \Remptyexone \]
where we include the column indices (in addition to the variables and the outer edge labels).
\end{example}

\indent We let $\mc{Z}_{\bg}(X_n; t)$ denote the partition function of $L_{\bg}$, that is 
\[ \mc{Z}_{\bg}(X_n; t) = \sum_{L \in LC_{\bg}} \weight(L).  \]
where $LC_{\bg}$ denotes the set of lattice configurations on $L_{\bg}$. 

\begin{thm} \label{LatticeModel}
Let $\bg$ be a tuple of skew partitions. Then,
\[ \mc{Z}_{\bg}(X_n; t) = \mathcal{L}_\bg(X_n;t). \]
\end{thm}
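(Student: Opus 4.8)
The plan is to exhibit an explicit weight-preserving bijection between the lattice configurations $LC_{\bg}$ and the tuples of semistandard tableaux $\SSYT(\bg)$, and then check that under this bijection the two weights agree. Since $\mc{L}_{\bg}$ is a generating function over $\SSYT(\bg)$ weighted by $t^{\coinv(T)}x^T$, it suffices to produce a bijection $\Phi : LC_{\bg} \to \SSYT(\bg)$ with $\weight(L) = t^{\coinv(\Phi(L))}\, x^{\Phi(L)}$.

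First I would construct $\Phi$ color by color. The constraints defining a lattice configuration --- conservation of paths, and at most one path of each color per vertex --- do not couple distinct colors, so $LC_{\bg}$ factors as a product over the $k$ colors of the configurations of a single-color five-vertex model. For a fixed color $j$, the color-$j$ paths run from the bottom boundary (whose $1$'s sit at the positions encoding $\gamma^{(j)}$ via $i = \gamma^{(j)}_m - m + 1$) to the top boundary (encoding $\beta^{(j)}$), with the left/right boundaries empty. This is precisely the non-intersecting lattice-path model whose configurations biject with $\SSYT(\beta^{(j)}/\gamma^{(j)})$ as in \cite[Thm. 7.16.1]{stanley1999enumerative}: cells with entry $i$ correspond to the horizontal edges of the color-$j$ paths lying in row $i$, and semistandardness is exactly the non-crossing condition. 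Taking the product over $j$ and matching against $\SSYT(\bg) = \prod_j \SSYT(\beta^{(j)}/\gamma^{(j)})$ yields $\Phi$.

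The $x$-weight is then immediate. The only $x_i$'s arise in the row of the lattice carrying the variable $x_i$, where each face contributes $x_i^{|\L|}$; summing $|\L|$ across that row counts the colored right-edges, i.e. the total number of horizontal steps (over all colors) in row $i$, which by the single-color correspondence is exactly the number of cells of $\Phi(L)$ with entry $i$. Hence the $x$-part of $\weight(L)$ equals $x^{\Phi(L)}$.

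The heart of the argument, and the step I expect to be the main obstacle, is matching the $t$-weight with $\coinv$. By (\ref{FaceWeight}) the total power of $t$ is $\sum_{\text{faces}} \sum_{m : L_m = 1}(\I_{[m+1,k]} + \J_{[m+1,k]})$, a sum over vertices of the number of pairs (color $m$ exiting to the right, strictly larger color present at the vertex). I would set up a bijection between these local pairs and the coinversion triples of $T = \Phi(L)$. Given a triple $(u,v,w)$ with $v,w$ on content line $\chi$, with $w$ in the later shape $j$ and $u$ the cell of shape $j$ one content line to the left in the row of $w$, the entries satisfy $T(u) \le T(v) \le T(w)$ exactly when the horizontal color-$j'$ edge representing $v$ (where $j' < j$ is the shape of $v$) lies, in column $\chi$ and row $T(v)$, within the vertical extent of the color-$j$ path that travels between $u$ and $w$; that is, exactly when the smaller color $j'$ exits to the right at a vertex through which the larger color $j$ passes. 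This identifies each coinversion triple with one unit of $t$ and conversely, with the conventions $a=0$ for $u \notin \bg$ and $c=\infty$ for $w \notin \bg$ matching the bottom and top boundaries where the color-$j$ path enters and exits. The bulk of the work is the precise row/column bookkeeping of this correspondence --- in particular checking that condition (iii) in the definition of a triple is exactly the requirement that the color-$j$ segment stay between the bottom and top boundaries --- after which summing over all vertices gives the total $t$-power as $\coinv(T)$ and completes the proof.
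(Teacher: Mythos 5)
Your proposal is correct and follows essentially the same route as the paper: the same row-by-row (color-by-color) bijection with the five-vertex model citing \cite[Thm.~7.16.1]{stanley1999enumerative}, the same observation that the $x$-weight counts horizontal steps per row, and the same identification of each coinversion triple $(u,v,w)$ with a local vertex datum (smaller color exiting right at a vertex where a larger color appears), including the boundary conventions $a=0$, $c=\infty$. The paper simply carries out the ``row/column bookkeeping'' you defer, via an explicit case-by-case table of the nine possible triple types.
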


Some examples of using this theorem to compute LLT polynomials are given in Appendix \ref{AppendixExamples}.

This theorem will follow from a weight-preserving bijection $\varphi : \SSYT(\bg) \rightarrow LC_{\bg}$. Given $T \in \SSYT(\bg)$, the corresponding lattice configuration $L = \varphi(T)$ is constructed as follows.  Each row in $T$ corresponds to a colored path in $L$.  Fix a row
\begin{center}
\ytableausetup{nosmalltableaux}
\ytableausetup{nobaseline}
\begin{ytableau}
\none & \none & \none & \none & \none & \none[c] & \none[\enspace c+1] & \none[\enspace...] & \none[\hspace{1cm} c+j-1] \\
\none & \none & \none & \none & \none[\diag] & \none[\diag] & \none[...] & \none[\diag] \\
*(lightgray) & *(lightgray)... & *(lightgray) & e_1 & e_2 & ... & e_j & \none & \none & \none & \none[T^{(i)}] \\
\none & \none & \none[\diag] & \none[\diag] & \none[...] & \none[\diag]
\end{ytableau}
\end{center}
\noindent in $T$.  The corresponding path in $L$ has color $i$, enters via the bottom of column $c$ and exits via the top of column $c+j$, and crosses from column $c+m-1$ to column $c+m$ at row $e_m$ for each index $m \in [j]$.  

\begin{example}
Let $\bg = ((3,3)/(2,1), (3,1)/(1,0))$ and $n = 2$.  Then
\[\modexone{1}{1}{2}{1}{1}{2}\]
where in the lattice configuration blue is color 1, red is color 2, and we have omitted the variables and the edge labels (as we will do often).  
\end{example}

The invertibility of $\varphi$ is straightforward; indeed this is a well-known fact for one color \cite[Thm. 7.16.1]{stanley1999enumerative} and the proof for $k$ colors follows from applying the proof for one color to each of the $k$ colors independently. The following proposition then completes the proof of Theorem \ref{LatticeModel}.
\begin{prop}
Let $T \in \SSYT(\bg)$ and let $\varphi(T)$ be the corresponding lattice configuration, as defined above.  Then
\[ \coinv(T) = \sum_{\text{vertices $V$ in $\varphi(T)$}} \enspace \sum_{\text{colors $i$ exiting $V$ to the right}} \enspace \text{(\# colors larger than $i$ that appear in $V$)}. \]
In particular, the weight of the lattice configuration $\varphi(T)$ is $x^T t^{\coinv(T)}$.
\end{prop}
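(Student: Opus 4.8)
The plan is to compute $\weight(\varphi(T)) = \prod_V L_{x_{\rho(V)}}(\I,\J;\K,\L)$, where $\rho(V)$ denotes the lattice row of $V$ and the labels are those of the face at $V$, and to match its $x$- and $t$-parts separately against $x^T$ and $t^{\coinv(T)}$. For the $x$-part, I would use that in the bijection $\varphi$ each cell of $T$ produces exactly one horizontal step: the $m$-th cell of a given row of $T^{(i)}$, with entry $e_m$, corresponds to the crossing at lattice row $e_m$, i.e. a right-exit at a vertex in row $e_m$. Since the face weight contributes $x^{|\L|}$ and row $e_m$ carries the variable $x_{e_m}$, multiplying over all vertices gives $\prod_{u \in T} x_{T(u)} = x^T$. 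Thus it remains only to identify the $t$-exponent with $\coinv(T)$, which is the content of the displayed identity.

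For the $t$-part, I would first rewrite the global $t$-exponent in the form appearing on the right-hand side. Reading off the face weight, the exponent of $t$ contributed by vertex $V$ is $\sum_{m : L_m = 1}(\I_{[m+1,k]} + \J_{[m+1,k]})$. By conservation $\I + \J = \K + \L$ together with the constraint that no color has $I_i = J_i = 1$, every color present at $V$ enters through exactly one of the bottom or left edges; hence $\I_{[m+1,k]} + \J_{[m+1,k]}$ is exactly the number of colors larger than $m$ that are incident to (``appear in'') $V$. Summing over $V$ shows that the total $t$-exponent of $\weight(\varphi(T))$ is precisely the right-hand side of the proposition, so the ``in particular'' clause follows once I prove $\coinv(T)$ equals this double sum.

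The heart of the argument is a bijection between the triples $(V, i, i')$ counted by the double sum — a vertex $V$, a color $i$ exiting $V$ to the right, and a color $i' > i$ appearing at $V$ — and the coinversion triples of $T$. Given such $(V, i, i')$ with $V$ in column $\kappa$ and row $\ell$, the right-exit of color $i$ identifies a cell $v \in \beta^{(i)}/\gamma^{(i)}$ of content $\kappa$ with entry $b = \ell$; and the color-$i'$ path through $V$ occupies, inside column $\kappa$, exactly the vertices in rows $a, a+1, \dots, c$, where $a$ is the entry of the shape-$i'$ cell $u$ of content $\kappa - 1$ and $c$ is the entry of the shape-$i'$ cell $w$ of content $\kappa$ (the cell at which this path exits to the right). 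I would then check that $(u,v,w)$ is a triple in the sense of the paper — $v$ and $w$ lie on content line $\kappa$ with $w$ in the later shape $i' > i$, and $u$ lies one content line lower in the same row as $w$ — and that color $i'$ appearing at $V$, i.e. $a \le \ell \le c$, is exactly the coinversion condition $a \le b \le c$. Running this correspondence in reverse recovers $(V,i,i')$ from $(u,v,w)$ (using that at most one path of each color meets any vertex), giving the bijection and hence $\coinv(T)$ equal to the double sum.

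The main obstacle is the bookkeeping at the boundary, namely the triples in which $u$ or $w$ does not lie in $\bg$ (encoded by the conventions $a = 0$, $c = \infty$) and the role of condition (iii) in the definition of a triple. Here I would translate these conventions into the path picture using the top and bottom boundary conditions of $L_{\bg}$: $u \notin \bg$ corresponds to the color-$i'$ path entering column $\kappa$ from the bottom (so $a$ is effectively $0$ and $a \le \ell$ is automatic), while $w \notin \bg$ corresponds to its exiting column $\kappa$ through the top (so $c$ is effectively $\infty$), and condition (iii) is precisely what guarantees the path is incident to the column rather than absent from it. Verifying that the definitions of $\gamma(i)$ and $\beta(i)$ make these three statements line up with the admissible range from $(r, \gamma^{(j)}_r - 1)$ to $(r, \beta^{(j)}_r + 1)$ for $u, w$ is the most delicate step; once it is in place, the bijection of the previous paragraph covers all triples uniformly and the proof is complete.
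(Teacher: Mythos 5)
Your proposal is correct and is essentially the paper's own proof: both arguments identify each coinversion triple of $T$ with the data of a vertex $V$, a color $i$ exiting $V$ to the right, and a larger color $i'$ appearing at $V$ (the paper's quadruple $(i,j,b,p)$), after observing that the $t$-exponent of the face weights counts exactly these configurations. The only difference is presentational — the paper verifies the correspondence by a nine-case table over the possible forms $0 \leq a \leq b \leq c \leq \infty$, whereas you argue uniformly via the row-span of the color-$i'$ path inside column $\kappa$ and then treat the boundary conventions $a=0$, $c=\infty$ and condition (iii) separately.
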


\begin{proof} The left-hand side counts the number of coinversion triples in $T$; that is, the number of triples of boxes in $T$
\begin{equation*}
\ytableausetup{nobaseline}
\begin{ytableau}
\none &\none & a & c & \none & \none[T^{(j)}]  \\
\none & \none[p] & \none[\diag]  \\
\none & \none[\diag] \\
b & \none & \none[T^{(i)}] \\
\end{ytableau} 
\end{equation*}
with $p$ denoting the content line, $i < j$, and $a \leq b \leq c$, where we set $a=0$ and $c=\infty$ if the respective box is empty. One can observe that the coinversion triple can be recovered just from $i,j,b,p$, so we will identify each coinversion triple with the corresponding quadruple $(i,j,b,p)$.  The right-hand side counts the number of quadruples $(i,j,b,p)$ where $V$ is the vertex in $\varphi(T)$ at row $b$ and column $p$, $i$ is a color exiting $V$, and $j$ is a color larger than $i$ that appears in $V$.  The proposition follows from the fact that the left-hand side quadruples correspond to the right-hand side quadruples via $\varphi$.  To verify this correspondence, we split into cases based on the form of the coinversion triple. \\

\begin{center} \noindent \resizebox{5in}{!}{
\begin{tabular}{c|c|c}
\begin{tabular}{c}
$0 < a < b < c < \infty$ \\
\ytableausetup{nobaseline}
\begin{ytableau}
\none &\none & a & c & \none & \none[T^{(j)}]  \\
\none & \none[p] & \none[\diag]  \\
\none & \none[\diag] \\
b & \none & \none[T^{(i)}] \\
\end{ytableau} $\leftrightarrow$
\resizebox{1cm}{!}{
\begin{tikzpicture}[baseline=(current bounding box.center)]
\draw (0,0) -- (0,5); \draw (1,0) -- (1,5); 
\draw (0,0) -- (1,0); \draw (0,1) -- (1,1); \draw (0,2) -- (1,2); \draw (0,3) -- (1,3); \draw (0,4) -- (1,4); \draw (0,5) -- (1,5);
\node[left] at (0,0.5) {$a$}; \node[left] at (0,1.5) {$\vdots$}; \node[left] at (0,2.5) {$b$}; \node[left] at (0,3.5) {$\vdots$}; \node[left] at (0,4.5) {$c$};
\node[above] at (0.5,5) {$p$}; \node[right] at (0.6,2.2) {\color{red}{$j$}}; \node[above] at (0.8,2.6) {\color{blue}{$i$}};
\node[left] at (0.5,2.6) {\color{blue}{...}};
\draw[blue] (0.4,2.6)--(1,2.6);
\draw[red] (0,0.4)--(0.6,0.4)--(0.6,4.4)--(1,4.4);
\end{tikzpicture}}
\end{tabular}
&
\begin{tabular}{c}
$0 < a < b < c = \infty$ \\
\ytableausetup{nobaseline}
\begin{ytableau}
\none &\none & a & \none[\infty] & \none & \none[T^{(j)}]  \\
\none & \none[p] & \none[\diag]  \\
\none & \none[\diag] \\
b & \none & \none[T^{(i)}] \\
\end{ytableau} $\leftrightarrow$
\resizebox{1cm}{!}{
\begin{tikzpicture}[baseline=(current bounding box.center)]
\draw (0,0) -- (0,5); \draw (1,0) -- (1,5); 
\draw (0,0) -- (1,0); \draw (0,1) -- (1,1); \draw (0,2) -- (1,2); \draw (0,3) -- (1,3); \draw (0,4) -- (1,4); \draw (0,5) -- (1,5);
\node[left] at (0,0.5) {$a$}; \node[left] at (0,1.5) {$\vdots$}; \node[left] at (0,2.5) {$b$}; \node[left] at (0,3.5) {$\vdots$}; \node[left] at (0,4.5) {$n$};
\node[above] at (0.5,5) {$p$}; \node[right] at (0.6,2.2) {\color{red}{$j$}}; \node[above] at (0.8,2.6) {\color{blue}{$i$}};
\node[left] at (0.5,2.6) {\color{blue}{...}};
\draw[blue] (0.4,2.6)--(1,2.6);
\draw[red] (0,0.4)--(0.6,0.4)--(0.6,5);
\end{tikzpicture}}
\end{tabular}
&
\begin{tabular}{c}
$0 < a < b = c < \infty$ \\
\ytableausetup{nobaseline}
\begin{ytableau}
\none &\none & a & b & \none & \none[T^{(j)}]  \\
\none & \none[p] & \none[\diag]  \\
\none & \none[\diag] \\
b & \none & \none[T^{(i)}] \\
\end{ytableau} $\leftrightarrow$
\resizebox{1cm}{!}{
\begin{tikzpicture}[baseline=(current bounding box.center)]
\draw (0,0) -- (0,5); \draw (1,0) -- (1,5); 
\draw (0,0) -- (1,0); \draw (0,1) -- (1,1); \draw (0,2) -- (1,2); \draw (0,3) -- (1,3); \draw (0,4) -- (1,4); \draw (0,5) -- (1,5);
\node[left] at (0,0.5) {$a$}; \node[left] at (0,1.5) {$\vdots$}; \node[left] at (0,2.5) {$b$}; \node[left] at (0,3.5) {$\vdots$}; \node[left] at (0,4.5) {$n$};
\node[above] at (0.5,5) {$p$}; \node[right] at (0.6,2.2) {\color{red}{$j$}}; \node[above] at (0.8,2.6) {\color{blue}{$i$}};
\node[left] at (0.5,2.6) {\color{blue}{...}};
\draw[blue] (0.4,2.6)--(1,2.6);
\draw[red] (0,0.4)--(0.6,0.4)--(0.6,2.4)--(1,2.4);
\end{tikzpicture}}
\end{tabular} 
\\ && \\ \hline
\begin{tabular}{c}
$0 = a < b < c < \infty$ \\
\ytableausetup{nobaseline}
\begin{ytableau}
\none &\none & \none[0] & c & \none & \none[T^{(j)}]  \\
\none & \none[p] & \none[\diag]  \\
\none & \none[\diag] \\
b & \none & \none[T^{(i)}] \\
\end{ytableau} $\leftrightarrow$
\resizebox{1cm}{!}{
\begin{tikzpicture}[baseline=(current bounding box.center)]
\draw (0,0) -- (0,5); \draw (1,0) -- (1,5); 
\draw (0,0) -- (1,0); \draw (0,1) -- (1,1); \draw (0,2) -- (1,2); \draw (0,3) -- (1,3); \draw (0,4) -- (1,4); \draw (0,5) -- (1,5);
\node[left] at (0,0.5) {$1$}; \node[left] at (0,1.5) {$\vdots$}; \node[left] at (0,2.5) {$b$}; \node[left] at (0,3.5) {$\vdots$}; \node[left] at (0,4.5) {$c$};
\node[above] at (0.5,5) {$p$}; \node[right] at (0.6,2.2) {\color{red}{$j$}}; \node[above] at (0.8,2.6) {\color{blue}{$i$}};
\node[left] at (0.5,2.6) {\color{blue}{...}};
\draw[blue] (0.4,2.6)--(1,2.6);
\draw[red] (0.6,0)--(0.6,4.4)--(1,4.4);
\end{tikzpicture}}
\end{tabular}
&
\begin{tabular}{c}
$0 = a < b < c = \infty$ \\
\ytableausetup{nobaseline}
\begin{ytableau}
\none &\none & \none[0] & \none[\mid \infty] & \none & \none[T^{(j)}]  \\
\none & \none[p] & \none[\diag]  \\
\none & \none[\diag] \\
b & \none & \none[T^{(i)}] \\
\end{ytableau} $\leftrightarrow$
\resizebox{1cm}{!}{
\begin{tikzpicture}[baseline=(current bounding box.center)]
\draw (0,0) -- (0,5); \draw (1,0) -- (1,5); 
\draw (0,0) -- (1,0); \draw (0,1) -- (1,1); \draw (0,2) -- (1,2); \draw (0,3) -- (1,3); \draw (0,4) -- (1,4); \draw (0,5) -- (1,5);
\node[left] at (0,0.5) {$1$}; \node[left] at (0,1.5) {$\vdots$}; \node[left] at (0,2.5) {$b$}; \node[left] at (0,3.5) {$\vdots$}; \node[left] at (0,4.5) {$n$};
\node[above] at (0.5,5) {$p$}; \node[right] at (0.6,2.2) {\color{red}{$j$}}; \node[above] at (0.8,2.6) {\color{blue}{$i$}};
\node[left] at (0.5,2.6) {\color{blue}{...}};
\draw[blue] (0.4,2.6)--(1,2.6);
\draw[red] (0.6,0)--(0.6,5);
\end{tikzpicture}}
\end{tabular}
&
\begin{tabular}{c}
$0 = a < b = c < \infty$ \\
\ytableausetup{nobaseline}
\begin{ytableau}
\none &\none & \none[0] & b & \none & \none[T^{(j)}]  \\
\none & \none[p] & \none[\diag]  \\
\none & \none[\diag] \\
b & \none & \none[T^{(i)}] \\
\end{ytableau} $\leftrightarrow$
\resizebox{1cm}{!}{
\begin{tikzpicture}[baseline=(current bounding box.center)]
\draw (0,0) -- (0,5); \draw (1,0) -- (1,5); 
\draw (0,0) -- (1,0); \draw (0,1) -- (1,1); \draw (0,2) -- (1,2); \draw (0,3) -- (1,3); \draw (0,4) -- (1,4); \draw (0,5) -- (1,5);
\node[left] at (0,0.5) {$1$}; \node[left] at (0,1.5) {$\vdots$}; \node[left] at (0,2.5) {$b$}; \node[left] at (0,3.5) {$\vdots$}; \node[left] at (0,4.5) {$n$};
\node[above] at (0.5,5) {$p$}; \node[right] at (0.6,2.2) {\color{red}{$j$}}; \node[above] at (0.8,2.6) {\color{blue}{$i$}};
\node[left] at (0.5,2.6) {\color{blue}{...}};
\draw[blue] (0.4,2.6)--(1,2.6);
\draw[red] (0.6,0)--(0.6,2.4)--(1,2.4);
\end{tikzpicture}}
\end{tabular}
\\ && \\ \hline
\begin{tabular}{c}
$0 < a = b < c < \infty$ \\
\ytableausetup{nobaseline}
\begin{ytableau}
\none &\none & b & c & \none & \none[T^{(j)}]  \\
\none & \none[p] & \none[\diag]  \\
\none & \none[\diag] \\
b & \none & \none[T^{(i)}] \\
\end{ytableau} $\leftrightarrow$
\resizebox{1cm}{!}{
\begin{tikzpicture}[baseline=(current bounding box.center)]
\draw (0,0) -- (0,5); \draw (1,0) -- (1,5); 
\draw (0,0) -- (1,0); \draw (0,1) -- (1,1); \draw (0,2) -- (1,2); \draw (0,3) -- (1,3); \draw (0,4) -- (1,4); \draw (0,5) -- (1,5);
\node[left] at (0,0.5) {$1$}; \node[left] at (0,1.5) {$\vdots$}; \node[left] at (0,2.5) {$b$}; \node[left] at (0,3.5) {$\vdots$}; \node[left] at (0,4.5) {$c$};
\node[above] at (0.5,5) {$p$}; \node[right] at (0.6,2.2) {\color{red}{$j$}}; \node[above] at (0.8,2.6) {\color{blue}{$i$}};
\node[left] at (0.5,2.6) {\color{blue}{...}};
\draw[blue] (0.4,2.6)--(1,2.6);
\draw[red] (0,2.4)--(0.6,2.4)--(0.6,4.4)--(1,4.4);
\end{tikzpicture}}
\end{tabular}
&
\begin{tabular}{c}
$0 < a = b < c = \infty$ \\
\ytableausetup{nobaseline}
\begin{ytableau}
\none &\none & b & \none[\infty] & \none & \none[T^{(j)}]  \\
\none & \none[p] & \none[\diag]  \\
\none & \none[\diag] \\
b & \none & \none[T^{(i)}] \\
\end{ytableau} $\leftrightarrow$
\resizebox{1cm}{!}{
\begin{tikzpicture}[baseline=(current bounding box.center)]
\draw (0,0) -- (0,5); \draw (1,0) -- (1,5); 
\draw (0,0) -- (1,0); \draw (0,1) -- (1,1); \draw (0,2) -- (1,2); \draw (0,3) -- (1,3); \draw (0,4) -- (1,4); \draw (0,5) -- (1,5);
\node[left] at (0,0.5) {$1$}; \node[left] at (0,1.5) {$\vdots$}; \node[left] at (0,2.5) {$b$}; \node[left] at (0,3.5) {$\vdots$}; \node[left] at (0,4.5) {$n$};
\node[above] at (0.5,5) {$p$}; \node[right] at (0.6,2.2) {\color{red}{$j$}}; \node[above] at (0.8,2.6) {\color{blue}{$i$}};
\node[left] at (0.5,2.6) {\color{blue}{...}};
\draw[blue] (0.4,2.6)--(1,2.6);
\draw[red] (0,2.4)--(0.6,2.4)--(0.6,5);
\end{tikzpicture}}
\end{tabular}
&
\begin{tabular}{c}
$0 < a = b = c < \infty$ \\
\ytableausetup{nobaseline}
\begin{ytableau}
\none &\none & b & b & \none & \none[T^{(j)}]  \\
\none & \none[p] & \none[\diag]  \\
\none & \none[\diag] \\
b & \none & \none[T^{(i)}] \\
\end{ytableau} $\leftrightarrow$
\resizebox{1cm}{!}{
\begin{tikzpicture}[baseline=(current bounding box.center)]
\draw (0,0) -- (0,5); \draw (1,0) -- (1,5); 
\draw (0,0) -- (1,0); \draw (0,1) -- (1,1); \draw (0,2) -- (1,2); \draw (0,3) -- (1,3); \draw (0,4) -- (1,4); \draw (0,5) -- (1,5);
\node[left] at (0,0.5) {$1$}; \node[left] at (0,1.5) {$\vdots$}; \node[left] at (0,2.5) {$b$}; \node[left] at (0,3.5) {$\vdots$}; \node[left] at (0,4.5) {$n$};
\node[above] at (0.5,5) {$p$}; \node[right] at (0.6,2.2) {\color{red}{$j$}}; \node[above] at (0.8,2.6) {\color{blue}{$i$}};
\node[left] at (0.5,2.6) {\color{blue}{...}};
\draw[blue] (0.4,2.6)--(1,2.6);
\draw[red] (0,2.4)--(1,2.4);
\end{tikzpicture}}
\end{tabular} 
\\ && 
\end{tabular}} \end{center}

\end{proof}

\section{Yang-Baxter Equation \label{sec:YBE}}

In this section we show that the vertex model we have defined is quantum integrable, that is, the weights of the model satisfy the Yang-Baxter equation.  This integrability can be used to show that the coinversion LLT polynomials are symmetric in the $x_i$ and is essential to the Cauchy identity given in Section \ref{sec:Cauchy}. 

To this end, define the \textbf{R-matrix} by 
\[
R_{y/x}(\I,\J;\K,\L) = 
\begin{tikzpicture}[baseline=(current bounding box.center)]
\draw (0,-1)--(2,1); \draw (0,1)--(2,-1);
\node[below,left] at (0,-1) {$\I$};
\node[above,left] at (0,1) {$\J$};
\node[above,right] at (2,1) {$\K$};
\node[below,right] at (2,-1) {$\L$};
\end{tikzpicture}
\]
where the weights are given by 
\begin{equation}\label{Rweight}
\begin{aligned}
& R_{y/x}(\I,\J;\K,\L) = \prod_{i=1}^k \left(\frac{y}{x t^{\delta_i} }\right)^{K_i} \left(1-\frac{y}{x t^{\delta_i}}\right)^{(J_i-I_i+L_i-K_i)/2}, \\
& \delta_i = \frac{\J_{[i+1,k]}-\I_{[i+1,k]}+\L_{[i+1,k]}-\K_{[i+1,k]}}{2}
\end{aligned}
\end{equation}
whenever $\I + \J = \K + \L$ and there is no $i \in [k]$ such that $I_i = K_i = 1$ and $J_i = L_i =0$. Otherwise $R_{y/x}(\I,\J;\K,\L) = 0$. In terms of paths this means, at a vertex, color $i$ must look like one of five possible configurations and contributes weight
\begin{center}
    \begin{tabular}{ccccc}
         Type 1 & Type 2 & Type 3 & Type 4 & Type 5 \\
         \begin{tikzpicture} \draw[ultra thick] (0,1)--(1,0);\draw[thin] (0,0)--(1,1); \end{tikzpicture} & \begin{tikzpicture} \draw[ultra thick] (0,1)--(0.5,0.5)--(1,1);\draw[thin] (0,0)--(0.5,0.5)--(1,0); \end{tikzpicture} & \begin{tikzpicture} \draw[thin] (0,1)--(0.5,0.5)--(1,1);\draw[ultra thick] (0,0)--(0.5,0.5)--(1,0); \end{tikzpicture} & \begin{tikzpicture} \draw[ultra thick] (0,1)--(1,0);\draw[ultra thick] (0,0)--(1,1); \end{tikzpicture} & \begin{tikzpicture} \draw[thin] (0,1)--(1,0);\draw[thin] (0,0)--(1,1); \end{tikzpicture}  \\
         $1-\frac{y}{xt^{\delta_i}}$ & $\frac{y}{xt^{\delta_i}}$ & $1$ & $\frac{y}{xt^{\delta_i}}$ & $1$
    \end{tabular}
\end{center}
where $\delta_i$ counts the number of colors greater than $i$ of Type 1.

\begin{thm}[Yang-Baxter Equation (YBE)] \label{prop:YBE}
The matrix entries of $L$ and $R$ satisfy the intertwining relation
\begin{equation}
\begin{aligned}
  \sum_{\K_1,\K_2,\K_3 \in \{0,1\}^k} L_y(\K_3,\K_2;\J_3,\J_2) L_x(\I_3,\K_1;\K_3,\J_1) R_{y/x}(\I_2,\I_1;\K_2,\K_1) \\ = \sum_{\L_1,\L_2,\L_3 \in \{0,1\}^k} R_{y/x}(\L_2,\L_1;\J_2,\J_1) L_x(\L_3,\I_1;\J_3,\L_1) L_y(\I_3,\I_2;\L_3,\L_2) 
\end{aligned}
\end{equation}
for any $\I_1,\I_2,\I_3,\J_1,\J_2,\J_3 \in \{0,1\}^k$.
\end{thm}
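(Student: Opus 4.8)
The plan is to prove the Yang--Baxter equation by induction on the number of colors $k$, treating the model as a superposition of $k$ five-vertex models coupled only through powers of $t$. First I would note that the conservation law $\I+\J=\K+\L$, built into both the $L$- and $R$-weights, turns the stated equation into a finite family of scalar identities: once the boundary data $\I_1,\I_2,\I_3,\J_1,\J_2,\J_3$ are fixed, the multiset of colors present is determined, and only finitely many internal labels contribute a nonzero summand to either side. The base case $k=1$ is the Yang--Baxter equation for the five-vertex model, which is classical (see \cite{baxter2016exactly} and \cite[Thm.~7.16.1]{stanley1999enumerative}); when $k=1$ every index range $[m+1,k]$ and every $\delta_i$ is empty, so the $t$-factors disappear and the weights collapse to the ordinary five-vertex weights.

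For the inductive step I would peel off the smallest color. The key structural observation is that deleting color $1$ leaves the weights of the colors $2,\ldots,k$ untouched: in $L_x(\I,\J;\K,\L)$ the exponent $\I_{[m+1,k]}+\J_{[m+1,k]}$ attached to a color $m\ge 2$ counts only colors strictly larger than $m$, hence never registers color $1$, and similarly each $\delta_i$ with $i\ge 2$ ignores color $1$. Consequently every face weight factors as a color-$1$ contribution times a genuine $(k-1)$-color weight for colors $2,\ldots,k$, and the latter factor obeys the Yang--Baxter equation by the inductive hypothesis after relabeling $2,\ldots,k$ as $1,\ldots,k-1$.

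It then remains to control the color-$1$ factor against a fixed background of larger colors. For such a background, color $1$ traces a single five-vertex path, but with a twist: at each $L$-face its weight is $\bigl(x\,t^{N}\bigr)^{L_1}$, where $N=\I_{[2,k]}+\J_{[2,k]}$ is the number of larger colors present at that face, so color $1$ effectively sees a spectral parameter shifted by a power of $t$; at the $R$-face it sees the ratio $y/(x\,t^{\delta_1})$ with $\delta_1$ the number of larger colors of Type $1$. Writing $c_x,c_y$ for the shifts at the two $L$-faces, I would show that the three twisted color-$1$ faces on each side assemble into a single five-vertex Yang--Baxter equation with parameters $x\,t^{c_x}$ and $y\,t^{c_y}$, which holds by the base case, provided the local identity $\delta_1=c_x-c_y$ relating the larger-color pattern across the three faces is satisfied.

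The main obstacle is precisely the interface between these two reductions. Because the color-$1$ twists $c_x,c_y,\delta_1$ are read off from the larger-color labels, and those labels are themselves being summed over, the peeling does not factor the sum a priori; one must verify that the twisted color-$1$ equation is compatible with the inductive rearrangement of colors $2,\ldots,k$, and in particular that the relation $\delta_1=c_x-c_y$ holds for every relevant local configuration. I expect to establish this by a finite case analysis organized according to the five $R$-matrix types of the larger colors at the crossing and according to which of the incoming edges carry colors larger than color $1$; for each pattern one checks that the total exponent of $t$ produced by color $1$ at the three vertices is invariant under the swap encoded by the $R$-matrix. This bookkeeping is elementary but intricate, and it is the step I expect to be genuinely cumbersome.
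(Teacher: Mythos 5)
Your reduction of the inductive step to a ``twisted'' five-vertex Yang--Baxter equation for the peeled color rests on the local identity $\delta_1 = c_x - c_y$, and that identity is false. Concretely, take $k=2$ on the gauche lattice and let the single larger color take the configuration called $\pdiamond_2$ in the paper's Appendix~\ref{YBE-proof}: it enters at $\I_1$, crosses the $R$-vertex as Type~1 (so $\delta_1 = 1$), passes through the $x$-face and exits through its top edge, then traverses the $y$-face from below and exits at $\J_2$. This larger color then \emph{appears} at both $L$-faces, so $c_x = c_y = 1$ and $c_x - c_y = 0 \neq 1 = \delta_1$: color $1$ sees effective parameters $xt$ and $yt$ at the two $L$-faces but $y/(xt)$ at the crossing, and $\bigl(xt,\, yt,\, y/(xt)\bigr)$ is not a five-vertex YBE triple, since $y/(xt) \neq (yt)/(xt)$. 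So, conditionally on a fixed configuration of the larger colors, the color-$1$ system is in general \emph{not} a YBE instance for any shift of parameters, and no finite local case analysis can establish a compatibility that fails configuration-by-configuration.

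There is a second, structural obstruction even where the compatibility does hold: your inductive hypothesis for colors $2,\ldots,k$ is an equality of \emph{sums}, not a weight-preserving bijection between gauche and droite configurations (e.g.\ the word $(1,3)$ in Appendix~\ref{YBE-proof} has one gauche configuration but two droite configurations), whereas the factor you want to carry through the induction --- color $1$'s conditional partition function --- depends on the internal larger-color configuration through $c_x$, $c_y$, $\delta_1$, not merely on its boundary data. Hence the inductive hypothesis cannot be applied inside the sum as you propose; one needs a refined induction statement. This is exactly what the paper's proof supplies, and why it is long: it peels off the \emph{largest} color (whose own weights are unshifted), via the recursion \eqref{LRrecursive}, encodes each color's boundary behaviour as a letter of a word, and proves a hierarchy of reduction lemmas, including genuinely non-bijective mixing identities such as $M_{(u,1,2,v)} = t\, M_{(u,2,1,v)} + (1-t)\, M_{(u,3,v)}$ (Lemma~\ref{YBE15}) that have no configuration-wise interpretation, until only the words $1^k$ and $2^k$ remain; and even these base cases are not consequences of the $k=1$ YBE, since $D_{1^k}$ is a sum of $2^k$ terms that collapses to the single monomial $x^k t^{\binom{k}{2}} = G_{1^k}$ only through a $q$-binomial telescoping identity. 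Your proposal contains no counterpart of either the mixing relations or this final summation, and both are unavoidable.
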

Graphically this means that $R$ and $L$ satisfy
\[
 \begin{tikzpicture}[baseline=(current bounding box.center)] 
 \draw (-1,0.5) -- (0,1.5); \draw (-1,1.5) -- (0,0.5); 
 \draw[step=1.0,black,thin] (0,0) grid (1,2); 
 \node[left] at (-1,1.5) {$\I_1$}; \node[left] at (-1,0.5) {$\I_2$}; \node[below] at (0.5,0) {$\I_3$};
 \node[right] at (1,1.5) {$\J_2$}; \node[right] at (1,0.5) {$\J_1$}; \node[above] at (0.5,2) {$\J_3$};
 \node at (0.5,0.5) {$x$}; \node at (0.5,1.5) {$y$};
 \end{tikzpicture} 
=
 \begin{tikzpicture}[baseline=(current bounding box.center)] 
 \draw (2,0.5) -- (1,1.5); \draw (2,1.5) -- (1,0.5); 
 \draw[step=1.0,black,thin] (0,0) grid (1,2); 
 \node[left] at (0,1.5) {$\I_1$}; \node[left] at (0,0.5) {$\I_2$}; \node[below] at (0.5,0) {$\I_3$};
 \node[right] at (2,0.5) {$\J_1$}; \node[right] at (2,1.5) {$\J_2$}; \node[above] at (0.5,2) {$\J_3$};
 \node at (0.5,0.5) {$y$}; \node at (0.5,1.5) {$x$};
 \end{tikzpicture}
\]
where the choice of $\I$'s and $\J$'s correspond to a choice of boundary condition and we sum over the weight of all possible configurations that satisfy the chosen boundary condition.

Denote by $L^{(k)}_x$ and $R^{(k)}_x$ the L-matrix and R-matrix for $k$ colors. We can recursively define the L- and R-matrices for $k+1$ colors by adding a $(k+1)^{st}$ color larger than all $k$ of the previous colors. We have 
\begin{equation} \label{LRrecursive}
    \begin{aligned}
    L^{(k+1)}_x((\I,I_{k+1}),(\J,J_{k+1});(\K,K_{k+1});(\L,L_{k+1})) = & \;  L^{(k)}_x(\I,\J;\K;\L) \otimes E(I_{k+1},J_{k+1};K_{k+1},L_{k+1}) \\
    & + L^{(k)}_{xt}(\I,\J;\K;\L) \otimes F_x(I_{k+1},J_{k+1};K_{k+1},L_{k+1}) \\
    R^{(k+1)}_{y/x}((\I,I_{k+1}),(\J,J_{k+1});(\K,K_{k+1});(\L,L_{k+1})) = & \;  R^{(k)}_{y/(x t)}(\I,\J;\K;\L) \otimes \tilde E_{y/x}(I_{k+1},J_{k+1};K_{k+1},L_{k+1}) \\
    &  R^{(k)}_{y/x}(\I,\J;\K;\L) \otimes \tilde F_{y/x}(I_{k+1},J_{k+1};K_{k+1},L_{k+1})
    \end{aligned}
\end{equation}
where $L^{(k)}$ and $R^{(k)}$ act on colors $1,\ldots,k$, and $E,F,\tilde E, \tilde F$ act only on the  $(k+1)^{st}$ color with weights given in the tables below.
\begin{center}
\begin{equation} \label{EFweights}
\begin{tabular}{|c|c|c|c|c|c|}
\hline
& & & & & \\
$E$ & \begin{tikzpicture}\draw[thin] (0,0) rectangle (1,1);\end{tikzpicture} & \begin{tikzpicture}\draw[thin] (0,0) rectangle (1,1);\draw[thick] (0.5,0)--(0.5,0.5)--(1,0.5);\end{tikzpicture} & \begin{tikzpicture}[thin] \draw (0,0) rectangle (1,1); \draw[thick] (0,0.5)--(1,0.5); \end{tikzpicture} & \begin{tikzpicture} \draw[thin] (0,0) rectangle (1,1); \draw[thick] (0.5,0)--(0.5,1); \end{tikzpicture} & \begin{tikzpicture} \draw[thin] (0,0) rectangle (1,1); \draw[thick] (0,0.5)--(0.5,0.5)--(0.5,1); \end{tikzpicture}
\\
& $1$ & $0$ & $0$ & $0$ & $0$ \\
\hline
& & & & & \\
$F_x$ & \begin{tikzpicture}\draw[thin] (0,0) rectangle (1,1);\end{tikzpicture} & \begin{tikzpicture}\draw[thin] (0,0) rectangle (1,1);\draw[thick] (0.5,0)--(0.5,0.5)--(1,0.5);\end{tikzpicture} & \begin{tikzpicture} \draw[thin] (0,0) rectangle (1,1); \draw[thick] (0,0.5)--(1,0.5); \end{tikzpicture} & \begin{tikzpicture} \draw[thin] (0,0) rectangle (1,1); \draw[thick] (0.5,0)--(0.5,1); \end{tikzpicture} & \begin{tikzpicture} \draw[thin] (0,0) rectangle (1,1); \draw[thick] (0,0.5)--(0.5,0.5)--(0.5,1); \end{tikzpicture}\\
& $0$ & $x$ & $x$ & $1$ & $1$ \\
\hline
\end{tabular}
\end{equation}
\end{center}

\begin{center}
\begin{tabular}{|c|c|c|c|c|c|}
\hline
& & & & & \\
$\tilde E_{y/x}$ & \begin{tikzpicture} \draw[ultra thick] (0,1)--(1,0);\draw[thin] (0,0)--(1,1); \end{tikzpicture} & \begin{tikzpicture} \draw[ultra thick] (0,1)--(0.5,0.5)--(1,1);\draw[thin] (0,0)--(0.5,0.5)--(1,0); \end{tikzpicture} & \begin{tikzpicture} \draw[thin] (0,1)--(0.5,0.5)--(1,1);\draw[ultra thick] (0,0)--(0.5,0.5)--(1,0); \end{tikzpicture} & \begin{tikzpicture} \draw[ultra thick] (0,1)--(1,0);\draw[ultra thick] (0,0)--(1,1); \end{tikzpicture} & \begin{tikzpicture} \draw[thin] (0,1)--(1,0);\draw[thin] (0,0)--(1,1); \end{tikzpicture} \\
& $1-y/x$ & $0$ & $0$ & $0$ & $0$ \\
\hline 
& & & & & \\
$\tilde F_{y/x}$ & \begin{tikzpicture} \draw[ultra thick] (0,1)--(1,0);\draw[thin] (0,0)--(1,1); \end{tikzpicture} & \begin{tikzpicture} \draw[ultra thick] (0,1)--(0.5,0.5)--(1,1);\draw[thin] (0,0)--(0.5,0.5)--(1,0); \end{tikzpicture} & \begin{tikzpicture} \draw[thin] (0,1)--(0.5,0.5)--(1,1);\draw[ultra thick] (0,0)--(0.5,0.5)--(1,0); \end{tikzpicture} & \begin{tikzpicture} \draw[ultra thick] (0,1)--(1,0);\draw[ultra thick] (0,0)--(1,1); \end{tikzpicture} & \begin{tikzpicture} \draw[thin] (0,1)--(1,0);\draw[thin] (0,0)--(1,1); \end{tikzpicture} \\
& $0$ & $y/x$ & $1$ & $y/x$ & $1$ \\
\hline
\end{tabular}
\end{center}

\noindent It is easy to check that the recursive formulae agree with the weights given in equations (\ref{FaceWeight}) and (\ref{Rweight}). These recursive formulae allow us to prove the YBE by induction on the number of colors. The details are given in Appendix \ref{YBE-proof}.

The YBE can be used to give an alternate proof that the coinversion LLT polynomials $\mathcal{L}_{\bm{\beta}/\bm{\gamma}}(X;t)$ are symmetric in the $x_i$ which was originally shown in \cite{LLT}, and later given a purely combinatorial proof in \cite{HHLsym}.
\begin{thm}
The coinversion LLT polynomials $\mathcal{L}_{\bm{\beta}/\bm{\gamma}}(X;t)$ are symmetric in the variables $X$.
\end{thm}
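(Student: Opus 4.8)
The plan is to use the Yang--Baxter equation through the standard ``train'' (or railroad) argument to show that $\mc{Z}_{\bg}(X_n;t)$---and hence, by Theorem~\ref{LatticeModel}, $\mc{L}_{\bg}(X_n;t)$---is invariant under each adjacent transposition $x_i \leftrightarrow x_{i+1}$. Since the simple transpositions generate $S_n$, establishing invariance under all of them suffices for full symmetry. First I would isolate two adjacent rows of $L_{\bg}$, carrying spectral parameters $x_i$ (bottom) and $x_{i+1}$ (top); all other rows are spectators in what follows.

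The crucial starting observation is that the $R$-matrix acts trivially on the all-$\0$ horizontal boundary of $L_{\bg}$. From (\ref{Rweight}) we have $R_{y/x}(\I,\J;\K,\L)=0$ unless $\I+\J=\K+\L$, so $R_{y/x}(\0,\0;\K,\L)$ is supported on $\K=\L=\0$, where every $K_i=0$ forces each factor in the product to equal $1$ and the total weight to be $1$. Thus I would attach an $R$-vertex with parameter $x_{i+1}/x_i$ at the right boundary between the two rows: because the two right boundary edges are $\0$, the inserted vertex contributes weight $1$ and forces its new external legs to be $\0$, leaving the partition function unchanged.

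Next I would repeatedly invoke the YBE (Theorem~\ref{prop:YBE}) to transport this $R$-vertex leftward, one column at a time, across all $\band(\bg)+1$ columns. Each local move rewrites a summand $R\cdot L\cdot L$ as $L\cdot L\cdot R$ summed over the shared internal edge, so the total summed weight is preserved; and, reading the graphical form of Theorem~\ref{prop:YBE}, each such move interchanges the two rapidity labels on the crossing, so that when the $R$-vertex emerges at the left boundary the two rows now carry $x_{i+1}$ and $x_i$. At the left boundary the edges are again $\0$, so the same computation lets me delete the $R$-vertex at weight $1$. The resulting lattice is precisely $L_{\bg}$ with $x_i$ and $x_{i+1}$ interchanged, yielding
\[ \mc{Z}_{\bg}(\ldots,x_i,x_{i+1},\ldots;t) = \mc{Z}_{\bg}(\ldots,x_{i+1},x_i,\ldots;t). \]

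I expect the main obstacle to be careful bookkeeping at the boundaries rather than anything conceptually deep. One must confirm that the all-$\0$ left and right boundary conditions genuinely permit insertion and deletion of the $R$-vertex at weight $1$ (verified above), and that the top/bottom boundaries $\bm{\beta},\bm{\gamma}$ never interfere, since the $R$-vertex travels horizontally along the two chosen rows and never touches the vertical boundary edges. Granting the YBE as a black box (proved in Appendix~\ref{YBE-proof}), the transport step is a formal consequence, so the entire difficulty concentrates in this boundary compatibility and in checking that the YBE is oriented so as to swap---rather than merely preserve---the two spectral parameters.
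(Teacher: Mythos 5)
Your proposal is correct and is essentially the paper's own proof: the paper likewise inserts an $R$-vertex next to the two rows (at the left end, exploiting that the only non-zero $R$-entry with $\K=\L=\0$ has $\I=\J=\0$ and weight $R(\0,\0;\0,\0)=1$), drags it across the lattice by repeated applications of Theorem~\ref{prop:YBE}, and removes it at the opposite $\0$ boundary, concluding that exchanging the two spectral parameters leaves $\mc{Z}_{\bg}$ unchanged. The only (immaterial) differences are the direction of transport and that you phrase the reduction explicitly in terms of adjacent transpositions generating $S_n$.
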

\begin{proof}
Consider two rows of our lattice model
\[
\begin{tikzpicture}
\draw (0,0) -- (0,2); \draw (1,0) -- (1,2); \draw (2,0) -- (2,2); \draw (3,0) -- (3,2); 
\draw (0,0) -- (3,0); \draw (0,1) -- (3,1); \draw (0,2) -- (3,2);
\node[left] at (0,0.5) {$\0$}; \node at (0.5,0.5) {$x_i$}; \node at (1.5,0.5) {$\ldots$};\node at (2.5,0.5) {$x_i$}; \node[right] at (3,0.5) {$\0$};
\node[left] at (0,1.5) {$\0$}; \node at (0.5,1.5) {$x_j$}; \node at (1.5,1.5) {$\ldots$}; \node at (2.5,1.5) {$x_j$}; \node[right] at (3,1.5) {$\0$};
\end{tikzpicture}
\]
\noindent We insert an R-matrix at the left end and note that the only non-zero entry of the R-matrix with $\K=\L=0$ has $\I=\J=0$ as well. Now repeatedly applying the YBE give
\[
\begin{tikzpicture}[baseline=(current bounding box.center)]
\draw (-1,1.5)--(0,0.5); \draw (-1,0.5)--(0,1.5);
\node[left] at (-1,0.5) {$\0$}; \node[left] at (-1,1.5) {$\0$};
\draw (0,0) -- (0,2); \draw (1,0) -- (1,2); \draw (2,0) -- (2,2); \draw (3,0) -- (3,2); 
\draw (0,0) -- (3,0); \draw (0,1) -- (3,1); \draw (0,2) -- (3,2);
 \node at (0.5,0.5) {$x_i$}; \node at (1.5,0.5) {$\ldots$};\node at (2.5,0.5) {$x_i$}; \node[right] at (3,0.5) {$\0$}; \node at (0.5,1.5) {$x_j$}; \node at (1.5,1.5) {$\ldots$}; \node at (2.5,1.5) {$x_j$}; \node[right] at (3,1.5) {$\0$};
\end{tikzpicture}
=
\begin{tikzpicture}[baseline=(current bounding box.center)]
\draw (3,1.5)--(4,0.5); \draw (3,0.5)--(4,1.5);
\node[right] at (4,0.5) {$\0$}; \node[right] at (4,1.5) {$\0$};
\draw (0,0) -- (0,2); \draw (1,0) -- (1,2); \draw (2,0) -- (2,2); \draw (3,0) -- (3,2); 
\draw (0,0) -- (3,0); \draw (0,1) -- (3,1); \draw (0,2) -- (3,2);
\node[left] at (0,0.5) {$\0$}; \node at (0.5,0.5) {$x_j$}; \node at (1.5,0.5) {$\ldots$};\node at (2.5,0.5) {$x_j$}; 
\node[left] at (0,1.5) {$\0$}; \node at (0.5,1.5) {$x_i$}; \node at (1.5,1.5) {$\ldots$}; \node at (2.5,1.5) {$x_i$}; 
\end{tikzpicture}
\]
where now we note that $\I=\J=0$ then we must have $\K=\L=0$ to get a non-zero element of the R-matrix. As $R_{x_j/x_i}(\0,\0;\0,\0) = 1$, we see that swapping $x_i$ and $x_j$ leaves the partition function unchanged. 
\end{proof}

\section{Single Rows \label{sec:one-row}}

In this section we consider the case when $\bm{\gamma}=\emptyset^k$ and each partition in $\bm{\beta}$ has a single part. In other words, $\bm{\beta}$ is a tuple of single rows. In this case, we can view the tuple $\bm{\beta}$ as a composition $\beta$, and will abuse notation by writing $\mc{L}_\beta(X;t)$ to mean the polynomial $\mc{L}_{\bm{\beta}}(X;t)$. Using the results of the previous section, we give another proof that LLT polynomials indexed by $\beta$ coincide with certain Hall-Littlewood polynomials. The exact statement is as follows.

Let $H_\mu(X; t)$ denote the transformed Hall-Littlewood polynomials, given by their Schur expansion
\begin{equation}
    H_\mu(X; t) = \sum_{\lambda \geq \mu} K_{\lambda, \mu}(t)s_\lambda(X)
\end{equation}
where $K_{\lambda, \mu}(t) \in \Z[t]$ denotes the Kostka-Foulkes polynomial. We will also make use of the modified Hall-Littlewood polyomials $\widetilde{H}_\mu(X; t)$ defined via
\begin{equation}
    \widetilde{H}_\mu(X; t) = t^{n(\mu)} H_\mu(X; t^{-1})
    \label{HL-HLt}
\end{equation}
where $n(\mu) = \sum_i (i-1) \mu_i$. Similarly, we define the modified Kostka-Foulkes polynomials $\widetilde{K}_{\lambda, \mu}(t) :=  t^{n(\mu)} K_{\lambda, \mu}(t^{-1})$.

The following is due to \cite{LLT}, albeit in a different form than stated below.
\begin{prop} \label{LLT-HL-inv}
Let $\mu$ be a partition, viewed also as a tuple of rows. Then, 
\begin{equation} \label{eq:LLT-HL-inv}
    \mathcal{G}_\mu(X; t) = \widetilde{H}_\mu(X; t).
\end{equation}
where we recall that $\mc{G}_\mu(X;t)$ denotes the inversion LLT polynomials.
\end{prop}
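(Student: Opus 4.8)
The cleanest route is to reduce the statement to the original spin-generating-function theorem of \cite{LLT}, using the two normalization identities already recorded in the paper. Recall that the Stanton--White correspondence \cite{stanton1985schensted} identifies the tuple of single rows $\mu$ with the $k$-quotient of a unique partition $\lambda=\lambda(\mu)$ having empty $k$-core, and that under this correspondence the spin statistic on $k$-ribbon tableaux is carried to the attacking-inversion statistic, which is exactly the content of the dictionary \eqref{spin-inv}. The theorem of \cite{LLT} asserts, in this single-row regime, that the spin polynomial $G^{(k)}_{\lambda}(X;q)$ is a (transformed) Hall--Littlewood polynomial, up to a power of $q$ and the reparametrization $t\mapsto q^{2}$. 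My plan is to chain these facts together and verify that all the spurious powers of $t$ cancel.

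Concretely, I would proceed in four steps. First, fix $\lambda=\lambda(\mu)$ and write the classical identity of \cite{LLT} in the form $G^{(k)}_{\lambda}(X;q)=q^{a}H_\mu(X;q^{2})$ for the appropriate integer $a$. Second, apply \eqref{spin-inv}, substituting $t\mapsto t^{-2}$ and stripping the prefactor $t^{e}$, to rewrite the left-hand side in terms of the inversion polynomial $\mc{G}_\mu(X;t)$. Third, apply the defining relation \eqref{HL-HLt}, $\widetilde{H}_\mu(X;t)=t^{n(\mu)}H_\mu(X;t^{-1})$, to the right-hand side. Fourth, collect exponents: the integers $a$, $e$, and $n(\mu)$ must combine to zero, which I would confirm by extracting the coefficient of a single Schur function $s_\lambda$ on both sides and matching it against $\widetilde{K}_{\lambda,\mu}(t)$.

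A more self-contained alternative, in the spirit of the rest of the paper, is a direct comparison of Schur expansions. The Yang--Baxter argument of the previous section shows that $\mc{G}_\mu$ is symmetric, so it has a well-defined Schur expansion; I would show its $s_\lambda$-coefficient equals $\widetilde{K}_{\lambda,\mu}(t)=\sum_{S\in\SSYT(\lambda,\mu)}t^{\mathrm{cocharge}(S)}$, where the right-hand side is the Lascoux--Schützenberger cocharge formula (obtained from $\widetilde{K}_{\lambda,\mu}(t)=t^{n(\mu)}K_{\lambda,\mu}(t^{-1})$ and $\mathrm{cocharge}=n(\mu)-\mathrm{charge}$). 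Concretely, one standardizes the reading word of a tuple $T$ of weakly increasing rows, records the shape $\lambda$ of its insertion tableau, and exhibits a weight-preserving bijection between such tuples and the column-strict tableaux $S\in\SSYT(\lambda,\mu)$ sending $\inv(T)$ to $\mathrm{cocharge}(S)$.

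The genuine content, in either route, is the identity $\inv=\mathrm{cocharge}$ (equivalently, that spin equals cospin under Stanton--White); the bookkeeping of $t$-powers is routine once one compares leading terms. I therefore expect the cocharge/inversion matching to be the main obstacle. For the citation route I would simply import it from \cite{LLT} together with the correspondence of \cite{stanton1985schensted}, noting for the reader that a fully combinatorial proof requires the descent-preserving insertion sketched above.
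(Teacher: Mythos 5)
Your proposal does not follow the paper's route, and as written neither of your two routes amounts to a complete proof. Route 1 (chaining the original theorem of \cite{LLT} through the Stanton--White correspondence and the normalizations (\ref{spin-inv}) and (\ref{HL-HLt})) is essentially the observation the paper itself makes when it attributes the proposition to \cite{LLT} ``albeit in a different form'': all the mathematical content is imported from the geometric cell-decomposition argument, which is exactly what this section sets out to avoid (the paper states that all known proofs rely on the geometry of a flag variety and that its purpose is to give a new one). As a citation-plus-bookkeeping derivation it could be made to work, but even then the bookkeeping is only asserted: you never pin down the exponents $a$ and $e$ or carry out the leading-term comparison. Route 2 contains the genuine gap: the identity $\inv = \mathrm{cocharge}$ under standardization and insertion is not a technical hurdle to be dispatched at the end --- it essentially \emph{is} the theorem, being equivalent to the Lascoux--Sch\"utzenberger charge formula for the Kostka--Foulkes polynomials. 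Sketching a ``descent-preserving insertion'' does not establish it, and nothing in your proposal indicates how the bijection is defined or why it transports the two statistics correctly.

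The paper's proof is genuinely different and avoids both Stanton--White and cocharge entirely. It first converts the inversion statement into the coinversion form $\mc{L}_\mu(X;t)=t^{m(\mu)-n(\mu)}H_\mu(X;t)$ using (\ref{inv-coinv}) and (\ref{HL-HLt}), computes the total number of triples $m(\mu)=n(\mu)+\inv(\mu)$ (Corollary \ref{m-cor}), reduces to the reversed partition $\mu^{\rev}$ by the corner-flip/color-swap argument on the lattice model (Proposition \ref{swap-single-rows}), and then concludes with a one-line weight-preserving bijection: rotate each filling of $\mu^{\rev}$ by 90 degrees counterclockwise and replace every entry $i$ by $n-i+1$. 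This carries the coinversion triples of the LLT filling exactly onto the coinversion triples appearing in the $q=0$ Haglund--Haiman--Loeb formula (\ref{H-comb}) for $H_\mu(X;t)$ from \cite{HHLsym}. The only imported ingredient is that combinatorial formula for $H_\mu$; no insertion algorithm, charge statistic, or ribbon-tableau correspondence is needed. That elementary relabeling bijection is the idea your proposal is missing.
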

The polynomials $\widetilde{K}_{\lambda, \mu}(t)$ have many geometric interpretations. It is a well known fact \cite{lusztig1981green} that when $t=q$ is the cardinality of a finite field, then $GL_n(\mathbb{F}_q)$ acts on the set of $\mathbb{F}_q$-rational points of a flag variety, and $\widetilde{K}_{\lambda, \mu}(q)$ is equal to the value of an irreducible character $\chi_\lambda$ in this representation on a unipotent element $u$ with Jordan form specified by $\mu$. Proposition \ref{LLT-HL-inv} is proved in \cite{LLT} by showing that there is a cell decomposition of a similar flag variety whose cells are indexed by $k$ tuples of tableaux and whose dimensions are precisely the inversion statistic defined in Section \ref{sec:LLT}. 

As far as the authors are aware, the only known proofs rely on the geometry of an underlying flag variety. We present a new proof using coinversion LLT polynomials. Using (\ref{HL-HLt}) and (\ref{inv-coinv}), Equation (\ref{eq:LLT-HL-inv}) becomes
\begin{equation} \label{eq:LLT-HL-coinv}
    \mathcal{L}_\mu(X; t) = t^{m(\mu)-n(\mu)} H_\mu(X; t)
\end{equation}
where we recall that $m(\mu)$ is the total number of triples in the $\mu$. We derive the following explicit formula for $m$ which holds for all tuples of partitions $\bm{\beta}$, not just those consisting of single rows.
\begin{prop}
    Let $\bm{\beta}$ be a tuple of partitions. Then,
\begin{equation} \label{eq:m}
    m(\bm{\beta}) = \#\{ a < b, \; i, j \mid 0 \leq \beta_j^{(b)} - j + i < \beta_i^{(a)}\} + \sum_{\substack{a < b \\ i, j}} \max(\min(\beta^{(a)}_i - i, \beta^{(b)}_j - j) + \min(i,j), 0)
\end{equation}
\end{prop}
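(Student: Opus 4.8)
The plan is to count the triples of $\bm\beta$ directly by setting up a bijection between them and tuples of data $(a,b,i,j,c)$, where $a<b$ index two of the shapes, $i$ is the row of the cell $v$ in $\beta^{(a)}$, $j$ is the common row of the cells $u,w$ in $\beta^{(b)}$, and $c$ is the common content of $v$ and $w$. Since the proposition concerns a tuple of (straight) partitions, we are in the case $\bm\gamma=\emptyset^k$, and the defining conditions (i)--(iii) together with the remark on virtual cells translate into the requirements that $v=(i,c+i)$ be a genuine cell of $\beta^{(a)}$, i.e. $1\le c+i\le \beta^{(a)}_i$, and that $w=(j,c+j)$ lie in the admissible range of row $j$, i.e. $1\le c+j\le \beta^{(b)}_j+1$ (the extra value $c+j=\beta^{(b)}_j+1$ being the virtual position directly to the right of the row, and the left neighbour $u=(j,c+j-1)$, ranging over columns $0,\dots,\beta^{(b)}_j$, being automatically valid, either as a genuine cell or, at column $0$, as the endpoint of the empty row of $\bm\gamma$). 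First I would check that this is genuinely a bijection: $v$ recovers $(a,i,c)$, $w$ recovers $(b,j)$ and the same $c$, the condition $a<b$ is the requirement that $w$ be in a later shape, and $u$ is then forced, so every triple arises exactly once.

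Next I would split the count according to whether $w$ is a genuine cell or the virtual cell, which is exhaustive and disjoint. If $w$ is genuine then $1\le c+j\le\beta^{(b)}_j$, so combining with $1\le c+i\le\beta^{(a)}_i$ the admissible contents $c$ fill the integer interval $[\,1-\min(i,j),\ \min(\beta^{(a)}_i-i,\ \beta^{(b)}_j-j)\,]$; hence for fixed $(a,b,i,j)$ the number of genuine-$w$ triples is exactly $\max\!\big(\min(\beta^{(a)}_i-i,\beta^{(b)}_j-j)+\min(i,j),\,0\big)$, and summing over $a<b$ and all $i,j$ produces the second term of (\ref{eq:m}).

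Finally, if $w$ is virtual then $c=\beta^{(b)}_j+1-j$ is uniquely determined by $(b,j)$, and the only surviving constraint is that $v=(i,\beta^{(b)}_j+1-j+i)$ be a genuine cell of $\beta^{(a)}$, i.e. $1\le \beta^{(b)}_j+1-j+i\le\beta^{(a)}_i$, which is precisely $0\le \beta^{(b)}_j-j+i<\beta^{(a)}_i$; summing this indicator over $a<b$ and all $i,j$ yields the cardinality in the first term of (\ref{eq:m}), and adding the two contributions gives the identity. I expect the only real obstacle to be the bookkeeping in the first paragraph, namely pinning down from conditions (ii)--(iii) and the accompanying remark the exact boundary conventions for the virtual cells (that $u$ runs from the $\bm\gamma$-endpoint at column $0$ up to column $\beta^{(b)}_j$, and $w$ from column $1$ up to the virtual column $\beta^{(b)}_j+1$), since an off-by-one there would corrupt both the additive $\min(i,j)$ in the second term and the sharp inequalities defining the indicator set in the first. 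I would also note that the lower bound $1-\min(i,j)$ and the condition $0\le\beta^{(b)}_j-j+i$ are mutually consistent precisely because $\beta^{(b)}_j\ge0$ forces $\beta^{(b)}_j-j+\min(i,j)\ge0$ to agree with $\beta^{(b)}_j-j+i\ge0$; once these conventions are fixed, everything reduces to elementary interval-counting.
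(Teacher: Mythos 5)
Your proposal is correct, and the underlying count is the same as the paper's: both parametrize triples by the shapes $a<b$, the row $i$ of $v$, the row $j$ of $u,w$, and the common content of $v$ and $w$, and both reduce everything to counting integers in an interval. Where you genuinely diverge is in how the two terms of (\ref{eq:m}) are extracted. The paper counts all admissible contents at once for fixed $(a,b,i,j)$, obtaining $\#\bigl([0,\beta^{(a)}_i-1]\cap[i-j,\beta^{(b)}_j+i-j]\bigr)$, and then splits this quantity algebraically via a $\max/\min$ identity, finishing with a separate verification that the indicator condition it produces, $i-\min(i,j)\le\beta^{(b)}_j-j+i$, is equivalent to $0\le\beta^{(b)}_j-j+i$. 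You split first, combinatorially, according to whether $w$ is a genuine cell of $\beta^{(b)}$ (i.e. $1\le c+j\le\beta^{(b)}_j$) or the virtual cell at column $\beta^{(b)}_j+1$: the genuine case yields the interval $[1-\min(i,j),\,\min(\beta^{(a)}_i-i,\beta^{(b)}_j-j)]$, whose cardinality is exactly the summand of the second term, while the virtual case pins $c=\beta^{(b)}_j+1-j$ and leaves precisely the first term's condition $0\le\beta^{(b)}_j-j+i<\beta^{(a)}_i$. What your route buys is that each term of the formula acquires a direct combinatorial meaning (triples with $w$ inside the shape versus triples with $w$ just past the end of its row), and the $\max/\min$ bookkeeping and the final equivalence-of-conditions step become unnecessary; the paper's route, by contrast, defers all case analysis to algebra. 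Your boundary conventions are also the correct ones: in particular, your virtual-$w$ case correctly retains the triples coming from empty rows $\beta^{(b)}_j=0$, where both $u$ and $w$ are virtual, which the formula does count (this is consistent with the paper's remark that replacing the third shape of Example \ref{exampleLLT} by $(1,0)/(0,0)$ introduces the triples $(0,8,\infty)$ and $(0,6,\infty)$).
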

\begin{proof}
We count triples by their cell labelled $v$ in (\ref{triple}), as this cell is always in the shape $\bm{\beta}$. Fix a cell $v = (i,\ell) \in \beta^{(a)}$ . If there is a triple $(u,v,w)$, then $u, w$ must lie in some (or adjacent to some) $\beta^{(b)}$ for $b > a$. For each row $\beta^{(b)}_j$, let $u,w$ be the unique pair of cells in this row with $w$ on the same content line as $v$ and $u$ directly to the left of $w$. Then, $(u,v,w)$ form a triple if either (1) $u,w$ are both in $\beta^{(b)}_j$, (2) $u$ is the cell $(j,0)$ just before the beginning of the row, or (3), $u$ is the cell $(j, \beta^{(b)}_j)$ at the end of the row. In other words, $(u,v,w)$ is a triple exactly when $\beta^{(b)}_j$ has a cell of content $c(v)$ or $c(v)-1$. As the set of contents in the row $\beta^{(b)}_j$ is precisely the interval $[1-j, \beta^{(b)}_j-j]$, then 
\[ (u,v,w) \text{ is a triple } \iff 1-j \leq c(v) \leq \beta^{(b)}_j - j + 1 \iff i-j \leq \ell-1 \leq \beta^{(b)}_j + i - j  \]
As $\ell-1$ ranges over the interval $[0, \beta_i^{(a)}-1]$, after summing over $\ell, i,j$ and $a < b$, we find that the number of triples $(u,v,w)$ is
\begin{align*} m(\bm{\beta}) &= \sum_{\substack{a < b \\ i,j}} \# \left( [0,\beta^{(a)}_i-1] \cap [i-j, \beta^{(b)}_j + i - j] \right) \\
&= \sum_{\substack{a < b \\ i,j}} \# \left( [-i,\beta^{(a)}_i-i-1] \cap [-j, \beta^{(b)}_j - j] \right) \\
&= \sum_{\substack{a < b \\ i,j}} \max(\min(\beta^{(a)}_i-i-1, \beta^{(b)}_j - j) - \max(-i, -j) + 1, 0) \\
&= \sum_{\substack{a < b \\ i,j}} \max(\min(\beta^{(a)}_i-i, \beta^{(b)}_j - j) + \min(i,j), 0) + 
\begin{cases}
1 &: -\min(i,j) \leq \beta^{(b)}_j - j \leq \beta^{(a)}_i-i-1 \\
0 &: \text{else}
\end{cases} \\
&= \sum_{\substack{a < b \\ i,j}} \max(\min(\beta^{(a)}_i-i, \beta^{(b)}_j - j) + \min(i,j), 0) + 
\begin{cases}
1 &: i-\min(i,j) \leq \beta^{(b)}_j - j +i < \beta^{(a)}_i \\
0 &: \text{else}
\end{cases}
\end{align*}
The condition $i-\min(i,j) \leq \beta^{(b)}_j - j + i$ is seen to be equivalent to $0 \leq \beta^{(b)}_j - j + i$ in either case $i \leq j$ or $j \leq i$.
\end{proof}
Given a composition $\beta$, define $\inv(\beta) = \# \{i < j \mid \beta_i > \beta_j \}$.
\begin{cor} \label{m-cor}
	Let $\mu$ be a partition and let $\beta$ be any rearrangement of its parts. Then,
	\[ m(\beta) = n(\mu) + \inv(\beta) \]
	where $n(\mu) = \sum_i (i-1) \mu_i$.
\end{cor}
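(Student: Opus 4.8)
The plan is to specialize the explicit formula (\ref{eq:m}) for $m(\bm{\beta})$ to the single-row case and then recognize the two resulting summands. When $\bm{\beta}$ is a tuple of single rows viewed as the composition $\beta = (\beta_1, \ldots, \beta_k)$, each partition $\beta^{(a)}$ has exactly one (possibly empty) row, so in (\ref{eq:m}) the indices $i$ and $j$ are forced to equal $1$, with $\beta^{(a)}_i = \beta_a$ and $\beta^{(b)}_j = \beta_b$. First I would substitute these values and simplify each of the two terms separately.

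For the first term, the condition $0 \le \beta^{(b)}_j - j + i < \beta^{(a)}_i$ becomes $0 \le \beta_b < \beta_a$; since all parts are non-negative the left inequality is automatic, so this term counts exactly the pairs $a < b$ with $\beta_a > \beta_b$, i.e. it is $\inv(\beta)$. For the second term, $\min(\beta^{(a)}_i - i, \beta^{(b)}_j - j) + \min(i,j)$ collapses to $\min(\beta_a, \beta_b)$, which is non-negative, so the outer $\max(\cdot, 0)$ may be dropped and the term becomes $\sum_{a<b} \min(\beta_a, \beta_b)$. This yields the clean intermediate identity $m(\beta) = \inv(\beta) + \sum_{a<b} \min(\beta_a, \beta_b)$.

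It then remains to identify $\sum_{a<b}\min(\beta_a, \beta_b)$ with $n(\mu)$. The key observation is that this sum ranges over all unordered pairs of parts and so depends only on the underlying multiset of parts; it is therefore invariant under rearrangement and in particular agrees with the value computed for the partition $\mu$ itself. For the weakly decreasing arrangement $\mu_1 \ge \cdots \ge \mu_k$ one has $\min(\mu_a, \mu_b) = \mu_b$ whenever $a < b$, so the sum reorganizes as $\sum_b (b-1)\mu_b = n(\mu)$. Combining the two computations gives $m(\beta) = n(\mu) + \inv(\beta)$, as claimed.

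There is no serious obstacle here: the result is a direct specialization of (\ref{eq:m}) together with the symmetry of the pair-min sum. The only points demanding care are verifying that non-negativity of the parts lets one discard the lower inequality in the first term and the outer maximum in the second, and noting that $\sum_{a<b}\min(\beta_a,\beta_b)$ is a symmetric function of the parts, so that evaluating it on the sorted arrangement to obtain $n(\mu)$ is legitimate.
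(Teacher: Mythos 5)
Your proof is correct and takes essentially the same route as the paper: specialize (\ref{eq:m}) to $i=j=1$, recognize the first term as $\inv(\beta)$ and the second as $\sum_{a<b}\min(\beta_a,\beta_b)$, and conclude via the identity $n(\mu)=\sum_{a<b}\min(\beta_a,\beta_b)$. The only difference is that you also justify that last identity (by symmetry of the pair-min sum and evaluation on the sorted arrangement), which the paper simply asserts.
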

\begin{proof}
As $\bm{\beta}$ consists of single rows, the only non-zero terms in (\ref{eq:m}) are when $i=j=1$.
Thus,
\begin{align*}
    m(\beta) &= \#\{ a < b \mid 0 \leq \beta^{(b)} < \beta^{(a)}\} + \sum_{a < b} \max(\min(\beta^{(a)} - 1, \beta^{(b)} - 1) + 1, 0) \\
    &= \inv(\beta) + \sum_{a < b} \min(\beta^{(a)}, \beta^{(b)})
\end{align*}
The result follows from the identity $n(\mu) = \sum_{a < b} \min(\beta^{(a)}, \beta^{(b)})$.
\end{proof}
Hence with Corollary \ref{m-cor}, (\ref{eq:LLT-HL-coinv}) becomes
\begin{equation} \label{eq:LLT-HL-coinv-2}
    \mathcal{L}_\mu(X; t) = t^{\inv(\mu)} H_\mu(X; t)
\end{equation}
We establish the following statement, giving another proof of (\ref{eq:LLT-HL-coinv-2}).
\begin{prop} \label{LLT-HL-coinv}
	Let $\mu$ be a partition and let $\beta$ be any rearrangement of the parts of $\mu$. 
	Then,
	\[ \mc{L}_{\beta}(X;t) = t^{\inv(\beta)} H_\mu(X; t) \]
	In particular, if $\mu^{\rev} = (\mu_n, \ldots, \mu_1)$ is $\mu$ in reverse order, then
	\[ \mc{L}_{\mu^{\rev}}(X;t) = H_\mu(X; t) \]
\end{prop}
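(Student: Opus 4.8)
The plan is to prove the full statement $\mc{L}_\beta(X;t) = t^{\inv(\beta)}H_\mu(X;t)$ and to read off the ``in particular'' claim as the case $\beta = \mu^{\rev}$, where $\inv(\mu^{\rev}) = \#\{i<j : \mu^{\rev}_i > \mu^{\rev}_j\} = 0$ since $\mu^{\rev}$ is weakly increasing. By Theorem \ref{LatticeModel} it suffices to compute the partition function $\mc{Z}_\beta$. For single rows with $\bm{\gamma}=\emptyset^k$ the lattice $L_\beta$ simplifies drastically: it has columns $0,1,\dots,\mu_1$, every color enters the bottom of column $0$, and color $j$ exits the top of column $\beta_j$, so a configuration is simply a family of colored up-right lattice paths, one per color, weighted by the interaction rule of \eqref{FaceWeight}. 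I will work throughout with this path picture.

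The main engine I propose is induction on the number of parts, adjoining one single row at a time and matching this operation with the Hall--Littlewood creation (vertex) operators $B_r$, for which $H_\mu = B_{\mu_1}B_{\mu_2}\cdots B_{\mu_\ell}(1)$. The recursion \eqref{LRrecursive}, which builds the $(k{+}1)$-color $L$-matrix from the $k$-color one by adjoining a new largest color, is exactly the algebraic input: adjoining the NE-most color with a single row of size $r$ multiplies $\mc{Z}$ by a transfer operator whose $x$-dependence comes from the new path's right-steps (producing complete homogeneous symmetric functions $h_r,h_{r+1},\dots$) and whose $t$-dependence comes from the rule in \eqref{FaceWeight} that a smaller color exiting a face to the right gains a factor of $t$ for each larger color sharing that face. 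The first key step is to carry out this computation and identify the resulting operator with $B_r$, in the normalization matching the Schur expansion that defines $H_\mu$. Granting this, iterating over the parts in decreasing order gives $\mc{Z}_{\mu^{\rev}} = B_{\mu_1}\cdots B_{\mu_\ell}(1) = H_\mu$, which is the $\inv=0$ case.

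It then remains to pass from the sorted composition to an arbitrary rearrangement $\beta$. I would do this by an adjacent-transposition argument: swapping two neighboring colors $j,j+1$ of single-row sizes $\beta_j,\beta_{j+1}$ induces a bijection between configurations of $L_\beta$ and $L_{\beta'}$ (interchange the two colored paths), and I claim the ratio of $t$-weights is the constant $t^{\inv(\beta')-\inv(\beta)}=t^{\pm 1}$, independent of the configuration---equivalently, that the creation operators reorder up to a single power of $t$ per inversion. Summing over configurations then yields $\mc{L}_\beta = t^{\inv(\beta)-\inv(\mu^{\rev})}\mc{L}_{\mu^{\rev}} = t^{\inv(\beta)}H_\mu$. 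The main obstacle is the identification of the add-a-color transfer operator with $B_r$, together with the verification that its $t$-weighted interactions reproduce exactly the $t$-structure of the creation operator and of the reordering; this is a careful but purely combinatorial bookkeeping of how many larger colors each right-step of the new path sees, and is where all of the arithmetic with \eqref{FaceWeight} is concentrated. An alternative to the creation-operator step is to peel off the top row carrying $x_n$ and match the resulting one-variable branching against the Hall--Littlewood Pieri rule; this avoids the operator formalism, but is complicated by the fact that the peeled boundaries range over all compositions $\beta'\le\beta$, not only those yielding horizontal strips, so the horizontal-strip Pieri coefficients re-emerge only after re-expanding the lower-rank $H_{\operatorname{sort}(\beta')}$ and summing.
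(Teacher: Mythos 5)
Your skeleton---reduce to the sorted case $\mu^{\rev}$ by adjacent swaps, then identify $\mc{L}_{\mu^{\rev}}$ with $H_\mu$---is exactly the paper's (Proposition \ref{swap-single-rows} followed by the bijective argument in Proposition \ref{LLT-HL-coinv}), but both of your key steps have genuine gaps.

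The central gap is the creation-operator step, which is asserted rather than proved, and which as set up is not even well-posed. ``Adjoining the NE-most color with row $r$'' is an operation on \emph{configurations}, not on polynomials: for a fixed path $P$ of the new largest color, the $t$-dressing it induces---an extra factor $t$ on each right-step of a smaller color lying in a face that $P$ passes through, by (\ref{FaceWeight})---depends on the \emph{column} as well as the row of that right-step, so the dressed sum over the old configurations is not a variable substitution into, nor any function of, the polynomial $\mc{L}_{(\beta_1,\dots,\beta_k)}$ alone. Hence there is no a priori ``transfer operator'' by which the new color ``multiplies $\mc{Z}$''; proving that the joint sum reorganizes into an operator applied to the lower-rank polynomial, that this operator is $B_r$, and that the finite-variable lattice quantities match the symmetric-function identity $H_\mu=B_{\mu_1}\cdots B_{\mu_\ell}(1)$, is essentially the entire content of the proposition, not bookkeeping. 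Moreover, your parenthetical ``equivalently, the creation operators reorder up to a single power of $t$ per inversion'' is false for the standard Jing--Garsia operators: with $B_a f = f\!\left[X-(1-t)/z\right]\prod_i(1-zx_i)^{-1}\big|_{z^a}$ one has $B_3B_1(1)=h_1h_3-(1-t)h_4=ts_4+s_{31}=H_{(3,1)}$, while $B_1B_3(1)=t^3s_4+t^2s_{31}+(t-1)s_{22}$, which is not $t^cH_{(3,1)}$ for any $c$. (This does not contradict the lattice fact you want, because $\mc{L}_\beta$ for unsorted $\beta$ is simply not obtained by composing creation operators in the unsorted order.) By contrast, the paper's treatment of the sorted case is short and bijective: it rewrites $\mc{L}_{\mu^{\rev}}$ as a sum over fillings of $D(\mu^{\rev})$ with weakly increasing rows, recalls the combinatorial formula (\ref{H-comb}) of \cite{HHLsym} for $H_\mu$ as a sum over fillings of the conjugate diagram $D(\mu')$ with weakly decreasing columns, and maps one to the other by rotating the filling $90$ degrees counterclockwise and replacing each entry $i$ by $n-i+1$.

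The reduction step also contains an error, though a repairable one. The statement you want is true (it is Proposition \ref{swap-single-rows}), but the bijection you name---interchanging the two colored paths wholesale---does not have constant weight ratio. Take $k=2$, $n=2$, $\bm{\beta}=((2),(1))$, so there are two triples: the filling $(T^{(1)},T^{(2)})=(11,\,1)$ has $\coinv=2$, as does $(12,\,2)$; but their images under wholesale recoloring are the fillings $(1,\,11)$ and $(2,\,12)$ of shape $((1),(2))$, which have $\coinv=1$ and $\coinv=0$ respectively. So recoloring shifts the weight by $t$ on one configuration and by $t^2$ on another, and summing over configurations does not prove $\mc{L}_{(2,1)}=t\,\mc{L}_{(1,2)}$. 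The correct bijection is finer: locate the \emph{last} face at which the smaller-colored path exits to the right while the larger-colored path exits upward, and swap the colors of the two paths only after that face; only then is the ratio identically $t$.
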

\begin{proof}
Using Proposition \ref{swap-single-rows} below, it suffices to prove the particular case with $\mu^{\rev}$. Viewing $\mu^{\rev}$ as a composition rather than as a tuple of rows, we recast the definition of $\mc{L}_{\mu^{\rev}}$ as a sum over fillings of the diagram $D(\mu^{\rev})$:
\begin{equation} \label{LLT-single-rows}
\mc{L}_{\mu^{\rev}}(X; t) = \sum_{\substack{\sigma : \mu^{\rev} \to \Z_+ \\ \text{rows} \leq }} t^{\coinv(\sigma)} x^\sigma
\end{equation}
where the sum is over fillings of $\mu^{\rev}$ with weakly increasing rows, and a coinversion triple of a filling $\sigma$ is of the form
\begin{equation} \label{coinv-single-rows}
\begin{ytableau}
a & c \\
\none & \none \\
\none & b
\end{ytableau}
\end{equation}
with $a \leq b \leq c$, and we set $a=0$ if it is not in $\mu^{\rev}$. 

We now recall the combinatorial definition of $H_\mu(X;t)$, given in \cite{HHLsym}, by exchanging $q$ and $t$ in the combinatorial formula for Macdonald polynomials and setting $q=0$:
\begin{equation} \label{H-comb}
H_\mu(X; t) = \sum_{\substack{\sigma : \mu' \to \Z_+ \\ \text{cols} \geq }} t^{\coinv'(\sigma)} x^\sigma
\end{equation}
Here, the sum is over fillings $\sigma$ on the conjugate Young diagram $D(\mu')$ with weakly decreasing columns, and $\coinv'(\sigma)$ counts triples of the form
\begin{equation} \label{coinv-H}
\begin{ytableau}
x & \none & \none & z \\
y
\end{ytableau}
\end{equation}
with $x \leq z \leq y$, and we set $y=0$ if it is not in $\mu'$. 

We produce a weight preserving bijection $\Phi$ from terms in (\ref{LLT-single-rows}) to terms in (\ref{H-comb}). Define $\Phi$ to be the map from fillings on the shape $\mu^{\rev}$ to fillings on the shape $\mu'$ which rotates 90 degrees counterclockwise and swaps entries $i$ and $n-i+1$. For example,
\[ \begin{ytableau}
1 & 2 &  4 & 4 \\
*(green) 2 & *(green) 3 &  4 \\
3 &  3 \\
1 & *(green) 3
\end{ytableau}
\quad \mapsto \quad
\begin{ytableau}
1 \\
1 & 1 \\
3 & *(green) 2 & 2 & *(green) 2 \\
4 & *(green) 3 & 2 & 4
\end{ytableau}
\]
It is clear that $\Phi$ is a bijection and after setting $a=y,$ $b=z$ and $c=x$, maps coinversion triples as in (\ref{coinv-single-rows}) precisely to triples as in (\ref{coinv-H}).
\end{proof}

\begin{prop} \label{swap-single-rows}
	Let $\mu$ be a partition and let $\beta$ be any rearrangement of the parts of $\mu$.  Then,
	\[ \mc{L}_\mu(X; t) = t^{\inv(\mu) - \inv(\beta)}  \mc{L}_\beta(X; t) \]
\end{prop}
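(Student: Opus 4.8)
The plan is to reduce to the case of an adjacent transposition and then establish the resulting exchange relation using the integrability of the model. Write $s_a$ for the transposition of the positions $a$ and $a+1$. Swapping these two positions leaves every comparison not involving $\{a,a+1\}$ unchanged, so $\inv(\beta)-\inv(s_a\beta)=[\beta_a>\beta_{a+1}]-[\beta_a<\beta_{a+1}]\in\{-1,0,1\}$. Hence it suffices to prove the single exchange relation
\[ \mc{L}_\beta(X;t) = t^{\,\inv(\beta)-\inv(s_a\beta)}\,\mc{L}_{s_a\beta}(X;t), \]
because composing adjacent transpositions telescopes the exponent to $\inv(\mu)-\inv(\beta)$ for an arbitrary rearrangement; when $\beta_a=\beta_{a+1}$ the relation is trivial, so assume $\beta_a\neq\beta_{a+1}$.

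By Theorem \ref{LatticeModel} it is equivalent to compare the partition functions $\mc{Z}_\beta$ and $\mc{Z}_{s_a\beta}$. In the single-row case every color enters through the bottom of the column of content $0$, and color $j$ exits through the top of the column of content $\beta_j$; thus the lattices $L_\beta$ and $L_{s_a\beta}$ have identical bulk and identical, color-symmetric, bottom boundary, and differ \emph{only} at the top boundary, where the exit columns of the two adjacent colors $a$ and $a+1$ are interchanged. One might hope to match configurations term by term, but the obvious weight-preserving bijection — transposing the fillings of the two rows — does \emph{not} shift $\coinv$ by a constant: already for $\mu=(2,1)$ one checks that the shift it induces on individual monomials takes the values $0$ and $2$, even though $\mc{L}_{(2,1)}=t\,\mc{L}_{(1,2)}$. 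The equality is therefore genuinely global, and the natural tool is the Yang–Baxter integrability of Theorem \ref{prop:YBE}.

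Concretely, I would introduce a color-exchange intertwiner acting on the two adjacent colors $a,a+1$, place it along the top boundary so that it swaps their exit data, and then propagate it downward through the rows of $L_\beta$ by a Yang–Baxter-type move until it reaches the bottom boundary. Since the bottom boundary is symmetric in all colors, the intertwiner must act there as a scalar; this would show that $\mc{Z}_\beta/\mc{Z}_{s_a\beta}$ is a \emph{uniform} power $t^N$ of $t$. Granting this, the exponent $N$ is then pinned down for free by comparing a single coefficient (e.g.\ the coefficient of the monomial concentrated in the largest variable, or the extreme $t$-degree term), which gives $N=\inv(\beta)-\inv(s_a\beta)$, consistent with the Hecke-algebra origin of these polynomials recalled after Definition \ref{coinv-LLT}.

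The main obstacle is precisely the construction and verification of this color intertwiner. The $R$-matrix of Section \ref{sec:YBE} exchanges the two \emph{spectral parameters} of adjacent rows, whereas here I need an operator that exchanges two adjacent \emph{colors} while satisfying the appropriate commutation relation with the $L$-matrix. Using the recursive description (\ref{LRrecursive}), in which the $(k{+}1)$-st color is adjoined with a spectral shift $x\mapsto xt$, this verification should collapse to a two-color computation: treat the colors $a$ and $a+1$ as the only active colors, with all smaller and all larger colors as inert background (their relative order with respect to $\{a,a+1\}$ is unchanged by the swap, so they contribute identically on both sides). Carrying out this two-color check — confirming that the exchange relation holds at each vertex and that the propagated intertwiner absorbs into the symmetric bottom boundary as a single power of $t$ — is where the real work lies.
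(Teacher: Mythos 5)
Your reduction to a single adjacent swap, together with the observation that any third color $c$ is either smaller or larger than both $a$ and $a+1$ and hence contributes identically on both sides, matches the paper's first step (the paper reduces further still, to $k=2$ with $\mu=(\mu_1,\mu_2)$ and $\beta=(\mu_2,\mu_1)$). Beyond that point, however, there is a genuine gap: your entire argument rests on a ``color-exchange intertwiner'' that is never constructed. You do not define its entries, do not state or verify the commutation relation it must satisfy with the $L$-matrix, do not prove that it can be propagated from the top boundary down to the bottom, and do not justify that it acts on the color-symmetric bottom boundary as a scalar. None of the machinery of Theorem \ref{prop:YBE} applies as stated, since the $R$-matrix there exchanges spectral parameters of adjacent rows, not colors; you acknowledge yourself that this verification is ``where the real work lies,'' which is to say the proposal is missing precisely its central step. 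In particular, the final move of pinning down the exponent $N$ by comparing one coefficient is only valid once $\mc{Z}_\beta/\mc{Z}_{s_a\beta}$ is known to be a single power of $t$ --- and that is exactly the unproven claim.

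Moreover, the premise that pushes you toward this machinery --- that the identity is ``genuinely global'' because configurations cannot be matched term by term --- is false. You only checked that one particular bijection (transposing the fillings of the two rows) fails to shift $\coinv$ by a constant; that does not rule out all bijections, and the paper's proof is precisely a term-by-term bijection, just a different one. For two colors with $\mu_1>\mu_2$, both paths enter at the same face, and the path of the smaller color exits strictly to the right of the other, so there is a \emph{last} face at which the smaller-color path exits right while the larger-color path exits up; swapping the colors of the two paths after this face is a bijection onto configurations with top boundary $(\mu_2,\mu_1)$ which preserves the $x$-weight (horizontal steps stay horizontal) and decreases $\coinv$ by exactly one, giving $\mc{L}_\mu(X;t)=t\,\mc{L}_\beta(X;t)$ directly. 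So the elementary local route you dismissed is in fact available, and it is substantially shorter than the integrability route you sketch.
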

\begin{proof}
	It suffices to prove this in the case when $\beta$ is obtained from $\mu$ by a single swap of consecutive rows, and in fact we only need to consider when $\mu = (\mu_1, \mu_2)$ and $\beta = (\mu_2, \mu_1)$. We order the colors blue $<$ red.
	
	As the case $\mu_1=\mu_2$ is trivial, we assume $\mu_1 > \mu_2$. In this case, the blue path will end to the right of the red path. Since the paths start at the same face, there exists a last face of the form 
	\begin{tikzpicture}[baseline=(current bounding box.center)]
	\draw (0,0)--(1,0)--(1,1)--(0,1)--(0,0); \draw[blue] (0.4,0.6)--(1,0.6); \draw[red] (0.6,0.4)--(0.6,1);
	\end{tikzpicture}. Taking a path configuration with upper boundary $\mu=(\mu_1,\mu_2)$ and and swapping the color of the paths after this face gives a bijection with configurations with upper boundary $\beta=(\mu_2,\mu_1)$. For example,
	\[
	\begin{tikzpicture}[baseline=(current bounding box.center)]
	\draw[help lines] (0,0) grid (5,4);
	
	\draw[blue] (0.4,0)--(0.4,0.6)--(1.4,0.6)--(2.4,0.6)--(2.4,2.6)--(3.4,2.6);
	\draw[blue, very thick] (3.4,2.6)--(4.4,2.6)--(4.4,4);
	\draw[red] (0.6,0)--(0.6,0.4)--(1.6,0.4)--(1.6,1.4)--(3.6,1.4)--(3.6,2.4);
	\draw[red, very thick] (3.6,2.4)--(3.6,4);
	
	\draw[black, very thick] (3,2)--(4,2)--(4,3)--(3,3)--(3,2);

	\end{tikzpicture}
	\mapsto
	\begin{tikzpicture}[baseline=(current bounding box.center)]
	\draw[help lines] (0,0) grid (5,4);
	
	\draw[blue] (0.4,0)--(0.4,0.6)--(1.4,0.6)--(2.4,0.6)--(2.4,2.6)--(3.4,2.6);
	\draw[blue, very thick] (3.4,2.6)--(3.4,4);
	\draw[red] (0.6,0)--(0.6,0.4)--(1.6,0.4)--(1.6,1.4)--(3.6,1.4)--(3.6,2.4);
	\draw[red, very thick] (3.6,2.4)--(4.4,2.4)--(4.4,4);
	
	\draw[black, very thick] (3,2)--(4,2)--(4,3)--(3,3)--(3,2);
	
	\end{tikzpicture}
	\]
	It's clear that the number of coinversions decreases by one after swapping, so that
	\[ \mathcal{L}_\mu (X;t) = t \; \mathcal{L}_\beta (X;t). \]
\end{proof}

\section{A Cauchy Identity}\label{sec:Cauchy}

In this section, we give another proof of a Cauchy identity for LLT polynomials. A Cauchy identity was given in \cite{lam2005ribbon} for the original spin-generating LLT polynomials, whereas our Cauchy identity is for coinversion LLT polynomials; however the reader can readily use (\ref{spin-inv}) to rederive the identity in \cite{lam2005ribbon}.

\begin{thm}[Cauchy Identity]\label{thm:CauchyIdentity}
Fix $n, k \geq 1$. Then
\begin{equation}
    \sum_{\bm{\lambda}} t^{d(\bm{\lambda})} \mathcal{L}_{\bm{\lambda}}(X_n;t)\mathcal{L}_{\bm{\lambda}}(Y_n;t) = \prod_{i,j=1}^n\prod_{m=0}^{k-1} \left(1-x_iy_j t^m \right)^{-1}
    \label{CauchyIdentityEq}
\end{equation}
where the sum is over $k$-tuples $\bm{\lambda}$ of partitions with $n$ non-negative parts, $d(\bm{\lambda})$ is given in Lemma \ref{lem:BoxComp}.
\end{thm}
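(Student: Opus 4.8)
The plan is to realize the left-hand side of (\ref{CauchyIdentityEq}) as the partition function of a single larger lattice, and then to evaluate that lattice a second way using the Yang--Baxter equation. Before doing so I would first dispose of the factor $t^{d(\bm{\lambda})}$. I expect Lemma \ref{lem:BoxComp} to record exactly the statement $\mc{L}_{\bm{\lambda}^{\text{rot}}}(Y_n;t)=t^{d(\bm{\lambda})}\mc{L}_{\bm{\lambda}}(Y_n;t)$, since rotating each shape by $180^\circ$ and reversing the tuple carries the coinversion triples of a filling to a complementary family of triples and thereby shifts $\coinv$ by the constant $d(\bm{\lambda})$. This reduces (\ref{CauchyIdentityEq}) to the cleaner form $\sum_{\bm{\lambda}}\mc{L}_{\bm{\lambda}}(X_n;t)\,\mc{L}_{\bm{\lambda}^{\text{rot}}}(Y_n;t)=\prod_{i,j,m}(1-x_iy_jt^m)^{-1}$, and it is this form I would prove directly with the model.

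Next comes the gluing step. By Theorem \ref{LatticeModel}, $\mc{L}_{\bm{\lambda}}(X_n;t)$ is the partition function of $L_{\bm{\lambda}}$: a lattice with $n$ rows carrying rapidities $x_1,\dots,x_n$, empty bottom and side boundaries, and top boundary encoding $\bm{\lambda}$. The rotation $\bm{\lambda}\mapsto\bm{\lambda}^{\text{rot}}$ is engineered so that the lattice for $\mc{L}_{\bm{\lambda}^{\text{rot}}}(Y_n;t)$, flipped upside down, carries $\bm{\lambda}$ on its bottom boundary with its colored paths oriented to continue those of $L_{\bm{\lambda}}$ across the seam. Stacking the reflected $Y$-lattice on top of the $X$-lattice and identifying the common horizontal boundary, the sum over all $k$-tuples $\bm{\lambda}$ (with $n$ non-negative parts) becomes a single sum over all admissible states on the shared edge. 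Hence the reduced left-hand side equals the partition function of one combined lattice whose $x$-rapidity rows sit below its $y$-rapidity rows, with empty outer boundary.

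I would then evaluate this combined lattice by induction on $n$, using integrability to peel off rapidity lines. Following the train argument already used in the proof that $\mc{L}_{\bg}$ is symmetric, I attach R-matrices at the boundary and repeatedly apply the Yang--Baxter equation (Theorem \ref{prop:YBE}) to drag a given $x$-line past the $y$-lines. Because the outer boundary is empty, the R-matrices entering and leaving at the corners act as the identity (the only nonzero entry with empty legs is $R_{y/x}(\0,\0;\0,\0)=1$), exactly as in the symmetry proof; this lets me commute rapidities freely while the partition function is unchanged. Iterating the commutation reduces the combined lattice to a product, over all pairs $(i,j)$, of the elementary two-line configuration in which a single $x_i$-line crosses a single $y_j$-line. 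The base case $n=1$ is the assertion that this elementary crossing equals $\prod_{m=0}^{k-1}(1-x_iy_jt^m)^{-1}$, and the inductive product of these factors is precisely the right-hand side.

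The heart of the argument, and the step I expect to be the main obstacle, is this elementary evaluation: I must show that the $n=1$ partition function $\sum_{\bm{\lambda}}\mc{L}_{\bm{\lambda}}(x_1;t)\,\mc{L}_{\bm{\lambda}^{\text{rot}}}(y_1;t)$, where $\bm{\lambda}$ ranges over $k$-tuples of single-part partitions, equals $\prod_{m=0}^{k-1}(1-x_1y_1t^m)^{-1}$. In a five-vertex model each color admits at most one path per edge, so the full geometric series $\sum_{p\ge 0}(x_1y_1t^m)^{p}$ cannot come from a single vertex; it must emerge after summing over arbitrarily large parts of $\bm{\lambda}$, that is, over all the ways colored paths can be created along the crossing. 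Making this rigorous requires careful bookkeeping of the graded weight $t^{\delta_i}$ attached to each color, and I would organize it as an induction on the number of colors $k$ mirroring the recursive definition (\ref{LRrecursive}), so that the factors $\prod_{m=0}^{k-1}(1-x_1y_1t^m)^{-1}$ appear one at a time as each color is peeled off. The remaining technical points are to treat (\ref{CauchyIdentityEq}) as an identity of formal power series in the $x_i,y_j$ (ensuring the interchange of summations is legitimate) and to verify that the reflected $Y$-weights genuinely reproduce $\mc{L}_{\bm{\lambda}^{\text{rot}}}$, so that the seam identification is weight-preserving.
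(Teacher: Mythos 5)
Your reduction of the theorem to the rotated form is legitimate---this is exactly how the paper passes between Theorem \ref{thm:CauchyIdentity} and its final corollary, via Proposition \ref{BoxSkew} and the fact that $d(\bm{\lambda}^c)=d(\bm{\lambda})$---and your glue-and-use-integrability instinct matches the paper's strategy. But the two steps you treat as routine are where all the content lies, and as stated they fail. First, your glued lattice is not a lattice of the model for which you possess a Yang--Baxter equation. Rotating the $Y$-lattice by $180$ degrees reattaches the weight $x^{\#\{\text{colors exiting right}\}}t^{(\cdots)}$ to colors entering on the left and reverses the roles of the colors, so the upper block does not consist of $L_y$-faces; Theorem \ref{prop:YBE} intertwines two rows of $L$-faces in the same orientation and says nothing about dragging an $x_i$-row past such a flipped row. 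This is precisely why the paper introduces $L^*_{\bar x}=x^kt^{\binom{k}{2}}L_{\bar x}$ with $\bar x=1/(xt^{k-1})$, proves the separate intertwining relation \eqref{intertwining2}, and then must prove Propositions \ref{BoxSkew} and \ref{Complement} (with Lemmas \ref{lem:BoxComp} and \ref{lem:Comp}, the corner-flip arguments of Appendix \ref{lemma-proofs}) in order to identify the starred partition function as $\mc{L}^*_{\bm{\lambda}}=\left(x^\rho\right)^kt^{\binom{n}{2}\binom{k}{2}}t^{d(\bm{\lambda})}\mc{L}_{\bm{\lambda}}$ (Proposition \ref{L-star}). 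What you defer as ``verify that the reflected weights reproduce $\mc{L}_{\bm{\lambda}^{\text{rot}}}$'' is the bulk of the proof, not a technical afterthought. (Also, the glued lattice's outer boundary is not empty: $nk$ paths enter at the bottom and must exit at the top; a genuinely empty boundary would force the partition function to equal $1$.)

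Second, the pivotal step ``iterating the commutation reduces the combined lattice to a product over all pairs $(i,j)$ of elementary two-line crossings'' is not an operation the train argument can perform. Repeated use of the YBE shows a partition function is unchanged when two rapidity lines are exchanged, provided the crossings freeze trivially at both ends; it never factorizes a $2n$-row partition function into $n^2$ two-row ones, and asserting that factorization is tantamount to assuming the theorem. Relatedly, in the paper the factors $\prod_m(1-x_iy_jt^m)$ do not arise as geometric series from unboundedly large parts of $\bm{\lambda}$ (your proposed $n=1$ mechanism); they are the weights of the $n^2$ R-matrix crossings themselves, frozen in the configuration where every color passes straight through, after all crossings have been pushed from the far left, where they are void of paths and contribute $1$ (yielding the monomial \eqref{eq:CauchyLHS}), to the far right at infinity, where every path must traverse them (yielding \eqref{eq:CauchyRHS}). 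The resulting identity,
\begin{equation*}
\left(x^\rho\right)^kt^{\binom{n}{2}\binom{k}{2}}=\prod_{i,j=1}^n\prod_{m=0}^{k-1}\left(1-x_iy_jt^m\right)\cdot\sum_{\bm{\lambda}}\mc{L}_{\bm{\lambda}}(Y_n;t)\,\mc{L}^*_{\bm{\lambda}}(X_n;t),
\end{equation*}
is then closed by Proposition \ref{L-star}. Your $n=1$ identity is true, but nothing in your outline connects it to general $n$: the missing ingredients are exactly the $L^*$-matrix, its intertwining relation, and the weight-comparison lemmas.
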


The following method of using vertex models to prove Cauchy-type identities was given in \cite{WHEELER2016543}. We start by introducing the $L^*$-matrix, represented by a gray box,
\begin{align*}
L^*_{\bar x}(\I,\J;\K,\L)  =
\resizebox{2cm}{!}{
\begin{tikzpicture}[baseline=(current bounding box.center)]  \draw[fill=lightgray] (0,0) rectangle (2,2); 
\node at (1,1) {{\Large $\bar{x}$}}; 
\node[left] at (0,1) {$\J$}; \node[below] at (1,0) {$\I$};
\node[right] at (2,1) {$\L$}; \node[above] at (1,2) {$\K$};
\end{tikzpicture}
}
\end{align*}
\noindent where $\bar{x} = \frac{1}{xt^{k-1}}$. The weights are defined by the equation
\begin{equation}
L^*_{\bar x}(\I,\J;\K,\L)  = x^k t^{\binom{k}{2}}L_{\bar{x}}(\I,\J;\K,\L).
\end{equation}

\begin{prop}
The matrix entries of $R$, $L$, and $L^*$ satisfy the intertwining equation
\begin{equation}\label{intertwining2}
\begin{aligned}
      \sum_{\K_1,\K_2,\K_3 \in \{0,1\}^k} L_y(\K_3,\K_2;\J_3,\J_2) L^*_{\bar x}(\I_3,\K_1;\K_3,\J_1) R_{y /\bar{x}}(\I_2,\I_1;\K_2,\K_1) \\ = \sum_{\L_1,\L_2,\L_3 \in \{0,1\}^k} R_{y /\bar{x}}(\L_2,\L_1;\J_2,\J_1) L^*_{\bar x}(\L_3,\I_1;\J_3,\L_1) L_y(\I_3,\I_2;\L_3,\L_2) 
\end{aligned}
\end{equation}
for any $\I_1,\I_2,\I_3,\J_1,\J_2,\J_3 \in \{0,1\}^k$. In other words, graphically, $R$, $L$, and $L^*$ satisfy
\[
  \begin{tikzpicture}[baseline=(current bounding box.center)] \draw (-1,0.5) -- (0,1.5); \draw (-1,1.5) -- (0,0.5); \draw[fill=lightgray] (0,0) rectangle (1,1); \draw[step=1.0,black,thin] (0,0) grid (1,2); \node at (0.5,1.5) {$y$}; \node at (0.5,0.5) {$\bar{x}$}; \end{tikzpicture} 
=
 \begin{tikzpicture}[baseline=(current bounding box.center)] \draw (2,0.5) -- (1,1.5); \draw (2,1.5) -- (1,0.5); \draw[fill=lightgray] (0,1) rectangle (1,2); \draw[step=1.0,black,thin] (0,0) grid (1,2); \node at (0.5,1.5) {$\bar{x}$}; \node at (0.5,0.5) {$y$}; \end{tikzpicture}  
\]
\end{prop}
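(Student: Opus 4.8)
The plan is to derive (\ref{intertwining2}) directly from the Yang-Baxter equation of Theorem \ref{prop:YBE}, exploiting the fact that $L^*_{\bar x}$ is nothing but a global scalar multiple of the ordinary L-matrix evaluated at the spectral parameter $\bar x = 1/(x t^{k-1})$.

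First I would recall that the YBE in Theorem \ref{prop:YBE} is an identity of rational functions in the two spectral parameters, valid for arbitrary values of $x$ and $y$. I would therefore specialize it by substituting $x \mapsto \bar x$. This leaves the factors $L_y(\cdots)$ untouched, turns each occurrence of $L_x$ into $L_{\bar x}$, and turns $R_{y/x}$ into $R_{y/\bar x}$, giving
\begin{align*}
  \sum_{\K_1,\K_2,\K_3} L_y(\K_3,\K_2;\J_3,\J_2)\, L_{\bar x}(\I_3,\K_1;\K_3,\J_1)\, R_{y/\bar x}(\I_2,\I_1;\K_2,\K_1) \\
  = \sum_{\L_1,\L_2,\L_3} R_{y/\bar x}(\L_2,\L_1;\J_2,\J_1)\, L_{\bar x}(\L_3,\I_1;\J_3,\L_1)\, L_y(\I_3,\I_2;\L_3,\L_2).
\end{align*}

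Next I would multiply both sides by the fixed scalar $x^k t^{\binom{k}{2}}$. In every summand on each side there is exactly one factor of the form $L_{\bar x}$, so by the defining relation $L^*_{\bar x}(\I,\J;\K,\L) = x^k t^{\binom{k}{2}} L_{\bar x}(\I,\J;\K,\L)$ this factor becomes $L^*_{\bar x}$, while the $L_y$ and $R_{y/\bar x}$ factors are unchanged. The result is precisely (\ref{intertwining2}), and the graphical form follows by reading off the same diagram with the gray box now denoting $L^*_{\bar x}$.

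The only point requiring care — and it is a bookkeeping point rather than a genuine obstacle — is to confirm that the scalar $x^k t^{\binom{k}{2}}$ attaches to exactly one L-matrix in each monomial and that the spectral-parameter ratio appearing in the R-matrix is consistently $y/\bar x$ on both sides, so that no stray factors of $t$ or $x$ survive the substitution. Since the scalar is independent of the summation indices $\K_i$ and $\L_i$, it pulls cleanly through both sums, and no further computation with the explicit weights (\ref{FaceWeight}) or (\ref{Rweight}) is needed.
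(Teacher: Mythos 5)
Your proposal is correct and is exactly the paper's argument: the paper's proof reads ``After plugging in the definition of $L^*$ this follows from the YBE, Theorem \ref{prop:YBE},'' which is precisely your specialization $x \mapsto \bar{x}$ in the Yang-Baxter equation followed by pulling the constant scalar $x^k t^{\binom{k}{2}}$ through both sums. You have simply spelled out the bookkeeping that the paper leaves implicit.
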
 

\noindent 

\begin{proof}
After plugging in the definition of $L^*$ this follows from the YBE, Theorem \ref{prop:YBE}.
\end{proof}

\subsection{Row-to-Row Transfer Matrices}

We introduce two types of semi-infinite \textbf{row-to-row transfer matrices}.  The first is obtained by concatenating infinitely many $L$ matrices, and the second is obtained by concatenating infinitely many $L^*$ matrices. Graphically, we represent these matrices as
\begin{align*}
T(x) = \begin{tikzpicture}[baseline=(current bounding box.center)] \draw[step=1.0,black,thin] (0,0) grid (5,1); \node[left] at (0,0.5) {$x$}; \node[right] at (5,0.5) {$\ldots$}; \end{tikzpicture}  \\
T^*(x) = \begin{tikzpicture}[baseline=(current bounding box.center)] \draw[fill=lightgray] (0,0) rectangle (5,1); \draw[step=1.0,black,thin] (0,0) grid (5,1); \node[left] at (0,0.5) {$\bar{x}$}; \node[right] at (5,0.5) {$\ldots$}; \end{tikzpicture}
\end{align*}

\noindent For $T(x)$ we write the parameter $x$ to the left of the row to indicate that every face has weights given by $L_x$, and similarly for $T^*(x)$. An entry of the matrix is given by fixing the incoming (left and bottom) and outgoing (top and right) paths on the boundary and summing over the weight of all configurations respecting these boundary conditions.

Note that the matrix entries are well-defined only if we assume $|x|<1$, which means that any matrix entry with unbounded degree in $x$ is equal to 0. We are interested in certain submatrices given by
\begin{align*}
T_+(x) := \begin{tikzpicture}[baseline=(current bounding box.center)] \draw[step=1.0,black,thin] (0,0) grid (5,1); \node[left] at (0,0.5) {$x$}; \node[right] at (5,0.5) {$...$}; \draw[black,fill=white] (5,0.5) circle (.5ex); \draw[black,fill=white] (0,0.5) circle (.5ex); \end{tikzpicture}
\\
 T^*_+(x) := \begin{tikzpicture}[baseline=(current bounding box.center)] \draw[fill=lightgray] (0,0) rectangle (5,1); \draw[step=1.0,black,thin] (0,0) grid (5,1); \node[left] at (0,0.5) {$\bar{x}$}; \node[right] at (5,0.5) {$...$}; \draw[black,fill=black] (5,0.5) circle (.5ex); \draw[black,fill=white] (0,0.5) circle (.5ex); \end{tikzpicture}
\end{align*}
\noindent where to simplify notation we use a black circle to indicate the vector $\1$ and a white circle to indicate the vector $\0$.
Graphically, fixing how the paths enter and exit the row from the top and bottom, the corresponding entry $T_+(x)$ is the sum over the weight all possible infinite row configurations of white boxes such that the leftmost edge is unoccupied and only empty boxes appear sufficiently far to the right, respecting the top and bottom boundary conditions. The corresponding entry of $T^*_+(x)$ is the sum over the weight all possible infinite row configurations of gray boxes such that the leftmost edge is unoccupied and only boxes with all paths horizontal appear sufficiently far to the right respecting the top and bottom boundary conditions.

Repeated application of the Yang-Baxter equation (\ref{intertwining2}) yields
\begin{prop}\label{prop:Tcommute}
The matrices $T_+(y)$ and $T^*_+(x)$ satisfy
\begin{equation}
\resizebox{6cm}{!}{
    \begin{tikzpicture}[baseline=(current bounding box.center)]
    \draw[fill=lightgray] (0,0) rectangle (5,1);
    \draw[step=1.0,black,thin] (0,0) grid (5,2);
    \draw (0,0.5)--(-1,1.5); \draw (0,1.5)--(-1,0.5);
    \draw[black,fill=black] (5,0.5) circle (.5ex);
    \draw[black,fill=white] (5,1.5) circle (.5ex);
    \draw[black,fill=white] (-1,0.5) circle (.5ex);
    \draw[black,fill=white] (-1,1.5) circle (.5ex);
    \node[left] at (-1,0.5) {$y$};
    \node[left] at (-1,1.5) {$\bar x$};
    \node[above] at (4.5,2) {$\ldots$};
    \end{tikzpicture}
} = 
\resizebox{6cm}{!}{
    \begin{tikzpicture}[baseline=(current bounding box.center)]
    \draw[fill=lightgray] (0,1) rectangle (5,2);
    \draw[step=1.0,black,thin] (0,0) grid (5,2);
    \draw (5,0.5)--(6,1.5); \draw (5,1.5)--(6,0.5);
    \draw[black,fill=black] (6,0.5) circle (.5ex);
    \draw[black,fill=white] (6,1.5) circle (.5ex);
    \draw[black,fill=white] (0,0.5) circle (.5ex);
    \draw[black,fill=white] (0,1.5) circle (.5ex);
    \node[left] at (0,0.5) {$\bar x$};
    \node[left] at (0,1.5) {$y$};
    \node[above] at (4.5,2) {$\ldots$};
    \end{tikzpicture}
}
\end{equation}
\end{prop}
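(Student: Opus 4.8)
The plan is to run the standard ``train'' (or railway-shunting) argument, taking the local intertwining relation (\ref{intertwining2}) as the elementary move. In its single-column form, (\ref{intertwining2}) lets an $R$-vertex pass through one gray--white column, interchanging the vertical order of the $L^*$-face (parameter $\bar x$) and the $L$-face (parameter $y$) sitting in that column. Iterating this move across the entire semi-infinite row transports an $R$-vertex from the far-left edge to the far-right edge, and its net effect is exactly to swap the two rows, which is what the claimed identity asserts. Concretely, I would begin from the product on one side, insert an $R$-vertex at the extreme left, march it rightward one column at a time by repeated application of (\ref{intertwining2}), and then delete it at the extreme right.

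For the insertion at the left I would reuse the observation that drove the symmetry proof in Section~\ref{sec:YBE}: the two edges entering the $R$-vertex from the far left are both $\0$ (the white circles in the figure), and the only nonzero entry of $R$ with both incoming edges equal to $\0$ is $R_{y/\bar x}(\0,\0;\0,\0)=1$, which also forces both outgoing edges to be $\0$. Thus attaching this $R$-vertex to the left of the two rows multiplies the partition function by $1$ and is compatible with the left boundary conditions of $T_+$ and $T^*_+$, so the $R$-vertex may be inserted into the product for free.

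The bulk of the argument is then the repeated application of (\ref{intertwining2}). Each application is an instance of the already-established intertwining equation, so no new computation is required, only the bookkeeping that the summed-over internal edges match at each step; because every column carries exactly one $L$ face and one $L^*$ face, the single-column relation applies verbatim. After $N$ such moves the $R$-vertex sits between columns $N$ and $N+1$, with the two rows exchanged in every column to its left.

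The main obstacle is the passage to infinity on the right: since the rows are semi-infinite, I must justify that moving the $R$-vertex arbitrarily far right and then erasing it is legitimate. Here I would invoke the frozen-tail structure built into $T_+$ and $T^*_+$. Sufficiently far to the right every white face is empty and every gray face is fully horizontal, so the pair of edges presented to the $R$-vertex in the tail is the fixed pair $\0$ (on the $T_+$ line) and $\1$ (on the $T^*_+$ line), as recorded by the white and black circles on the right of the figure. I would then check that on this asymptotic pair the $R$-vertex acts as the identity --- the relevant weight equals $1$ and the incoming and outgoing labels coincide --- so that pushing $R$ into the frozen tail and removing it changes nothing. Combined with the convergence supplied by $|x|,|y|<1$ (which makes every matrix entry a well-defined power series and annihilates any contribution of unbounded degree), this identifies the left-inserted diagram with the right-inserted diagram and yields $T_+(y)\,T^*_+(x)=T^*_+(x)\,T_+(y)$.
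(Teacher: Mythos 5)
Your first two steps are exactly the paper's argument (the paper proves this proposition with precisely that one-line justification: repeated application of (\ref{intertwining2}), plus entrywise stabilization coming from the frozen tails). The genuine error is in your third step, the claim that on the asymptotic pair the $R$-vertex ``acts as the identity'' with weight $1$ and may therefore be erased. On the frozen tail the $R$-vertex receives $\0$ from the $T_+$ line and $\1$ from the $T^*_+$ line, and emits $\1$ on its lower-right edge and $\0$ on its upper-right edge; per color this is a Type~1 configuration, and since all $k$ colors are of Type~1 the weight is
\begin{equation*}
R_{y/\bar x}(\0,\1;\0,\1)\;=\;\prod_{m=0}^{k-1}\Bigl(1-\frac{y}{\bar x\,t^{m}}\Bigr)\;=\;\prod_{m=0}^{k-1}\bigl(1-xy\,t^{m}\bigr)\;\neq\;1,
\end{equation*}
using $\bar x = 1/(xt^{k-1})$. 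So you may push the $R$-vertex arbitrarily far to the right --- and your stabilization reasoning for that part is sound, since beyond the frozen column each further application of (\ref{intertwining2}) merely swaps a frozen column and presents the $R$-vertex with the same edge labels --- but you may not delete it: removing it divides the matrix entry by $\prod_{m=0}^{k-1}(1-xyt^m)$.

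Consequently the identity you conclude with, $T_+(y)\,T^*_+(x)=T^*_+(x)\,T_+(y)$, is false, and it is not what the proposition asserts: the right-hand picture in the statement still contains the $R$-vertex, with its outgoing edges frozen to the black/white values. In operator form the proposition reads
\begin{equation*}
T_+(y)\,T^*_+(x)\;=\;\prod_{m=0}^{k-1}\bigl(1-xy\,t^{m}\bigr)\,T^*_+(x)\,T_+(y),
\end{equation*}
and this scalar is not a technicality --- it is precisely the source of the kernel $\prod_{i,j}\prod_{m}(1-x_iy_jt^m)^{-1}$ in Theorem~\ref{thm:CauchyIdentity} (compare (\ref{eq:CauchyRHS}), where the $n^2$ such $R$-vertices contribute exactly this product). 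If your clean commutativity held, the Cauchy identity would degenerate to $\sum_{\bm{\lambda}}t^{d(\bm{\lambda})}\mathcal{L}_{\bm{\lambda}}(X_n;t)\mathcal{L}_{\bm{\lambda}}(Y_n;t)=1$, which is absurd. The fix is simply to stop after your stabilization argument: the limiting diagram, with the $R$-vertex still attached at the right, \emph{is} the right-hand side of the proposition; nothing further should (or can) be removed.
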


\subsection{Proof of Theorem~\ref{thm:CauchyIdentity}}

The proof comes in several steps. We begin with the equation
\begin{equation}
\label{CauchyEqn}
\resizebox{6cm}{!}{ \begin{tikzpicture}[baseline=(current bounding box.center)]
\draw[fill=lightgray] (0,0) rectangle (5,3);
\draw[step=1.0,black,thin] (0,0) grid (5,6);
\draw (0,0.5) -- (-3,3.5); \draw (0,1.5) -- (-2.5,4); \draw (0,2.5) -- (-2,4.5); 
\draw (0,3.5) -- (-2,1.5); \draw (0,4.5) -- (-2.5,2); \draw (0,5.5) -- (-3,2.5); 
\node[above] at (4.5,6) {$\ldots$};
\node[below left] at (-2,1.5) {$y_1$}; \node[below left] at (-2.5,2) {$\ddots$}; \node[below left] at (-3,2.5) {$y_n$};
\node[above left] at (-3,3.5) {$\bar{x}_1$}; \node[above left] at (-2.5,4) {$\iddots$}; \node[above left] at (-2,4.5) {$\bar{x}_n$};
\draw[black,fill=black] (5,0.5) circle (.5ex); \draw[black,fill=black] (5,1.5) circle (.5ex); \draw[black,fill=black] (5,2.5) circle (.5ex); \draw[black,fill=white] (5,3.5) circle (.5ex); \draw[black,fill=white] (5,4.5) circle (.5ex); \draw[black,fill=white] (5,5.5) circle (.5ex);
\draw[black,fill=black] (0.5,0) circle (.5ex); \draw[black,fill=black] (1.5,0) circle (.5ex); \draw[black,fill=black] (2.5,0) circle (.5ex);
\draw[black,fill=white] (-2,1.5) circle (.5ex); \draw[black,fill=white] (-2.5,2) circle (.5ex); \draw[black,fill=white] (-3,2.5) circle (.5ex);
\draw[black,fill=white] (-2,4.5) circle (.5ex); \draw[black,fill=white] (-2.5,4) circle (.5ex); \draw[black,fill=white] (-3,3.5) circle (.5ex);
\end{tikzpicture}}
=
\resizebox{6cm}{!}{ \begin{tikzpicture}[baseline=(current bounding box.center)]
\draw[fill=lightgray] (0,3) rectangle (5,6);
\draw[step=1.0,black,thin] (0,0) grid (5,6);
\draw (5,0.5) -- (8,3.5); \draw (5,1.5) -- (7.5,4); \draw (5,2.5) -- (7,4.5); 
\draw (5,3.5) -- (7,1.5); \draw (5,4.5) -- (7.5,2); \draw (5,5.5) -- (8,2.5); 
\node[above] at (4.5,6) {$\ldots$};
\node[left] at (0,0.5) {$y_1$}; \node[left] at (0,1.5) {$\vdots \enspace$}; \node[left] at (0,2.5) {$y_n$};
\node[left] at (0,3.5) {$\bar{x}_1$}; \node[left] at (0,4.5) {$\vdots \enspace$}; \node[left] at (0,5.5) {$\bar{x}_n$};
\draw[black,fill=black] (0.5,0) circle (.5ex); \draw[black,fill=black] (1.5,0) circle (.5ex); \draw[black,fill=black] (2.5,0) circle (.5ex);
\draw[black,fill=black] (7,1.5) circle (.5ex); \draw[black,fill=black] (7.5,2) circle (.5ex); \draw[black,fill=black] (8,2.5) circle (.5ex);
\draw[black,fill=white] (7,4.5) circle (.5ex); \draw[black,fill=white] (7.5,4) circle (.5ex); \draw[black,fill=white] (8,3.5) circle (.5ex);
\draw[black,fill=white] (0,0.5) circle (.5ex); \draw[black,fill=white] (0,1.5) circle (.5ex); \draw[black,fill=white] (0,2.5) circle (.5ex); \draw[black,fill=white] (0,3.5) circle (.5ex); \draw[black,fill=white] (0,4.5) circle (.5ex); \draw[black,fill=white] (0,5.5) circle (.5ex);
\end{tikzpicture}}
\end{equation} \\
\noindent which follows from repeated applications of Proposition \ref{prop:Tcommute}. Here the gray rows are generated by $T_+^*$ and the white rows are generated by $T_+$.

\indent We first simplify the left-hand side of (\ref{CauchyEqn}).  As no paths enter from the left, we can factorize the left-hand side as follows:
\begin{align*}
\resizebox{6cm}{!}{ \begin{tikzpicture}
\draw[fill=lightgray] (0+1,0) rectangle (5+1,3);
\draw[step=1.0,black,thin] (0+1,0) grid (5+1,6);
\draw (0,0.5) -- (-3,3.5); \draw (0,1.5) -- (-2.5,4); \draw (0,2.5) -- (-2,4.5); 
\draw (0,3.5) -- (-2,1.5); \draw (0,4.5) -- (-2.5,2); \draw (0,5.5) -- (-3,2.5); 
\node[above] at (4.5+1,6) {$\ldots$};
\node[below left] at (-2,1.5) {$y_1$}; \node[below left] at (-2.5,2) {$\ddots$}; \node[below left] at (-3,2.5) {$y_n$};
\node[above left] at (-3,3.5) {$\bar{x}_1$}; \node[above left] at (-2.5,4) {$\iddots$}; \node[above left] at (-2,4.5) {$\bar{x}_n$};
\draw[black,fill=black] (5+1,0.5) circle (.5ex); \draw[black,fill=black] (5+1,1.5) circle (.5ex); \draw[black,fill=black] (5+1,2.5) circle (.5ex);
\draw[black,fill=black] (0.5+1,0) circle (.5ex); \draw[black,fill=black] (1.5+1,0) circle (.5ex); \draw[black,fill=black] (2.5+1,0) circle (.5ex);
\draw[black,fill=white] (0,0.5) circle (.5ex); \draw[black,fill=white] (0,1.5) circle (.5ex); \draw[black,fill=white] (0,2.5) circle (.5ex); \draw[black,fill=white] (0,3.5) circle (.5ex); \draw[black,fill=white] (0,4.5) circle (.5ex); \draw[black,fill=white] (0,5.5) circle (.5ex);
\draw[black,fill=white] (0+1,0.5) circle (.5ex); \draw[black,fill=white] (0+1,1.5) circle (.5ex); \draw[black,fill=white] (0+1,2.5) circle (.5ex); \draw[black,fill=white] (0+1,3.5) circle (.5ex); \draw[black,fill=white] (0+1,4.5) circle (.5ex); \draw[black,fill=white] (0+1,5.5) circle (.5ex);
\draw[black,fill=white] (5+1,3.5) circle (.5ex); \draw[black,fill=white] (5+1,4.5) circle (.5ex); \draw[black,fill=white] (5+1,5.5) circle (.5ex);
\draw[black,fill=white] (-2,1.5) circle (.5ex); \draw[black,fill=white] (-2.5,2) circle (.5ex); \draw[black,fill=white] (-3,2.5) circle (.5ex);
\draw[black,fill=white] (-2,4.5) circle (.5ex); \draw[black,fill=white] (-2.5,4) circle (.5ex); \draw[black,fill=white] (-3,3.5) circle (.5ex);
\node at (0.5,3) {$\times$};
\end{tikzpicture}}
\end{align*}
\noindent The first factor equals 1, since it is void of paths.  To evaluate the second factor, we observe that the only non-zero contribution occurs for the following  configuration:
\begin{align*}
\resizebox{4cm}{!}{ \begin{tikzpicture}
\draw[fill=lightgray] (0,0) rectangle (5,3);
\draw[step=1.0,black,thin] (0,0) grid (5,6);
\node[left] at (0,0.5) {$\bar{x}_1$}; \node[left] at (0,1.5) {$\vdots$}; \node[left] at (0,2.5) {$\bar{x}_n$};
\node[left] at (0,3.5) {$y_1$}; \node[left] at (0,4.5) {$\vdots$}; \node[left] at (0,5.5) {$y_n$};
\node[above] at (4.5,6) {$\ldots$};
\draw[black,fill=black] (5,0.5) circle (.5ex); \draw[black,fill=black] (5,1.5) circle (.5ex); \draw[black,fill=black] (5,2.5) circle (.5ex);
\draw[black,fill=black] (0.5,0) circle (.5ex); \draw[black,fill=black] (1.5,0) circle (.5ex); \draw[black,fill=black] (2.5,0) circle (.5ex);
\draw[black, ultra thick] (2.5,0)--(2.5,0.5)--(5,0.5); \draw[black, ultra thick] (1.5,0)--(1.5,1.5)--(5,1.5); \draw[black, ultra thick] (0.5,0)--(0.5,2.5)--(5,2.5);
\draw[black,fill=white] (0,0.5) circle (.5ex); \draw[black,fill=white] (0,1.5) circle (.5ex); \draw[black,fill=white] (0,2.5) circle (.5ex); \draw[black,fill=white] (0,3.5) circle (.5ex); \draw[black,fill=white] (0,4.5) circle (.5ex); \draw[black,fill=white] (0,5.5) circle (.5ex);
 \draw[black,fill=white] (5,3.5) circle (.5ex); \draw[black,fill=white] (5,4.5) circle (.5ex); \draw[black,fill=white] (5,5.5) circle (.5ex);
\end{tikzpicture}}
\end{align*}
\noindent in which the bold paths indicate that paths of every color follow the trajectory. This has weight
\begin{equation}
\left(x_1^k t^{\binom{k}{2}}\right)^{n-1}\left(x_2^k t^{\binom{k}{2}}\right)^{n-2}\cdots \left(x_{n-1}^k t^{\binom{k}{2}}\right)^{1} = \left(x^\rho\right)^k t^{\binom{n}{2}\binom{k}{2}}.
\label{eq:CauchyLHS}
\end{equation}

\noindent where we recall $\rho = (n-1, \ldots, 0)$, and $x^\rho$ denotes the monomial $x_1^{n-1} \cdots x_n^0$.

\indent We now simplify the right-hand side of (\ref{CauchyEqn}).  The right edge of the lattice is situated at infinity, hence a non-zero contribution is possible only if the lowest $n$ edges are unoccupied and the highest $n$ edges are occupied.  Therefore we can factorize the right-hand side as follows:
\begin{align*}
\resizebox{6cm}{!}{ \begin{tikzpicture}
\draw[fill=lightgray] (0,3) rectangle (5,6);
\draw[step=1.0,black,thin] (0,0) grid (5,6);
\draw (5+1,0.5) -- (8+1,3.5); \draw (5+1,1.5) -- (7.5+1,4); \draw (5+1,2.5) -- (7+1,4.5); 
\draw (5+1,3.5) -- (7+1,1.5); \draw (5+1,4.5) -- (7.5+1,2); \draw (5+1,5.5) -- (8+1,2.5); 
\node[above] at (4.5,6) {$...$};
\node[left] at (0,0.5) {$y_1$}; \node[left] at (0,1.5) {$\vdots \enspace$}; \node[left] at (0,2.5) {$y_n$};
\node[left] at (0,3.5) {$\bar{x}_1$}; \node[left] at (0,4.5) {$\vdots \enspace$}; \node[left] at (0,5.5) {$\bar{x}_n$};
\draw[black,fill=black] (0.5,0) circle (.5ex); \draw[black,fill=black] (1.5,0) circle (.5ex); \draw[black,fill=black] (2.5,0) circle (.5ex);
\draw[black,fill=black] (7+1,1.5) circle (.5ex); \draw[black,fill=black] (7.5+1,2) circle (.5ex); \draw[black,fill=black] (8+1,2.5) circle (.5ex);
\draw[black,fill=black] (5,3.5) circle (.5ex); \draw[black,fill=black] (5,4.5) circle (.5ex); \draw[black,fill=black] (5,5.5) circle (.5ex);
\draw[black,fill=black] (5+1,3.5) circle (.5ex); \draw[black,fill=black] (5+1,4.5) circle (.5ex); \draw[black,fill=black] (5+1,5.5) circle (.5ex);
\draw[black,fill=white] (0,0.5) circle (.5ex); \draw[black,fill=white] (0,1.5) circle (.5ex); \draw[black,fill=white] (0,2.5) circle (.5ex); \draw[black,fill=white] (0,3.5) circle (.5ex); \draw[black,fill=white] (0,4.5) circle (.5ex); \draw[black,fill=white] (0,5.5) circle (.5ex);
\draw[black,fill=white] (5,0.5) circle (.5ex); \draw[black,fill=white] (5,1.5) circle (.5ex); \draw[black,fill=white] (5,2.5) circle (.5ex);
\draw[black,fill=white] (5+1,0.5) circle (.5ex); \draw[black,fill=white] (5+1,1.5) circle (.5ex); \draw[black,fill=white] (5+1,2.5) circle (.5ex);
\draw[black,fill=white] (8+1,3.5) circle (.5ex); \draw[black,fill=white] (7.5+1,4) circle (.5ex); \draw[black,fill=white] (7+1,4.5) circle (.5ex);
\node at (5.5,3) {$\times$};
\end{tikzpicture}}
\end{align*}
\noindent The only non-zero contribution to the second factor occurs when all paths travel in a straight line from the North-West to the South-East. This configuration has weight
\begin{equation}
w\left(
\resizebox{3cm}{!}{ \begin{tikzpicture}[baseline=(current bounding box.center)]
\draw (5,0.5) -- (8,3.5); \draw (5,1.5) -- (7.5,4); \draw (5,2.5) -- (7,4.5); 
\draw (5,3.5) -- (7,1.5); \draw (5,4.5) -- (7.5,2); \draw (5,5.5) -- (8,2.5); 
\draw[black, ultra thick] (5,3.5) -- (7,1.5); \draw[black, ultra thick] (5,4.5) -- (7.5,2); \draw[black, ultra thick] (5,5.5) -- (8,2.5); 
\node[left] at (5,0.5) {$y_1$}; \node[left] at (5,1.5) {$\vdots \enspace$}; \node[left] at (5,2.5) {$y_n$};
\node[left] at (5,3.5) {$\bar{x}_1$}; \node[left] at (5,4.5) {$\vdots \enspace$}; \node[left] at (5,5.5) {$\bar{x}_n$};
\draw[black,fill=black] (5,3.5) circle (.5ex); \draw[black,fill=black] (5,4.5) circle (.5ex); \draw[black,fill=black] (5,5.5) circle (.5ex);
\draw[black,fill=white] (5,2.5) circle (.5ex); \draw[black,fill=white] (5,1.5) circle (.5ex); \draw[black,fill=white] (5,0.5) circle (.5ex);
\draw[black,fill=black] (7,1.5) circle (.5ex); \draw[black,fill=black] (7.5,2) circle (.5ex); \draw[black,fill=black] (8,2.5) circle (.5ex);
\draw[black,fill=white] (7,4.5) circle (.5ex); \draw[black,fill=white] (7.5,4) circle (.5ex); \draw[black,fill=white] (8,3.5) circle (.5ex);
\end{tikzpicture}}
\right)=\prod_{i,j=1}^n \prod_{m=0}^{k-1}\left(1-x_i y_j t^m \right).
\label{eq:CauchyRHS}
\end{equation}

\noindent The first factor can be expressed as
\begin{align*}
\resizebox{4cm}{!}{  \begin{tikzpicture}[baseline=(current bounding box.center)]
\draw[fill=lightgray] (0,3) rectangle (5,6);
\draw[step=1.0,black,thin] (0,0) grid (5,6);
\node[above] at (4.5,6) {$...$};
\node[left] at (0,0.5) {$y_1$}; \node[left] at (0,1.5) {$\vdots \enspace$}; \node[left] at (0,2.5) {$y_n$};
\node[left] at (0,3.5) {$\bar{x}_1$}; \node[left] at (0,4.5) {$\vdots \enspace$}; \node[left] at (0,5.5) {$\bar{x}_n$};
\draw[black,fill=black] (0.5,0) circle (.5ex); \draw[black,fill=black] (1.5,0) circle (.5ex); \draw[black,fill=black] (2.5,0) circle (.5ex);
\draw[black,fill=black] (5,3.5) circle (.5ex); \draw[black,fill=black] (5,4.5) circle (.5ex); \draw[black,fill=black] (5,5.5) circle (.5ex);
\draw[black,fill=white] (5,0.5) circle (.5ex); \draw[black,fill=white] (5,1.5) circle (.5ex); \draw[black,fill=white] (5,2.5) circle (.5ex);
\draw[black,fill=white] (0,3.5) circle (.5ex); \draw[black,fill=white] (0,4.5) circle (.5ex); \draw[black,fill=white] (0,5.5) circle (.5ex);
\draw[black,fill=white] (0,0.5) circle (.5ex); \draw[black,fill=white] (0,1.5) circle (.5ex); \draw[black,fill=white] (0,2.5) circle (.5ex);
\end{tikzpicture}  } 
= \sum_{\bm{\lambda} }
\resizebox{8cm}{!}{ \begin{tikzpicture}[baseline=(current bounding box.center)]
\draw[step=1.0,black,thin] (0,0) grid (5,3);
\node[above] at (4.5,3) {$...$};
\node[left] at (0,0.5) {$y_1$}; \node[left] at (0,1.5) {$\vdots \enspace$}; \node[left] at (0,2.5) {$y_n$};
\draw[black,fill=black] (0.5,0) circle (.5ex); \draw[black,fill=black] (1.5,0) circle (.5ex); \draw[black,fill=black] (2.5,0) circle (.5ex);
\node[above] at (2.5,3) {$\bm{\lambda}$};
\draw[black,fill=white] (0,0.5) circle (.5ex); \draw[black,fill=white] (0,1.5) circle (.5ex); \draw[black,fill=white] (0,2.5) circle (.5ex);
\draw[black,fill=white] (5,0.5) circle (.5ex); \draw[black,fill=white] (5,1.5) circle (.5ex); \draw[black,fill=white] (5,2.5) circle (.5ex);
\node at (6,1.5) {$\times$};
\draw[fill=lightgray] (0+7,0) rectangle (5+7,3);
\draw[step=1.0,black,thin] (0+7,0) grid (5+7,3);
\node[above] at (4.5+7,3) {$\cdots$};
\node[left] at (0+7,0.5) {$\bar{x}_1$}; \node[left] at (0+7,1.5) {$\vdots \enspace$}; \node[left] at (0+7,2.5) {$\bar{x}_n$};
\draw[black,fill=black] (5+7,0.5) circle (.5ex); \draw[black,fill=black] (5+7,1.5) circle (.5ex); \draw[black,fill=black] (5+7,2.5) circle (.5ex);
 \node[below] at (2.5+7,0) {$\bm{\lambda}$};
\draw[black,fill=white] (0+7,0.5) circle (.5ex); \draw[black,fill=white] (0+7,1.5) circle (.5ex); \draw[black,fill=white] (0+7,2.5) circle (.5ex);
\end{tikzpicture}  } 
\end{align*}
We know from Theorem \ref{LatticeModel} that the first factor in the sum is exactly $\mathcal{L}_{\bm{\lambda}}(y_1,\ldots,y_n;t)$, hence we are left with computing
\begin{equation}
\mc{L}^*_{\bm{\lambda}}(x_1,...,x_n;t) := \resizebox{4cm}{!}{ \begin{tikzpicture}[baseline=(current bounding box.center)]
\draw[fill=lightgray] (0,0) rectangle (5,3);
\draw[step=1.0,black,thin] (0,0) grid (5,3);
\node[above] at (4.5,3) {$\cdots$};
\node[left] at (0,0.5) {$\bar{x}_1$}; \node[left] at (0,1.5) {$\vdots \enspace$}; \node[left] at (0,2.5) {$\bar{x}_n$};
\draw[black,fill=black] (5,0.5) circle (.5ex); \draw[black,fill=black] (5,1.5) circle (.5ex); \draw[black,fill=black] (5,2.5) circle (.5ex);
\node[below] at (2.5,0) {$\bm{\lambda}$}; 
\draw[black,fill=white] (0,0.5) circle (.5ex); \draw[black,fill=white] (0,1.5) circle (.5ex); \draw[black,fill=white] (0,2.5) circle (.5ex);
\end{tikzpicture} }.
\end{equation}
To that end, we start with the following identities of LLT polynomials.
\begin{prop} \label{BoxSkew}
Let $\bm{\lambda} = (\lambda^{(1)}, \ldots, \lambda^{(k)})$ be a tuple of partitions, each with $n$ non-negative parts. Fix the number of columns $M$ such that $\band(\bm{\lambda}) < M$, in the notation of Section \ref{sec:latticemodel}. Let $\bm{\lambda}^c = (\lambda^{(k),c},\ldots,\lambda^{(1),c})$, where $\lambda^{(i),c}$ denotes the complement of $\lambda^{(i)}$ in an $(M-n)\times n$ box. We have
\[
\resizebox{4cm}{!}{ \begin{tikzpicture}[baseline=(current bounding box.center)]
\draw (0,0) rectangle (5,3);
\draw[step=1.0,black,thin] (0,0) grid (5,3);
\node at (2.5,3.5) {$M$}; \draw[<-] (0,3.5)--(2.2,3.5); \draw[->] (2.8,3.5)--(5,3.5);
\node[left] at (0,0.5) {$x_1$}; \node[left] at (0,1.5) {$\vdots \enspace$}; \node[left] at (0,2.5) {$x_n$};
\draw[black,fill=black] (2.5,3) circle (.5ex); \draw[black,fill=black] (3.5,3) circle (.5ex); \draw[black,fill=black] (4.5,3) circle (.5ex);
\node[below] at (2.5,0) {$\bm{\lambda}$};
\draw[black,fill=white] (0,0.5) circle (.5ex); \draw[black,fill=white] (0,1.5) circle (.5ex); \draw[black,fill=white] (0,2.5) circle (.5ex);
\draw[black,fill=white] (5,0.5) circle (.5ex); \draw[black,fill=white] (5,1.5) circle (.5ex); \draw[black,fill=white] (5,2.5) circle (.5ex);
\end{tikzpicture} } = t^{d(\bm{\lambda})} \mathcal{L}_{\bm{\lambda}^c}(X_n;t)
\]
where $d(\bm{\lambda)}$ is given below in Lemma~\ref{lem:BoxComp}. In other words, if $B = (M-n,\ldots,M-n)$ with $n$ parts, and $\bm{B} = (B,\ldots,B)$ with $k$ copies of $B$, then
\begin{equation} \label{eq:BoxSkew}
\mathcal{L}_{\bm{B}/\bm{\lambda}}(X_n;t) = t^{d(\bm{\lambda})} \mathcal{L}_{\bm{\lambda}^c}(X_n;t)
\end{equation}
\end{prop}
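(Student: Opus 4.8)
The plan is to identify the partition function depicted on the left-hand side with the $L^*$-transfer-matrix entry $\mc{L}^*_{\bm{\lambda}}(X_n;t)$, and then to show directly that this entry equals $t^{d(\bm{\lambda})} \mc{L}_{\bm{\lambda}^c}(X_n;t)$ by exhibiting a weight-preserving bijection between the admissible $L^*$-lattice configurations with bottom boundary $\bm{\lambda}$ and the $L$-lattice configurations of $L_{\bm{\lambda}^c}$ from Theorem~\ref{LatticeModel}. Recalling that $L^*_{\bar x} = x^k t^{\binom{k}{2}} L_{\bar x}$ with $\bar x = 1/(xt^{k-1})$, the essential observation is that a gray ($L^*$) face and a white ($L$) face are related by a \emph{particle--hole} (complementation) involution: passing an $L^*$ row is, up to the overall scalar $x^k t^{\binom{k}{2}}$ and the substitution $x \mapsto \bar x$, the same as passing an $L$ row in which the roles of occupied and empty horizontal edges are interchanged. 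First I would make this precise at the level of a single face, checking that complementing the horizontal ($\I \leftrightarrow \bar\I := \1 - \I$) and keeping track of how $|\L|$ and the $t$-exponent transform produces exactly the scalar relating $L^*_{\bar x}$ to $L_x$ after the hole relabeling.

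Next I would globalize this to the whole row, and thereby to the full lattice. Under the hole/complement relabeling, a semi-infinite $L^*$ row with bottom profile encoding $\bm{\lambda}$ and the prescribed black/white left and right boundary conditions becomes an ordinary $L$ lattice whose top boundary encodes the complementary tuple $\bm{\lambda}^c$ inside the $(M-n)\times n$ box — this is precisely where the complement $\lambda^{(i),c}$ and the reversal of the order of the shapes (sending $(\lambda^{(1)},\ldots,\lambda^{(k)})$ to $(\lambda^{(k),c},\ldots,\lambda^{(1),c})$) enter, the reversal coming from the fact that complementation inside a face reverses the ordering of the $k$ colors. The reversal of shape order must be matched against the color-reversal built into the $L^*$ weight $x^k t^{\binom{k}{2}} L_{\bar x}$. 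By Theorem~\ref{LatticeModel}, the resulting $L$-partition function is $\mc{L}_{\bm{\lambda}^c}(X_n;t)$, so the bijection yields $\mc{L}^*_{\bm{\lambda}}(X_n;t) = t^{d(\bm{\lambda})} \mc{L}_{\bm{\lambda}^c}(X_n;t)$ once the accumulated scalar is collected into the single monomial-free power $t^{d(\bm{\lambda})}$. The constant $d(\bm{\lambda})$ is then read off as the total $t$-discrepancy between the two weightings, and I would simply record it, deferring its closed form to Lemma~\ref{lem:BoxComp}.

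The statement~(\ref{eq:BoxSkew}) is then immediate: the left-hand lattice with a finite width $M$ and all-black top/all-white side boundaries is, by the bijection $\varphi$ of Theorem~\ref{LatticeModel}, exactly the partition function $\mc{L}_{\bm{B}/\bm{\lambda}}(X_n;t)$ for the skew shape $\bm{B}/\bm{\lambda}$, since filling the $(M-n)\times n$ box above $\bm{\lambda}$ with paths that must exit the top is the lattice encoding of the skew tableaux on $\bm{B}/\bm{\lambda}$. Equating the two descriptions of the same partition function gives the displayed identity.

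I expect the main obstacle to be bookkeeping the $t$-powers correctly through the complementation, and in particular verifying that the $x \mapsto \bar x = 1/(x t^{k-1})$ substitution together with the prefactor $x^k t^{\binom{k}{2}}$ conspires to leave an $x$-free scalar $t^{d(\bm{\lambda})}$ and the genuinely complemented LLT polynomial $\mc{L}_{\bm{\lambda}^c}$ (rather than some $t$-twisted variant). The negative powers of $t$ hidden in $\bar x$ interact with the $t$-statistic of each face, so I would organize the computation by isolating, for each face, the contribution of every color to the exponent of $t$ under complementation, and confirm that after summing over the row these collapse to a configuration-independent constant — precisely the independence noted for extra coinversion triples in the discussion following Definition~\ref{inv-coinv-triple}.
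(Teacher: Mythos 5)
Your proposal breaks at its very first step: the lattice on the left-hand side of Proposition \ref{BoxSkew} is \emph{not} $\mc{L}^*_{\bm{\lambda}}(X_n;t)$. It is drawn with ordinary white ($L$-weight) faces carrying the parameters $x_1,\ldots,x_n$ (not $\bar{x}_i$), white circles on both side boundaries, and black circles along the top encoding the box $\bm{B}$; by Theorem \ref{LatticeModel} it is therefore exactly $\mc{L}_{\bm{B}/\bm{\lambda}}(X_n;t)$ --- which is precisely the left-hand side of (\ref{eq:BoxSkew}), not a second, independent description to be played off against it. The gray $L^*$ lattice only appears later (Corollaries \ref{cor:CauchyCor1}--\ref{cor:CauchyCor2}, Proposition \ref{L-star}), and the identity you attribute to it, $\mc{L}^*_{\bm{\lambda}} = t^{d(\bm{\lambda})}\mc{L}_{\bm{\lambda}^c}(X_n;t)$, is false: since $L^*_{\bar{x}} = x^k t^{\binom{k}{2}} L_{\bar{x}}$ is by definition a reweighting of $L$ at the inverted parameter $\bar{x}=1/(xt^{k-1})$, any passage between gray and white lattices inevitably produces inverted variables and monomial prefactors. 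Indeed the paper's actual statements are that the gray analogue of the left-hand side equals $(x_1\cdots x_n)^{kM}\, t^{n^2\binom{k}{2}}\, t^{\widetilde{d}(\bm{\lambda})+d(\bm{\lambda})}\, \mc{L}_{\bm{\lambda}^c}(X_n^{-1};t)$ (Corollary \ref{cor:CauchyCor1}) and that $\mc{L}^*_{\bm{\lambda}} = (x^\rho)^k\, t^{\binom{n}{2}\binom{k}{2}}\, t^{d(\bm{\lambda})}\, \mc{L}_{\bm{\lambda}}(X_n;t)$ (Proposition \ref{L-star}). Cleaning out the $X_n^{-1}$ and the monomials requires Proposition \ref{Complement}, a separate result with its own bijection; so your route does not prove (\ref{eq:BoxSkew}) so much as presuppose its combination with Proposition \ref{Complement}.

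What is missing is the combinatorial heart of the proposition: a bijection between $L$-lattice configurations with bottom boundary $\bm{\lambda}$ and top boundary $\bm{B}$, and configurations with bottom boundary $\bm{\emptyset}$ and top boundary $\bm{\lambda}^c$. The paper's bijection is rotation of the entire configuration by $180$ degrees combined with reversal of the colors (this rotation, not any feature of $L^*$, is what forces the reversed order in $\bm{\lambda}^c=(\lambda^{(k),c},\ldots,\lambda^{(1),c})$); horizontal steps stay horizontal, so the $x$-degrees are controlled, and the shift in $\coinv$ is configuration-independent by Lemma \ref{lem:BoxComp} --- which is itself a nontrivial corner-flip connectivity argument in Appendix B, not something one can ``simply record.'' Your particle--hole substitute cannot play this role: complementing the horizontal occupations $\J\mapsto \1-\J$, $\L\mapsto \1-\L$ destroys the conservation law $\I+\J=\K+\L$; combining it with a left-right reflection restores conservation but not the five-vertex exclusion (a color with $I_i=K_i=1$, $J_i=L_i=0$ is legal, while its image has $I_i=1$, $L_i=0$ entering the exclusion constraint and gets weight zero). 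Moreover complementation changes how many path segments of each color cross each row, so it cannot match configurations of the skew shape $\bm{B}/\bm{\lambda}$ with configurations of the straight shape $\bm{\lambda}^c$, which have the same number of paths per color.
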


\begin{prop}\label{Complement}
Let $\bm{\lambda}$, $M$ be as in Proposition \ref{BoxSkew}. Then,
\begin{equation}
\mathcal{L}_{\bm{\lambda}}(X_n;t) = (x_1\cdots x_n)^{k(M-n)}t^{\widetilde{d}(\bm{\lambda})}\mathcal{L}_{\bm{\lambda}^c}(X_n^{-1};t)
\label{complementEq}
\end{equation}
where $\widetilde{d}(\bm{\lambda})$ is given in Lemma~\ref{lem:Comp}.
\end{prop}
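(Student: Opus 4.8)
The plan is to prove \eqref{complementEq} by exhibiting a weight-tracking bijection between the combinatorial objects underlying the two sides. Writing both LLT polynomials through Definition~\ref{coinv-LLT}, the left side is $\sum_{T\in\SSYT(\bm\lambda)} t^{\coinv(T)}x^T$, while the right side is $(x_1\cdots x_n)^{k(M-n)}t^{\widetilde d(\bm\lambda)}\sum_{S\in\SSYT(\bm\lambda^c)} t^{\coinv(S)}(x^{-1})^S$. Since $\band(\bm\lambda)<M$, every $\lambda^{(a)}$ fits inside the $(M-n)\times n$ box, so I can define $\Psi\colon\SSYT(\bm\lambda)\to\SSYT(\bm\lambda^c)$ by applying, to each component $T^{(a)}$, the standard box–complementation bijection $\SSYT(\lambda^{(a)})\xrightarrow{\sim}\SSYT(\lambda^{(a),c})$ and then reversing the order of the tuple to match $\bm\lambda^c=(\lambda^{(k),c},\dots,\lambda^{(1),c})$. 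Equivalently, one may reflect the lattice $L_{\bm\lambda}$ of Section~\ref{sec:latticemodel} about a vertical axis and reinterpret the reflected paths, the tuple reversal then corresponding to reversing the content direction and the passage to $X^{-1}$ to measuring occupancy from the opposite side. This mirrors the rotation bijection behind Proposition~\ref{BoxSkew}, the essential difference being that here I complement the \emph{content} of each tableau rather than rotating it, which is exactly what produces inverse variables.

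The monomial bookkeeping is the routine half. The box–complementation bijection is characterized, independently of the chosen realization (e.g.\ by Bender--Knuth moves or by reflecting the associated Gelfand--Tsetlin chains), by sending a tableau of content $(\mu_1,\dots,\mu_n)$ to one of content $(M-n-\mu_1,\dots,M-n-\mu_n)$. Hence in each component the number of cells of value $v$ is replaced by $(M-n)$ minus itself, and multiplying over the $k$ components gives $x^T=(x_1\cdots x_n)^{k(M-n)}(x^{-1})^{\Psi(T)}$. Substituting this into $\sum_T t^{\coinv(T)}x^T$ reduces the whole identity to the single statement
\[
\coinv(T)=\widetilde d(\bm\lambda)+\coinv(\Psi(T)) \qquad\text{for every } T\in\SSYT(\bm\lambda),
\]
i.e.\ that $\coinv$ changes by an amount depending only on $\bm\lambda$ and $M$; this common value is the constant $\widetilde d(\bm\lambda)$ recorded in Lemma~\ref{lem:Comp}.

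Establishing this last displayed equation is where the real work lies, and it is the step I expect to be the main obstacle. The difficulty is that a coinversion triple (Definition~\ref{inv-coinv-triple}) is a nonlocal, inter-component condition $a\le b\le c$ on cells lying on adjacent content lines in two different shapes, and $\Psi$ rearranges entries globally, so one cannot match triples cell-by-cell. The plan is to split the triples into those whose three cells are genuinely present and the boundary triples with $a=0$ or $c=\infty$; the latter depend only on the shapes $\bm\beta,\bm\gamma$ (as already noted after Definition~\ref{inv-coinv-triple}) and contribute a shape-determined constant, while for the former I will track how the relation $a\le b\le c$ is transported under content-complementation together with the tuple reversal and colour relabelling $a\mapsto k+1-a$. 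It is convenient to pass through the inversion statistic via \eqref{inv-coinv}, since reversal roughly turns the weak inequalities of a coinversion triple into those of an inversion triple in the complemented configuration, so that the net change telescopes to the total triple count of a fixed reference shape; summing these contributions over all pairs of components and all content lines, exactly as in the computation of $m(\bm\beta)$, should yield the closed form for $\widetilde d(\bm\lambda)$ asserted in Lemma~\ref{lem:Comp}. Finally, the symmetry of $\mc{L}$ established in Section~\ref{sec:YBE} can be invoked to absorb any residual permutation of the variables introduced along the way.
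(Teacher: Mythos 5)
Your setup matches the paper's proof exactly: your bijection $\Psi$ (column complementation in each component followed by reversal of the tuple) is the paper's $\Phi$, the monomial identity $x^T=(x_1\cdots x_n)^{k(M-n)}(x^{-1})^{\Psi(T)}$ is the same bookkeeping, and the reduction of \eqref{complementEq} to the single claim that $\coinv(T)-\coinv(\Psi(T))$ is a constant depending only on $\bm{\lambda}$ and $M$ is precisely Lemma~\ref{lem:Comp}. The gap is that you do not prove this claim, and the sketch you offer for it would not work. First, your assertion that the boundary triples (those with $a=0$ or $c=\infty$) ``depend only on the shapes and contribute a shape-determined constant'' misreads the remark after Definition~\ref{inv-coinv-triple}: only triples with \emph{both} ends degenerate, i.e.\ of the form $(0,b,\infty)$, are filling-independent. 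A triple with $a=0$ but $c$ finite is a coinversion triple precisely when $b\le c$, which depends on the filling; and these singly-degenerate triples are exactly what makes the lemma nontrivial, since complementation turns genuine triples into boundary ones and vice versa. Second, the mechanism ``coinversion triples of $T$ become inversion triples of $\Psi(T)$'' is inconsistent with what you need: since every triple of a semistandard filling is either an inversion or a coinversion triple, such a correspondence would give $\coinv(T)=\inv(\Psi(T))=m(\bm{\lambda}^c)-\coinv(\Psi(T))$, i.e.\ a constant \emph{sum} $\coinv(T)+\coinv(\Psi(T))$; combined with the constant difference you are trying to prove, this would force $\coinv$ itself to be constant on $\SSYT(\bm{\lambda})$, which is false. (Where an identification of triples does exist, complementation plus tuple reversal in fact preserves the coinversion condition, since both the entry order and the roles of the two shapes are reversed.) Third, column complementation induces no cell-to-cell or entry-to-entry map, so there is nothing concrete to ``track'' or ``telescope''; and the closing appeal to the symmetry of $\mc{L}$ addresses a problem (a residual permutation of the variables) that never arises in this argument.

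The paper closes this gap with a different, local argument (Appendix~\ref{lemma-proofs}): the set of lattice-path configurations of a fixed shape is connected under corner flips; a case analysis of the local path pictures shows that a single corner flip changes $\coinv$ by the same amount in $T$ as in $\Phi(T)$ (using that entries on a common content line correspond to paths exiting a common lattice column); hence the difference $\coinv(T)-\coinv(\Phi(T))$ is constant, and its value $\widetilde{d}(\bm{\lambda})=(k-1)|\bm{\lambda}|-n(M-n)\binom{k}{2}$ is computed by evaluating both statistics on one extremal configuration (the superstandard filling and its image). Some argument of this kind --- or a genuinely worked-out analysis of how triples transform --- is needed at the point where your proposal currently has only a heuristic.
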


The proofs for Proposition \ref{BoxSkew} and Proposition \ref{Complement} come in a similar flavor. For each, we construct a bijection between configurations of the shape $\bm{\lambda^c}$ and those of the shape $\bm{B}/\bm{\lambda}$ for the former, and shape $\bm{\lambda}$ for the latter. We then show each bijection is weight-preserving, up to an overall power of $t$ and the variables $X$. To avoid clutter, we postpone the weight-preserving part of the proofs to Appendix \ref{lemma-proofs}.

\begin{proof}[Proof of Proposition \ref{BoxSkew}]
There is a bijection between configurations with bottom boundary $\bm{\lambda}$ and top boundary $\bm{B}$, and configurations with bottom boundary $\bm{\emptyset}$ and top boundary $\bm{\lambda}^c$ given by rotating the configuration by 180 degrees and reversing the colors. For example, with two colors we have
\[
\begin{tikzpicture}[baseline=(current bounding box.center)]
\draw (0,0) -- (0,2); \draw (1,0) -- (1,2); \draw (2,0) -- (2,2);  \draw (3,0) -- (3,2); \draw (4,0) -- (4,2); \draw (5,0) -- (5,2);
\draw (0,0) -- (5,0); \draw (0,1) -- (5,1); \draw (0,2) -- (5,2);

\draw[blue, very thick] (1.4,0)--(1.4,0.6)--(2.4,0.6)--(2.4,1.6)--(3.4,1.6)--(3.4,2);
\draw[blue, very thick] (3.4,0)--(3.4,0.6)--(4.4,0.6)--(4.4,2);
\draw[red] (0.6,0)--(0.6,0.4)--(1.6,0.4)--(1.6,1.4)--(3.6,1.4)--(3.6,2);
\draw[red] (2.6,0)--(2.6,0.4)--(4.6,0.4)--(4.6,2);
\end{tikzpicture}
\mapsto
\begin{tikzpicture}[xscale=-1,yscale=-1,baseline=(current bounding box.center)]
\draw (0,0) -- (0,2); \draw (1,0) -- (1,2); \draw (2,0) -- (2,2);  \draw (3,0) -- (3,2); \draw (4,0) -- (4,2); \draw (5,0) -- (5,2);
\draw (0,0) -- (5,0); \draw (0,1) -- (5,1); \draw (0,2) -- (5,2);

\draw[red, very thick] (1.4,0)--(1.4,0.6)--(2.4,0.6)--(2.4,1.6)--(3.4,1.6)--(3.4,2);
\draw[red, very thick] (3.4,0)--(3.4,0.6)--(4.4,0.6)--(4.4,2);
\draw[blue] (0.6,0)--(0.6,0.4)--(1.6,0.4)--(1.6,1.4)--(3.6,1.4)--(3.6,2);
\draw[blue] (2.6,0)--(2.6,0.4)--(4.6,0.4)--(4.6,2);
\end{tikzpicture}
\]
Note that horizontal steps remain horizontal under this bijection, so the $X$ weight remains unchanged. The proposition will follow from Lemma \ref{lem:BoxComp} below.
\end{proof}

\begin{lem}\label{lem:BoxComp}
Fix $\bm{\lambda}$, $M$, and $\bm{B}$ as in Proposition~\ref{BoxSkew}. If $T$ is a configuration of $\bm{B}/\bm{\lambda}$, we let $\Phi(T)$ be the configuration of $\bm{\lambda}^c$ under the bijection in the proof of Proposition~\ref{BoxSkew}. Then $d(\bm{\lambda}):=\coinv(T)-\coinv(\Phi(T))$ is independent of the choice of $T$. In particular,
\begin{equation}
d(\bm{\lambda}) = \binom{n}{2}\binom{k}{2} - \# \{i,j, a < b\; |\; \lambda_i^{(b)}-i < \lambda_j^{(a)}-j\}.
\end{equation}
\end{lem}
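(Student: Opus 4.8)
The plan is to compute $\coinv(T)-\coinv(\Phi(T))$ directly from the vertex-model description of $\coinv$ (the Proposition following Theorem~\ref{LatticeModel}), to track how the bijection $\Phi$ of the proof of Proposition~\ref{BoxSkew} acts face-by-face, and then to recognize the answer as a boundary-determined crossing number between colored path families.

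First I would record the action of $\Phi$ (rotate by $180^\circ$, then reverse colors $i\mapsto k+1-i$) on a single face. Writing the bottom/left/top/right labels of a face $V$ as $\I,\J,\K,\L$, the rotation produces a face with labels $\rev\K,\rev\L,\rev\I,\rev\J$, and color reversal conjugates each label. Grouping the vertex description of $\coinv$ by color pairs gives
\[
\coinv(T)=\sum_{a<b}\sum_V [\![a\text{ exits right at }V]\!]\,[\![b\in V]\!],
\]
and a short computation with the relabeling above shows
\[
\coinv(\Phi(T))=\sum_{a<b}\sum_V [\![b\text{ enters left at }V]\!]\,[\![a\in V]\!].
\]
Subtracting yields $\coinv(T)-\coinv(\Phi(T))=\sum_{a<b}D_{a,b}(T)$, where
\[
D_{a,b}(T)=\sum_V\big([\![a\text{ exits right}]\!][\![b\in V]\!]-[\![b\text{ enters left}]\!][\![a\in V]\!]\big),
\]
all labels now referring to the single configuration $T$ of shape $\bm B/\bm\lambda$.

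Next I would simplify $D_{a,b}$. Writing $[\![b\in V]\!]=K_b(V)+L_b(V)$ and $[\![a\in V]\!]=I_a(V)+J_a(V)$ (each a $0/1$ quantity by conservation at a five-vertex face), the two ``double horizontal'' terms satisfy $\sum_V L_aL_b=\sum_V J_aJ_b$: every horizontal edge of $L_{\bm B/\bm\lambda}$ is internal, since the outer left and right edges are $\0$, so it is counted once as a right edge and once as a left edge. These cancel, leaving
\[
D_{a,b}(T)=\sum_V\big(L_a(V)K_b(V)-J_b(V)I_a(V)\big).
\]
This is precisely the algebraic intersection number between the family of color-$a$ strands and the family of color-$b$ strands: $L_aK_b$ marks a face where a horizontal color-$a$ step crosses a rising color-$b$ step, and $J_bI_a$ marks the opposite crossing. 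Because all strands are monotone (up and right) and the two families have fixed endpoints, this signed count depends only on those endpoints and not on the interior of $T$; this is exactly what shows $d(\bm\lambda)$ is independent of $T$.

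Finally I would evaluate the crossing number from the boundary data. For color $a$ the strand attached to part $i$ runs from bottom column $\lambda^{(a)}_i-i+1$ to top column $(M-n)-i+1$, and the top columns coincide for every color because $\bm B=(B,\dots,B)$. Decomposing $D_{a,b}$ over pairs of strands, the contribution of the part-$i$ strand of color $b$ and the part-$j$ strand of color $a$ is $0$ or $\pm1$ according to whether their bottom and top orders agree, i.e.\ according to the comparison of $\lambda^{(b)}_i-i$ with $\lambda^{(a)}_j-j$ (the top comparison being fixed by $i$ versus $j$). Summing over all parts and all color pairs $a<b$ collects the order-preserving crossings into the constant $\binom n2\binom k2$ and removes one for each quadruple with $\lambda^{(b)}_i-i<\lambda^{(a)}_j-j$, giving the stated formula. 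I expect the main obstacle to be this last bookkeeping step: fixing the exact sign convention in $D_{a,b}$ and handling the boundary strands (empty cells, read as $0$ or $\infty$) so that the count assembles precisely into $\binom n2\binom k2-\#\{i,j,a<b\mid \lambda^{(b)}_i-i<\lambda^{(a)}_j-j\}$. The limiting cases $\bm\lambda=\bm\emptyset$ and $\lambda^{(1)}=\dots=\lambda^{(k)}$, both forcing $d=0$, serve to pin down the signs.
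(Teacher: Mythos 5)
You've given a correct proof, but by a genuinely different route than the paper. The paper never eliminates $\Phi$: it observes that configurations with fixed boundary are connected under corner flips, checks through a case analysis of local path patterns that a flip in $T$ induces a flip in $\Phi(T)$ changing $\coinv$ by the same amount (so the difference is flip-invariant), and then evaluates $d(\bm{\lambda})$ on the extremal configuration whose paths sit as high as possible. You instead pull $\coinv(\Phi(T))$ back through the rotate-and-reverse-colors map so that both terms become sums over the faces of the single configuration $T$; conservation plus the empty left/right boundary edges then collapse the difference to $\sum_{a<b}\sum_V\left(L_a(V)K_b(V)-I_a(V)J_b(V)\right)$, a signed crossing count of the color-$a$ strands against the color-$b$ strands, which is boundary-determined and can be evaluated pair by pair from endpoints. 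I checked your bookkeeping: with the tie-breaking convention forced by the face-local expression (color $a$ counts as left of color $b$ when strands share a bottom or top column), the pair consisting of color-$a$ strand $j$ and color-$b$ strand $i$ contributes $+1$ iff $j<i$ and $\lambda^{(a)}_j-j\le\lambda^{(b)}_i-i$, contributes $-1$ iff $j\ge i$ and $\lambda^{(a)}_j-j>\lambda^{(b)}_i-i$, and these totals do assemble into $\binom{n}{2}\binom{k}{2}-\#\{i,j,a<b \mid \lambda^{(b)}_i-i<\lambda^{(a)}_j-j\}$. Your route buys a conceptual invariance statement in place of the paper's ten-case analysis, and it replaces the extremal-configuration computation by pure endpoint combinatorics; what it still owes is a rigorous justification that the signed count depends only on endpoints --- the cleanest fix is to verify invariance of $\sum_V(L_aK_b-I_aJ_b)$ under a single corner flip of either color, which is a two-line consequence of the conservation law $\I+\J=\K+\L$ at the face adjacent to the flipped corner, combined with the same flip-connectivity of configurations that the paper itself invokes.
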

\begin{proof} Postponed to Appendix \ref{lemma-proofs}. \end{proof}

\begin{proof}[Proof of Proposition \ref{Complement}]
The reader will note that upon specialization at $t=1$ of (\ref{complementEq}), we arrive at a well-known identity of Schur functions \cite[Ex. 7.41]{stanley1999enumerative}, which is proven combinatorially using a bijection between SSYT of a shape and those of its complement shape. We recall that bijection here.

Given a partition $\mu$ with $n \geq \ell(\mu)$ and $N \geq \mu_1$, let $T \in \SSYT(\mu)$. Let $\nu^1, \ldots, \nu^{N}$ denote the (possibly empty) columns of $T$, left to right. Let $\widetilde{\nu}^i$ be the column whose entries are $ [n] \setminus \{j \mid j \in \nu^i \}$, arranged in increasing order. Define $\Phi(T)$ to be the tableau with columns $\widetilde{\nu}^{N}, \ldots, \widetilde{\nu}^1$, left to right. An example is given below with $n=4$ and $N=3$.
\[ \begin{ytableau}
\none \\
4 \\
2 & 3 & 4 \\
1 & 1 & 2
\end{ytableau}
\quad \longleftrightarrow \quad
\begin{ytableau}
*(lightgray) 2 & *(lightgray) 1 & *(lightgray) 1\\
*(lightgray) 4 & *(lightgray) 3 & *(lightgray) 2 \\
3 & 4 & *(lightgray) 4 \\
1 & 2 & 3
\end{ytableau}
\]
Now, we set $N=M-n$ and if $T=(T^{(1)}, \ldots, T^{(k)}) \in \SSYT(\bm{\lambda)}$, then we abuse notation and write $\Phi(T) = (\Phi(T^{(k)}), \ldots, \Phi(T^{(1)}))$, to be the reversal of applying the bijection individually to each tableau. We will abuse notation further and also write $\Phi(T)$ for the corresponding configuration of $\bm{\lambda}^c$, when $T$ is a configuration of $\bm{\lambda}$.

Note that if $T$ has weight $x^T$, then $\Phi(T)$ will have weight $(x^{-1})^T (x_1 \cdots x_n)^{k(M-n)}$. The proposition then follows from Lemma \ref{lem:Comp} below.
\end{proof}

\begin{lem}\label{lem:Comp}
Fix $\bm{\lambda}$ and $M$ as in Proposition~\ref{BoxSkew}. If $T$ is a configuration of $\bm{\lambda}$, we let $\Phi(T)$ be the configuration of $\bm{\lambda}^c$ under the bijection in the proof of Proposition~\ref{Complement}. Then, $\widetilde{d}(\bm{\lambda}) = \coinv T - \coinv \Phi(T)$ is independent of $T$. In particular,
\begin{equation}
\widetilde{d}(\bm{\lambda}) = (k-1)|\bm{\lambda}| - n(M-n)\binom{k}{2}
\end{equation}
\end{lem}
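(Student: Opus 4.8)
The plan is to prove the two assertions of the lemma separately: first that $\widetilde{d}(\bm{\lambda}) := \coinv T - \coinv \Phi(T)$ is genuinely independent of the configuration $T$, and only then to pin down its value by evaluating on one convenient filling. A useful preliminary reduction is that every triple in the sense of (\ref{triple}) involves cells from exactly two of the shapes: its cell $v$ lies in some $\lambda^{(a)}$ while $u,w$ are attached to a strictly later shape $\lambda^{(b)}$, so that $\coinv T = \sum_{a<b}\coinv_{a,b}(T^{(a)},T^{(b)})$, each summand depending only on the indicated pair of tableaux. Since the bijection $\Phi$ of Proposition~\ref{Complement} reverses the order of the shapes and acts on each tableau separately, the difference $\coinv T-\coinv\Phi(T)$ also splits over unordered pairs of shapes; hence it suffices to establish independence and compute the local contribution in the case $k=2$ and then sum over the $\binom{k}{2}$ pairs.

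For the independence step in the two-shape case I would pass to the vertex model of Section~\ref{sec:latticemodel}. By the proposition establishing Theorem~\ref{LatticeModel}, $\coinv$ is a sum of purely local vertex contributions, counting at each vertex the pairs (color exiting to the right, larger color present). I would then realize the column-complement bijection $\Phi$ directly on lattice configurations: complementing a partition inside an $(M-n)\times n$ box is, at the level of its Maya (edge-occupation) data, a reflection of the lattice through its center composed with the particle--hole exchange on vertical edges, and the reversal of shape order becomes a reversal of the color labels. Under such an involution each local coinversion contribution transforms according to local edge data only, so the filling-dependent parts of $\coinv T$ and $\coinv \Phi(T)$ cancel vertex by vertex and the total difference collapses to a shape-only quantity. (A more pedestrian alternative, which I would keep in reserve, is to verify directly that the two-shape count $\coinv_{1,2}(T^{(1)},T^{(2)})-\coinv_{1,2}(\Phi(T^{(2)}),\Phi(T^{(1)}))$ is constant by classifying triples according to which of $u,w$ are boundary cells.)

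Granting independence, I would evaluate the constant on the superstandard filling $T_0$ in which row $i$ of every tableau carries the entry $i$. There the cells $u$ and $w$ of a genuine triple lie in a common row and hence carry equal entries, so the coinversion test $a\le b\le c$ degenerates into a comparison of row indices that is dictated entirely by the two shapes and by which of $u,w$ is a boundary cell; the same analysis applies to $\Phi(T_0)$. Counting the surviving triples on each side and subtracting is then a finite bookkeeping of contents against row indices, which I expect to telescope to $(k-1)|\bm{\lambda}| - n(M-n)\binom{k}{2}$, with the box size $n(M-n)$ and the factor $\binom{k}{2}$ entering through the boundary triples (one family per ordered pair of shapes) and the term $(k-1)|\bm{\lambda}|$ recording, for each cell and each of the $k-1$ partner shapes, its diagonal match.

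The hard part is the independence step. Unlike the $180^\circ$-rotation bijection used in Proposition~\ref{BoxSkew}, the column complement re-sorts the entries within each complemented column, so there is no cell-by-cell and hence no triple-by-triple correspondence; in particular one checks on small examples that $\Phi$ does \emph{not} preserve $\inv T$, so the tempting shortcut through the identity $\coinv = m - \inv$ behind (\ref{inv-coinv}) — which would reduce everything to the explicit formula (\ref{eq:m}) for $m$ — is simply false. Showing that the net coinversion difference is nevertheless filling-independent is the entire content of the lemma, and the local vertex-weight reformulation above (or, failing a clean implementation of it, the explicit two-shape triple count) is the route I would commit to, rather than any hoped-for global symmetry of $\Phi$.
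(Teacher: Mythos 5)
Your overall skeleton — reduce to pairs of colors, prove that $\coinv T-\coinv\Phi(T)$ is filling-independent, then evaluate on the superstandard filling — is exactly the paper's skeleton, and your evaluation step would go through essentially as the paper's does. The genuine gap is in the mechanism you commit to for the independence step. The claimed lattice realization of $\Phi$ as ``reflection of the lattice through its center composed with particle--hole exchange on vertical edges (plus color reversal)'' is not the column-complementation bijection of Proposition \ref{Complement}. Particle--hole exchange on vertical edges turns a configuration with $n$ paths per color into one with $M-n$ paths per color (it realizes a transpose-complement duality, landing on the \emph{conjugate} shape), whereas configurations of $\bm{\lambda}^c$ again have $n$ paths per color; already for $n=1$, $M=2$, $\lambda=(1)$ your recipe outputs a ``path'' that enters in one column and exits in the column to its left, which is not a configuration of this model at all. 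The deeper reason it cannot work is that tableau columns are not lattice columns: the cell $(i,j)$ sits on content line $j-i$, so complementing the $j$-th tableau column touches crossings spread across many lattice columns, and $\Phi$ is simply not horizontally local on the lattice. Moreover, even granting a genuine lattice symmetry, ``cancel vertex by vertex'' is too strong: under the honest $180^\circ$ rotation-plus-color-reversal used for Lemma \ref{lem:BoxComp}, a path exiting a vertex to the right maps to a path \emph{entering} the image vertex from the left, so individual vertex weights are not matched, and the paper still needs a nontrivial argument there.

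What is missing is the paper's key idea: the set of configurations of $\bm{\lambda}$ is connected under single corner flips, so it suffices to show $\coinv T-\coinv f(T)=\coinv\Phi(T)-\coinv\Phi(f(T))$ for one flip $f$; and $\Phi$ intertwines flips, because flipping a corner changes one entry $a$ of some $T^{(b)}$ to $a+1$, and complementing that column changes an $a+1$ of $\Phi(T^{(b)})$ to an $a$, i.e.\ up-flips correspond to down-flips. The paper then does a local case analysis showing that the flip changes $\coinv$ on one side iff it does on the other; the only subtle point is that the relevant pair of cells stays on a common content line after complementation, which is proved using column strictness (all entries in $[a+2,n]$ lie between the $a$ and the $a+1$ in both columns). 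Your fallback — classifying triples of a fixed filling by which of $u,w$ are boundary cells — does not supply any mechanism for comparing \emph{two different} fillings, so it cannot by itself yield independence; to repair the proposal you should replace the lattice-symmetry argument by the corner-flip connectedness argument above.
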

\begin{proof} Postponed to Appendix \ref{lemma-proofs}. \end{proof}

\begin{remark}
We note that Proposition \ref{Complement} has a representation-theoretic meaning. At $q=1$, (\ref{complementEq}) is simply a statement of contragredient duality for (tensor products of) irreducible representations of $GL_n$. One can use this same duality, together with the machinery of affine Hecke algebras, to conclude the statement for arbitrary $q$.

A similar result was proven in \cite[Prop. 10]{shimozono2000graded} for the class of generalized Hall-Littlewood polynomials defined therein. These polynomials coincide, up to a power of $t$, with coinversion LLT polynomials when the indexing tuple consists of rectangles.
\end{remark}

Continuing with our calculation of $\mc{L}^\ast_{\bm{\lambda}}$, we switch from the $L$ weights in Proposition \ref{BoxSkew} to those of $L^*$.
\begin{cor} \label{cor:CauchyCor1}
Fix $\bm{\lambda}$ and $M$ as in Proposition~\ref{BoxSkew}. We have
\[
\resizebox{4cm}{!}{ \begin{tikzpicture}[baseline=(current bounding box.center)]
\draw[fill=lightgray] (0,0) rectangle (5,3);
\draw[step=1.0,black,thin] (0,0) grid (5,3);
\node at (2.5,3.5) {$M$}; \draw[<-] (0,3.5)--(2.2,3.5); \draw[->] (2.8,3.5)--(5,3.5);
\node[left] at (0,0.5) {$\bar{x}_1$}; \node[left] at (0,1.5) {$\vdots \enspace$}; \node[left] at (0,2.5) {$\bar{x}_n$};
\draw[black,fill=black] (2.5,3) circle (.5ex); \draw[black,fill=black] (3.5,3) circle (.5ex); \draw[black,fill=black] (4.5,3) circle (.5ex);
\node[below] at (2.5,0) {$\bm{\lambda}$};
\draw[black,fill=white] (0,0.5) circle (.5ex); \draw[black,fill=white] (0,1.5) circle (.5ex); \draw[black,fill=white] (0,2.5) circle (.5ex);
\draw[black,fill=white] (5,0.5) circle (.5ex); \draw[black,fill=white] (5,1.5) circle (.5ex); \draw[black,fill=white] (5,2.5) circle (.5ex);
\end{tikzpicture} } = (x_1\cdots x_n)^{kM} t^{n^2\binom{k}{2}} t^{\widetilde{d}(\bm{\lambda}) + d(\bm{\lambda})} \mathcal{L}_{\bm{\lambda}^c}(X_n^{-1};t)
\]
\end{cor}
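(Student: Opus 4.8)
The plan is to reduce the $L^*$-partition function on the left to the $L$-partition function already evaluated in Proposition~\ref{BoxSkew}, and then reconcile the fact that the gray boxes carry the spectral parameter $\bar{x}_i = x_i^{-1}t^{-(k-1)}$ whereas the target $\mathcal{L}_{\bm{\lambda}^c}$ is expressed in $X_n^{-1}$. Concretely, I would exploit the defining relation $L^*_{\bar{x}}(\I,\J;\K,\L) = x^k t^{\binom{k}{2}} L_{\bar{x}}(\I,\J;\K,\L)$ face by face. The lattice in question has $n$ rows and $M$ columns, and in row $i$ each of the $M$ faces carries the common scalar $x_i^k t^{\binom{k}{2}}$. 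Because this scalar is configuration-independent (the vanishing entries of $L^*$ are exactly those of $L$), it factors out of the partition function, contributing an overall $\prod_{i=1}^n (x_i^k t^{\binom{k}{2}})^M = (x_1\cdots x_n)^{kM} t^{nM\binom{k}{2}}$, and what remains is precisely the $L$-weight partition function with the very same boundary data but with parameters $\bar{x}_i$ in place of $x_i$.

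Next I would invoke Proposition~\ref{BoxSkew} under the substitution $x_i \mapsto \bar{x}_i$, which identifies that remaining $L$-partition function with $t^{d(\bm{\lambda})}\,\mathcal{L}_{\bm{\lambda}^c}(\bar{X}_n; t)$, where $\bar{X}_n = (\bar{x}_1,\ldots,\bar{x}_n)$. To pass from $\bar{X}_n$ to $X_n^{-1}$ I would use that $\mathcal{L}_{\bm{\lambda}^c}(\,\cdot\,;t)$ is homogeneous of degree $|\bm{\lambda}^c|$ in its alphabet, since every monomial $x^T$ records one variable per cell of $\bm{\lambda}^c$. As $\bar{x}_i = x_i^{-1}t^{-(k-1)}$, scaling out the common factor $t^{-(k-1)}$ yields $\mathcal{L}_{\bm{\lambda}^c}(\bar{X}_n;t) = t^{-(k-1)|\bm{\lambda}^c|}\,\mathcal{L}_{\bm{\lambda}^c}(X_n^{-1};t)$.

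The final step is bookkeeping of the powers of $t$. Collecting the three contributions, the exponent on the left is $nM\binom{k}{2} + d(\bm{\lambda}) - (k-1)|\bm{\lambda}^c|$, and I must verify this equals the claimed $n^2\binom{k}{2} + \widetilde{d}(\bm{\lambda}) + d(\bm{\lambda})$. Cancelling $d(\bm{\lambda})$ reduces the claim to the identity $nM\binom{k}{2} - (k-1)|\bm{\lambda}^c| = n^2\binom{k}{2} + \widetilde{d}(\bm{\lambda})$. Substituting $\widetilde{d}(\bm{\lambda}) = (k-1)|\bm{\lambda}| - n(M-n)\binom{k}{2}$ from Lemma~\ref{lem:Comp}, together with $|\bm{\lambda}^c| = kn(M-n) - |\bm{\lambda}|$ (each of the $k$ complements lives in an $(M-n)\times n$ box), and using $(k-1)k = 2\binom{k}{2}$, both sides collapse to $2n^2\binom{k}{2} - nM\binom{k}{2} + (k-1)|\bm{\lambda}|$. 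The $x$-power matches at once, since each side carries $(x_1\cdots x_n)^{kM}$.

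The only genuine obstacle here is this $t$-exponent accounting; in particular, one must not overlook that the change of variables from $\bar{x}_i$ to $x_i^{-1}$ introduces the extra factor $t^{-(k-1)|\bm{\lambda}^c|}$, which is exactly the term that reconciles the explicit constant $n^2\binom{k}{2} + \widetilde{d}(\bm{\lambda})$ in the statement with the raw prefactor $nM\binom{k}{2}$ produced by the $L^*$-to-$L$ conversion. Every other ingredient is a direct appeal to Proposition~\ref{BoxSkew}, Lemma~\ref{lem:Comp}, and the homogeneity of $\mathcal{L}_{\bm{\lambda}^c}$.
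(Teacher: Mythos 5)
Your proposal is correct and follows essentially the same route as the paper's proof: convert each $L^*$-face to an $L$-face via $L^*_{\bar x}=x^k t^{\binom{k}{2}}L_{\bar x}$ (extracting $(x_1\cdots x_n)^{kM}t^{nM\binom{k}{2}}$), apply Proposition~\ref{BoxSkew} at the substituted parameters $\bar x_i$, use homogeneity of degree $|\bm{\lambda}^c|$ to trade $t^{-(k-1)}$ factors for $t^{-(k-1)|\bm{\lambda}^c|}$, and reconcile exponents via $|\bm{\lambda}^c|=kn(M-n)-|\bm{\lambda}|$ and Lemma~\ref{lem:Comp}. Your exponent bookkeeping checks out, matching the paper's chain of equalities exactly.
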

\begin{proof}
To switch to $L^*$ weights in Proposition \ref{BoxSkew} we first take $x_i\mapsto \frac{1}{x_i t^{k-1}}$ and then multiply each face by $x_i^k t^{\binom{k}{2}}$. This gives
\[
\begin{aligned}
& (x_1\cdots x_n)^{kM} t^{nM\binom{k}{2}} t^{d(\bm{\lambda})} \mathcal{L}_{\bm{\lambda}^c}(x_1^{-1}t^{-k+1},\ldots,x_n^{-1}t^{-k+1};t) \\
& = (x_1\cdots x_n)^{kM} t^{nM\binom{k}{2}} t^{d(\bm{\lambda})} t^{-(k-1)|\bm{\lambda}^c|} \mathcal{L}_{\bm{\lambda}^c}(x_1^{-1},\ldots,x_n^{-1};t) \\
& = (x_1\cdots x_n)^{kM} t^{nM\binom{k}{2}} t^{d(\bm{\lambda})} t^{(k-1)|\bm{\lambda}|-k(k-1)n(M-n)} \mathcal{L}_{\bm{\lambda}^c}(X_n^{-1};t) \\
& = (x_1\cdots x_n)^{kM} t^{n^2\binom{k}{2}}t^{-n(M-n)\binom{k}{2}} t^{(k-1)|\bm{\lambda}|}t^{d(\bm{\lambda})} \mathcal{L}_{\bm{\lambda}^c}(X_n^{-1};t) \\
& = (x_1\cdots x_n)^{kM} t^{n^2\binom{k}{2}}t^{\widetilde{d}(\bm{\lambda}) + d(\bm{\lambda})} \mathcal{L}_{\bm{\lambda}^c}(X_n^{-1};t)
\end{aligned}
\]
as desired.
\end{proof}

\begin{cor}\label{cor:CauchyCor2}
Fix $\bm{\lambda}$ and $M$ as in Proposition \ref{BoxSkew}. We have
\[
\resizebox{4cm}{!}{ \begin{tikzpicture}[baseline=(current bounding box.center)]
\draw[fill=lightgray] (0,0) rectangle (5,3);
\draw[step=1.0,black,thin] (0,0) grid (5,3);
\node at (2.5,3.5) {$M$}; \draw[<-] (0,3.5)--(2.2,3.5); \draw[->] (2.8,3.5)--(5,3.5);
\node[left] at (0,0.5) {$\bar{x}_1$}; \node[left] at (0,1.5) {$\vdots \enspace$}; \node[left] at (0,2.5) {$\bar{x}_n$};
\draw[black,fill=black] (5,0.5) circle (.5ex); \draw[black,fill=black] (5,1.5) circle (.5ex); \draw[black,fill=black] (5,2.5) circle (.5ex);
\node[below] at (2.5,0) {$\bm{\lambda}$};
\draw[black,fill=white] (0,0.5) circle (.5ex); \draw[black,fill=white] (0,1.5) circle (.5ex); \draw[black,fill=white] (0,2.5) circle (.5ex);
\end{tikzpicture} } =\left(x^\rho \right)^k (x_1\cdots x_n)^{k(M-n)} t^{\binom{n}{2}\binom{k}{2} } t^{\widetilde{d}(\bm{\lambda}) + d(\bm{\lambda})} \mathcal{L}_{\bm{\lambda}^c}(X_n^{-1};t)
\]
\end{cor}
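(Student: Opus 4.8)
The plan is to deduce this corollary directly from Corollary~\ref{cor:CauchyCor1} by comparing the two gray lattices, which differ only in their outgoing boundary: in Corollary~\ref{cor:CauchyCor1} all paths exit through the top of the last $n$ columns, whereas here they exit through the right. I will show that the two lattices share an identical ``bulk'' on the first $M-n$ columns and differ only in a frozen rightmost $n$-column corner whose weight can be computed explicitly, and then simply rescale the formula of Corollary~\ref{cor:CauchyCor1}.

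First I would argue that the state entering the last $n$ columns is forced. Since $\band(\bm{\lambda}) < M$, the rightmost $n$ columns lie beyond the support of the bottom boundary $\bm{\lambda}$, so no path enters them from below. In Corollary~\ref{cor:CauchyCor1} all $kn$ paths exit through the top of these columns and can only enter from the left, so the left edge of column $M-n+1$ must carry every color in every row, i.e.\ it is fully packed. In the present setting all $kn$ paths exit through the right edge of column $M$; running conservation backwards column by column (again with no entries from below) shows the left edge of column $M-n+1$ is once more fully packed, and moreover that each of the last $n$ columns is an all-color horizontal vertex of weight $1$. In both cases the restriction to columns $1,\dots,M-n$ ranges over the same set of configurations with identical weights (bottom boundary $\bm{\lambda}$, empty left and top, fully packed right edge); call this common partition function $Z_{\mathrm{bulk}}$. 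Thus the two sides differ only by the weight of the rightmost $n$ columns: Corollary~\ref{cor:CauchyCor1} equals $Z_{\mathrm{bulk}}\cdot W$ while the present quantity equals $Z_{\mathrm{bulk}}\cdot 1$, where $W$ is the weight of the frozen ``turn up'' corner.

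It then remains to compute $W$. Applying the five-vertex uniqueness of \cite[Thm.~7.16.1]{stanley1999enumerative} to each color separately forces every color into the same staircase routing of the fully packed input to the top, and conservation forces all $k$ colors to travel together, so the corner is frozen and each of its vertices is an all-color vertex. A fully horizontal all-color vertex has weight $1$, whereas an all-color vertex in which all paths turn up or pass vertically has weight $x_i^k t^{\binom{k}{2}}$ in row $i$; counting exactly $i$ such vertices in row $i$ gives
\[ W = \prod_{i=1}^n \left(x_i^k t^{\binom{k}{2}}\right)^{i} = \Big(\textstyle\prod_i x_i^{ik}\Big)\, t^{\binom{n+1}{2}\binom{k}{2}}. \]
Dividing the right-hand side of Corollary~\ref{cor:CauchyCor1} by $W$ and simplifying, using $n^2 - \binom{n+1}{2} = \binom{n}{2}$ together with $(x_1\cdots x_n)^{kM}\prod_i x_i^{-ik} = (x^\rho)^k (x_1\cdots x_n)^{k(M-n)}$, yields the stated formula. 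The main obstacle is precisely the colored weight computation in this last step: one must verify that the corner is genuinely frozen for all $k$ (so that no summation survives) and that each vertical or turn vertex contributes exactly $x_i^k t^{\binom{k}{2}}$ with no extra color-interaction powers of $t$ arising from the $L^*$ normalization, so that the per-row count of $i$ produces precisely $W$. Everything else is forced by conservation and by the already-established Corollary~\ref{cor:CauchyCor1}.
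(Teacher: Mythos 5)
Your overall strategy is the same as the paper's: compare the two gray partition functions through a frozen corner, compute its weight $W=\prod_{i=1}^n\bigl(x_i^kt^{\binom{k}{2}}\bigr)^i$, and rescale Corollary \ref{cor:CauchyCor1}; your $L^*$-weight bookkeeping and the final exponent arithmetic are correct. However, your opening claim contains a genuine gap: $\band(\bm{\lambda})<M$ does \emph{not} imply that the rightmost $n$ columns lie beyond the support of the bottom boundary. The hypothesis only forces $\max_j\lambda^{(j)}_1\le M-n$ (exactly what is needed for the complement in the $(M-n)\times n$ box), so for instance $\lambda^{(1)}=(M-n,0,\dots,0)$ is allowed, and its largest part enters the lattice at the bottom of the \emph{last} column; the paper's own example in the proof of Proposition \ref{BoxSkew} likewise has paths entering from below inside the last $n$ columns. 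In such cases the left edge of column $M-n+1$ is not fully packed, the full $n\times n$ corner is not frozen, and the factorization ``Cor.\ \ref{cor:CauchyCor1} $=Z_{\mathrm{bulk}}\cdot W$, present statement $=Z_{\mathrm{bulk}}\cdot 1$'' breaks down: paths can both enter the corner from below and wander through its lower-left part, so the restrictions to the first $M-n$ columns do not range over a common set of configurations. Your argument as written is only valid under the stronger assumption $\max_j\lambda^{(j)}_1\le M-2n$ (which would suffice for the $M\to\infty$ limit in Proposition \ref{L-star}, but not for the corollary as stated).

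What is actually frozen---and what the paper's schematic pictures depict---is only the staircase region of boxes $(c,\ell)$ with $c+\ell>M$: a same-color non-conflict argument (two paths of one color may not share a vertex) shows that in Corollary \ref{cor:CauchyCor1} the path of each color destined for the top of the $m$-th column from the right must occupy that column vertically from row $m$ upward, while in the present lattice the corresponding paths run horizontally out of row $m$ from that column onward. The correct bijection replaces these terminal ascents by terminal horizontal runs and keeps \emph{everything else} fixed, including bottom entries inside the last $n$ columns and whatever the paths do below the staircase. The weight ratio is then computed box-by-box on the staircase region exactly as you did, with one additional check: at the staircase boxes the colors may enter from below as well as from the left, but since nothing exits right in the Cor.\ \ref{cor:CauchyCor1} boxes, and since the $L^*$-weight of the boxes here depends only on $\I+\J=\1$, one still gets $x_\ell^kt^{\binom{k}{2}}$ per staircase box there and $1$ per staircase box here. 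With this repair, your computation of $W$ and the final simplification go through verbatim.
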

\begin{proof}
There is a weight-preserving bijection from configurations in Corollary \ref{cor:CauchyCor1} to configurations here by taking
\[
\resizebox{2cm}{!}{ \begin{tikzpicture}[baseline=(current bounding box.center)]
\draw[fill=lightgray] (0,0) rectangle (3,3);
\draw[step=1.0,black,thin] (0,0) grid (3,3);
\draw[black,fill=black] (0.5,3) circle (.5ex); \draw[black,fill=black] (1.5,3) circle (.5ex); \draw[black,fill=black] (2.5,3) circle (.5ex);
\draw[ultra thick] (0.5,2.5)--(0.5,3); \draw[ultra thick] (1.5,1.5)--(1.5,3); \draw[ultra thick] (2.5,0.5)--(2.5,3); 
\end{tikzpicture}}
\mapsto
\resizebox{2cm}{!}{ \begin{tikzpicture}[baseline=(current bounding box.center)]
\draw[fill=lightgray] (0,0) rectangle (3,3);
\draw[step=1.0,black,thin] (0,0) grid (3,3);
\draw[black,fill=black] (3,0.5) circle (.5ex); \draw[black,fill=black] (3,1.5) circle (.5ex); \draw[black,fill=black] (3,2.5) circle (.5ex);
\draw[ultra thick] (0.5,2.5)--(3,2.5); \draw[ultra thick] (1.5,1.5)--(3,1.5); \draw[ultra thick] (2.5,0.5)--(3,0.5); 
\end{tikzpicture}}
\]
and multiplying by $(x_1\cdots x_n)^{-nk} \left(x^\rho\right)^k t^{-\binom{n+1}{2}\binom{k}{2}}$.
\end{proof}

\begin{prop} \label{L-star}
Let $\bm{\lambda} = (\lambda^{(1)}, \ldots, \lambda^{(k)})$ be a tuple of partitions, each with $n$ non-negative parts. Then,
\[
\mc{L}^\ast_{\bm{\lambda}}(X_n;t) = \left(x^\rho \right)^k  t^{\binom{n}{2}\binom{k}{2} } t^{d(\bm{\lambda})}  \mathcal{L}_{\bm{\lambda}}(X_n;t)
\]
\end{prop}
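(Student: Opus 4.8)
The plan is to identify the semi-infinite gray-box partition function $\mc{L}^\ast_{\bm{\lambda}}$ with the width-$M$ lattice computed in Corollary~\ref{cor:CauchyCor2} (for any $M>\band(\bm{\lambda})$), and then to eliminate the complement polynomial $\mc{L}_{\bm{\lambda}^c}(X_n^{-1};t)$ by feeding in Proposition~\ref{Complement}. Once these two inputs are combined, the exponents $\widetilde{d}(\bm{\lambda})$ cancel and the stated identity falls out.

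The crux is a single vertex weight computation. I would evaluate the $L^\ast$-weight of the \emph{all-horizontal} vertex, in which every one of the $k$ colors enters from the left and exits to the right, i.e.\ the entry $L^\ast_{\bar x}(\0,\1;\0,\1)$. From the face weight formula, $L_{\bar x}(\0,\1;\0,\1)=\bar x^{\,k}\prod_{m=1}^{k} t^{\,k-m}=\bar x^{\,k}t^{\binom{k}{2}}$, and since $L^\ast_{\bar x}=x^k t^{\binom{k}{2}}L_{\bar x}$ with $\bar x=1/(xt^{k-1})$, one obtains $L^\ast_{\bar x}(\0,\1;\0,\1)=(x\bar x)^k t^{2\binom{k}{2}}=t^{-k(k-1)}t^{k(k-1)}=1$. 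Thus such vertices are weight-inert.

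Next I would run the truncation argument. In $\mc{L}^\ast_{\bm{\lambda}}$ each color enters from the bottom in exactly $n$ columns, all lying within the first $M$ columns once $M>\band(\bm{\lambda})$; since the right boundary is fully occupied in every row and no path exits through the top, conservation forces each of the $n$ rows to carry all $k$ colors horizontally beyond column $M$. By the previous paragraph each such vertex contributes weight $1$, so the infinite tail is inert and $\mc{L}^\ast_{\bm{\lambda}}(X_n;t)$ equals the left-hand side of Corollary~\ref{cor:CauchyCor2}. Invoking that corollary and Proposition~\ref{Complement}, which gives $(x_1\cdots x_n)^{k(M-n)}\mc{L}_{\bm{\lambda}^c}(X_n^{-1};t)=t^{-\widetilde{d}(\bm{\lambda})}\mc{L}_{\bm{\lambda}}(X_n;t)$, I would substitute to get
\[
\mc{L}^\ast_{\bm{\lambda}}(X_n;t)=\left(x^\rho\right)^k t^{\binom{n}{2}\binom{k}{2}}\,t^{\widetilde{d}(\bm{\lambda})+d(\bm{\lambda})}\,t^{-\widetilde{d}(\bm{\lambda})}\,\mc{L}_{\bm{\lambda}}(X_n;t)=\left(x^\rho\right)^k t^{\binom{n}{2}\binom{k}{2}}t^{d(\bm{\lambda})}\mc{L}_{\bm{\lambda}}(X_n;t),
\]
which is the assertion; in particular the auxiliary width $M$ has dropped out.

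The main obstacle is the truncation step: one must argue carefully that the semi-infinite configuration genuinely stabilizes to all-horizontal vertices past column $M$. This is a conservation-of-paths argument using that the right boundary equals $\1$ in every row, that each color contributes exactly $n$ bottom entries confined to the bandwidth, and that nothing exits the top; the weight-$1$ computation above is precisely what makes this truncation weight-preserving. Everything downstream is bookkeeping with the exponents $d$ and $\widetilde{d}$ already pinned down in Lemmas~\ref{lem:BoxComp} and~\ref{lem:Comp}.
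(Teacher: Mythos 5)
Your proposal is correct and follows essentially the same route as the paper: the paper's proof likewise combines Corollary~\ref{cor:CauchyCor2} with Proposition~\ref{Complement} so that the $\widetilde{d}(\bm{\lambda})$ exponents cancel, and then identifies the result with $\mc{L}^\ast_{\bm{\lambda}}$ by letting $M\to\infty$. Your only addition is to make that last step explicit — the weight-$1$ computation for the all-horizontal vertex $L^\ast_{\bar x}(\0,\1;\0,\1)$ and the conservation argument showing the tail stabilizes — which the paper leaves implicit in the phrase ``Taking $M\to\infty$ gives the proposition.''
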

\begin{proof}
Fix the number of columns $M$ as usual. Combining Proposition \ref{Complement} and Corollary \ref{cor:CauchyCor2} yields
\[
\resizebox{4cm}{!}{ \begin{tikzpicture}[baseline=(current bounding box.center)]
\draw[fill=lightgray] (0,0) rectangle (5,3);
\draw[step=1.0,black,thin] (0,0) grid (5,3);
\node at (2.5,3.5) {$M$}; \draw[<-] (0,3.5)--(2.2,3.5); \draw[->] (2.8,3.5)--(5,3.5);
\node[left] at (0,0.5) {$\bar{x}_1$}; \node[left] at (0,1.5) {$\vdots \enspace$}; \node[left] at (0,2.5) {$\bar{x}_n$};
\draw[black,fill=black] (5,0.5) circle (.5ex); \draw[black,fill=black] (5,1.5) circle (.5ex); \draw[black,fill=black] (5,2.5) circle (.5ex);
\node[below] at (2.5,0) {$\bm{\lambda}$};
\draw[black,fill=white] (0,0.5) circle (.5ex); \draw[black,fill=white] (0,1.5) circle (.5ex); \draw[black,fill=white] (0,2.5) circle (.5ex);
\end{tikzpicture} } = \left(x^\rho \right)^k  t^{\binom{n}{2}\binom{k}{2} } t^{d(\bm{\lambda})}  \mathcal{L}_{\bm{\lambda}}(X_n;t)
\]
Note that this is independent of $M$. Taking $M\to \infty$ gives the proposition.
\end{proof}

We are finally ready to prove the Cauchy identity.
\begin{proof}[Proof of Theorem \ref{thm:CauchyIdentity}]
Combining (\ref{eq:CauchyLHS}), (\ref{eq:CauchyRHS}) and Proposition \ref{L-star}, we see that (\ref{CauchyEqn}) becomes
\[
\left(x^\rho \right)^k t^{\binom{n}{2}\binom{k}{2}} =\prod_{i,j=1}^n\prod_{m=0}^{k-1} \left(1-x_iy_j t^m \right) \sum_{\bm{\lambda}} \left(x^\rho \right)^k  t^{\binom{n}{2}\binom{k}{2} } t^{d(\bm{\lambda})}  \mathcal{L}_{\bm{\lambda}}(X_n;t)\mathcal{L}_{\bm{\lambda}}(Y_n;t).
\]
Rearranging gives the desired identity.
\end{proof}

The same style of arguments hold when we let the paths enter the bottom at positions indexed by a tuple of partitions $\bm{\mu}$ 
\[
\resizebox{6cm}{!}{ \begin{tikzpicture}[baseline=(current bounding box.center)]
\draw[fill=lightgray] (0,0) rectangle (5,3);
\draw[step=1.0,black,thin] (0,0) grid (5,6);
\draw (0,0.5) -- (-3,3.5); \draw (0,1.5) -- (-2.5,4); \draw (0,2.5) -- (-2,4.5); 
\draw (0,3.5) -- (-2,1.5); \draw (0,4.5) -- (-2.5,2); \draw (0,5.5) -- (-3,2.5); 
\node[above] at (4.5,6) {$\ldots$};
\node[below left] at (-2,1.5) {$y_1$}; \node[below left] at (-2.5,2) {$\ddots$}; \node[below left] at (-3,2.5) {$y_n$};
\node[above left] at (-3,3.5) {$\bar{x}_1$}; \node[above left] at (-2.5,4) {$\iddots$}; \node[above left] at (-2,4.5) {$\bar{x}_n$};
\draw[black,fill=black] (5,0.5) circle (.5ex); \draw[black,fill=black] (5,1.5) circle (.5ex); \draw[black,fill=black] (5,2.5) circle (.5ex);
\node[below] at (2.5,0) {$\bm{\mu}$};
\draw[black,fill=white] (0,0.5) circle (.5ex); \draw[black,fill=white] (0,1.5) circle (.5ex); \draw[black,fill=white] (0,2.5) circle (.5ex);
\draw[black,fill=white] (0,3.5) circle (.5ex); \draw[black,fill=white] (0,4.5) circle (.5ex); \draw[black,fill=white] (0,5.5) circle (.5ex);
\draw[black,fill=white] (5,3.5) circle (.5ex); \draw[black,fill=white] (5,4.5) circle (.5ex); \draw[black,fill=white] (5,5.5) circle (.5ex);
\draw[black,fill=white] (-2,1.5) circle (.5ex); \draw[black,fill=white] (-2.5,2) circle (.5ex); \draw[black,fill=white] (-3,2.5) circle (.5ex);
\draw[black,fill=white] (-2,4.5) circle (.5ex); \draw[black,fill=white] (-2.5,4) circle (.5ex); \draw[black,fill=white] (-3,3.5) circle (.5ex);
\end{tikzpicture}}
=
\resizebox{6cm}{!}{ \begin{tikzpicture}[baseline=(current bounding box.center)]
\draw[fill=lightgray] (0,3) rectangle (5,6);
\draw[step=1.0,black,thin] (0,0) grid (5,6);
\draw (5,0.5) -- (8,3.5); \draw (5,1.5) -- (7.5,4); \draw (5,2.5) -- (7,4.5); 
\draw (5,3.5) -- (7,1.5); \draw (5,4.5) -- (7.5,2); \draw (5,5.5) -- (8,2.5); 
\node[above] at (4.5,6) {$\ldots$};
\node[left] at (0,0.5) {$y_1$}; \node[left] at (0,1.5) {$\vdots \enspace$}; \node[left] at (0,2.5) {$y_n$};
\node[left] at (0,3.5) {$\bar{x}_1$}; \node[left] at (0,4.5) {$\vdots \enspace$}; \node[left] at (0,5.5) {$\bar{x}_n$};
\draw[black,fill=black] (7,1.5) circle (.5ex); \draw[black,fill=black] (7.5,2) circle (.5ex); \draw[black,fill=black] (8,2.5) circle (.5ex);
\draw[black,fill=black] (5,3.5) circle (.5ex); \draw[black,fill=black] (5,4.5) circle (.5ex); \draw[black,fill=black] (5,5.5) circle (.5ex);
\node[below] at (2.5,0) {$\bm{\mu}$};
\draw[black,fill=white] (0,0.5) circle (.5ex); \draw[black,fill=white] (0,1.5) circle (.5ex); \draw[black,fill=white] (0,2.5) circle (.5ex);
\draw[black,fill=white] (0,3.5) circle (.5ex); \draw[black,fill=white] (0,4.5) circle (.5ex); \draw[black,fill=white] (0,5.5) circle (.5ex);
\draw[black,fill=white] (5,0.5) circle (.5ex); \draw[black,fill=white] (5,1.5) circle (.5ex); \draw[black,fill=white] (5,2.5) circle (.5ex);
\draw[black,fill=white] (7,4.5) circle (.5ex); \draw[black,fill=white] (7.5,4) circle (.5ex); \draw[black,fill=white] (8,3.5) circle (.5ex);
\end{tikzpicture}}
\]
From this we have
\begin{prop}
\[
\sum_{\bm{\lambda}} t^{d(\bm{\lambda})} \mathcal{L}_{\bm{\lambda}}(X_n;t)\mathcal{L}_{\bm{\lambda}/\bm{\mu}}(Y_n;t) = t^{d(\bm{\mu})}\mathcal{L}_{\bm{\mu}}(X_n;t) \prod_{i,j=1}^n\prod_{m=0}^{k-1} \left(1-x_iy_j t^m \right)^{-1}.
\]
\end{prop}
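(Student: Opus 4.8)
The plan is to run the same train-of-$R$-matrices argument used for Theorem~\ref{thm:CauchyIdentity}, but with the bottom boundary of the lattice set to $\bm{\mu}$ rather than the ground state. Repeated application of Proposition~\ref{prop:Tcommute} yields the displayed equation preceding the statement, whose two sides I will evaluate separately; equating them and cancelling the common factors will produce the claim.

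For the left-hand side I first peel off the triangle of $R$-matrices on the far left. Since the entire left boundary is unoccupied and $R_{y/\bar x}(\0,\0;\0,\0)=1$, this triangle contributes a factor of $1$, exactly as in the proof of Theorem~\ref{thm:CauchyIdentity}. What remains is the stack of gray ($T^*$) rows below the white ($T$) rows, now with bottom boundary $\bm\mu$ and with the right edges of the gray rows occupied and everything else on the boundary empty. Because the top and left boundaries of the white rows are void, conservation forces the white rows to be entirely empty, so every path entering from $\bm\mu$ must thread through the gray rows and exit on the right. This is precisely the configuration computing $\mc L^*_{\bm\mu}(X_n;t)$; applying Proposition~\ref{L-star} gives
\[ \mc L^*_{\bm\mu}(X_n;t) = \left(x^\rho\right)^k t^{\binom n2\binom k2} t^{d(\bm\mu)}\, \mc L_{\bm\mu}(X_n;t). \]
When $\bm\mu=\bm\emptyset$ one has $d(\bm\emptyset)=0$, and this recovers (\ref{eq:CauchyLHS}) as a consistency check.

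For the right-hand side I peel off the triangle of $R$-matrices on the far right. As in (\ref{eq:CauchyRHS}), the only surviving configuration there sends all colors straight from northwest to southeast, contributing $\prod_{i,j=1}^n\prod_{m=0}^{k-1}(1-x_iy_jt^m)$; this factor is unchanged by the choice of $\bm\mu$. The remaining stack has the white $T(y)$ rows below the gray $T^*(x)$ rows, and inserting a complete set of states $\bm\lambda$ along the interface factors it as $\sum_{\bm\lambda} \mc Z_{\bm\lambda/\bm\mu}(Y_n;t)\,\mc L^*_{\bm\lambda}(X_n;t)$. The white block has bottom boundary $\bm\mu$ and top boundary $\bm\lambda$, so by Theorem~\ref{LatticeModel} its partition function is the skew polynomial $\mc L_{\bm\lambda/\bm\mu}(Y_n;t)$, which vanishes unless $\bm\lambda\supseteq\bm\mu$; meanwhile Proposition~\ref{L-star} rewrites the gray block as $\left(x^\rho\right)^k t^{\binom n2\binom k2} t^{d(\bm\lambda)}\mc L_{\bm\lambda}(X_n;t)$. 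Equating the two sides, cancelling the common factor $\left(x^\rho\right)^k t^{\binom n2\binom k2}$, and dividing through by the infinite product yields the asserted identity.

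The main obstacle I anticipate is bookkeeping the boundary conditions carefully enough to justify the two structural claims: that the left-hand side collapses to $\mc L^*_{\bm\mu}$ (i.e.\ that the white rows are forced empty and all $\bm\mu$-paths exit through the gray rows), and that the white block on the right-hand side genuinely computes the \emph{skew} LLT polynomial $\mc L_{\bm\lambda/\bm\mu}$. Both are direct analogues of steps already carried out for the ground-state case in Theorem~\ref{thm:CauchyIdentity}, so once the bottom boundary is propagated correctly through the two factorizations, the remaining manipulations are the same cancellations as before.
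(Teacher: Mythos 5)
Your proposal is correct and takes essentially the same approach as the paper: the paper's own (very terse) proof consists of exactly this, namely rerunning the transfer-matrix argument of Theorem~\ref{thm:CauchyIdentity} with bottom boundary $\bm{\mu}$, identifying the left-hand side as $\mc{L}^*_{\bm{\mu}}(X_n;t)$ via Proposition~\ref{L-star}, factoring the right-hand side through the interface sum $\sum_{\bm{\lambda}}\mc{L}_{\bm{\lambda}/\bm{\mu}}(Y_n;t)\,\mc{L}^*_{\bm{\lambda}}(X_n;t)$ using Theorem~\ref{LatticeModel}, and cancelling the common factor $\left(x^\rho\right)^k t^{\binom{n}{2}\binom{k}{2}}$ together with the product $\prod_{i,j,m}\left(1-x_iy_jt^m\right)$. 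Your write-up simply makes explicit the boundary-condition bookkeeping that the paper leaves implicit.
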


Define $\bm{\lambda}^{\text{rot}}:= \left(\lambda^{(k),\text{rot}},\ldots,\lambda^{(1),\text{rot}}\right)$ where $\lambda^{\text{rot}}$ is $\lambda$ rotated by $180$ degrees. Take $\bm{B}$ to be the smallest box containing $\bm{\lambda}$ and let $\bm{\lambda}^c$ be the complement taken in this box. Plugging in $\bm{\lambda}^c$ into Equation \eqref{eq:BoxSkew} for this choice of $\bm{B}$ we see that
\[
\mathcal{L}_{\bm{\lambda}^{\text{rot}}}(X_n;t) = t^{d\left(\bm{\lambda}^c\right)} \mathcal{L}_{\bm{\lambda}}(X_n;t).
\]
Using this to replace one of the $\mathcal{L}_{\bm{\lambda}}$ in the sum in Theorem \ref{thm:CauchyIdentity}, and noting that $d(\bm{\lambda}^c)=d(\bm{\lambda})$, we can reformulate the Cauchy identity as
\begin{cor}
\[
 \sum_{\bm{\lambda}}  \mathcal{L}_{\bm{\lambda}}(X_n;t)\mathcal{L}_{\bm{\lambda}^{\text{rot}}}(Y_n;t) = \prod_{i,j=1}^n\prod_{m=0}^{k-1} \left(1-x_iy_j t^m \right)^{-1}.
\]
\end{cor}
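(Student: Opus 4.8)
The plan is to derive the corollary directly from the Cauchy identity of Theorem~\ref{thm:CauchyIdentity} by absorbing the factor $t^{d(\bm{\lambda})}$ into one of the two LLT factors, trading $\mathcal{L}_{\bm{\lambda}}(Y_n;t)$ for $\mathcal{L}_{\bm{\lambda}^{\text{rot}}}(Y_n;t)$. The single fact that makes this work is the rotation identity
\[
\mathcal{L}_{\bm{\lambda}^{\text{rot}}}(Y_n;t) = t^{d(\bm{\lambda})}\,\mathcal{L}_{\bm{\lambda}}(Y_n;t),
\]
which I would prove first and then substitute termwise into the sum $\sum_{\bm{\lambda}} t^{d(\bm{\lambda})}\mathcal{L}_{\bm{\lambda}}(X_n;t)\mathcal{L}_{\bm{\lambda}}(Y_n;t)$, whose value is already known to be the desired infinite product.

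To prove the rotation identity I would apply the skew-complement relation \eqref{eq:BoxSkew} with $\bm{\lambda}$ replaced by its box-complement $\bm{\lambda}^c$ (taken in the smallest box $\bm{B}$ containing $\bm{\lambda}$), obtaining $\mathcal{L}_{\bm{B}/\bm{\lambda}^c}(Y_n;t) = t^{d(\bm{\lambda}^c)}\mathcal{L}_{(\bm{\lambda}^c)^c}(Y_n;t)$. Two purely combinatorial identifications make this usable: the complement of the complement returns the original tuple, $(\bm{\lambda}^c)^c = \bm{\lambda}$, and the skew shape $\bm{B}/\bm{\lambda}^c$ is, after rotating each component by $180^\circ$ and reversing the order of the tuple, exactly $\bm{\lambda}^{\text{rot}}$ (it matters here that the reversal of order built into $\bm{\lambda}^c$ agrees with the reversal built into the definition of $\bm{\lambda}^{\text{rot}}$, and that excising the complement from the box is the same as rotating $\bm{\lambda}$ into the North-East corner). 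Granting these, the relation becomes $\mathcal{L}_{\bm{\lambda}^{\text{rot}}}(Y_n;t) = t^{d(\bm{\lambda}^c)}\mathcal{L}_{\bm{\lambda}}(Y_n;t)$.

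The remaining point, and the one requiring an actual (if short) computation, is the complement-invariance $d(\bm{\lambda}^c) = d(\bm{\lambda})$; I expect this symmetry check to be the main obstacle. I would read off the closed form of $d$ from Lemma~\ref{lem:BoxComp}. Since the leading term $\binom{n}{2}\binom{k}{2}$ is manifestly unchanged, it suffices to show that the cardinality $\#\{i,j,\,a<b \mid \lambda_i^{(b)}-i < \lambda_j^{(a)}-j\}$ is invariant under $\bm{\lambda}\mapsto\bm{\lambda}^c$. Using $(\bm{\lambda}^c)^{(a)}_i = (M-n) - \lambda^{(k+1-a)}_{n+1-i}$, the defining inequality for $\bm{\lambda}^c$ transforms, under the involution $(i,j,a,b)\mapsto (n+1-j,\,n+1-i,\,k+1-b,\,k+1-a)$, precisely into the defining inequality for $\bm{\lambda}$; this bijection of the index set yields equality of the two cardinalities.

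With the rotation identity in hand, the corollary follows by a single substitution: replacing $t^{d(\bm{\lambda})}\mathcal{L}_{\bm{\lambda}}(Y_n;t)$ by $\mathcal{L}_{\bm{\lambda}^{\text{rot}}}(Y_n;t)$ turns the left-hand side of Theorem~\ref{thm:CauchyIdentity} into $\sum_{\bm{\lambda}}\mathcal{L}_{\bm{\lambda}}(X_n;t)\mathcal{L}_{\bm{\lambda}^{\text{rot}}}(Y_n;t)$ while leaving the right-hand product untouched. I would also remark that, because the exponent $d(\bm{\lambda})$ appearing in Lemma~\ref{lem:BoxComp} does not depend on the auxiliary box size $M$, the rotation identity—and hence the final statement—is unambiguous.
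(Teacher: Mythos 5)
Your proposal is correct and takes essentially the same route as the paper: plug $\bm{\lambda}^c$ into \eqref{eq:BoxSkew}, identify $\bm{B}/\bm{\lambda}^c$ with $\bm{\lambda}^{\text{rot}}$ to get $\mathcal{L}_{\bm{\lambda}^{\text{rot}}}(X_n;t)=t^{d(\bm{\lambda}^c)}\mathcal{L}_{\bm{\lambda}}(X_n;t)$, use $d(\bm{\lambda}^c)=d(\bm{\lambda})$, and substitute into Theorem~\ref{thm:CauchyIdentity}. Your explicit verification of $d(\bm{\lambda}^c)=d(\bm{\lambda})$ via the involution $(i,j,a,b)\mapsto(n+1-j,\,n+1-i,\,k+1-b,\,k+1-a)$ applied to the count in Lemma~\ref{lem:BoxComp} is correct, and in fact supplies a detail the paper merely asserts.
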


\appendix
\addcontentsline{toc}{section}{Appendices}
\section*{Appendices}

\section{Proof of the YBE \label{YBE-proof}}

\tikzstyle{every picture}=[baseline=(current bounding box.center)]

\indent Recall the recursive formulations of the $L$ and $R$ matrices from Equations \eqref{LRrecursive} and \eqref{EFweights}.  Our goal is to show that, for any choice of $\I_1,\I_2,\I_3,\J_1,\J_2,\J_3 \in \{0,1\}^k$,
\begin{equation} \label{YBE(matrices)}
\begin{aligned}
&\sum_{\L_1,\L_2,\L_3 \in \{0,1\}^k} R^{(k)}_{y/x}(\L_2,\L_1;\J_2,\J_1) L^{(k)}_x(\L_3,\I_1;\J_3,\L_1) L^{(k)}_y(\I_3,\I_2;\L_3,\L_2) \\
&= \sum_{\K_1,\K_2,\K_3 \in \{0,1\}^k} L^{(k)}_y(\K_3,\K_2;\J_3,\J_2) L^{(k)}_x(\I_3,\K_1;\K_3,\J_1) R^{(k)}_{y/x}(\I_2,\I_1;\K_2,\K_1).
\end{aligned}
\end{equation}
\noindent Graphically this means that $L$ and $R$ satisfy
\begin{equation} \label{YBE(lattices)}
 \begin{tikzpicture}[baseline=(current bounding box.center)] 
 \draw (-1,0.5) -- (0,1.5); \draw (-1,1.5) -- (0,0.5); 
 \draw[step=1.0,black,thin] (0,0) grid (1,2); 
 \node[left] at (-1,1.5) {$\I_1$}; \node[left] at (-1,0.5) {$\I_2$}; \node[below] at (0.5,0) {$\I_3$};
 \node[right] at (1,1.5) {$\J_2$}; \node[right] at (1,0.5) {$\J_1$}; \node[above] at (0.5,2) {$\J_3$};
 \node at (0.5,0.5) {$x$}; \node at (0.5,1.5) {$y$};
 \end{tikzpicture} 
=
 \begin{tikzpicture}[baseline=(current bounding box.center)] 
 \draw (2,0.5) -- (1,1.5); \draw (2,1.5) -- (1,0.5); 
 \draw[step=1.0,black,thin] (0,0) grid (1,2); 
 \node[left] at (0,1.5) {$\I_1$}; \node[left] at (0,0.5) {$\I_2$}; \node[below] at (0.5,0) {$\I_3$};
 \node[right] at (2,0.5) {$\J_1$}; \node[right] at (2,1.5) {$\J_2$}; \node[above] at (0.5,2) {$\J_3$};
 \node at (0.5,0.5) {$y$}; \node at (0.5,1.5) {$x$};
 \end{tikzpicture}
\end{equation}
where the choice of $\I$'s and $\J$'s correspond to a choice of boundary condition and we sum over the weight of all possible configurations that satisfy the chosen boundary condition. 

\indent We will now simplify the notation slightly.  We will let $V$ be the $2^k$-dimensional vector space consisting of formal linear combinations of elements of $\{0,1\}^k$.  If $A : V \otimes V \rightarrow V \otimes V$ is an operator, we define $A_{12},A_{13},A_{23}:V\otimes V\otimes V \to V\otimes V\otimes V$ by the rule that the subscripts indicate which factors of the tensor product the operator acts on by $A$. For example, $A_{12}=A\otimes I$ where $I$ is the identity on $V$. 
\noindent In particular, we will adopt this notation for the L and R matrices, which we can think of as operators $V \otimes V \rightarrow V \otimes V$ as follows.
\begin{align*}
&L^{(k)}_x : (\I,\J) \mapsto \sum_{\K,\L \in \{0,1\}^k} L^{(k)}_x(\I,\J;\K,\L) (\K \otimes \L) \\
&R^{(k)}_{y/x} : (\I,\J) \mapsto \sum_{\K,\L \in \{0,1\}^k} R^{(k)}_{y/x}(\I,\J;\K,\L) (\K \otimes \L)
\end{align*}
\noindent We will also let $L^{(k)}(x) = L^{(k)}_x$ and $R^{(k)}(y/x) = R^{(k)}_{y/x}$.  We can now rewrite Equation \eqref{YBE(matrices)} as
\begin{equation*}
G^{(k)}(y/x) := R^{(k)}_{12}(y/x) L^{(k)}_{13}(x) L^{(k)}_{23}(y) = L^{(k)}_{23}(y) L^{(k)}_{13}(x) R^{(k)}_{12}(y/x) =: D^{(k)}(y/x).
\end{equation*}
\noindent Here $G$ stands for gauche (left) and $D$ stands for droite (right).  Finally, we will often omit the superscripts on the L and R matrices, so we can again rewrite Equation \eqref{YBE(matrices)} as
\begin{equation*}
G(y/x) := R_{12}(y/x) L_{13}(x) L_{23}(y) = L_{23}(y) L_{13}(x) R_{12}(y/x) =: D(y/x).
\end{equation*}

\begin{definition} We can interpret Equation \eqref{YBE(lattices)} in terms of $k$-tuples of colored lattice paths.
\begin{enumerate}
    \item We will refer to the left-hand side of Equation \eqref{YBE(lattices)} as the gauche (or $G$) lattice.  A specification of the edge labels on the lattice corresponds a $k$-tuple of colored paths on the lattice.  These paths enter at the edges labelled $\I$ below; pass through the vertices labelled 12, 13, and 23 below; and exit at the edges labelled $\J$ below.
\begin{align*}
\begin{tikzpicture}
\gauche
\node at (1.5,1.5) {$23$};
\node at (1.5,0.5) {$13$};
\node at (0.5,1) {$12$};
\node[left] at (0,0.5) {$\I_2$};
\node[left] at (0,1.5) {$\I_1$};
\node[below] at (1.5,0) {$\I_3$};
\node[above] at (1.5,2) {$\J_3$};
\node[right] at (2,0.5) {$\J_1$};
\node[right] at (2,1.5) {$\J_2$};
\end{tikzpicture}
\end{align*}
    \item We will refer to the right-hand side of Equation \eqref{YBE(lattices)} as the droite (or $D$) lattice.  A specification of the edge labels on the lattice corresponds a $k$-tuple of colored paths on the lattice.  These paths enter at the edges labelled $\I$ below; pass through the vertices labelled 12, 13, and 23 below; and exit at the edges labelled $\J$ below.
\begin{align*}
\begin{tikzpicture}
\droite 
\node at (0.5,1.5) {$13$};
\node at (0.5,0.5) {$23$};
\node at (1.5,1) {$12$};
\node[left] at (0,1.5) {$\I_1$};
\node[left] at (0,0.5) {$\I_2$};
\node[below] at (0.5,0) {$\I_3$};
\node[right] at (2,0.5) {$\J_1$};
\node[right] at (2,1.5) {$\J_2$};
\node[above] at (0.5,2) {$\J_3$};
\end{tikzpicture}
\end{align*} 
\end{enumerate}
\end{definition} 

\indent Each entry in $G^{(k)}$ or $D^{(k)}$ is indexed by a word $w = (w_1,...,w_k)$, where the letter $w_i$ records where the paths of color $i$ enter and exit the lattice, 
in the alphabet $\{0,...,13\}$ as follows.

\noindent \begin{center} \resizebox{0.8\textwidth}{!}{ \begin{tabular}{ccccccc}
\begin{tikzpicture} \twobox \node[above] at (0.5,2) {$w_i=0$}; \end{tikzpicture} & \begin{tikzpicture} \twobox \draw[brightpink] (0,0.5)--(1,0.5); \node[above] at (0.5,2) {$w_i=1$}; \end{tikzpicture} & \begin{tikzpicture} \twobox \draw[brightpink] (0,1.5)--(1,1.5); \node[above] at (0.5,2) {$w_i=2$}; \end{tikzpicture} & \begin{tikzpicture} \twobox \draw[brightpink] (0,1.5)--(1,1.5); \draw[brightpink] (0,0.5)--(1,0.5); \node[above] at (0.5,2) {$w_i=3$}; \end{tikzpicture} & \begin{tikzpicture} \twobox \draw[brightpink] (0,0.5)--(0.5,0.5)--(0.5,1.5)--(1,1.5); \node[above] at (0.5,2) {$w_i=4$}; \end{tikzpicture} & \begin{tikzpicture} \twobox \draw[brightpink] (0,1.5)--(0.5,1.5)--(0.5,0.5)--(1,0.5);\node[above] at (0.5,2) {$w_i=5$}; \end{tikzpicture} &
\begin{tikzpicture} \twobox \draw[brightpink] (0.5,0)--(0.5,0.5)--(1,0.5); \node[above] at (0.5,2) {$w_i=6$}; \end{tikzpicture} \\
\begin{tikzpicture} \twobox \draw[brightpink] (0,1.5)--(0.5,1.5)--(0.5,2); \node[above] at (0.5,2) {$w_i=7$}; \end{tikzpicture} & \begin{tikzpicture} \twobox \draw[brightpink] (0,1.5)--(0.5,1.5)--(0.5,2); \draw[brightpink] (0,0.5)--(1,0.5); \node[above] at (0.5,2) {$w_i=8$}; \end{tikzpicture} & \begin{tikzpicture} \twobox \draw[brightpink] (0,1.5)--(0.5,1.5)--(0.5,2); \draw[brightpink] (0.5,0)--(0.5,0.5)--(1,0.5); \node[above] at (0.5,2) {$w_i=9$}; \end{tikzpicture} & \begin{tikzpicture} \twobox \draw[brightpink] (0.5,0)--(0.5,0.5)--(1,0.5); \draw[brightpink] (0,1.5)--(1,1.5); \node[above] at (0.5,2) {$w_i=10$}; \end{tikzpicture} & \begin{tikzpicture} \twobox \draw[brightpink] (0,0.5)--(0.5,0.5)--(0.5,2); \node[above] at (0.5,2) {$w_i=11$}; \end{tikzpicture} & \begin{tikzpicture} \twobox \draw[brightpink] (0.5,0)--(0.5,1.5)--(1,1.5); \node[above] at (0.5,2) {$w_i=12$}; \end{tikzpicture} & \begin{tikzpicture} \twobox \draw[brightpink] (0.5,0)--(0.5,2); \node[above] at (0.5,2) {$w_i=13$}; \end{tikzpicture}
\end{tabular} }
\end{center}

\noindent To be more precise, $w_i$ records the $i$-th components of the labels on the $\I$ and $\J$ edges.

\begin{center}
    \begin{tikzpicture}
    \twobox 
    \node[left] at (0,0.5) {$\I_2$};
    \node[left] at (0,1.5) {$\I_1$};
    \node[below] at (0.5,0) {$\I_3$};
    \node[right] at (1,0.5) {$\J_1$};
    \node[right] at (1,1.5) {$\J_2$};
    \node[above] at (0.5,2) {$\J_3$};
    \end{tikzpicture}
\end{center}

\begin{example}

There is only one path on the droite lattice that gives the letter 5, namely
\begin{align*}
\begin{tikzpicture} \droite \draw[brightpink] (2,0.4) -- (1,1.4) -- (0,1.4); \end{tikzpicture}
\end{align*}
\end{example}

\begin{example}
There are two paths on the gauche lattice that give the letter 2, namely
\begin{align*}
\begin{tikzpicture}[baseline=(current bounding box.center)] \gauche \draw[brightpink] (0,1.6) -- (0.5,1.1) -- (1,1.6) -- (1.4,1.6) -- (2,1.6); \end{tikzpicture}
\enspace \emph{ and }
\begin{tikzpicture}[baseline=(current bounding box.center)] \gauche \draw[brightpink] (0,1.6) -- (1,0.6) -- (1.4,0.6) -- (1.4,1.6) -- (2,1.6); \end{tikzpicture} \enspace.
\end{align*}
\end{example}

\begin{definition} We denote by $M_w(x,y)$ the entry indexed by $w$, with $M \in \{G,D\}$. \end{definition}

\begin{definition}
Given $M \in \{G,D\}$ and a $k$-tuple of paths $P = (P_1,...,P_k)$ for the colors $1,...,k$ on the $M$ lattice, we denote by $M_P(x,y)$ the resulting weight.  
\end{definition}

\begin{remark}
Given a word $w = (w_1,...,w_k)$ in the alphabet $\{0,...,13\}$, we have
\[ M_w(x,y) = \sum_P M_P(x,y) \]
where the sum is taken over all $k$-tuples of paths $P = (P_1,...,P_k)$ on the $M$ lattice so that the resulting word is $w$.
\end{remark}

\begin{example} Let brown be color 1 and let green be color 2.  Then
\begin{align*}
G_{(1,3)} = 
\begin{tikzpicture} \gauche \draw[brown] (0,0.4) -- (0.5,0.9) -- (1,0.4) -- (1.6,0.4) -- (2,0.4); \draw[green] (0,0.6) -- (1,1.6) -- (2,1.6); \draw[green] (0,1.6) -- (1,0.6) -- (2,0.6);  \end{tikzpicture}
\enspace , \enspace
D_{(1,3)} =  \begin{tikzpicture} \droite \draw[brown] (0.6,0) -- (0.6,1.4) -- (1,1.4) -- (2,0.4); \draw[green] (2,0.6) -- (1,1.6) -- (0,1.6); \draw[green] (2,1.6) -- (1,0.6) -- (0,0.6);  \end{tikzpicture} \enspace + 
\begin{tikzpicture} \droite \draw[brown] (0.6,0) -- (0.6,0.4) -- (1,0.4) -- (1.5,0.9) -- (2,0.4); \draw[green] (2,0.6) -- (1,1.6) -- (0,1.6); \draw[green] (2,1.6) -- (1,0.6) -- (0,0.6);  \end{tikzpicture} \enspace .
\end{align*}
\noindent Here we have omitted the superscripts for the sake of clarity, as we do often.
\end{example}

\indent We will also make use of the following definitions and notations.

\begin{definition}
Given a word $u$, we denote by $|u|$ the length of $u$.  Given a word $u$ and $i \in \{0,...,13\}$, we denote by $|u|_i$ the number of $i$'s in $u$.
\end{definition}

\begin{definition}
Recall the weights of $E$, $F(x)$, $\tilde{E}(y/x)$, and $\tilde{F}(y/x)$ from Equation \eqref{EFweights}.  The paths with non-zero weights in $F(x)$ are considered special for the $L$ vertices.  The paths with non-zero weights in $\tilde{E}(y/x)$ are considered special for the $R$ vertices.
\end{definition}

\begin{example}
The following path on the gauche lattice is special at the 12 and 23 vertices, but not at the 13 vertex.
\begin{align*}
\begin{tikzpicture} \gauche \draw[brightpink] (0,1.6) -- (1,0.6) -- (1.4,0.6) -- (1.4,1.6) -- (2,1.6);  \end{tikzpicture}
\end{align*}
\end{example}

\indent To prove Equation \eqref{YBE(matrices)}, it is enough to show that $G_w = D_w$ for all words $w$.  We will use a recursive argument to simplify $w$ as much as possible, and then check the equality on the base cases.  

\subsection{Reducing to the case where $w$ is a word in the alphabet $\{1,2,3,4,5\}$}

\indent We begin by showing that all 0's can be removed.

\begin{lem} \label{YBE0} For any words $u$ and $v$, we have $M_{(u,0,v)}(x,y) = M_{(u,v)}(x,y)$. \end{lem}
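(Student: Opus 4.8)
The plan is to prove the identity configuration-by-configuration and then sum. Recall that $M_w(x,y) = \sum_P M_P(x,y)$, the sum being over all $k$-tuples of colored paths $P$ on the $M$-lattice ($M \in \{G,D\}$) whose associated word is $w$. In the word $(u,0,v)$ the color $p := |u|+1$ carries the letter $0$, meaning that color $p$ is \emph{absent}: every edge of the lattice has $I_p = J_p = K_p = L_p = 0$, so the only path-configuration of color $p$ is the empty one. First I would set up the evident bijection $P \mapsto P'$ between path-tuples with word $(u,0,v)$ and path-tuples with word $(u,v)$, obtained by deleting the empty path of color $p$ and relabelling colors $p+1,\dots,k$ down by one. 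Since an absent color admits a unique (empty) configuration, this is manifestly a bijection; the content of the lemma is that it preserves weights.

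To check $M_P(x,y) = M_{P'}(x,y)$ it suffices, since $M_P$ is the product of the three local weights at the vertices $12$, $13$, $23$, to show each local weight is unchanged by deleting color $p$. Here I would work directly from the explicit formulas \eqref{FaceWeight} and \eqref{Rweight} rather than the recursive ones, as the explicit weights carry no spectral-parameter shift. The crucial point is that each local weight factors as a product over colors; for instance
\[ L_x(\I,\J;\K,\L) = \prod_{m=1}^{k} x^{L_m}\, t^{\,L_m\,(\I_{[m+1,k]}+\J_{[m+1,k]})}, \]
and similarly $R_{y/x}$ factors as a product of per-color factors $(y/xt^{\delta_m})^{K_m}(1-y/xt^{\delta_m})^{(J_m-I_m+L_m-K_m)/2}$. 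An absent color $p$ has $L_p=K_p=0$ and $J_p-I_p+L_p-K_p=0$, so its factor is exactly $1$ at every vertex. Moreover each surviving factor for a present color $m$ depends on the other colors only through the \emph{count of present larger colors} on the input edges (the exponents $\I_{[m+1,k]}+\J_{[m+1,k]}$ for $L$, and $\delta_m$, which counts larger colors of Type~1, for $R$); since $p$ is absent it contributes nothing to any such count. Because the relabelling $P\mapsto P'$ is order-preserving on the present colors, these counts—and hence all factors—are preserved, giving $M_P = M_{P'}$.

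Summing over $P$ through the bijection then yields $M_{(u,0,v)}(x,y) = M_{(u,v)}(x,y)$, as claimed. I expect the only real obstacle to be the bookkeeping in the second step: one must verify carefully that both the $L$-exponents and the quantity $\delta_m$ are insensitive to inserting or deleting an absent color, and that the order-preserving relabelling of the colors above $p$ does not disturb the "larger present color'' counts. As a sanity check, when $v$ is empty the recursive formulas \eqref{LRrecursive}–\eqref{EFweights} give this directly, since the absent largest color forces the empty box in $E$ (weight $1$) and Type~5 in $\tilde F$ (weight $1$), collapsing $L^{(k+1)}_x$ to $L^{(k)}_x$ and $R^{(k+1)}_{y/x}$ to $R^{(k)}_{y/x}$ with no change of parameters; the general middle-position case is handled uniformly by the weight-factorization argument above.
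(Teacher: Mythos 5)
Your proof is correct, but it takes a genuinely different route from the paper's. The paper stays inside the recursive formalism of Equations \eqref{LRrecursive}--\eqref{EFweights}: it fixes a tuple of paths realizing $v$, lets $\alpha,\beta,\gamma$ be the numbers of special paths among them at the $12$, $13$, $23$ vertices, and factors the partition function as $\left[ R_{12}(y/xt^{\alpha})L_{13}(xt^{\beta})L_{23}(yt^{\gamma})\right]_u \cdot 1 \cdot G^{(\ell)}_{(P_{k-\ell+1},\dots,P_k)}(x,y)$, the point being that the empty path of the letter $0$ contributes weight $1$ and, not being special at any vertex, produces no spectral-parameter shift; the same factorization then computes $G^{(k-1)}_{(u,v)}$. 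You instead bypass the recursion entirely and argue locally from the closed-form weights \eqref{FaceWeight} and \eqref{Rweight}: each vertex weight factors over colors, the absent color's factor is $1$ (since $L_p=K_p=0$ and $J_p-I_p+L_p-K_p=0$), and every other color's factor depends on its companions only through the counts $\I_{[m+1,k]}+\J_{[m+1,k]}$ and $\delta_m$, to which an absent color contributes nothing and which an order-preserving relabelling preserves. Your argument is more elementary and proves something slightly stronger --- invariance of every individual vertex weight, hence of every configuration weight, under insertion or deletion of an absent color --- and it treats the gauche and droite lattices uniformly, whereas the paper must remark that the $M=D$ case ``is similar.'' What the paper's phrasing buys is uniformity with the rest of Appendix \ref{YBE-proof}: the bracket-with-shifted-parameters template set up here is exactly what is reused in Lemmas \ref{YBE0}--\ref{YBE15}, where the letters being manipulated (e.g.\ $6$, $7$, or the swaps $3\cdot 1 \leftrightarrow 1\cdot 3$) do carry nontrivial weights and genuine shifts, so the general mechanism cannot be purely local there. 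One small bookkeeping item implicit in your write-up that is worth stating: deleting an all-zero coordinate also preserves the non-vanishing constraints ($\I+\J=\K+\L$, no doubled color at an $L$-vertex, and the $R$-vertex condition), so your bijection really does match configurations of nonzero weight on both sides.
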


\begin{proof}

We do the case $M=G$ in detail.  (The case $M=D$ is similar.)   Note that there is only one path on the gauche lattice that gives the letter 0, namely
\begin{align*}
\pbullet_0 =
\begin{tikzpicture} \gauche \end{tikzpicture}
 \enspace. 
\end{align*}
Let $\ell = |v|$. Choose paths $P_{k-\ell+1},...,P_k$ on the gauche lattice so that the resulting word is $v$.  Let $\alpha,\beta,\gamma$ be the number of special paths among $P_{k-\ell+1},...,P_k$ for the 12, 13, 23 vertices respectively.  We have 
\begin{align*}
G^{(k)}_{(u,0,v)}(x,y) &= G^{(k)}_{(u,\pbullet_0,v)}(x,y) \\
&= \left [ R^{(k-\ell-1)}_{12}(y/xt^\alpha) L^{(k-\ell-1)}_{13}(xt^\beta) L^{(k-\ell-1)}_{23}(yt^\gamma) \right ]_u \cdot 1 \cdot G^{(\ell)}_{(P_{k-\ell+1},...,P_k)}(x,y) \\
&= \left [ R^{(k-\ell-1)}_{12}(y/xt^\alpha) L^{(k-\ell-1)}_{13}(xt^\beta) L^{(k-\ell-1)}_{23}(yt^\gamma) \right ]_u \cdot G^{(\ell)}_{(P_{k-\ell+1},...,P_k)}(x,y) \\
&= G^{(k-1)}_{(u,v)}(x,y).
\end{align*}

\end{proof}

\indent The next two lemmas demonstrate that we can reduce all 6's to 1's; all 7's to 2's; all 8's, 9's, and 10's to 3's; and all 11's, 12's, and 13's to 4's.

\begin{lem} For all $a \in \{6,9,10,12\}$, we have 
\[ M_{(u,a,v)}(x,y) = M_{(u,f(a),v)}(x,y) \]
where $f : \{6,9,10,12,13\} \rightarrow \{1,3,4,8,11\}$ is defined by 
\[ f(6) = 1, f(10) = 3, f(9) = 8, f(12) = 4, f(13) = 11. \] 
\end{lem}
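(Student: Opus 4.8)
The plan is to prove the equality, for each listed value of $a$ (and in fact for every $a$ in the domain of $f$, including $a=13$), by a weight-preserving bijection between the $k$-tuples of paths contributing to $M_{(u,a,v)}(x,y)$ and those contributing to $M_{(u,f(a),v)}(x,y)$ that reroutes only one strand of color $i$ and fixes every other strand and every other color. The starting observation is uniform: in each pair the two letters describe the same color-$i$ strands except that the strand entering the box at the bottom edge $\I_3$ is replaced by the strand with the \emph{same} exit that instead enters at the left-bottom edge $\I_2$. One checks directly that $6\to 1$, $12\to 4$ and $13\to 11$ alter a single strand, while $9\to 8$ and $10\to 3$ alter one strand and leave a spectator strand (the one entering at $\I_1$) untouched; so the map is always a single local rerouting of the ``$\I_3$-strand'' of color $i$.

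The second step is to realize this rerouting on each lattice and check it is a bijection onto valid configurations. On the droite lattice the $\I_3$-strand enters the bottom cell (vertex $23$), and rerouting it to $\I_2$ merely slides its entrance from the bottom edge to the left edge of that same cell, leaving the rest of its trajectory, hence the occupation of every other edge, unchanged. On the gauche lattice the $\I_3$-strand enters the bottom cell (vertex $13$) from below; after rerouting it first traverses the crossing vertex $12$ entering at the SW corner and then enters $13$ from the left. When color $i$ has no other strand at $12$ (the cases $6,12,13$) this is a lone SW-to-SE crossing, i.e. a Type 3 passage of weight $1$; when color $i$ already carries its spectator strand at $12$ (the cases $a\in\{9,10\}$, where that strand is a NW-to-NE, Type 2 passage) the new SW-to-SE strand merges with it into a Type 4 crossing. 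In all cases conservation and the one-strand-per-color-per-vertex constraint are preserved, and the move is manifestly invertible.

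The central step is weight preservation, for which three observations suffice. (a) At the modified $L$-vertex the strand moves from the bottom input $\I$ to the left input $\J$; since the $x$-exponent in the $L$-weight~\eqref{FaceWeight} counts only colors exiting to the right, and the strand's exit is unchanged, the $x$-weight is untouched, as is the strand's own $t$-contribution (which, if it exits right, counts only larger colors). (b) For any other color $m$ exiting right at that vertex, the relevant $t$-exponent $\I_{[m+1,k]}+\J_{[m+1,k]}$ counts color $i$ whenever $i>m$ whether color $i$ lies in $\I$ or in $\J$, so all such exponents are unchanged. (c) At the crossing vertex $12$ (gauche only) the color-$i$ passage is Type 3 of weight $1$ in the single-strand cases and changes from Type 2 to Type 4 in the two-strand cases; since Type 2 and Type 4 carry the same weight $y/(xt^{\delta_i})$ in~\eqref{Rweight}, and since none of Types 2, 3, 4 is a Type 1 passage, the statistic $\delta_j$ of every color $j$ (including $\delta_i$, which depends only on the fixed larger colors) is unaffected, so neither color $i$'s own $R$-weight nor any other color's $R$-weight changes. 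Summing the bijection over all configurations then gives $M_{(u,a,v)}=M_{(u,f(a),v)}$ for $M\in\{G,D\}$.

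I expect the main obstacle to be point (c) in the two-strand cases $a\in\{9,10\}$: one must notice that rerouting the $\I_3$-strand does not produce a second color-$i$ strand at the crossing but rather promotes the existing Type 2 passage to Type 4, and then exploit both the coincidence of their weights and the fact that the $R$-matrix statistic $\delta$ counts only Type 1 passages. By contrast, the $L$-vertex bookkeeping in (a)--(b) is routine once one notes the symmetric role of $\I$ and $\J$ in the exponent of~\eqref{FaceWeight}.
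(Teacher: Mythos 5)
Your proposal is correct and takes essentially the same approach as the paper: the paper's proof is the very same strand-rerouting correspondence (replace the color-$i$ strand entering at $\I_3$ by one entering at $\I_2$ with the same exit, summarized there by the picture equation at the start of its proof), with weight preservation verified through the recursive $E/F$ factorization in the representative case $a=6$, $M=G$ and the remaining cases declared similar. Your direct vertex-by-vertex check of the explicit weight formulas---including the Type 2 to Type 4 promotion at the crossing for $a\in\{9,10\}$, which the paper leaves inside its ``the other cases are similar''---is just a more explicit implementation of the same bookkeeping.
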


\begin{proof}
The proof can be summarized by the equation
\[
\begin{tikzpicture}[baseline=(current bounding box.center)]
\twobox
\draw[brightpink] (0,0.5)--(0.5,0.5);
\end{tikzpicture}
=
\begin{tikzpicture}[baseline=(current bounding box.center)]
\twobox
\draw[brightpink] (0.5,0)--(0.5,0.5);
\end{tikzpicture}.
\]
We do the case $a=6, M = G$ in detail.  (The other cases are similar.)  Note that there is only one path on the gauche lattice that gives the letter $6$, namely
\begin{align*}
\pbullet_6 =
\begin{tikzpicture} \gauche \draw[brightpink] (1.6,0) -- (1.6,0.4) -- (2,0.4);  \end{tikzpicture}
 \enspace, 
\end{align*}
and that there is only one path on the gauche lattice that gives the letter $f(6) = 1$, namely
\begin{align*}
\pbullet_1 =
\begin{tikzpicture} \gauche \draw[brightpink] (0,0.4) -- (0.5,0.9) -- (1,0.4) -- (1.6,0.4) -- (2,0.4);  \end{tikzpicture} \enspace. 
\end{align*}
Let $\ell = |v|$. Choose paths $P_{k-\ell+1},...,P_k$ on the gauche lattice so that the resulting word is $v$.  Let $\alpha,\beta,\gamma$ be the number of special paths among $P_{k-\ell+1},...,P_k$ for the 12, 13, 23 vertices respectively.  We have 
\begin{align*}
&G^{(k)}_{(u,\pbullet_6,P_{k-\ell+1},...,P_k)}(x,y) \\
&= \left [ R^{(k-\ell-1)}_{12}(y/xt^\alpha) L^{(k-\ell-1)}_{13}(xt^{\beta+1}) L^{(k-\ell-1)}_{23}(yt^\gamma) \right ]_u \cdot xt^\beta \cdot G^{(\ell)}_{(P_{k-\ell+1},...,P_k)}(x,y) \\
&= G^{(k)}_{(u,\pbullet_1,P_{k-\ell+1},...,P_k)}(x,y).
\end{align*}
\end{proof}

\begin{lem} \label{YBE2} For all $a \in \{7,8,9,11,13\}$, if $v$ is a word in the alphabet $\{1,2,3,4,5\}$, then 
\[ y t^{|v|-|v|_1-|v|_5} M_{(u,a,v)}(x,y) = M_{(u,g(a),v)}(x,y),\]
where $g : \{7,8,9,11,13\} \rightarrow \{2,3,4,10,12\}$ is defined by 
\[ g(7) = 2, g(8) = 3, g(9) = 10, g(11) = 4, g(13) = 12.\] \end{lem}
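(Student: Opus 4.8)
The plan is to reuse the recursive peeling strategy of Lemma~\ref{YBE0} and the preceding lemma, isolating the color $c$ that carries the distinguished letter, so that $u$ records the colors smaller than $c$ and $v$ the colors larger than $c$. Fixing paths for the $v$-colors and summing over the $u$-colors factors $M_{(u,a,v)}$ as
$[\,R^{(k-\ell-1)}_{12}(y/xt^{\alpha})\,L^{(k-\ell-1)}_{13}(xt^{\beta})\,L^{(k-\ell-1)}_{23}(yt^{\gamma})\,]_u$ times the local weight of the color-$c$ path times $M^{(\ell)}$ for the chosen $v$-paths, where $\ell=|v|$ and $\alpha,\beta,\gamma$ count the $v$-paths special for the $12,13,23$ vertices. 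The guiding observation is that in every case $a\mapsto g(a)$ merely moves the exit of color $c$ from the top edge $\J_3$ to the top-right edge $\J_2$, leaving the rest of its letter untouched (the second strand present when $a\in\{8,9\}$ is unchanged).

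First I would set up the resulting bijection on internal configurations: with the $u$- and $v$-paths fixed, switch the exit of color $c$ at the upper L-vertex from ``out the top'' to ``out the right,'' producing a configuration realizing $g(a)$, and conversely. The key verification is that this leaves color $c$ present at exactly the same vertices, and of the same R-type there, so that in particular its special status at each vertex is unchanged; hence the shifts $\alpha,\beta,\gamma$ imposed on the $u$-colors are preserved and the $u$-summed factor and the $M^{(\ell)}$ factor agree on the two sides. The whole discrepancy then reduces to the local weight of color $c$, which I read off from the face weights.

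For $M=G$ the discrepancy sits at the single vertex $23$ (carrying parameter $y$): replacing a top-exit by a right-exit multiplies the weight by $y\,t^{\gamma}$, one extra $y$ for the added right-output and one extra $t$ for each larger color present at that vertex. The decisive input is $\gamma=|v|-|v|_1-|v|_5$, which I would obtain by checking that a $v$-color with letter in $\{1,2,3,4,5\}$ occupies the $23$-vertex precisely when its letter is $2$, $3$, or $4$. This is where the five-vertex constraint does the real work: although a letter such as $2$ admits several routings, the conservation $\I+\J=\K+\L$ together with the ban on $I_i=J_i=1$ rules out the spurious ones (a color of letter $1$ or $5$ is never forced through the $23$-vertex), so ``present at the $23$-vertex'' depends only on the letter.

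The case $M=D$ is handled in the same spirit and is the main obstacle, since there the move from $\J_3$ to $\J_2$ is non-local: it replaces a top-exit at the $13$-vertex (parameter $x$) by a right-exit at the $13$-vertex followed by a bounce through the $R$-vertex. I would compute the two contributions separately, an extra $x\,t^{\beta}$ from the $13$-vertex right-output and an extra $y/(x\,t^{\alpha})$ from the Type-$2$ bounce at the $R$-vertex, so that the net factor is $y\,t^{\beta-\alpha}$, and then verify by the same per-letter analysis that $\beta-\alpha=|v|-|v|_1-|v|_5$: letters $2,3,4$ contribute $1$ each, while letters $1$ and $5$ contribute $0$ because each of their admissible routings contributes ``present at $13$'' and ``Type $1$ at $R$'' in tandem. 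Establishing this path-independent bookkeeping for the non-local $D$-move, rather than the weight arithmetic itself, is the step I expect to require the most care.
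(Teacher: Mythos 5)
Your proposal is correct and follows essentially the same route as the paper's proof: the same recursive peeling into a $u$-factor, the local weight of the distinguished color, and the fixed $v$-paths; the same observation that $g$ simply moves the exit of that color from $\J_3$ to $\J_2$; and the same per-letter verification that letters $2,3,4$ each contribute a power of $t$ while $1,5$ contribute none. The only difference is emphasis: the paper computes $a=7$, $M=G$ in detail and dismisses the remaining cases as similar, whereas you explicitly carry out the $M=D$ bookkeeping (the factor $x t^{\beta}\cdot y/(x t^{\alpha}) = y t^{\beta-\alpha}$ and the check that $\beta-\alpha = |v|-|v|_1-|v|_5$ holds routing-independently), which is indeed where the care is needed and which your analysis handles correctly.
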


\begin{proof}
The proof can be summarized by the equation
\[
\begin{tikzpicture}[baseline=(current bounding box.center)]
\twobox
\draw[brightpink] (0.5,1.5)--(1,1.5);
\end{tikzpicture}
=
\begin{tikzpicture}[baseline=(current bounding box.center)]
\twobox
\draw[brightpink] (0.5,1.5)--(0.5,2);
\end{tikzpicture}
\cdot y t^{|v|-|v|_1-|v|_5}.
\]
We do the case $a = 7, M = G$ in detail.  (The other cases are similar.)  Note that there are two paths on the gauche lattice that give the letter $7$, namely
\begin{align*}
\pbullet_7 = 
\begin{tikzpicture} \gauche \draw[brightpink] (0,1.6) -- (0.5,1.1) -- (1,1.6) -- (1.4,1.6) -- (1.4,2);  \end{tikzpicture}
 \textrm{ and }
\pdiamond_7 = 
\begin{tikzpicture} \gauche \draw[brightpink] (0,1.6) -- (1,0.6) -- (1.4,0.6) -- (1.4,1.6) -- (1.4,2);  \end{tikzpicture}
 \enspace , 
\end{align*}
and two paths on the gauche lattice that give the letter $g(7) = 2$, namely
\begin{align*}
\pbullet_2 = 
\begin{tikzpicture} \gauche \draw[brightpink] (0,1.6) -- (0.5,1.1) -- (1,1.6) -- (1.4,1.6) -- (2,1.6);  \end{tikzpicture}
 \textrm{ and }
\pdiamond_2 = 
\begin{tikzpicture} \gauche \draw[brightpink] (0,1.6) -- (1,0.6) -- (1.4,0.6) -- (1.4,1.6) -- (2,1.6);  \end{tikzpicture}
 \enspace . 
\end{align*}
Let $\ell = |v|$. Choose paths $P_{k-\ell+1},...,P_k$ on the gauche lattice so that the resulting word is $v$.  Let $\alpha,\beta,\gamma$ be the number of special paths among $P_{k-\ell+1},...,P_k$ for the 12, 13, 23 vertices respectively.  Note that $\gamma = |v| - |v|_1 - |v|_5$ since $v$ is a word in the alphabet $\{1,2,3,4,5\}$.  We have 
\begin{align*}
&G^{(k)}_{(u,\pbullet_2,P_{k-\ell+1},...,P_k)}(x,y) \\
&= \left [ R^{(k-\ell-1)}_{12}(y/xt^\alpha) L^{(k-\ell-1)}_{13}(xt^\beta) L^{(k-\ell-1)}_{23}(yt^{\gamma+1}) \right ]_u \cdot yt^\gamma \cdot y/xt^\alpha \cdot G^{(\ell)}_{(P_{k-\ell+1},...,P_k)}(x,y) \\
&= yt^\gamma \cdot \left [ R^{(k-\ell-1)}_{12}(y/xt^\alpha) L^{(k-\ell-1)}_{13}(xt^\beta) L^{(k-\ell-1)}_{23}(yt^{\gamma+1}) \right ]_u \cdot y/xt^\alpha \cdot G^{(\ell)}_{(P_{k-\ell+1},...,P_k)}(x,y) \\
&= yt^\gamma \cdot G^{(k)}_{(u,\pbullet_7,P_{k-\ell+1},...,P_k)}(x,y)
\end{align*}
and
\begin{align*}
&G^{(k)}_{(u,\pdiamond_2,P_{k-\ell+1},...,P_k)}(x,y) \\
&= \left [ R^{(k-\ell-1)}_{12}(y/xt^{\alpha+1}) L^{(k-\ell-1)}_{13}(xt^{\beta+1}) L^{(k-\ell-1)}_{23}(yt^{\gamma+1}) \right ]_u \cdot (1-y/xt^\alpha) \cdot yt^\gamma \cdot G^{(\ell)}_{(P_{k-\ell+1},...,P_k)}(x,y) \\
&= yt^\gamma \cdot \left [ R^{(k-\ell-1)}_{12}(y/xt^{\alpha+1}) L^{(k-\ell-1)}_{13}(xt^{\beta+1}) L^{(k-\ell-1)}_{23}(yt^{\gamma+1}) \right ]_u \cdot (1-y/xt^\alpha) \cdot G^{(\ell)}_{(P_{k-\ell+1},...,P_k)}(x,y) \\
&= yt^\gamma \cdot G^{(k)}_{(u,\pdiamond_7,P_{k-\ell+1},...,P_k)}(x,y).
\end{align*}
\end{proof}

\indent The following corollary follows from Lemmas \ref{YBE0}-\ref{YBE2}.

\begin{cor} To prove Equation \eqref{YBE(matrices)}, it is enough to show that 
\[ G_w(x,y) = D_w(x,y)\]
for all words $w$ in the alphabet $\{1,2,3,4,5\}$. \end{cor}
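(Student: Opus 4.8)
The plan is to show that an arbitrary word $w$ in the alphabet $\{0,\ldots,13\}$ can be driven, by repeated application of Lemmas \ref{YBE0}--\ref{YBE2}, down to a word $w'$ in the restricted alphabet $\{1,2,3,4,5\}$, in such a way that $M_{w'}(x,y) = C\cdot M_w(x,y)$ for a single nonzero monomial $C$ in $y$ and $t$ that is the same for $M=G$ and $M=D$. Granting this, the corollary is immediate: if $G_{w'}=D_{w'}$ for the reduced word $w'\in\{1,2,3,4,5\}$, then $C\,G_w=G_{w'}=D_{w'}=C\,D_w$, and dividing by the nonzero monomial $C$ gives $G_w=D_w$. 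Since the text has already reduced Equation \eqref{YBE(matrices)} to proving $G_w=D_w$ for all words $w$, this finishes the job.

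The reduction table is read directly off the three lemmas. Lemma \ref{YBE0} deletes each $0$. The $f$-reduction lemma sends $6\mapsto1$, $10\mapsto3$, $12\mapsto4$ straight into $\{1,2,3,4,5\}$, and sends $9\mapsto8$, $13\mapsto11$; Lemma \ref{YBE2} sends $7\mapsto2$, $8\mapsto3$, $11\mapsto4$ into $\{1,2,3,4,5\}$, and sends $9\mapsto10$, $13\mapsto12$. Composing these, every letter outside $\{1,2,3,4,5\}$ other than $9$ and $13$ is cleared in a single step, while $9$ and $13$ are cleared in two steps (for instance $9\mapsto8\mapsto3$ and $13\mapsto11\mapsto4$). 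The crucial observation is that the scalar produced at each step --- namely $1$ for Lemma \ref{YBE0} and the $f$-reduction lemma, and $y\,t^{|v|-|v|_1-|v|_5}$ for Lemma \ref{YBE2} --- depends only on the word and not on the choice of $M\in\{G,D\}$. This is exactly what lets $G_w$ and $D_w$ descend in lockstep to $G_{w'}$ and $D_{w'}$.

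The one genuine point of care is that Lemma \ref{YBE2} may be applied only when the suffix $v$ lying to the right of the letter being reduced already belongs to $\{1,2,3,4,5\}$. I would therefore process $w$ from right to left, at each stage reducing the \emph{rightmost} letter not in $\{1,2,3,4,5\}$, so that its whole suffix has already been cleared and the hypothesis of Lemma \ref{YBE2} is satisfied. For the two-step letters $9$ and $13$ I would apply the $f$-reduction lemma first --- it imposes no condition on $v$ --- leaving an $8$ or an $11$ in the same position with the same cleared suffix, and only then invoke Lemma \ref{YBE2}. To make the induction precise I would track the statistic
\[
s(w) = |w|_0 + |w|_6 + |w|_7 + |w|_8 + |w|_{10} + |w|_{11} + |w|_{12} + 2\bigl(|w|_9 + |w|_{13}\bigr),
\]
observing that $s(w)=0$ precisely when $w$ is a word in $\{1,2,3,4,5\}$ and that each of the single reduction steps above lowers $s$ by exactly one. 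The main (and essentially the only) obstacle is this bookkeeping of the order of operations: one must check that the rightmost-first strategy keeps the suffix valid for Lemma \ref{YBE2} throughout, including across the intermediate $8$ or $11$ produced while clearing a $9$ or a $13$. Once the order is fixed, every individual step is an instance of one of the three lemmas, and the conclusion follows by induction on $s(w)$.
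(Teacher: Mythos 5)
Your proof is correct and is exactly the argument the paper intends: the corollary is stated there as an immediate consequence of Lemmas \ref{YBE0}--\ref{YBE2}, applied precisely as you apply them (delete $0$'s, send $6,10,12$ and $7,8,11$ directly into $\{1,2,3,4,5\}$, and clear $9$ and $13$ in two steps). Your right-to-left processing order and the induction statistic $s(w)$ simply make explicit the bookkeeping --- in particular the requirement that the suffix $v$ in Lemma \ref{YBE2} already lie in $\{1,2,3,4,5\}$, and the fact that each step multiplies $G_w$ and $D_w$ by the same nonzero monomial --- that the paper leaves implicit.
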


\noindent Therefore, from now on, we will assume $w$, $u$, and $v$ are words in the alphabet $\{1,2,3,4,5\}$.  

\subsection{Reducing to the case where $w$ is a word in the alphabet $\{1,2\}$} 

\indent First we show that 3's, 4's, and 5's can be removed from the right.

\begin{lem} \label{YBE4} $M_{(w,3)}(x,y) = y^2 M_w(xt,yt)$ \end{lem}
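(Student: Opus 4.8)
The plan is to append the letter $3$ as a new \emph{largest} color and then peel it off using the recursive formulas \eqref{LRrecursive} for $L^{(k+1)}$ and $R^{(k+1)}$, in exactly the style of Lemmas \ref{YBE0}--\ref{YBE2}. I would treat $M = G$ first; the case $M = D$ is handled by the identical bookkeeping, the only difference being the order $L_{23}L_{13}R_{12}$ versus $R_{12}L_{13}L_{23}$, which does not affect the scalars computed below.

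The first step is to pin down the routing of the new color. Letter $3$ means that the new color occupies both left input edges and both right output edges while being absent from the top and bottom edges. On the gauche lattice, conservation forces a single configuration: at the crossing ($12$) vertex both rails carry the color, so it is the Type 4 crossing; and at each of the two $L$-vertices ($13$ and $23$) the color enters from the left and exits to the right (horizontal), with the internal edge necessarily free of the color. I would verify that these are forced, so that no residual sum over the new color's paths survives.

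The second step is to read off the contributions from the tables in \eqref{EFweights} together with the subsequent $\tilde E, \tilde F$ tables. A horizontal path has $E$-weight $0$ and $F$-weight equal to the spectral parameter, so at vertex $13$ the recursion \eqref{LRrecursive} keeps only the $F$-branch, contributing a factor $x$ and sending $L^{(k)}_{13}(x)\mapsto L^{(k)}_{13}(xt)$; likewise vertex $23$ contributes $y$ and sends $L^{(k)}_{23}(y)\mapsto L^{(k)}_{23}(yt)$. The Type 4 crossing has $\tilde E$-weight $0$ and $\tilde F$-weight $y/x$, so the $R$-recursion keeps only the $\tilde F$-branch, contributing $y/x$ while leaving $R^{(k)}_{12}(y/x)$ with its parameter unshifted. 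The product of the three scalars is $(y/x)\cdot x\cdot y = y^2$.

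The final step is to recognize the remainder. After peeling the new color we are left with $R^{(k)}_{12}(y/x)\,L^{(k)}_{13}(xt)\,L^{(k)}_{23}(yt)$ acting on the colors of $w$; since $R$ depends only on the ratio $y/x = (yt)/(xt)$, this is precisely $G^{(k)}_w(xt,yt)$, giving $G_{(w,3)}(x,y) = y^2\,G_w(xt,yt)$, and the analogous computation yields the $D$ case. The main obstacle is the bookkeeping in the second step: one must confirm that letter $3$ determines a single configuration (so that no sum survives) and, crucially, that the crossing falls into the $\tilde F$ rather than the $\tilde E$ branch, so that the $R$-parameter is left \emph{unshifted}; this is exactly what lets the two shifted $L$-parameters combine into the clean rescaling $(x,y)\mapsto(xt,yt)$.
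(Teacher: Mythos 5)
Your proposal is correct and matches the paper's own proof: both identify the unique (forced) configuration for the letter $3$, peel off the largest color via the recursion \eqref{LRrecursive} and the weight tables to get the scalars $(y/x)\cdot x\cdot y = y^2$ with the $L$-parameters shifted to $xt, yt$ and the $R$-parameter unshifted, and conclude using $y/x = (yt)/(xt)$. The only cosmetic difference is that the paper writes out the droite case explicitly, whereas you correctly observe it is the same bookkeeping.
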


\begin{proof}
Note that there is only one path on the gauche lattice that gives the letter $3$, namely
\begin{align*}
\begin{tikzpicture} \gauche \draw[brightpink] (0,0.4) -- (1,1.4) -- (2,1.4); \draw[brightpink] (0,1.6) -- (1,0.6) -- (2,0.6);  \end{tikzpicture} \enspace .
\end{align*}
We have
\begin{align*}
G_{(w,3)}(x,y) &= \left [ R_{12}(y/x) L_{13}(xt) L_{23}(yt) \right ]_w \cdot y/x \cdot x \cdot y \\
&= y^2 \cdot \left [ R_{12}(yt/xt) L_{13}(xt) L_{23}(yt) \right ]_w = y^2 \cdot G_w(xt,yt).
\end{align*}
Note that there is only one path on the droite lattice that gives the letter $3$, namely
\begin{align*}
\begin{tikzpicture} \droite \draw[brightpink] (2,0.4) -- (1,1.4) -- (0,1.4); \draw[brightpink] (2,1.6) -- (1,0.6) -- (0,0.6);  \end{tikzpicture} 
 \enspace . 
\end{align*}
We have
\begin{align*}
D_{(w,3)}(x,y) &= \left [ L_{23}(yt) L_{13}(xt) R_{12}(y/x) \right ]_w \cdot y \cdot x \cdot y/x \\
&= y^2 \cdot \left [ L_{23}(yt) L_{13}(xt) R_{12}(yt/xt) \right ]_w = y^2 \cdot D_w(xt,yt).
\end{align*}
\end{proof}

\begin{lem} $M_{(w,4)}(x,y) = y M_w(xt,yt)$ \end{lem}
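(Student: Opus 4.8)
The plan is to transcribe the proof of Lemma~\ref{YBE4} (the letter-$3$ case), handling $M=G$ and $M=D$ in parallel, and in each case factoring the weight into the contribution of the appended (largest) color and that of the subword $w$, exactly as dictated by the recursions \eqref{LRrecursive}--\eqref{EFweights}. First I would check that, just as for the letter $3$, there is a \emph{unique} path of the appended color realizing the letter $4$ on each lattice. On the gauche lattice the color enters at $\I_2$ (the lower-left leg of the $R$-vertex) and must exit at $\J_2$; at the $R$-vertex it can only bounce to the lower-right (Type $3$), since exiting to the upper-right would be the forbidden configuration $I_i=K_i=1,\,J_i=L_i=0$, so it passes into the $13$-vertex, exits through its top (the right exit $\J_1$ being empty), and finally exits to the right of the $23$-vertex. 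On the droite lattice the unique path is the mirror image: the color enters the left of the $23$-vertex, exits its top, enters the bottom of the $13$-vertex and exits to its right, and then bounces at the top of the $R$-vertex (Type $2$) to reach $\J_2$.

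Next I would read off the appended color's weight and the induced shift on $w$ from \eqref{EFweights}. On the gauche side the appended color is a special ($\tilde F$, resp. $F$) configuration at the $R$-, $13$-, and $23$-vertices in the sense of the tables, contributing $1\cdot 1\cdot y = y$ (Type $3$ at $R$ has $\tilde F$-weight $1$; the enter-left/exit-top step at the $13$-vertex has $F$-weight $1$; the enter-bottom/exit-right step at the $23$-vertex has $F$-weight $y$). Because the color is special (an $F$-type step) at both $L$-vertices, it forces $L_{13}(x)\mapsto L_{13}(xt)$ and $L_{23}(y)\mapsto L_{23}(yt)$ on the lower colors, while Type $3$ at $R$ is a $\tilde F$ configuration and so leaves $R_{12}(y/x)$ unshifted. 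On the droite side the appended color contributes $1\cdot x\cdot (y/x)=y$ at the $23$-, $13$-, and $R$-vertices and induces the identical shifts. In both cases one obtains
\[ M_{(w,4)}(x,y) = y\,[\,\cdots\,]_w, \]
where the bracket is $R_{12}(y/x)L_{13}(xt)L_{23}(yt)$ for $G$ and $L_{23}(yt)L_{13}(xt)R_{12}(y/x)$ for $D$.

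Finally I would invoke the invariance $y/x = (yt)/(xt)$ to rewrite $R_{12}(y/x)=R_{12}(yt/xt)$, so that each bracket becomes precisely $M_w(xt,yt)$, yielding $M_{(w,4)}(x,y)=y\,M_w(xt,yt)$ as claimed. The only real subtlety, and the step I would take the most care with, is the bookkeeping of which vertices the appended color is special at, since that alone governs the $x\mapsto xt$, $y\mapsto yt$ shifts on the lower colors; the slightly counterintuitive point is that the $R$-parameter is genuinely left at $y/x$ yet still matches $M_w(xt,yt)$ through the identity $y/x=yt/xt$. Everything else is a direct adaptation of the letter-$3$ computation.
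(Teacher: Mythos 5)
Your proposal is correct and follows essentially the same route as the paper's proof: the paper's argument for this lemma consists precisely of the two diagrams you describe (the unique gauche and droite paths realizing the letter $4$) together with the factorizations $G_{(w,4)}(x,y) = y\,\left[R_{12}(y/x)L_{13}(xt)L_{23}(yt)\right]_w$ and $D_{(w,4)}(x,y) = y\,\left[L_{23}(yt)L_{13}(xt)R_{12}(y/x)\right]_w$, closed off by the observation $y/x = yt/(xt)$. The only quibble is terminological: in the paper's convention the Type $3$ (resp.\ Type $2$) configurations at the $R$-vertex are $\tilde{F}$-type and hence \emph{not} ``special'' (special at an $R$-vertex means Type $1$), but you draw exactly the right conclusion from this, namely that the $R$-matrix parameter for the lower colors is left unshifted while both $L$-parameters pick up a factor of $t$.
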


\begin{proof}
	\begin{align*}
	\begin{tikzpicture} \gauche \draw[brightpink] (0,0.4) -- (0.5,0.9) -- (1,0.4) -- (1.6,0.4) -- (1.6,1.4) -- (2,1.4);  \end{tikzpicture}
	\end{align*}
	\begin{align*}
	G_{(w,4)}(x,y) &= \left [ R_{12}(y/x) L_{13}(xt) L_{23}(yt) \right ]_w \cdot y \\
	&= y \cdot \left [ R_{12}(yt/xt) L_{13}(xt) L_{23}(yt) \right ]_w = y \cdot G_w(xt,yt)
	\end{align*}
	\begin{align*}
	\begin{tikzpicture} \droite \draw[brightpink] (0,0.4) -- (0.6,0.4) -- (0.6,1.4) -- (1,1.4) -- (1.5,0.9) -- (2,1.4);  \end{tikzpicture}
	\end{align*}
	\begin{align*}
	D_{(w,4)}(x,y) &= \left [ L_{23}(yt) L_{13}(xt) R_{12}(x,y) \right ]_w \cdot x \cdot y/x \\
	&= y \cdot \left [ L_{23}(yt) L_{13}(xt) R_{12}(xt,yt) \right ]_w = y \cdot D_w(xt,yt)
	\end{align*}
\end{proof}

\begin{lem} $M_{(w,5)}(x,y) = (x-y) M_w(xt,y)$ \end{lem}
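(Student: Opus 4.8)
The plan is to mirror the proofs of the preceding three lemmas: locate the unique lattice path carrying the letter $5$ on each of the gauche and droite lattices, peel off the contribution of the largest color, and read off the induced shift of the spectral parameters from the recursive formulas \eqref{LRrecursive}. The only genuinely new feature, as I explain at the end, is that the induced shift is \emph{asymmetric}.

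First I would pin down the paths. On the gauche lattice the largest color enters at $\I_1$, crosses the $R$-vertex diagonally down to the internal edge $\K_1$, and then passes horizontally through the bottom $L$-vertex ($13$) to exit at $\J_1$, while the top $L$-vertex ($23$) stays empty:
\begin{align*}
\begin{tikzpicture} \gauche \draw[brightpink] (0,1.6) -- (1,0.6) -- (2,0.6); \end{tikzpicture}
\end{align*}
On the droite lattice the color enters at $\I_1$, passes horizontally through the top $L$-vertex ($13$) to the internal edge $\L_1$, and then crosses the $R$-vertex diagonally down to exit at $\J_1$, the bottom $L$-vertex ($23$) being empty:
\begin{align*}
\begin{tikzpicture} \droite \draw[brightpink] (0,1.4) -- (1,1.4) -- (2,0.4); \end{tikzpicture}
\end{align*}
In each case uniqueness is immediate: since $I_3=J_2=J_3=0$ for this color, there is no alternate routing from $\I_1$ to $\J_1$.

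The key step is to factor the weight and track the shifts. At the $R$-vertex the largest color is of Type $1$, contributing $1-y/x$ (no $t$-power, since $\delta=0$ for the largest color) and, through the $\tilde E$ summand of the $R$-recursion, shifting the smaller colors' $R$-parameter to $y/(xt)$. At the occupied $L$-vertex it is a horizontal pass-through, contributing $x$ and, through the $F$ summand of the $L$-recursion, shifting that $L$-parameter to $xt$; at the empty $L$-vertex it contributes $1$ through the $E$ summand and induces no shift. Writing the computation exactly in the style of Lemma~\ref{YBE4}, I would obtain
\begin{align*}
G_{(w,5)}(x,y) &= \left[ R_{12}(y/(xt))\, L_{13}(xt)\, L_{23}(y) \right]_w \cdot (1-y/x)\cdot x \cdot 1 = (x-y)\, G_w(xt,y), \\
D_{(w,5)}(x,y) &= \left[ L_{23}(y)\, L_{13}(xt)\, R_{12}(y/(xt)) \right]_w \cdot 1 \cdot x \cdot (1-y/x) = (x-y)\, D_w(xt,y),
\end{align*}
using $(1-y/x)\,x = x-y$ and recognizing the bracketed products as $G_w$ and $D_w$ evaluated at $(xt,y)$.

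The point demanding care, and the main obstacle, is precisely this asymmetry relative to the letter $3$ and letter $4$ cases. There the largest color is special at all three vertices, so both parameters are shifted and one lands on $M_w(xt,yt)$; here the color is absent at one $L$-vertex, so only the $x$-argument is shifted while $y$ is untouched, giving $M_w(xt,y)$. The consistency check I would make explicit is that the Type~$1$ branch at the $R$-vertex forces the $\tilde E$ summand with parameter $y/(xt)$, which is exactly the smaller-color $R$-parameter one expects from the single $L$-shift $x\mapsto xt$ and the unshifted $y$; equating $R_{12}(y/(xt))\,L_{13}(xt)\,L_{23}(y)$ with the definition of $M_w$ at $(xt,y)$ is what makes the two spectral shifts compatible and yields the stated identity for both $M=G$ and $M=D$.
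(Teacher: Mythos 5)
Your proposal is correct and takes essentially the same route as the paper: you identify the same unique path carrying the letter $5$ on each lattice (Type $1$ crossing at the $R$-vertex, horizontal pass through the $13$ vertex, empty $23$ vertex), factor off the same weights $(1-y/x)\cdot x\cdot 1$, and recognize the bracketed product $R_{12}(y/(xt))\,L_{13}(xt)\,L_{23}(y)$ (resp.\ its droite analogue) as $M_w(xt,y)$, which is exactly the paper's two-line computation. The asymmetric shift $(x,y)\mapsto(xt,y)$ that you single out is indeed the only feature distinguishing this case from the letter $3$ and $4$ lemmas, and your recursion-based bookkeeping of it matches the paper's.
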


\begin{proof}
\begin{align*}
\begin{tikzpicture} \gauche \draw[brightpink] (0,1.4) -- (0.5,0.9) -- (1,0.4) -- (1.6,0.4) -- (2,0.4);  \end{tikzpicture} 
\end{align*}
\begin{align*}
G_{(w,5)}(x,y) &= \left [ R_{12}(y/xt) L_{13}(xt) L_{23}(y) \right ]_w (1-y/x)x \\
&= (x-y) \left [ R_{12}(y/xt) L_{13}(xt) L_{23}(y) \right ]_w = (x-y) G_w(xt,y)
\end{align*}
\begin{align*}
\begin{tikzpicture} \droite \draw[brightpink] (0,1.4) -- (0.6,1.4) -- (1,1.4) -- (2,0.4);  \end{tikzpicture}
\end{align*}
\begin{align*}
D_{(w,5)}(x,y) &= \left [ L_{23}(y) L_{13}(xt) R_{12}(y/xt) \right ]_w x(1-y/x) \\
&= (x-y) \left [ L_{23}(y) L_{13}(xt) R_{12}(y/xt) \right ]_w = (x-y) D_w(xt,y)
\end{align*}
\end{proof}

\indent Next we show that 3's, 4's, and 5's can be moved to the right.

\begin{lem} \label{YBE7} $M_{(u,3,1,v)}(x,y) = M_{(u,1,3,v)}(x,y)$ \end{lem}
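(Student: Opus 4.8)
The plan is to mirror the structure of the preceding reduction lemmas: peel the common suffix $v$ off the top using the recursion \eqref{LRrecursive}, reduce to a statement about the two middle colors, and verify that peeling those two colors in the two possible orders yields the same weight. On the gauche lattice the two relevant colors trace
\[ \text{letter } 3:\ \begin{tikzpicture}[baseline=(current bounding box.center)]\gauche \draw[brightpink] (0,0.4)--(1,1.4)--(2,1.4); \draw[brightpink] (0,1.6)--(1,0.6)--(2,0.6); \end{tikzpicture} \qquad \text{letter } 1:\ \begin{tikzpicture}[baseline=(current bounding box.center)]\gauche \draw[brightpink] (0,0.4)--(0.5,0.9)--(1,0.4)--(1.6,0.4)--(2,0.4); \end{tikzpicture} \]
so that letter $3$ is of Type $4$ at the R-vertex and then runs horizontally through both L-vertices, while letter $1$ is of Type $3$ at the R-vertex and runs horizontally through the $13$-vertex only, being absent from the $23$-vertex. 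The crucial qualitative point is that \emph{neither} letter is of Type $1$ at the R-vertex, hence neither is special for $R$; peeling either one therefore leaves the R-spectral ratio of the lower colors unshifted, and combined with the downward propagation of all shifts this makes the block coming from $v$ common to both words.

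First I would peel $v$ exactly as in the proof of Lemma~\ref{YBE2}: choosing paths $P=(P_{k-\ell+1},\dots,P_k)$ realizing $v$ and letting $\alpha,\beta,\gamma$ be the numbers of special paths among them for the $12,13,23$ vertices, the recursion factors both $G_{(u,3,1,v)}(x,y)$ and $G_{(u,1,3,v)}(x,y)$ as the common block $G^{(\ell)}_{(P)}(x,y)$ times the entry of $R_{12}\!\big(\tfrac{y}{xt^{\alpha}}\big)\,L_{13}(xt^{\beta})\,L_{23}(yt^{\gamma})$ indexed by $(u,3,1)$, resp.\ $(u,1,3)$. It then remains to peel the two middle colors from this reduced system, using $F_x$-weight $=x$ for a horizontal crossing and $\tilde F$-weights $1$ and $\tfrac{y}{x}$ for Types $3$ and $4$. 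Peeling in the order dictated by $(u,3,1)$ (top color, letter $1$, first) gives middle factor $(xt^{\beta})\cdot\big(\tfrac{y}{xt^{\alpha}}\big)(xt^{\beta+1})(yt^{\gamma})$ and leaves the $u$-block with vertex-$13$ parameter $xt^{\beta+2}$, vertex-$23$ parameter $yt^{\gamma+1}$, and ratio $\tfrac{y}{xt^{\alpha}}$; peeling in the order dictated by $(u,1,3)$ gives middle factor $\big(\tfrac{y}{xt^{\alpha}}\big)(xt^{\beta})(yt^{\gamma})\cdot(xt^{\beta+1})$ and leaves the \emph{identical} $u$-block. Both middle factors simplify to $x\,y^{2}\,t^{\,2\beta+\gamma+1-\alpha}$, so the two expressions agree term-by-term in $P$, giving $G_{(u,3,1,v)}=G_{(u,1,3,v)}$.

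The $D$ case is identical after reflecting the lattice, just as in the earlier lemmas, so I would record it as ``similar.'' The only genuine obstacle is the $t$-shift bookkeeping: since letter $3$ is present at the $23$-vertex but letter $1$ is not, the two colors shift the lower parameters differently at each individual step, and one must check that the \emph{accumulated} shift of the $u$-block and the \emph{product} of the two middle factors are nonetheless order-independent. The cancellation above is exactly this check, and it rests entirely on the two letters being non-Type-$1$ at the R-vertex (so the ratio is never shifted) together with the fact that interchanging which of the two adjacent colors carries the extra $23$-horizontal step only permutes, but does not alter, the total exponent of $t$.
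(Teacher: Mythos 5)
Your gauche-lattice computation is correct and is essentially the paper's: peel $v$ to get the shifts $\alpha,\beta,\gamma$, peel the two middle colors in either order, and observe that the two middle factors both equal $x y^{2} t^{2\beta+\gamma+1-\alpha}$ while the $u$-block $R_{12}(y/xt^{\alpha})L_{13}(xt^{\beta+2})L_{23}(yt^{\gamma+1})$ is the same. The problem is your treatment of $M=D$, which you dismiss as ``identical after reflecting the lattice'' on the strength of the claim that neither letter $1$ nor letter $3$ is ever of Type 1 at the R-vertex. That claim is only true on the gauche lattice. On the droite lattice the letter $1$ ($\I_2\to\J_1$) has \emph{two} realizations: it can go horizontally through the $23$-vertex and bounce at the R-vertex (Type 3), or it can go up through the $23$-vertex, horizontally through the $13$-vertex, and then \emph{cross} the R-vertex as Type 1 — and Type 1 is exactly what is special for $R$. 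So on the droite side your ``crucial qualitative point'' fails, and the entry $D_{(u,3,1,v)}$ is a sum over two tuples of paths, not one.

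This is not a cosmetic omission, because in the Type-1 tuple the shift bookkeeping is genuinely order-dependent in a way your argument does not address. Peeling the letter-$1$ color first (word $(u,3,1,v)$) shifts the R-ratio seen by the letter-$3$ color, whose Type-4 weight becomes $\frac{y}{xt^{\alpha+1}}$, while its $13$- and $23$-weights pick up shifts to $xt^{\beta+1}$ and $yt^{\gamma+1}$; peeling the letter-$3$ color first (word $(u,1,3,v)$) leaves its Type-4 weight at $\frac{y}{xt^{\alpha}}$ with unshifted $xt^{\beta}$, $yt^{\gamma}$, while the letter-$1$ factor becomes $xt^{\beta+1}\bigl(1-\frac{y}{xt^{\alpha}}\bigr)$. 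The two products do agree — both have total $t$-exponent $2\beta+\gamma+1-\alpha$ against the common factor $\bigl(1-\frac{y}{xt^{\alpha}}\bigr)$, and the $u$-block is $R_{12}(y/xt^{\alpha+1})L_{13}(xt^{\beta+2})L_{23}(yt^{\gamma+2})$ in both orders — but this is a compensation between a lost power of $t$ in the R-ratio and gained powers in the L-parameters, precisely the kind of cancellation your stated reason says cannot occur. The paper's proof of Lemma \ref{YBE7} carries out this two-tuple verification explicitly for $D$; your proposal as written would stop short of it, so the $D$ half of the lemma is unproven.
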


\begin{proof}
We do the case $M = G$ first.  Let $\ell = |v|$. Choose paths $P_{k-\ell+1},...,P_k$ on the gauche lattice so that the resulting word is $v$.  Let $\alpha,\beta,\gamma$ be the number of special paths among $P_{k-\ell+1},...,P_k$ for the 12, 13, 23 vertices respectively.  Note that there is only one tuple of paths on the gauche lattice so that the resulting word is $(3,1)$, namely
\begin{align*}
(\gbullet,\bbullet) = 
\begin{tikzpicture} \gauche \draw[brown] (0,0.4) -- (0.5,0.9) -- (1,0.4) -- (1.6,0.4) -- (2,0.4); \draw[green] (0,0.6) -- (1,1.6) -- (2,1.6); \draw[green] (0,1.6) -- (1,0.6) -- (2,0.6);  \end{tikzpicture}
 \enspace. 
\end{align*}
We have
\begin{align*}
&G_{(u,\gbullet,\bbullet,P_{k-\ell+1},...,P_k)}(x,y) \\
&= \left [ R_{12}(y/xt^\alpha) L_{13}(xt^{\beta+2}) L_{23}(yt^{\gamma+1}) \right ]_u \cdot \textcolor{green}{\frac{y}{xt^\alpha}xt^{\beta+1}yt^\gamma} \textcolor{brown}{xt^\beta} \cdot G_{(P_{k-\ell+1},...,P_k)}(x,y) \\
&= \left [ R_{12}(y/xt^\alpha) L_{13}(xt^{\beta+2}) L_{23}(yt^{\gamma+1}) \right ]_u \cdot \textcolor{brown}{xt^{\beta+1}} \textcolor{green}{\frac{y}{xt^\alpha}xt^\beta yt^\gamma} \cdot G_{(P_{k-\ell+1},...,P_k)}(x,y) \\
&= G_{(u,\bbullet,\gbullet,P_{k-\ell+1},...,P_k)}(x,y).
\end{align*}
The case $M = D$ is similar, except there are two tuples of paths to consider.
\begin{align*}
(\gbullet,\bbullet) = 
\begin{tikzpicture} \droite \draw[brown] (0.6,0) -- (0.6,1.4) -- (1,1.4) -- (2,0.4); \draw[green] (2,0.6) -- (1,1.6) -- (0,1.6); \draw[green] (2,1.6) -- (1,0.6) -- (0,0.6);  \end{tikzpicture}
 \enspace, \enspace
(\gbullet,\bdiamond) = 
\begin{tikzpicture} \droite \draw[brown] (0.6,0) -- (0.6,0.4) -- (1,0.4) -- (1.5,0.9) -- (2,0.4); \draw[green] (2,0.6) -- (1,1.6) -- (0,1.6); \draw[green] (2,1.6) -- (1,0.6) -- (0,0.6);  \end{tikzpicture} 
\end{align*}
\begin{align*}
&D_{(u,\gbullet,\bbullet,P_{k-\ell+1},...,P_k)}(x,y) \\
&= \left [ L_{23}(yt^{\gamma+2}) L_{13}(xt^{\beta+2}) R_{12}(y/xt^{\alpha+1}) \right ]_u \cdot \textcolor{green}{yt^{\gamma+1}xt^{\beta+1}\frac{y}{xt^{\alpha+1}}} \textcolor{brown}{xt^\beta\left(1-\frac{y}{xt^\alpha}\right)} \cdot D_{(P_{k-\ell+1},...,P_k)}(x,y) \\
&= D_{(P_{k-\ell+1},...,P_k)}(x,y) \cdot \textcolor{brown}{xt^{\beta+1}\left(1-\frac{y}{xt^\alpha}\right)} \textcolor{green}{yt^\gamma xt^\beta \frac{y}{xt^\alpha}} \cdot \left [ L_{23}(yt^{\gamma+2}) L_{13}(xt^{\beta+2}) R_{12}(y/xt^{\alpha+1}) \right ]_u \\
&= D_{(u,\bbullet,\gbullet,P_{k-\ell+1},...,P_k)}(x,y), \\
&D_{(u,\gbullet,\bdiamond,P_{k-\ell+1},...,P_k)}(x,y) \\
&= D_{(P_{k-\ell+1},...,P_k)}(x,y) \cdot \textcolor{green}{yt^{\gamma+1}xt^\beta\frac{y}{xt^\alpha}} \textcolor{brown}{yt^\gamma} \cdot \left [ L_{23}(yt^{\gamma+2}) L_{13}(xt^{\beta+1}) R_{12}(y/xt^\alpha) \right ]_u \\
&= D_{(P_{k-\ell+1},...,P_k)}(x,y) \cdot \textcolor{brown}{yt^{\gamma+1}} \textcolor{green}{yt^\gamma xt^\beta\frac{y}{xt^\alpha}} \cdot \left [ L_{23}(yt^{\gamma+2}) L_{13}(xt^{\beta+1}) R_{12}(y/xt^\alpha) \right ]_u \\
&= D_{(u,\bdiamond,\gbullet,P_{k-\ell+1},...,P_k)}(x,y)
\end{align*}
\end{proof}

\indent Lemmas \ref{YBE8}-\ref{YBE12} are proven in a similar manner as Lemma \ref{YBE7}.   

\begin{lem} \label{YBE8} $M_{(u,3,2,v)}(x,y) = M_{(u,2,3,v)}(x,y)$ \end{lem}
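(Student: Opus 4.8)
The plan is to imitate the proof of Lemma \ref{YBE7} essentially verbatim, replacing the letter $1$ by the letter $2$. First I would record the lattice-path realizations of the two letters being swapped. Exactly as in the proof of Lemma \ref{YBE4}, the letter $3$ has a \emph{unique} realization on each of the gauche and droite lattices: the doubled strand that passes through the R-vertex ($12$) as a Type-$4$ crossing and then runs horizontally through both L-vertices ($13$ and $23$). The letter $2$, on the other hand, has the \emph{two} realizations $\pbullet_2$ and $\pdiamond_2$ already exhibited in the example preceding Definition stating $M_w$, where $\pbullet_2$ is a Type-$2$ bounce at the R-vertex followed by a horizontal step through vertex $23$, while $\pdiamond_2$ is a Type-$1$ crossing at the R-vertex followed by a special (vertical) step at vertex $13$ and then a horizontal step through vertex $23$. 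Thus, unlike Lemma \ref{YBE7} whose $G$-case had a single tuple, both the $G$- and $D$-cases here will involve two tuples, so the present lemma is structurally closest to the $D$-case of Lemma \ref{YBE7}.

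Next I would set $\ell=|v|$, fix paths $P_{k-\ell+1},\dots,P_k$ realizing $v$, and let $\alpha,\beta,\gamma$ be the number of these paths that are special for the vertices $12$, $13$, $23$ respectively. For the ordering $(u,3,2,v)$ on the gauche lattice I would write down the two tuples $(\gbullet_3,\pbullet_2)$ and $(\gbullet_3,\pdiamond_2)$ (where $\gbullet_3$ denotes the unique realization of letter $3$), and for $(u,2,3,v)$ the mirrored tuples $(\pbullet_2,\gbullet_3)$ and $(\pdiamond_2,\gbullet_3)$. For each tuple I would read off the scalar weight contributed by the two inserted colors together with the shifts in the spectral parameters passed to the reduced operators $R^{(k-\ell-2)}_{12}$, $L^{(k-\ell-2)}_{13}$, $L^{(k-\ell-2)}_{23}$ acting on $u$: the Type-$4$ crossing of the letter-$3$ color contributes a factor of the form $\tfrac{y}{x}t^{(\cdot)}$ and bumps both L-parameters by $t$, while the letter-$2$ color contributes either a Type-$2$ factor ($\pbullet_2$) or a Type-$1$ factor times a special L-weight ($\pdiamond_2$) and bumps the appropriate parameters. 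I would then match $\pbullet_2\leftrightarrow\pbullet_2$ and $\pdiamond_2\leftrightarrow\pdiamond_2$ and simply rearrange the commuting scalar factors, exactly as the brown/green factors were reordered in Lemma \ref{YBE7}.

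The step I expect to be the main obstacle is the $t$-exponent bookkeeping of the $\delta$-type factors. Since the color carrying letter $2$ is the \emph{larger} of the two inserted colors in $(u,3,2,v)$ but the \emph{smaller} in $(u,2,3,v)$, the powers $t^{\delta}$ attached to the Type-$1$, Type-$2$, and Type-$4$ vertices differ between the two sides; the content of the lemma is that these differences are precisely absorbed by the corresponding change in which L-parameter is shifted (for example $xt^{\beta}$ versus $xt^{\beta+1}$, and the analogous shifts in $y$). I expect, in parallel with the $D$-case of Lemma \ref{YBE7}, that each realization matches its mirror \emph{term by term}, so that no cancellation between the two realizations of letter $2$ is required and the verification collapses to a direct comparison of monomials in $x$, $y$, and $t$ coming from the local weights. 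I would carry out the $M=G$ case in detail and then note that the $M=D$ case is identical up to placing the R-vertex on the right and correspondingly shifting the other parameter, just as in Lemma \ref{YBE7}.
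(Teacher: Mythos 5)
Your approach is essentially the paper's: enumerate the lattice-path realizations of the two inserted letters, track the special-path counts $\alpha,\beta,\gamma$ and the induced spectral-parameter shifts, and verify the identity configuration-by-configuration with no cancellation between realizations — this is exactly what the paper's tabular proof of Lemma \ref{YBE8} does, and your term-by-term matching prediction is correct. One structural claim is wrong, however: on the droite lattice the letter $2$ has a \emph{unique} realization (the path entering at $\I_1$ must exit vertex $13$ to the right, since exiting upward would occupy $\J_3$, and must then bounce at the $12$ crossing to reach $\J_2$); the two realizations $\pbullet_2$, $\pdiamond_2$ are a gauche-lattice phenomenon. So the case structure is the mirror of Lemma \ref{YBE7}, not a doubling of it: here the $G$-case involves two tuples and the $D$-case only one, which is why the paper's table has two gauche columns and a single droite column. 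This miscount does not damage the argument — executing your plan would simply reveal one fewer droite case to check, and each remaining configuration matches its mirror term by term, with the $t$-exponent differences absorbed by the parameter shifts exactly as you anticipate.
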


\begin{proof}

\noindent

\noindent \begin{tabular}{ccr}
\begin{tikzpicture} \gauche \draw[brown] (0,1.4) -- (1,0.4) -- (1.6,0.4) -- (1.6,1.4) -- (2,1.4); \draw[green] (0,0.6) -- (1,1.6) -- (2,1.6); \draw[green] (0,1.6) -- (1,0.6) -- (2,0.6);  \end{tikzpicture}
& \begin{tikzpicture} \gauche \draw[brown] (0,1.4) -- (0.5,0.9) -- (1,1.4) -- (2,1.4); \draw[green] (0,0.6) -- (1,1.6) -- (2,1.6); \draw[green] (0,1.6) -- (1,0.6) -- (2,0.6);  \end{tikzpicture}
& \begin{tikzpicture} \droite \draw[brown] (0,1.4) -- (1,1.4) -- (1.5,0.9) -- (2,1.4); \draw[green] (2,0.6) -- (1,1.6) -- (0,1.6); \draw[green] (2,1.6) -- (1,0.6) -- (0,0.6);  \end{tikzpicture} \\ \\
$\textcolor{green}{\frac{y}{xt^{\alpha+1}}xt^{\beta+1}yt^{\gamma+1}} \textcolor{brown}{\left(1-\frac{y}{xt^\alpha} \right)yt^\gamma}$
& $\textcolor{green}{\frac{y}{xt^\alpha}xt^\beta yt^{\gamma+1}} \textcolor{brown}{\frac{y}{xt^\alpha}yt^\gamma}$  
& $\textcolor{green}{yt^\gamma xt^{\beta+1} \frac{y}{xt^\alpha}} \textcolor{brown}{xt^\beta \frac{y}{xt^\alpha}}$ \\
= $\textcolor{brown}{\left(1-\frac{y}{xt^\alpha} \right)yt^{\gamma+1}} \textcolor{green}{\frac{y}{xt^\alpha}xt^\beta yt^\gamma}$ 
& $=\textcolor{brown}{\frac{y}{xt^\alpha}yt^{\gamma+1}} \textcolor{green}{\frac{y}{xt^\alpha}xt^\beta yt^\gamma}$
& $=\textcolor{brown}{xt^{\beta+1} \frac{y}{xt^\alpha}} \textcolor{green}{yt^\gamma xt^\beta \frac{y}{xt^\alpha}}$
\end{tabular} 

\end{proof}

\begin{lem} $M_{(u,4,1,v)}(x,y) = \frac{1}{t} M_{(u,1,4,v)}(x,y)$ \end{lem}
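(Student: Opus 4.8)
The plan is to mimic the proof of Lemma~\ref{YBE7} verbatim: peel off the two colors carrying the adjacent letters $4$ and $1$, and absorb the remainder of the word into lower-rank $R$ and $L$ operators with shifted spectral parameters. Concretely, I would set $\ell = |v|$, fix a choice of paths $P_{k-\ell+1},\dots,P_k$ on the relevant ($G$ or $D$) lattice realizing the word $v$, and let $\alpha,\beta,\gamma$ be the number of those paths that are special at the $12$, $13$, and $23$ vertices respectively. Since $v$ is a word in $\{1,2,3,4,5\}$, these counts depend only on $v$, exactly as in Lemma~\ref{YBE7}.

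Next I would enumerate the path tuples realizing the pair $(4,1)$ and the pair $(1,4)$ at the two peeled positions, separately on the gauche and droite lattices. The letter $1$ is a "pass-through" letter (a cup at the $R$-vertex followed by a horizontal step through the lower $L$-vertex), whereas the letter $4$ is a \emph{crossing} letter, entering at the lower left and exiting at the upper right; as in the droite case of Lemma~\ref{YBE7}, such a letter admits more than one path realization on a given lattice (one routed through the $23$ vertex after crossing at $R$, one routed through the $13$ vertex). For each tuple I would read off, from the recursive weights in \eqref{LRrecursive} and \eqref{EFweights}, the product of the local $R$- and $L$-face weights contributed by the two peeled colors, together with the shifted arguments $xt^{\beta+\cdots}$, $yt^{\gamma+\cdots}$, $(y/x)t^{-\alpha+\cdots}$ fed into $[\,\cdot\,]_u$.

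I would then compare the total weight of $(u,4,1,v)$ with that of $(u,1,4,v)$, summing over the path tuples on each side and treating $M=G$ and $M=D$ in turn. The expectation is that all dependence on $x$, $y$, $\alpha$, $\beta$, $\gamma$ matches between the two sides, and that the only discrepancy is a single power of $t$: interchanging the roles of the two colors changes, by exactly one, the number of larger colors appearing to the right of the pass-through color at one $L$-face (the crossing letter $4$ sends its color to the opposite row, altering the relative vertical order of the two colors). This is the source of the claimed factor $\tfrac{1}{t}$.

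The main obstacle is the same bookkeeping that pervades all of these reduction lemmas: correctly listing the multiple path realizations of the crossing letter $4$ (and, on the lattice where it occurs, of the pass-through letter), and then tracking the powers of $t$ arising both from the special-path shifts $t^{\alpha},t^{\beta},t^{\gamma}$ and from the relative ordering of the two peeled colors at each vertex. Once every tuple is enumerated, verifying that these contributions combine to give precisely $\tfrac{1}{t}$, with all $x$-, $y$-, and shift-dependence cancelling, is mechanical.
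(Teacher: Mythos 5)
Your overall strategy is exactly the paper's: fix the tail $v$, record the special-path counts $\alpha,\beta,\gamma$, peel off the two colors carrying the adjacent letters, note that the bracket $[\,\cdot\,]_u$ and the $v$-factor are unchanged when the two peeled colors swap roles, and verify the identity configuration by configuration. Your proposed mechanism for the factor $\tfrac{1}{t}$ is also the correct one: in every allowed configuration the letter-$1$ path exits an $L$-face to the right in the presence of the letter-$4$ path, so exchanging which of the two peeled colors is larger changes that one face weight by exactly one power of $t$, while the letter-$4$ path's own factors are unaffected in total.

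The gap is in the enumeration itself, which is the actual content of the proof, and as stated it is backwards. A path of a fixed color can never cross an $R$-vertex from the lower-left to the upper-right: that is precisely the excluded configuration $I_i = K_i = 1$, $J_i = L_i = 0$ in the definition of $R_{y/x}$ (none of the five allowed per-color types has this connectivity). Hence the letter $4$ has a \emph{unique} realization on each lattice: on the gauche lattice it bounces at $R$ (Type 3), turns up inside the $13$-face and exits right through the $23$-face; on the droite lattice it climbs through the $23$-face, exits the $13$-face to the right, and bounces at the top of $R$ (Type 2). The two-fold multiplicity you attribute to the letter $4$ in fact belongs to the letter $1$ on the droite lattice, which is not a single ``pass-through'': it can either run horizontally through the $23$-face and bounce at the bottom of $R$ (Type 3), or climb through the $23$- and $13$-faces and cross \emph{downward} at $R$ (Type 1, weight $1-\tfrac{y}{xt^{\delta}}$). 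Enumerating as you describe, you would include a configuration of weight zero and, worse, omit the genuine droite configuration carrying the factor $\bigl(1-\tfrac{y}{xt^{\alpha}}\bigr)$ --- the paper's second droite picture --- so your per-configuration identities would not sum to the full matrix entries $D_{(u,4,1,v)}$ and $D_{(u,1,4,v)}$. Once the enumeration is corrected (one gauche configuration and two droite configurations, exactly the paper's three pictures), the weight comparisons go through precisely as you expect and yield the factor $\tfrac{1}{t}$.
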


\begin{proof}

\noindent

\noindent \begin{tabular}{rrr}
\begin{tikzpicture} \gauche \draw[brown] (0,0.4) -- (0.5,0.9) -- (1,0.4) -- (1.6,0.4) -- (2,0.4); \draw[green] (0,0.6) -- (0.5,1.1) -- (1,0.6) -- (1.4,0.6) -- (1.4,1.6) -- (2,1.6);  \end{tikzpicture}
& \begin{tikzpicture} \droite \draw[brown] (0,0.4) -- (0.6,0.4) -- (0.6,1.4) -- (1,1.4) -- (2,0.4); \draw[green] (0,0.6) -- (0.4,0.6) -- (0.4,1.6) -- (1,1.6) -- (1.5,1.1) -- (2,1.6);  \end{tikzpicture}
& \begin{tikzpicture} \droite \draw[brown] (0,0.4) -- (1,0.4) -- (1.5,0.9) -- (2,0.4); \draw[green] (0,0.6) -- (0.4,0.6) -- (0.4,1.6) -- (1,1.6) -- (1.5,1.1) -- (2,1.6);  \end{tikzpicture} \\ \\
$\textcolor{green}{yt^\gamma} \textcolor{brown}{xt^\beta}$ 
& $\textcolor{green}{xt^{\beta+1} \frac{y}{xt^{\alpha+1}}} \textcolor{brown}{xt^\beta \left ( 1 - \frac{y}{xt^\alpha} \right )}$
& $\textcolor{green}{xt^\beta \frac{y}{xt^\alpha}} \textcolor{brown}{yt^\gamma}$\\
$=\frac{1}{t} \textcolor{brown}{xt^{\beta+1}} \textcolor{green}{yt^\gamma}$ 
& $=\frac{1}{t} \textcolor{brown}{xt^{\beta+1}\left ( 1 - \frac{y}{xt^\alpha} \right )} \textcolor{green}{xt^\beta \frac{y}{xt^\alpha}}$
& $=\frac{1}{t} \textcolor{brown}{yt^{\gamma+1}} \textcolor{green}{xt^\beta \frac{y}{xt^\alpha}}$
\end{tabular}
\end{proof}

\begin{lem} $M_{(u,4,2,v)}(x,y) = M_{(u,2,4,v)}(x,y)$ \end{lem}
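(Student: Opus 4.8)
The plan is to follow verbatim the strategy of Lemmas~\ref{YBE7} and~\ref{YBE8}: peel the largest colors off the right end of the word using the recursive weights \eqref{LRrecursive}--\eqref{EFweights}, thereby reducing the identity to a statement about the two colors carrying the letters $4$ and $2$. Concretely, set $\ell=|v|$ and choose paths $P_{k-\ell+1},\dots,P_k$ realizing $v$ on the relevant lattice; let $\alpha,\beta,\gamma$ be the numbers of these paths that are special at the $12$, $13$, and $23$ vertices, respectively. Every configuration contributing to $M_{(u,4,2,v)}$ or $M_{(u,2,4,v)}$ factors through these paths, so after stripping the common (suitably $t$-shifted) inner factor $\bigl[R_{12}(y/xt^{\alpha})\,L_{13}(xt^{\beta})\,L_{23}(yt^{\gamma})\bigr]_u$ together with the common $G^{(\ell)}$ or $D^{(\ell)}$ factor for $v$, it remains only to compare the scalar weights contributed by the two middle colors.

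For $M=G$ I would enumerate the path tuples realizing $(4,2)$ and $(2,4)$ on the gauche lattice, read their weights off \eqref{LRrecursive}, and match them by commuting the scalar colored factors, exactly as in the displayed computation in the proof of Lemma~\ref{YBE7}. The letter-$4$ color turns upward through both boxes, so it is present (hence special) at the $13$ and $23$ vertices and contributes there a factor of the shape $x^{0}\,y\,t^{\gamma}$, while the letter-$2$ color runs horizontally through the upper box only, contributing a factor $y\,t^{\gamma'}$ together with its $R$-weight. For $M=D$ the same analysis applies, but—as in Lemma~\ref{YBE8}—there are two path tuples to track (the letter-$2$ color may either bounce or cross at the $12$ vertex), and one checks that the sum of their weights agrees on the two sides after reindexing the bumps.

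The delicate point, and the one I expect to be the main obstacle, is the bookkeeping of the powers of $t$. Swapping the two middle colors interchanges which one is counted as ``larger'' in the exponents $\delta_i$ governing the $R$- and $L$-weights, and it simultaneously shifts, by $\pm1$, the bumps $\beta$ and $\gamma$ felt by the inner operators. For the pair $(4,2)$ these two effects cancel—precisely because both letters exit to the right at the $23$ vertex while only the letter-$4$ color is present at the $13$ vertex—so no residual power of $t$ survives, in contrast with the pair $(4,1)$, where the analogous accounting leaves a factor $t^{-1}$. Verifying this cancellation uniformly across both lattices and over all path tuples is routine but must be done carefully; once it is checked, the weights of $(u,4,2,v)$ and $(u,2,4,v)$ coincide term by term and the lemma follows.
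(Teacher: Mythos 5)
Your overall strategy is the same as the paper's: use the recursive weights \eqref{LRrecursive}--\eqref{EFweights} to strip off the common inner factor $\left[R_{12}L_{13}L_{23}\right]_u$ (with $t$-shifts determined by which of the paths realizing $v$ are special at each vertex) and then compare, configuration by configuration, the scalar weights carried by the two colors realizing the letters $4$ and $2$. However, your case analysis is backwards, and this is a genuine error rather than a presentational one. On the \emph{gauche} lattice it is the letter-$2$ path that has two realizations: it may bounce at the $R$ vertex (Type 2) and run through the upper box only, or cross at the $R$ vertex (Type 1) and climb through both boxes; the letter-$4$ path is forced to bounce (Type 3), since a path entering at the lower-left edge of the $R$ vertex cannot cross (the configuration $I_i=K_i=1$, $J_i=L_i=0$ has weight zero). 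This yields the two gauche configurations in the paper's table. On the \emph{droite} lattice, by contrast, both paths are forced: if the letter-$2$ path crossed at the $R$ vertex it would exit at $\J_1$ rather than $\J_2$, i.e.\ it would realize the letter $5$, not the letter $2$; so there is exactly one droite configuration. Your appeal to Lemma~\ref{YBE8} is likewise misplaced: there, too, the two-fold choice occurs on the gauche side. It is in Lemma~\ref{YBE7}, where the relevant letter is $1$ (which exits at $\J_1$), that the droite lattice carries the two tuples.

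Executed as written, your plan would therefore go wrong on both sides: for $M=D$ you would sum over a phantom ``crossing'' tuple that violates the boundary conditions, while for $M=G$ your description of the letter-$2$ color (``runs horizontally through the upper box only'') covers only the bounce realization and omits the crossing realization, whose weight carries the factor $\left(1-\tfrac{y}{xt^{\alpha}}\right)$ and must also be checked to balance. Note also that for this pair no summation or reindexing is needed at all: the two orderings match weight-for-weight on each of the three configurations separately (genuine sums of tuples only enter in Lemmas~\ref{YBE14} and~\ref{YBE15}). The cancellation mechanism you identify is essentially correct -- exactly one factor of $t$ arises at the vertex where both colors exit to the right, coming from the smaller color seeing the larger, and this is symmetric under the swap, while the exponents $\delta_i$ are unaffected because the letter-$4$ color is never of Type 1 at the $R$ vertex -- but the ``routine'' bookkeeping you defer is the entire content of the lemma: it is precisely the three-column table of weights that constitutes the paper's proof, so as it stands your proposal is a (miscalibrated) plan rather than a proof.
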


\begin{proof}

\noindent

\noindent \begin{tabular}{rrr}
\begin{tikzpicture} \gauche \draw[brown] (0,1.4) -- (0.5,0.9) -- (1,1.4) -- (2,1.4); \draw[green] (0,0.6) -- (0.5,1.1) -- (1,0.6) -- (1.4,0.6) -- (1.4,1.6) -- (2,1.6);  \end{tikzpicture}
& \begin{tikzpicture} \gauche \draw[brown] (0,1.4) -- (1,0.4) -- (1.6,0.4) -- (1.6,1.4) -- (2,1.4); \draw[green] (0,0.6) -- (0.5,1.1) -- (1,0.6) -- (1.4,0.6) -- (1.4,1.6) -- (2,1.6);  \end{tikzpicture}
& \begin{tikzpicture} \droite \draw[brown] (0,1.4) -- (1,1.4) -- (1.5,0.9) -- (2,1.4); \draw[green] (0,0.6) -- (0.4,0.6) -- (0.4,1.6) -- (1,1.6) -- (1.5,1.1) -- (2,1.6);  \end{tikzpicture} \\ \\
$\textcolor{green}{yt^{\gamma+1}} \textcolor{brown}{\frac{y}{xt^\alpha}yt^\gamma}$ 
& $\textcolor{green}{yt^{\gamma+1}} \textcolor{brown}{\left ( 1 - \frac{y}{xt^\alpha} \right ) yt^\gamma}$ 
& $\textcolor{green}{xt^{\beta+1} \frac{y}{xt^\alpha}} \textcolor{brown}{xt^\beta \frac{y}{xt^\alpha}}$ \\
$=\textcolor{brown}{\frac{y}{xt^\alpha} yt^{\gamma+1}} \textcolor{green}{yt^\gamma}$ 
& $=\textcolor{brown}{\left ( 1 - \frac{y}{xt^\alpha} \right ) yt^{\gamma+1}} \textcolor{green}{yt^\gamma}$
& $=\textcolor{brown}{xt^{\beta+1} \frac{y}{xt^\alpha}} \textcolor{green}{xt^\beta \frac{y}{xt^\alpha}}$
\end{tabular}
\end{proof}

\begin{lem} $M_{(u,5,1,v)}(x,y) = M_{(u,1,5,v)}(x,y)$ \end{lem}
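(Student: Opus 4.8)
The plan is to prove the identity separately for $M=G$ and $M=D$, following exactly the template of Lemma \ref{YBE7} and the recursions \eqref{LRrecursive}, \eqref{EFweights}. As in those proofs, I fix paths $P_{k-\ell+1},\dots,P_k$ realizing the suffix $v$ and let $\alpha,\beta,\gamma$ be the number of these that are special (Type $1$) at the $R$-vertex $12$, present at the $L$-vertex $13$, and present at the $L$-vertex $23$, respectively. The letters $5$ and $1$ to be interchanged are carried by the two adjacent colors $c_1<c_2$ sitting just above $u$. For each lattice I would enumerate the path configurations of these two colors that realize the prescribed letters, record the local weight each contributes (using the already–shifted spectral parameters) together with the shift the pair induces on the parameters seen by $u$, and then check that the total is unchanged under swapping which of $c_1,c_2$ carries the $5$.

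For $M=G$ each letter has a unique realization: a letter-$5$ color enters at $\I_1$, crosses down through the $R$-vertex (Type $1$, weight $1-\tfrac{y}{xt^\delta}$) and runs horizontally through the $13$ square, while a letter-$1$ color enters at $\I_2$, bounces at the $R$-vertex (Type $3$, weight $1$) and also runs horizontally through $13$; neither touches $23$. In both words the letter-$5$ color is the only Type-$1$ color among the two, so its $R$-weight is $1-\tfrac{y}{xt^\alpha}$ and the shifts induced on $u$ are the same, namely $R_{12}(y/xt^{\alpha+1})$, $L_{13}(xt^{\beta+2})$, $L_{23}(yt^{\gamma})$. The two horizontal $13$-weights have combined $t$-power $\beta+(\beta+1)=2\beta+1$ regardless of the ordering, so in each word the total equals $(1-\tfrac{y}{xt^\alpha})\,x^2t^{2\beta+1}$ times the same $u$-bracket, giving $G_{(u,5,1,v)}=G_{(u,1,5,v)}$.

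For $M=D$ the letter $5$ again has a unique realization (horizontal through $13$, Type $1$ at $R$), but the letter $1$ now admits two realizations: a \emph{straight} one (horizontal through $23$, Type $3$ at $R$) and a \emph{turning} one (a left-to-top turn at $23$, a bottom-to-right turn at $13$, and Type $1$ at $R$). I would therefore expand both $D_{(u,5,1,v)}$ and $D_{(u,1,5,v)}$ as a sum over the routing of the letter-$1$ color and match the terms in pairs. In the straight routing the two colors contribute the factors $(1-\tfrac{y}{xt^\alpha})xt^{\beta}$ and $yt^{\gamma}$ and shift $u$ by $(\alpha{+}1,\beta{+}1,\gamma{+}1)$; in the turning routing they contribute $(1-\tfrac{y}{xt^{\alpha+1}})xt^{\beta+1}$ and $(1-\tfrac{y}{xt^\alpha})xt^{\beta}$ and shift $u$ by $(\alpha{+}2,\beta{+}2,\gamma{+}1)$. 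In each routing the unordered pair of scalar factors and the induced $u$-shift depend only on the unordered pair of letters, not on which of $c_1,c_2$ carries the $5$; hence the terms agree one-for-one and the two sums are equal.

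I expect the droite side to be the main obstacle. There one must correctly enumerate the two routings of the letter-$1$ color (the gauche analogue of the downward turn is forbidden because a path cannot exit through an input edge, which is why $G$ is simpler), and then verify that the product of the $R$-factors $(1-\tfrac{y}{xt^\delta})$ together with the compensating powers of $t$ balances after the swap. The delicate point is that in the turning routing both colors are Type $1$ at $R$, so each increments the other's $\delta$; one must confirm that the factors $(1-\tfrac{y}{xt^{\alpha+1}})(1-\tfrac{y}{xt^\alpha})$ and the $13$-shifts reattach to the opposite colors without changing the overall product. Once the correct configurations are identified, the bookkeeping of $\alpha,\beta,\gamma$ and the gauche case are routine.
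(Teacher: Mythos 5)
Your proposal is correct and takes essentially the same route as the paper: the paper's proof is precisely a table exhibiting your three cases (the unique gauche configuration, and the straight/turning droite routings of the letter $1$ against the unique letter-$5$ path), with exactly the scalar factors $\left(1-\frac{y}{xt^{\alpha}}\right)xt^{\beta}\cdot yt^{\gamma}$, resp.\ $\left(1-\frac{y}{xt^{\alpha+1}}\right)xt^{\beta+1}\cdot\left(1-\frac{y}{xt^{\alpha}}\right)xt^{\beta}$, and the same induced shifts on the $u$-bracket that you record. One phrase in your last paragraph is loose --- in the turning routing only the \emph{larger} color increments the smaller one's $\delta$, not ``each the other's'' --- but your displayed factors are the correct ones, so the argument is unaffected.
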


\begin{proof}

\noindent

\noindent \begin{tabular}{rrr}
\begin{tikzpicture} \gauche \draw[brown] (0,0.4) -- (0.5,0.9) -- (1,0.4) -- (1.6,0.4) -- (2,0.4); \draw[green] (0,1.6) -- (1,0.6) -- (2,0.6);  \end{tikzpicture}
& \begin{tikzpicture} \droite \draw[brown] (0,0.4) -- (0.6,0.4) -- (0.6,1.4) -- (1,1.4) -- (2,0.4); \draw[green] (0,1.6) -- (1,1.6) -- (2,0.6);  \end{tikzpicture}
& \begin{tikzpicture} \droite \draw[brown] (0,0.4) -- (1,0.4) -- (1.5,0.9) -- (2,0.4); \draw[green] (0,1.6) -- (1,1.6) -- (2,0.6);  \end{tikzpicture} \\ \\
$\textcolor{green}{\left ( 1 - \frac{y}{xt^\alpha} \right ) xt^{\beta+1}} \textcolor{brown}{xt^\beta}$
& $\textcolor{green}{xt^{\beta+1} \left ( 1 - \frac{y}{xt^{\alpha+1}} \right )} \textcolor{brown}{xt^\beta \left ( 1 - \frac{y}{xt^\alpha} \right )}$
& $\textcolor{green}{xt^\beta \left ( 1 - \frac{y}{xt^\alpha} \right )} \textcolor{brown}{yt^\gamma}$ \\
$=\textcolor{brown}{xt^{\beta+1}} \textcolor{green}{\left ( 1 - \frac{y}{xt^\alpha} \right ) xt^\beta}$
& $=\textcolor{brown}{xt^{\beta+1}\left ( 1 - \frac{y}{xt^{\alpha+1}} \right )} \textcolor{green}{xt^\beta \left ( 1 - \frac{y}{xt^\alpha} \right )}$
& $=\textcolor{brown}{yt^\gamma} \textcolor{green}{xt^\beta \left ( 1 - \frac{y}{xt^\alpha} \right )}$
\end{tabular}
\end{proof}

\begin{lem} \label{YBE12} $M_{(u,5,2,v)}(x,y) = t M_{(u,2,5,v)}(x,y)$ \end{lem}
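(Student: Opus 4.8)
The plan is to prove this exactly as Lemmas~\ref{YBE7} and~\ref{YBE8}, treating $M=G$ and $M=D$ separately and matching path-tuples according to their boundary word. As there, set $\ell=|v|$, fix paths $P_{k-\ell+1},\dots,P_k$ realizing $v$, and let $\alpha,\beta,\gamma$ be the numbers of these that are special at the $12$, $13$, $23$ vertices respectively. The two swapped colors are realized by a green path of letter $5$ and a brown path of letter $2$; since letter $2$ admits the two realizations $\pbullet_2$ and $\pdiamond_2$ from the proof of Lemma~\ref{YBE2}, there are two tuples to match in each lattice, and I expect the correspondence to be tuple-by-tuple ($\pbullet\leftrightarrow\pbullet$, $\pdiamond\leftrightarrow\pdiamond$), exactly as in Lemma~\ref{YBE8}.

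For $M=G$ and the $\pbullet$ realization, the green letter-$5$ path crosses the R-vertex as Type~1 and runs horizontally through vertex $13$, contributing $(1-\tfrac{y}{xt^{\alpha}})\,xt^{\beta}$, while the brown letter-$2$ path bounces at the R-vertex (Type~2) and runs horizontally through vertex $23$, contributing its R-weight times $yt^{\gamma}$. In the word $(5,2)$ the letter-$5$ color is the smaller of the two, so it is absent from the brown path's R-exponent $\delta$, whence brown contributes $\tfrac{y}{xt^{\alpha}}\cdot yt^{\gamma}$; in the word $(2,5)$ the letter-$5$ color is the larger, hence Type~1 and counted in brown's $\delta$, so brown contributes $\tfrac{y}{xt^{\alpha+1}}\cdot yt^{\gamma}$. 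All three shifted spectral parameters of the reduced lattice on $u$, namely $\tfrac{y}{xt^{\alpha+1}}$, $xt^{\beta+1}$, $yt^{\gamma+1}$, are the same in both words, so the reduced partition functions and the (identical) green weight cancel and only the ratio of brown R-weights survives, giving precisely the factor $t$. The $\pdiamond$ realization is handled the same way, and the $M=D$ case is identical once one substitutes the droite realizations of letters $5$ and $2$ used in Lemmas~\ref{YBE7}--\ref{YBE8}.

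I expect the only real work to be the $t$-power bookkeeping: for each of the two realizations and each of the two color orders one must record which of green and brown is special at each of the three vertices, so as to confirm that the spectral-parameter shifts on the reduced $u$-lattice agree across the equality and that exactly one net power of $t$ remains. This is cleanest to present, as in Lemma~\ref{YBE8}, as a compact table of the competing path pictures together with their weights, each column verifying the identity by inspection; the single surviving $t$ traces in every case to the letter-$2$ path's R-vertex factor $\tfrac{y}{xt^{\delta}}$, whose exponent $\delta$ increases by one precisely when the Type-1 letter-$5$ color is moved from the smaller to the larger slot.
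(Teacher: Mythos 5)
Your overall strategy is the paper's own: fix the paths realizing $v$, record the special counts $\alpha,\beta,\gamma$, match each path tuple with itself across the two words (the geometry is unchanged, only the color order flips), and compare weights configuration by configuration, the common reduced factor on $u$ cancelling because the spectral-parameter shifts it sees are identical for both words. Your computation for the gauche $\pbullet_2$ configuration is correct and reproduces the paper's first column. But two of your concrete claims fail, and they sit exactly where the case analysis has content. First, the count: letter $2$ has two realizations only on the gauche lattice; on the droite lattice both letters $2$ and $5$ are realized uniquely (a path entering at $\I_1$ must cross vertex $13$ horizontally and then either bounce or cross at the R-vertex), so there are two gauche configurations and one droite configuration, which is why the paper's table has three columns rather than ``two tuples in each lattice.''

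Second, and more seriously, your closing claim that the surviving $t$ ``traces in every case to the letter-$2$ path's R-vertex factor $\frac{y}{xt^{\delta}}$'' is false for the gauche $\pdiamond_2$ configuration. There the letter-$2$ path crosses the R-vertex as Type 1, so its R-weight is $1-\frac{y}{xt^{\delta}}$, not $\frac{y}{xt^{\delta}}$. Swapping the slots sends the letter-$2$ path's $\delta$ from $\alpha$ to $\alpha+1$ while the letter-$5$ path's $\delta$ goes from $\alpha+1$ to $\alpha$: the two Type-1 factors merely exchange, their product $(1-\frac{y}{xt^{\alpha}})(1-\frac{y}{xt^{\alpha+1}})$ is invariant, and no factor of $t$ arises at the R-vertex at all. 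The $t$ comes instead from the L-vertex $13$: in the $\pdiamond_2$ realization the letter-$2$ path passes through vertex $13$ (entering left, exiting top), so the letter-$5$ path's horizontal-exit weight there is $xt^{\beta+1}$ when letter $2$ occupies the larger slot and $xt^{\beta}$ when it occupies the smaller one; this is precisely the paper's middle column, where $(1-\frac{y}{xt^{\alpha+1}})\,xt^{\beta+1}\cdot(1-\frac{y}{xt^{\alpha}})\,yt^{\gamma} = t\,(1-\frac{y}{xt^{\alpha+1}})\,yt^{\gamma}\cdot(1-\frac{y}{xt^{\alpha}})\,xt^{\beta}$. The identity therefore still holds case by case, but the bookkeeping you defer is not uniform in the way you predict: run literally, your proposed verification would fail to close the $\pdiamond_2$ column until the factor of $t$ is re-attributed from the R-vertex to vertex $13$.
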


\begin{proof}

\noindent

\noindent \begin{tabular}{rrr}
\begin{tikzpicture} \gauche \draw[brown] (0,1.4) -- (0.5,0.9) -- (1,1.4) -- (2,1.4); \draw[green] (0,1.6) -- (1,0.6) -- (2,0.6);  \end{tikzpicture}
& \begin{tikzpicture} \gauche \draw[brown] (0,1.4) -- (1,0.4) -- (1.6,0.4) -- (1.6,1.4) -- (2,1.4); \draw[green] (0,1.6) -- (1,0.6) -- (2,0.6);  \end{tikzpicture}
& \begin{tikzpicture} \droite \draw[brown] (0,1.4) -- (1,1.4) -- (1.5,0.9) -- (2,1.4); \draw[green] (0,1.6) -- (1,1.6) -- (2,0.6);  \end{tikzpicture} \\ \\
$\textcolor{green}{\left ( 1 - \frac{y}{xt^\alpha} \right ) xt^\beta} \textcolor{brown}{\frac{y}{xt^\alpha} yt^\gamma}$
& $\textcolor{green}{\left ( 1 - \frac{y}{xt^{\alpha+1}} \right ) xt^{\beta+1}} \textcolor{brown}{\left ( 1 - \frac{y}{xt^\alpha} \right ) yt^\gamma}$
& $\textcolor{green}{xt^{\beta+1} \left ( 1 - \frac{y}{xt^\alpha} \right )} \textcolor{brown}{xt^\beta \frac{y}{xt^\alpha}}$ \\
$=t \textcolor{brown}{\frac{y}{xt^{\alpha+1}} yt^\gamma} \textcolor{green}{\left ( 1 - \frac{y}{xt^\alpha} \right ) xt^\beta}$
& $=t \textcolor{brown}{\left ( 1 - \frac{y}{xt^{\alpha+1}} \right ) yt^\gamma} \textcolor{green}{\left ( 1 - \frac{y}{xt^\alpha} \right ) xt^\beta}$
& $=t \textcolor{brown}{xt^{\beta+1} \frac{y}{xt^{\alpha+1}}} \textcolor{green}{xt^\beta \left ( 1 - \frac{y}{xt^\alpha} \right )}$
\end{tabular}
\end{proof}

\indent The following corollary follows from Lemmas \ref{YBE4}-\ref{YBE12}.

\begin{cor} To prove Equation \eqref{YBE(matrices)}, it is enough to show that 
\[ G_w(x,y) = D_w(x,y) \]
for all words $w$ in the alphabet $\{1,2\}$. \end{cor}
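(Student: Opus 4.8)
The plan is to argue by induction on the number of letters of $w$ lying in $\{3,4,5\}$, which I will call the \emph{large} letters. The base case is when $w$ has no large letters: then $w$ is a word in $\{1,2\}$ and the identity $G_w(x,y)=D_w(x,y)$ holds by hypothesis. For the inductive step, suppose $w$ has at least one large letter and write $w=(u,a,v)$, where $a\in\{3,4,5\}$ is the \emph{rightmost} large letter. By maximality, $v$ is a word in $\{1,2\}$, whereas $u$ may still contain large letters.

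First I would push $a$ to the very end of the word. Since every letter of $v$ lies in $\{1,2\}$, I can apply Lemmas \ref{YBE7}--\ref{YBE12} one letter at a time to transpose $a$ past each successive letter of $v$. Each transposition rewrites $M_{(\ldots,a,b,\ldots)}$ as a fixed scalar multiple of $M_{(\ldots,b,a,\ldots)}$, where the scalar lies in $\{1,t^{-1},t\}$ and, crucially, is the \emph{same} for $M=G$ and $M=D$, because each of those lemmas is stated uniformly in $M$. After $|v|$ transpositions I obtain, for both $M\in\{G,D\}$,
\[ M_{(u,a,v)}(x,y)=c\,M_{(u,v,a)}(x,y), \]
with a common scalar $c$ depending only on $a$ and $v$.

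Next I would strip off the trailing large letter using the removal lemmas: Lemma \ref{YBE4} for $a=3$, and its analogues for $a=4$ and $a=5$. These give, again uniformly in $M$,
\[ M_{(u,v,a)}(x,y)=d\,M_{(u,v)}(x',y'), \]
where $d\in\{y^2,\,y,\,x-y\}$ and $(x',y')$ equals $(xt,yt)$ for $a\in\{3,4\}$ and $(xt,y)$ for $a=5$. The word $(u,v)$ has exactly one fewer large letter than $w$, so the induction hypothesis applies; since the statement being proved holds for \emph{all} values of the spectral parameters, I may invoke it at $(x',y')$ to get $G_{(u,v)}(x',y')=D_{(u,v)}(x',y')$. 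Combining the two displays yields
\[ G_w(x,y)=c\,d\,G_{(u,v)}(x',y')=c\,d\,D_{(u,v)}(x',y')=D_w(x,y), \]
completing the induction.

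I do not expect a genuine obstacle: the substantive content has already been isolated in Lemmas \ref{YBE4}--\ref{YBE12}, and this corollary is only the bookkeeping that assembles them. The two points demanding care are that the reduction alters the spectral parameters (via $x\mapsto xt$ and $y\mapsto yt$ or $y$), so the induction must be phrased as a statement holding for all $x,y$ simultaneously rather than at a fixed pair; and that the scalars produced by the moving and removal steps must agree between $G$ and $D$ — which is precisely why those lemmas were proved in the uniform form $M_{(\cdots)}=(\text{scalar})\cdot M_{(\cdots)}$ valid for both $M=G$ and $M=D$.
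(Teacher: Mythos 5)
Your proposal is correct and is precisely the argument the paper intends: the paper asserts this corollary follows from Lemmas \ref{YBE4}--\ref{YBE12} without writing out the assembly, and your induction — commute the rightmost letter of $\{3,4,5\}$ to the end of the word using the transposition lemmas, strip it off with the removal lemmas at the shifted parameters, and apply the inductive hypothesis — is exactly that bookkeeping. The two caveats you flag (the scalars being identical for $M=G$ and $M=D$, and the hypothesis on words in $\{1,2\}$ being quantified over all $x,y$ so it can be invoked at $(xt,yt)$ or $(xt,y)$) are the right ones and are handled correctly.
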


\noindent Therefore, from now on, we will assume $w$, $u$, and $v$ are words in the alphabet $\{1,2\}$.

\subsection{Reducing to the case where $w = 1^k$ or $w = 2^k$}

\begin{lem} \label{YBE14} $M_{(w,2,1)}(x,y) = (x-y)y M_w(xt^2,yt) + y^2 M_w(xt,yt)$ \end{lem}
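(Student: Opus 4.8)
The plan is to follow the peeling method of Lemmas~\ref{YBE7}--\ref{YBE12}: remove the two largest colors (those carrying the appended letters $2$ and $1$, i.e.\ colors $k-1$ and $k$), enumerate their joint path configurations on the $M$-lattice, and for each configuration factor the weight as a scalar coming from these two colors times $M_w$ evaluated at spectral parameters shifted by the number of peeled colors that are special at each vertex. I would carry this out for $M=G$ in detail and remark that $M=D$ is identical in spirit. Since no color is larger than $k-1,k$, the baseline special-counts vanish, so the shift applied to $M_w$ is $(xt^{\beta'},yt^{\gamma'})$, where $\beta'$ (resp.\ $\gamma'$) is the number of the two peeled colors that pass through the lower cell $L_{13}$ (resp.\ the upper cell $L_{23}$).

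First I would enumerate the configurations. As in the examples preceding the lemma, on the gauche lattice the letter $1$ strand (entering $\I_2$, exiting $\J_1$) has a \emph{unique} realization: it bounces along the bottom of the $R$-vertex (a Type $3$ crossing) and runs horizontally through $L_{13}$; it admits no upward excursion, since a single strand entering the bottom-left of the $R$-vertex can only exit bottom-right. The letter $2$ strand has exactly two realizations, $\pbullet_2$ and $\pdiamond_2$: in $\pbullet_2$ it bounces along the top of the $R$-vertex (Type $2$) and runs horizontally through $L_{23}$; in $\pdiamond_2$ it crosses down at the $R$-vertex (Type $1$), passes through $L_{13}$, turns upward along the internal edge, and exits through $L_{23}$. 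This yields exactly two joint configurations, producing the two terms on the right-hand side.

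Second, for each configuration I would read off the shift and compute the scalar, using the face weights \eqref{FaceWeight} and \eqref{Rweight} together with the fact that colors $k-1,k$ have no larger colors (so every $\delta$ and every ``number of larger colors in a vertex'' that they generate is $0$, making the scalar independent of $w$). For $\pbullet_2$ the letter $1$ strand meets only $L_{13}$ and the letter $2$ strand only $L_{23}$, so $(\beta',\gamma')=(1,1)$ and the shift is $(xt,yt)$; the scalar is $\bigl(\tfrac{y}{x}\bigr)\cdot x \cdot y = y^2$, from the Type $2$ weight $y/x$ at the $R$-vertex and the horizontal weights $x$ and $y$ at the two cells. For $\pdiamond_2$ the letter $2$ strand now also traverses $L_{13}$, so $(\beta',\gamma')=(2,1)$ and the shift is $(xt^2,yt)$; the scalar is $\bigl(1-\tfrac{y}{x}\bigr)\cdot x \cdot y=(x-y)y$, from the Type $1$ weight $1-y/x$ at the $R$-vertex (where $\delta=0$ since letter $1$ is Type $3$, not Type $1$) and the weights $x$, $y$ at $L_{13}$, $L_{23}$. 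Adding the two contributions gives $(x-y)y\,G_w(xt^2,yt)+y^2\,G_w(xt,yt)$, as claimed. The only point requiring care, and the main obstacle, is bookkeeping the $t$-exponents: one must check that the two top colors contribute no $t$-powers among themselves, so the peeled scalar does not depend on $w$, while verifying that $w$'s sub-lattice absorbs precisely the shifts $t^{\beta'}$, $t^{\gamma'}$ arising from these colors being present (hence special) at the $L$-cells, and that the induced $R$-parameter shift $t^{-\alpha'}$ is consistent with $G_w(xt^{\beta'},yt^{\gamma'})$ via $\alpha'=\beta'-\gamma'$. The droite case is treated identically, enumerating the tuples on the droite lattice realizing $(2,1)$ and summing their weights.
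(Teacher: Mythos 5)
Your proposal is correct and is essentially the paper's own proof: the paper likewise enumerates the two joint configurations of the two appended colors on each lattice and factors each as a scalar times $M_w$ at shifted parameters, obtaining on the gauche side exactly your values $(1-y/x)\,y\,x=(x-y)y$ with shift $(xt^2,yt)$ and $(y/x)\,y\,x=y^2$ with shift $(xt,yt)$. One caution about the droite case you defer as "identical": there the two peeled colors \emph{do} generate $t$-powers between themselves (the letter-$2$ strand picks up $xt\cdot\tfrac{y}{xt}$ because the letter-$1$ strand shares its $L_{13}$ cell and is of Type 1 at the $R$-vertex), so your side remark that the top colors "contribute no $t$-powers among themselves" is literally true only on the gauche side; these factors cancel, and summing the droite configurations still yields $(x-y)y$ and $y^2$, exactly as in the paper.
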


\begin{proof} 
Note that there are two tuples of paths on the gauche lattice so that the resulting word is $(2,1)$, namely 
\begin{align*}
(\bbullet,\gbullet) = 
\begin{tikzpicture} \gauche \draw[brown] (0,1.6) -- (1,0.6) -- (1.6,0.6) -- (1.6,1.6) -- (2,1.6); \draw[green] (0,0.4) -- (0.5,0.9) -- (1,0.4) -- (2,0.4);  \end{tikzpicture}
\textrm{ and }
(\bdiamond,\gbullet) =
\begin{tikzpicture} \gauche \draw[brown] (0,1.6) -- (0.5,1.1) -- (1,1.6) -- (2,1.6); \draw[green] (0,0.4) -- (0.5,0.9) -- (1,0.4) -- (2,0.4);  \end{tikzpicture} \enspace.
\end{align*}
We have
\begin{align*}
G_{(w,2,1)}(x,y) &= G_{(w,\bbullet,\gbullet)}(x,y) + G_{(w,\bdiamond,\gbullet)}(x,y) \\
&= \left[R_{12}(y/xt)L_{13}(xt^2)L_{23}(yt)\right]_w\textcolor{brown}{(1-y/x)y}\textcolor{green}{x} + \left[R_{12}(y/x)L_{13}(xt)L_{23}(yt)\right]_w\textcolor{brown}{(y/x)y}\textcolor{green}{x} \\
&= (x-y)y \left[R_{12}(yt/xt^2)L_{13}(xt^2)L_{23}(yt)\right]_w + y^2 \left[R_{12}(yt/xt)L_{13}(xt)L_{23}(yt)\right]_w \\
&= (x-y)y G_w(xt^2,yt) + y^2 G_w(xt,yt).
\end{align*}
Note that there are two tuples of paths on the droite lattice so that the resulting word is $(2,1)$, namely
\begin{align*}
(\bbullet,\gbullet) = 
\begin{tikzpicture} \droite \draw[brown] (0,1.6) -- (1,1.6) -- (1.5,1.1) -- (2,1.6); \draw[green] (0,0.4) -- (0.6,0.4) -- (0.6,1.4) -- (1,1.4) -- (2,0.4);  \end{tikzpicture}
 \textrm{ and }
(\bbullet,\gdiamond) = 
\begin{tikzpicture} \droite \draw[brown] (0,1.6) -- (1,1.6) -- (1.5,1.1) -- (2,1.6); \draw[green] (0,0.4) -- (1,0.4) -- (1.5,0.9) -- (2,0.4);  \end{tikzpicture} \enspace.
\end{align*}
We have
\begin{align*}
D_{(w,2,1)}(x,y) &= D_{(w,\bbullet,\gbullet)}(x,y) + D_{(w,\bbullet,\gdiamond)}(x,y) \\
&= \left[L_{23}(yt)L_{13}(xt^2)R_{12}(y/xt)\right]_w\textcolor{brown}{xt (y/xt)}\textcolor{green}{x(1-y/x)} + \left[L_{23}(yt)L_{13}(xt)R_{12}(y/x)\right]_w\textcolor{brown}{x(y/x)}\textcolor{green}{y} \\
&= (x-y)y\left[L_{23}(yt)L_{13}(xt^2)R_{12}(yt/xt^2)\right]_w + y^2\left[L_{23}(yt)L_{13}(xt)R_{12}(yt/xt)\right]_w \\
&= (x-y)y D_w(xt^2,yt) + y^2 D_w(xt,yt).
\end{align*}
\end{proof}

\begin{lem} \label{YBE15} $M_{(u,1,2,v)}(x,y) = t M_{(u,2,1,v)}(x,y) + (1-t) M_{(u,3,v)}(x,y)$ \end{lem}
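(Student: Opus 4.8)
The plan is to prove the identity separately for $M=G$ and $M=D$, in each case following the factorization template of Lemmas \ref{YBE7}--\ref{YBE14}. Fix paths $P_{k-\ell+1},\dots,P_k$ (with $\ell=|v|$) realizing the word $v$, and let $\alpha,\beta,\gamma$ count the paths among them that are special for the $12$, $13$, $23$ vertices respectively. Every entry $M_{(u,*,v)}$ then factors as a product of three pieces: the operator $\left[R_{12}(y/xt^{\alpha'})\,L_{13}(xt^{\beta'})\,L_{23}(yt^{\gamma'})\right]_u$ acting on $u$, the scalar $M^{(\ell)}_{(P_{k-\ell+1},\dots,P_k)}(x,y)$ coming from $v$, and the local weight contributed by the one or two colors carrying the letters in the slot $*$. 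The shifts $\alpha',\beta',\gamma'$ are $\alpha,\beta,\gamma$ increased by the number of slot-colors that are, respectively, of Type $1$ at the crossing, present at the $13$ vertex, and present at the $23$ vertex.

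First I would enumerate the relevant path-tuples. On the gauche lattice the letter-$1$ color has a single realization (it cups at the crossing and runs horizontally through the $13$ vertex); the letter-$3$ color also has a single realization (Type $4$ at the crossing, horizontal through both rows); and the letter-$2$ color has exactly two realizations, $\pbullet$ (it caps at the crossing and passes through the $23$ vertex only) and $\pdiamond$ (it crosses, then climbs through the $13$ vertex into the $23$ vertex). Thus $(1,2)$ and $(2,1)$ each split into two tuples while $(3)$ is a single tuple. The crucial observation is that the induced shift $(\Delta\alpha,\Delta\beta,\Delta\gamma)$ on the $u$-operator equals $(0,1,1)$ for the $\pbullet$ branch of both $(1,2)$ and $(2,1)$ and for the whole $(3)$ term, and equals $(1,2,1)$ for the $\pdiamond$ branch of both $(1,2)$ and $(2,1)$. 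Because these shifts coincide branch-for-branch, the $u$-operator and the $v$-scalar are identical across matched terms and factor out, reducing the lemma to two scalar identities among local weights.

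Then I would compute those local weights using the five $R$-weights and the $L$-bookkeeping $x^{|\L|}\prod_{m:L_m=1}t^{\I_{[m+1,k]}+\J_{[m+1,k]}}$, exactly as in the proof of Lemma \ref{YBE14}. For the $\pdiamond$ branch one finds local weight $ty(x-y)$ for $(1,2)$ and $y(x-y)$ for $(2,1)$: the extra $t$ arises because in $(1,2)$ the smaller (letter-$1$) color exits the $13$ vertex to the right with the larger (letter-$2$) color sitting on the left edge above it, creating one inversion, whereas in $(2,1)$ the letter-$1$ color is the larger one and this inversion is absent. This yields the coefficient $t$. For the $\pbullet$ branch and the $(3)$ term all three local weights equal $y^2$, so the matched $(0,1,1)$-contributions satisfy $y^2 = t\cdot y^2 + (1-t)\,y^2$. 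Summing the two branches then reproduces $M_{(u,1,2,v)} = t\,M_{(u,2,1,v)} + (1-t)\,M_{(u,3,v)}$. The droite case is handled identically, the mirrored path pictures giving the same local weights and shifts.

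The main obstacle is the careful path enumeration together with the $t$-exponent bookkeeping: in particular, correctly splitting the letter-$2$ color into its two realizations $\pbullet,\pdiamond$ and tracking both the $\delta_i$ exponents at the crossing and the inversion exponents at the $L$ vertices, while making sure that middle-middle interactions are assigned to the local weight and middle-versus-$u$ interactions to the operator shift. Verifying that the two branches carry matching spectral shifts---so that the $u$- and $v$-factors genuinely cancel---is what legitimizes the reduction to the two scalar identities, and it is precisely the comparison of the $\pdiamond$ weights (ratio $t$) against the recombination of the $\pbullet$ and $(3)$ weights that produces the coefficients $t$ and $1-t$.
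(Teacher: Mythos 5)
Your proposal is correct and follows essentially the same route as the paper's proof: the same enumeration of path realizations, the same key observation that the matched spectral shifts $(0,1,1)$ and $(1,2,1)$ let the $u$-operator and $v$-scalar factor out branch-for-branch, the same factor-$t$ comparison on the crossing branch, and the same recombination of the cap branch with the letter-$3$ term (the paper phrases it as $(1,2)_{\mathrm{cap}}=(2,1)_{\mathrm{cap}}=(3)$ and then does the algebra, which is your identity $y^2 = t\,y^2 + (1-t)\,y^2$ in disguise). The only cosmetic difference is that on the droite lattice the two-fold branching sits on letter $1$ rather than letter $2$ (the paper's $(\bbullet,\gbullet)$ versus $(\bbullet,\gdiamond)$), but as you indicate, the computation mirrors the gauche case with identical local weights and shifts.
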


\begin{proof}
We do the case $M = D$ first.  Let $\ell = |v|$. Choose paths $P_{k-\ell+1},...,P_k$ on the gauche lattice so that the resulting word is $v$.  Let $\alpha,\beta,\gamma$ be the number of special paths among $P_{k-\ell+1},...,P_k$ for the 12, 13, 23 vertices respectively.  Note that there are two tuples of paths on the droite lattice so that the resulting word is $(2,1)$, namely
\begin{align*}
(\bbullet,\gbullet) = 
\begin{tikzpicture} \droite \draw[brown] (0,1.6) -- (1,1.6) -- (1.5,1.1) -- (2,1.6); \draw[green] (0,0.4) -- (0.6,0.4) -- (0.6,1.4) -- (1,1.4) -- (2,0.4);  \end{tikzpicture}
 \textrm{ and }
(\bbullet,\gdiamond) =
\begin{tikzpicture} \droite \draw[brown] (0,1.6) -- (1,1.6) -- (1.5,1.1) -- (2,1.6); \draw[green] (0,0.4) -- (1,0.4) -- (1.5,0.9) -- (2,0.4);  \end{tikzpicture} \enspace.
\end{align*}
Also note that there is only one path on the droite lattice that gives the letter 3, namely
\begin{align*}
\pbullet = 
\begin{tikzpicture} \droite \draw[brightpink] (2,0.4) -- (1,1.4) -- (0,1.4); \draw[brightpink] (2,1.6) -- (1,0.6) -- (0,0.6);  \end{tikzpicture}  \enspace.
\end{align*}
We observe
\begin{align*}
&D_{(u,\bbullet,\gdiamond,P_{k-\ell+1},...,P_k)}(x,y) \\
&= \left [ L_{23}(yt^{\gamma+1}) L_{13}(xt^{\beta+1}) R_{12}(y/xt^\alpha) \right ]_u \cdot \textcolor{brown}{xt^\beta(y/xt^\alpha)} \textcolor{green}{yt^\gamma} \cdot D_{(P_{k-\ell+1},...,P_k)}(x,y) \\
&= \left [ L_{23}(yt^{\gamma+1}) L_{13}(xt^{\beta+1}) R_{12}(y/xt^\alpha) \right ]_u \cdot \textcolor{green}{yt^\gamma} \textcolor{brown}{xt^\beta(y/xt^\alpha)} \cdot D_{(P_{k-\ell+1},...,P_k)}(x,y) = D_{(u,\gdiamond,\bbullet,P_{k-\ell+1},...,P_k)}(x,y) \\
&= \left [ L_{23}(yt^{\gamma+1}) L_{13}(xt^{\beta+1}) R_{13}(y/xt^\alpha) \right ]_u \cdot \textcolor{brightpink}{yt^\gamma xt^\beta(y/xt^\alpha)} \cdot D_{(P_{k-\ell+1},...,P_k)}(x,y) = D_{(u,\pbullet,P_{k-\ell+1},...,P_k)}(x,y)
\end{align*}
and
\begin{align*}
&D_{(u,\gbullet,\bbullet,P_{k-\ell+1},...,P_k)}(x,y) \\
&= \left [ L_{23}(yt^{\gamma+1}) L_{13}(xt^{\beta+2}) R_{12}(y/xt^{\alpha+1}) \right ]_u \cdot \textcolor{green}{xt^{\beta+1}(1-y/xt^\alpha)} \textcolor{brown}{xt^\beta(y/xt^\alpha)} \cdot D_{(P_{k-\ell+1},...,P_k)}(x,y) \\
&= t \left [ L_{23}(yt^{\gamma+1}) L_{13}(xt^{\beta+2}) R_{12}(y/xt^{\alpha+1}) \right ]_u \cdot \textcolor{brown}{xt^{\beta+1}(y/xt^{\alpha+1})} \textcolor{green}{xt^\beta(1-y/xt^\alpha)} \cdot D_{(P_{k-\ell+1},...,P_k)}(x,y) \\
&= tD_{(u,\bbullet,\gbullet,P_{k-\ell+1},...,P_k)}(x,y).
\end{align*}
We have 
\begin{align*}
&D_{(u,1,2,P_{k-\ell+1},...,P_k)}(x,y) \\
&= D_{(u,\gbullet,\bbullet,P_{k-\ell+1},...,P_k)}(x,y) + D_{(u,\gdiamond,\bbullet,P_{k-\ell+1},...,P_k)}(x,y) \\
&= tD_{(u,\bbullet,\gbullet,P_{k-\ell+1},...,P_k)}(x,y) + D_{(u,\pbullet,P_{k-\ell+1},...,P_k)}(x,y) \\ 
&= t [D_{(u,\bbullet,\gbullet,P_{k-\ell+1},...,P_k)}(x,y) + D_{(u,\bbullet,\gdiamond,P_{k-\ell+1},...,P_k)}(x,y)] -t D_{(u,\bbullet,\gdiamond,P_{k-\ell+1},...,P_k)}(x,y) + D_{(u,\pbullet,P_{k-\ell+1},...,P_k)}(x,y) \\
&= t [D_{(u,\bbullet,\gbullet,P_{k-\ell+1},...,P_k)}(x,y) + D_{(u,\bbullet,\gdiamond,P_{k-\ell+1},...,P_k)}(x,y)] + (1-t) D_{(u,\pbullet,P_{k-\ell+1},...,P_k)}(x,y) \\
&= t D_{(u,2,1,P_{k-\ell+1},...,P_k)}(x,y) + (1-t) D_{(u,3,P_{k-\ell+1},...,P_k)}(x,y).
\end{align*}

\indent We will now do the case $M = G$.  Let $\ell = |v|$. Choose paths $P_{k-\ell+1},...,P_k$ on the gauche lattice so that the resulting word is $v$.  Let $\alpha,\beta,\gamma$ be the number of special paths among $P_{k-\ell+1},...,P_k$ for the 12, 13, 23 vertices respectively.  Note that there are two tuples of paths on the gauche lattice so that the resulting word is $(2,1)$, namely
\begin{align*}
(\bbullet,\gbullet) = 
\begin{tikzpicture} \gauche \draw[brown] (0,1.6) -- (1,0.6) -- (1.6,0.6) -- (1.6,1.6) -- (2,1.6); \draw[green] (0,0.4) -- (0.5,0.9) -- (1,0.4) -- (2,0.4);  \end{tikzpicture}
 \textrm{ and }
(\bdiamond,\gbullet) = 
\begin{tikzpicture} \gauche \draw[brown] (0,1.6) -- (0.5,1.1) -- (1,1.6) -- (2,1.6); \draw[green] (0,0.4) -- (0.5,0.9) -- (1,0.4) -- (2,0.4);  \end{tikzpicture} \enspace. 
\end{align*}
Also note that there is only one path on the gauche lattice that gives the letter 3, namely
\begin{align*}
\pbullet = 
\begin{tikzpicture} \gauche \draw[brightpink] (0,0.4) -- (1,1.4) -- (2,1.4); \draw[brightpink] (0,1.6) -- (1,0.6) -- (2,0.6);  \end{tikzpicture} \enspace. 
\end{align*}
We observe
\begin{align*}
&G_{(u,\gbullet,\bdiamond,P_{k-\ell+1},...,P_k)}(x,y) \\
&= \left [ R_{12}(y/xt^\alpha) L_{13}(xt^{\beta+1}) L_{23}(yt^{\gamma+1}) \right ]_u \cdot \textcolor{green}{xt^\beta} \textcolor{brown}{(y/xt^\alpha)yt^\gamma} \cdot G_{(P_{k-\ell+1},...,P_k)}(x,y) \\
&= \left [ R_{12}(y/xt^\alpha) L_{13}(xt^{\beta+1}) L_{23}(yt^{\gamma+1}) \right ]_u \cdot \textcolor{brown}{(y/xt^\alpha)yt^\gamma} \textcolor{green}{xt^\beta} \cdot G_{(P_{k-\ell+1},...,P_k)}(x,y) = G_{(u,\bdiamond,\gbullet,P_{k-\ell+1},...,P_k)}(x,y) \\
&= \left [ R_{12}(y/xt^\alpha) L_{13}(xt^{\beta+1}) L_{23}(yt^{\gamma+1}) \right ]_u \cdot \textcolor{brightpink}{(y/xt^\alpha)xt^\beta yt^\gamma} \cdot G_{(P_1,...,P_k)}(x,y) = G_{(u,\pbullet,P_{k-\ell+1},...,P_k)}(x,y)
\end{align*}
and
\begin{align*}
&G_{(u,\gbullet,\bbullet,P_{k-\ell+1},...,P_k)}(x,y) \\
&= \left [ R_{12}(y/xt^{\alpha+1}) L_{13}(xt^{\beta+2}) L_{23}(yt^{\gamma+1}) \right ]_u \cdot \textcolor{green}{xt^{\beta+1}} \textcolor{brown}{(1-y/xt^\alpha)yt^\gamma} \cdot G_{(P_{k-\ell+1},...,P_k)}(x,y) \\
&= t \left [ R_{12}(y/xt^{\alpha+1}) L_{13}(xt^{\beta+2}) L_{23}(yt^{\gamma+1}) \right ]_u \cdot \textcolor{brown}{(1-y/xt^\alpha)yt^\gamma} \textcolor{green}{xt^\beta} \cdot G_{(P_{k-\ell+1},...,P_k)}(x,y) \\
&= tG_{(u,\bbullet,\gbullet,P_{k-\ell+1},...,P_k)}(x,y).
\end{align*}
We have 
\begin{align*}
&G_{(u,1,2,P_{k-\ell+1},...,P_k)}(x,y) \\
&= G_{(u,\gbullet,\bbullet,P_{k-\ell+1},...,P_k)}(x,y) + G_{(u,\gbullet,\bdiamond,P_{k-\ell+1},...,P_k)}(x,y) \\
&= tG_{(u,\bbullet,\gbullet,P_{k-\ell+1},...,P_k)}(x,y) + G_{(u,\pbullet,P_{k-\ell+1},...,P_k)}(x,y) \\ 
&= t [G_{(u,\bbullet,\gbullet,P_{k-\ell+1},...,P_k)}(x,y) + G_{(u,\bdiamond,\gbullet,P_{k-\ell+1},...,P_k)}(x,y)] - t G_{(u,\bdiamond,\gbullet,P_{k-\ell+1},...,P_k)}(x,y) + G_{(u,\pbullet,P_{k-\ell+1},...,P_k)}(x,y) \\
&= t [G_{(u,\bbullet,\gbullet,P_{k-\ell+1},...,P_k)}(x,y) + G_{(u,\bdiamond,\gbullet,P_{k-\ell+1},...,P_k)}(x,y)] + (1-t) G_{(u,\pbullet,P_{k-\ell+1},...,P_k)}(x,y) \\
&= t G_{(u,2,1,P_{k-\ell+1},...,P_k)}(x,y) + (1-t) G_{(u,3,P_{k-\ell+1},...,P_k)}(x,y).
\end{align*}
\end{proof}

\indent The following corollary follows from Lemmas \ref{YBE14} and \ref{YBE15}.

\begin{cor} To prove Equation \eqref{YBE(matrices)}, it is enough to show that 
\[ G_w(x,y) = D_w(x,y)\] 
for $w = 1^k$ and $w = 2^k$. \end{cor}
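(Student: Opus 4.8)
The plan is to deduce the Corollary from Lemmas~\ref{YBE14} and \ref{YBE15} by strong induction on the length $k=|w|$, treating the monotone words $1^k$ and $2^k$ — whose equality $G_w=D_w$ is exactly the hypothesis we are allowed to assume at every length — as the base cases. For $k\le 1$ every word is already monotone, so there is nothing to do. Assume then that $G_u=D_u$ for all words $u\in\{1,2\}$ with $|u|<k$, and fix a \emph{non-monotone} $w\in\{1,2\}^k$, i.e.\ one containing both a $1$ and a $2$. The goal is to show that $M_w(x,y)$ can be written as a fixed $\mathbb{Z}[t,x^{\pm1},y^{\pm1}]$-combination of terms $M_u$ with $|u|<k$, \emph{with the same combination for $M=G$ and for $M=D$}; granting this, the difference $G_w-D_w$ becomes a combination of the differences $G_u-D_u$ over strictly shorter words, each of which vanishes by the inductive hypothesis.

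The engine of the argument is that every rewriting rule at our disposal acts identically on $G$ and on $D$. Working modulo the span of $\{M_u : |u|<k\}$, Lemma~\ref{YBE15} reads $M_{(p,1,2,q)}\equiv t\,M_{(p,2,1,q)}$, since its error term $M_{(p,3,q)}$ is strictly shorter: pushing the letter $3$ rightward through the $\{1,2\}$-word $q$ by Lemmas~\ref{YBE7} and \ref{YBE8}, and then deleting the trailing $3$ by Lemma~\ref{YBE4}, rewrites $M_{(p,3,q)}$ as $y^2\,M_{(p,q)}(xt,yt)$, a shorter word, with coefficients independent of whether $M=G$ or $M=D$. Thus the interior transpositions $12\leftrightarrow 21$ connect, modulo shorter words, any two words of the same content (the same number of $1$'s), the connecting scalar being one common power of $t$ for both $G$ and $D$.

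Since $w$ is non-monotone, its multiset of letters admits a rearrangement ending in $21$; choosing such a representative $r$, the previous step gives $M_w\equiv t^{e}M_r$ modulo shorter words, with a single exponent $e$ valid simultaneously for $G$ and $D$. Now Lemma~\ref{YBE14}, applied to $r=(r',2,1)$ with $|r'|=k-2$, expresses $M_r$ entirely through $M_{r'}(xt^2,yt)$ and $M_{r'}(xt,yt)$; in particular $M_r$ already lies in the span of shorter words, so $M_w\equiv 0$ modulo that span. Unwinding the congruences produces the desired common expansion of $G_w$ and $D_w$ over shorter words, and the inductive hypothesis forces $G_w=D_w$, completing the induction and hence the Corollary.

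I expect the only genuine subtlety to be the bookkeeping that legitimizes this: one must verify that each invocation of Lemmas~\ref{YBE4}, \ref{YBE7}, \ref{YBE8}, \ref{YBE14}, and \ref{YBE15} carries the \emph{same} scalar and performs the \emph{same} substitution $(x,y)\mapsto(xt^{a},yt^{b})$ for $G$ as for $D$, so that the two quantities stay in lockstep and every discrepancy is driven into strictly shorter words where induction can absorb it. The purely combinatorial input — that adjacent $12\leftrightarrow 21$ transpositions suffice to transform any non-monotone word into one ending in $21$ — is elementary, as these transpositions generate all rearrangements of a fixed multiset of letters.
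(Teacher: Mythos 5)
Your proposal is correct and is exactly the argument the paper intends: the paper states the corollary follows from Lemmas \ref{YBE14} and \ref{YBE15} without spelling out the details, and your write-up (strong induction on $|w|$, using Lemma \ref{YBE15} together with Lemmas \ref{YBE7}, \ref{YBE8}, \ref{YBE4} to transpose adjacent letters modulo strictly shorter words, then Lemma \ref{YBE14} to collapse a word ending in $21$) is the natural fleshing-out of that reduction, with the key observation—that every rewriting rule carries identical scalars and substitutions for $G$ and $D$—correctly identified. No gaps.
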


\subsection{Checking $w = 1^k$ and $w = 2^k$}

\indent We will now show that $G_w(x,y) = D_w(x,y)$ for $w = 1^k$ and $w = 2^k$ by computing them explicitly.

\begin{lem} $M_{1^k}(x,y) = x^{k} t^{\binom{k}{2}}$ \end{lem}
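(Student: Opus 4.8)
The plan is to compute $G_{1^k}(x,y)$ and $D_{1^k}(x,y)$ separately and check that both equal $x^k t^{\binom{k}{2}}$. In either lattice the word $w=1^k$ fixes the outer boundary completely: every color enters at the lower-left edge $\I_2$ and exits at the lower-right edge $\J_1$, while $\I_1=\I_3=\J_2=\J_3=\0$. So the task in each case is to enumerate the internal configurations compatible with this boundary and sum their weights, reading the individual face weights off \eqref{FaceWeight} and \eqref{Rweight}.

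For the gauche lattice I expect a \emph{unique} surviving configuration. Conservation at the two $L$-faces and at the $R$-vertex, together with the forbidden pattern in the $R$-weight (no color may sit on the lower-left leg and be absent on the upper-right leg of the crossing), forces every color to cross the $R$-vertex as a Type-$3$ ``cup'' (weight $1$) and then run horizontally through the $x$-parametrized face, leaving the $y$-face empty. Applying the face-weight formula to this single configuration, the horizontal $x$-face contributes $x^{k}\prod_{m=1}^{k}t^{\,\#\{j>m\}}=x^{k}t^{\binom{k}{2}}$ while the empty $y$-face and the $R$-vertex contribute $1$, so $G_{1^k}=x^{k}t^{\binom{k}{2}}$.

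The droite lattice carries the real content, since there the $R$-forbidden pattern imposes no constraint and each color independently has two admissible routes: straight through the $y$-face followed by a Type-$3$ crossing, or a turn up into the $x$-face followed by a Type-$1$ crossing. Thus configurations are indexed by the subset $S\subseteq[k]$ of colors taking the second route. Writing $S=\{s_1<\dots<s_r\}$, the $R$-exponent $\delta_{s_a}$ equals $r-a$, so the $R$-contribution $\prod_{i\in S}\bigl(1-y/(xt^{\delta_i})\bigr)=\prod_{b=0}^{r-1}\bigl(1-yt^{-b}/x\bigr)$ depends only on $r=|S|$; combining with the $x$- and $y$-face weights gives
\[
W(S)=y^{\,k-r}\,t^{\sum_{m\notin S}(k-m)}\prod_{b=0}^{r-1}(xt^{b}-y).
\]
Summing over all $S$ of fixed size $r$ and using the subset-sum identity $\sum_{|S|=r}t^{\sum_{m\notin S}(k-m)}=t^{\binom{k-r}{2}}\qbin{k}{r}{t}$ collapses the partition function to the one-variable sum
\[
D_{1^k}=\sum_{r=0}^{k}y^{\,k-r}\,t^{\binom{k-r}{2}}\,\qbin{k}{r}{t}\,\prod_{b=0}^{r-1}(xt^{b}-y).
\]

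The main obstacle is the final algebraic identity: that this sum collapses to $x^{k}t^{\binom{k}{2}}=\prod_{b=0}^{k-1}(xt^{b})$, i.e. that every positive power of $y$ cancels. This is a finite form of the $q$-binomial theorem. I would establish it either by induction on $k$ --- generalizing the identity with an auxiliary parameter so that the Pascal recurrence $\qbin{k}{r}{t}=\qbin{k-1}{r-1}{t}+t^{r}\qbin{k-1}{r}{t}$ together with $\prod_{b=0}^{r-1}(xt^{b}-y)=\bigl(\prod_{b=0}^{r-2}(xt^{b}-y)\bigr)(xt^{r-1}-y)$ lets the two resulting sums recombine --- or, more in the spirit of the paper, by a sign-reversing involution on the refined set of terms obtained by expanding each factor $(xt^{b}-y)$. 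The base case $k=1$ is the computation $y+(x-y)=x$, and I have checked $k=2$ directly. Everything else (the boundary analysis and the configuration count) is routine bookkeeping with the weight formulas; this $y$-cancellation is the only genuinely nontrivial ingredient.
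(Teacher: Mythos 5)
Your proposal is correct and takes essentially the same route as the paper: the unique gauche configuration giving $x^{k}t^{\binom{k}{2}}$, the droite partition function written as a sum over subsets $S\subseteq[k]$ of colors taking the "up-and-cross" route, which collapses via the subset-sum identity to $\sum_{r=0}^{k}y^{k-r}t^{\binom{k-r}{2}}\qbin{k}{r}{t}\prod_{b=0}^{r-1}\left(xt^{b}-y\right)$, followed by the same $q$-Pascal induction on $k$ for the final identity (the paper carries this out by showing $D_{1^{k}}=xt^{k-1}D_{1^{k-1}}$, so your sketched induction is exactly its argument). One small correction: the forbidden $R$-pattern is a color occupying \emph{both} the lower-left and upper-right legs with the other two legs empty ($I_i=K_i=1$, $J_i=L_i=0$), not what your parenthetical states — though the conclusion you draw from it, that each color entering at $\I_2$ must bounce from the lower-left to the lower-right leg (Type 3), is exactly right and is what the argument needs.
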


\begin{proof}
For the gauche lattice, there is only one path corresponding to the letter 1, namely
\begin{align*} 
 \begin{tikzpicture} \gauche \draw[brightpink] (0,0.4) -- (0.5,0.9) -- (1,0.4) -- (2,0.4);  \end{tikzpicture}  \enspace . \\
\end{align*}
Therefore
\begin{align*}
& G_{1^k}(x,y) = 
 \begin{tikzpicture} \gauche \draw[brightpink] (0,0.4) -- (0.5,0.9) -- (1,0.4) -- (2,0.4); \node[below] at (0.5,0.5) {$\textcolor{brightpink}{k}$};  \end{tikzpicture}  
= x^{2k}t^{k \choose 2}. \\
\end{align*}

For the droite lattice, there are two paths corresponding to the letter 1, which we call type I and type II.

\begin{center}
type I:
 \begin{tikzpicture} \droite \draw[brightpink] (0.6,0) -- (0.6,1.6) -- (1,1.6) -- (2,0.6);  \end{tikzpicture}  and type II:
 \begin{tikzpicture} \droite \draw[brightpink] (0.6,0) -- (0.6,0.4) -- (1,0.4) -- (1.5,0.9) -- (2,0.4);  \end{tikzpicture} 
\end{center}

\indent 

\noindent Let $\ell$ be the number of paths of type I, and suppose that the paths of type I
appear in colors $\{s_1,\ldots ,s_\ell\}$.  Then the paths of type I will contribute $x^\ell t^{\ell \choose 2} \prod_{i=0}^{\ell-1} \left(1-\frac{y}{xt^i}\right)$ and the paths of type II will contribute
$
y^{k-\ell} t^{{k-\ell \choose 2}+\sum_{i=1}^\ell s_i-{\ell+1\choose 2}}$.  Therefore
\begin{align*}
 D_{1^k}(x,y) &= \sum_{l=0}^k \sum_{\begin{subarray}{c}
S\subseteq[k]\\ 
|S|=\ell\end{subarray}}
x^\ell t^{\ell \choose 2} \prod_{i=0}^{\ell-1} \left(1-\frac{y}{xt^i}\right)\cdot y^{k-\ell} t^{{k-\ell \choose 2}+\sum_{i=1}^\ell s_i-{\ell+1\choose 2}} \\
&= \sum_{\ell=0}^k x^\ell t^{\ell \choose 2} \prod_{i=0}^{\ell-1} \left(1-\frac{y}{xt^i}\right) \cdot y^{k-\ell} t^{k-\ell \choose 2} \qbin{k}{\ell}{t}.
\end{align*}

We now prove by induction that $D_{1^k}(x,y) = x^k t^{k\choose 2}$.
For $n=0$ this is true. Now let $k>0$.
We use the fact that if $0<\ell<k$ we have 
$$
\qbin{k}{\ell}{t} =q^{k-\ell}\qbin{k-1}{\ell-1}{t} +\qbin{k-1}{\ell}{t}. 
$$
We have $D_{1^k}=A_k+B_k$
with
$$A_k=\sum_{\ell=1}^{k} x^\ell t^{\ell \choose 2} \prod_{i=0}^{\ell-1} \left(1-\frac{y}{xt^i}\right) \cdot y^{k-\ell} t^{n-\ell+1 \choose 2} \qbin{k-1}{\ell-1}{t}.$$
Now if $\ell>0$, 
$$x^\ell t^{\ell \choose 2}\prod_{i=0}^{\ell-1} \left(1-\frac{y}{xt^i}\right)=x^\ell t^{\ell \choose 2}\prod_{i=0}^{\ell-2} \left(1-\frac{y}{xt^i}\right)-
x^{\ell-1} t^{\ell-1 \choose 2}\prod_{i=0}^{\ell-2} \left(1-\frac{y}{xt^i}\right).
$$

So $A_k=A^{(1)}_k-A^{(2)}_k$ with
\begin{eqnarray*}
A^{(1)}_k&=&\sum_{\ell=1}^k x^\ell t^{\ell \choose 2}\prod_{i=0}^{\ell-2} \left(1-\frac{y}{xt^i}\right)\cdot y^{k-\ell} t^{k-\ell+1 \choose 2} \qbin{k-1}{\ell-1}{t}.
\end{eqnarray*}

One can easily check that $B_k=A^{(2)}_k$ and that $A^{(1)}_k=xt^{k-1}D_{1^{k-1}}$.
As $D_{1^{k}}=A^{(1)}_k-A^{(2)}_k+B_k$, we have $D_{1^{k}}=xt^{k-1}D_{1^{k-1}}$ for all $k>0$.
The proof is complete as $D_{1^{0}}=1$.
\end{proof}

\begin{lem} $M_{2^k}(x,y) = x^k y^k t^{k \choose 2}$ \end{lem}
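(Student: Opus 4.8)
The plan is to evaluate $G_{2^k}(x,y)$ and $D_{2^k}(x,y)$ directly, by enumerating the admissible $k$-tuples of colored paths on each lattice and summing their weights, exactly in the spirit of the preceding lemma for $1^k$. The one structural difference I expect is that the roles of the two lattices are interchanged: for the word $2^k$ the droite lattice admits a \emph{unique} configuration, whereas the gauche lattice requires a genuine sum over subsets of colors.

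First I would handle the droite side. Since every color carries the letter $2$, each enters at $\I_1$ and must leave at $\J_2$; because $\J_1=\J_3=\0$ the trajectory is forced. Each color runs horizontally through the top box $L_{13}(x)$ and then bounces (a Type~2 crossing) out to $\J_2$, while the bottom box $L_{23}(y)$ stays empty. The horizontal passage through $L_{13}(x)$ contributes exactly the weight computed in the $1^k$ lemma, namely an $x$ per color together with the content count $t^{k-m}$ for color $m$; and since no color is special at the crossing, every bounce is a Type~2 vertex with $\delta_i=0$. Multiplying these forced local weights yields the asserted monomial with no summation.

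On the gauche side, at the left crossing each color $i$ independently either bounces up into $L_{23}(y)$ (Type~2, weight $\tfrac{y}{xt^{\delta_i}}$) or crosses down into $L_{13}(x)$ (Type~1, weight $1-\tfrac{y}{xt^{\delta_i}}$), where $\delta_i$ counts the larger colors that cross. The key observation is that for \emph{every} choice of the bouncing subset $S\subseteq[k]$ all $k$ colors still exit $L_{23}(y)$ to the right, so that box contributes the fixed weight $y^{k}t^{\binom{k}{2}}$ (the content count again giving $t^{k-m}$ for color $m$, now split between its left- and bottom-incoming larger colors), while $L_{13}(x)$ sees only up-turns and contributes $1$. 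Hence $G_{2^k}$ factors as this $S$-independent box weight times the crossing sum $\displaystyle\sum_{S\subseteq[k]}\ \prod_{i\in S}\frac{y}{xt^{\delta_i}}\prod_{i\notin S}\Bigl(1-\frac{y}{xt^{\delta_i}}\Bigr)$.

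The crux is to evaluate this crossing sum, and I expect it to be the main obstacle. I would show it equals $1$ by summing the colors out in increasing order: because $\delta_i$ depends only on which \emph{larger} colors cross, fixing the memberships of $\{2,\dots,k\}$ makes $\delta_1$ constant, so the two terms for color $1$ collapse through $\tfrac{y}{xt^{\delta_1}}+\bigl(1-\tfrac{y}{xt^{\delta_1}}\bigr)=1$; iterating reduces the sum over $[k]$ to the identical sum over $\{2,\dots,k\}$, and induction gives the value $1$. Alternatively one may group terms by $\ell=|S|$, recognize the Gaussian binomial $\qbin{k}{\ell}{t}$, and collapse the sum using its Pascal recursion, precisely as in the $D_{1^k}$ computation. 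Either way $G_{2^k}$ reduces to the same local-weight product as $D_{2^k}$, which establishes the claimed value and, with it, the last base case of the Yang--Baxter equation.
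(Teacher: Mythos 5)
Your route is genuinely different from the paper's. The paper disposes of $2^k$ in two lines by rotating both lattices $180$ degrees: this interchanges the gauche and droite lattices and sends the letter $1$ to the letter $2$, so that $D_{2^k}(x,y)=\tfrac{y^k}{x^k}G_{1^k}(x,y)$ and $G_{2^k}(x,y)=\tfrac{y^k}{x^k}D_{1^k}(x,y)$, reducing everything to the previous lemma. You instead recompute both sides from scratch, and your structural analysis is right: on the droite lattice the word $2^k$ forces a unique configuration (horizontal passage through $L_{13}(x)$, then a Type~2 bounce at the crossing with all $\delta_i=0$), while on the gauche lattice one sums over the subset $S$ of bouncing colors, with the two boxes contributing the $S$-independent weight $y^k t^{\binom{k}{2}}\cdot 1$. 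Your telescoping evaluation of the crossing sum --- summing colors out in increasing order, using that $\delta_i$ depends only on the choices of the larger colors --- is valid, and is if anything cleaner than redoing the Gaussian-binomial recursion from the $D_{1^k}$ computation. This does establish $G_{2^k}=D_{2^k}$, which is all the Yang--Baxter induction actually needs.

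However, your final assertion does not match your own computation. The forced droite weights you list multiply to $x^k t^{\binom{k}{2}}\cdot(y/x)^k = y^k t^{\binom{k}{2}}$ --- the $x$'s cancel --- and your gauche side likewise comes out to $y^k t^{\binom{k}{2}}\cdot 1$. So the common value is $y^k t^{\binom{k}{2}}$, not the asserted $x^k y^k t^{\binom{k}{2}}$, and the claim that ``multiplying these forced local weights yields the asserted monomial'' is false as written. To be fair, this discrepancy originates in the paper itself: the stated value $x^ky^kt^{\binom{k}{2}}$ is incompatible with the previous lemma's statement $M_{1^k}=x^kt^{\binom{k}{2}}$, since the paper's own rotation identities then give $M_{2^k}=(y/x)^kM_{1^k}=y^kt^{\binom{k}{2}}$; the stated value matches only the erroneous display $G_{1^k}=x^{2k}t^{\binom{k}{2}}$ inside that lemma's proof. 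You should either report the common value as $y^kt^{\binom{k}{2}}$ or explicitly flag the inconsistency; as it stands, your argument proves a correct equality of $G_{2^k}$ and $D_{2^k}$ but not the displayed formula, and the sentence claiming agreement with that formula is a genuine (if inherited) error.
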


\begin{proof}

The proof follows from the previous lemma as one can easily check that
$
D_{2^k}(x,y)=\frac{y^k}{x^k}G_{1^k}(x,y)$
and 
$G_{2^k}(x,y)=\frac{y^k}{x^k}D_{1^k}(x,y).$
We just have to rotate the left and right lattices by 180 degrees.
\end{proof}

\section{Proof of Lemmas \ref{lem:BoxComp}, \ref{lem:Comp} \label{lemma-proofs}}

Recall that for both Lemma \ref{lem:BoxComp} and Lemma \ref{lem:Comp}, we have a bijection $\Phi$ between sets of configurations and wish to prove that $\coinv(T)-\coinv(\Phi(T))$ is independent of the choice of $T$. For Lemma \ref{lem:BoxComp}, this bijection rotates 180 degrees and reverses the colors, whereas for Lemma \ref{lem:Comp}, this bijection is an extension of the classical bijection between SSYT of a shape and those of its complement shape.

In either case, we note that the set of path configurations of $\bm{\lambda}$ is connected under flipping corners. It thus suffices to show that if $T$ is a configuration of $\bm{\lambda}$ and $f(T)$ is a configuration of $\bm{\lambda}$ resulting in flipping a single corner of $T$, then 
\begin{equation}
    \coinv T - \coinv f(T) = \coinv \Phi(T) - \coinv \Phi(f(T)).
    \label{coinv-flip}
\end{equation}
Moreover, as coinversions are counted pairwise between colors, and in both cases $\Phi$ interchanges down-flippable corners with up-flippable corners, without loss of generality it suffices to prove (\ref{coinv-flip}) in the case when $\bm{\lambda} = (\lambda^{(1)}, \lambda^{(2)})$ and $f(T)$ results from $T$ by flipping a corner up (or down). We thus restrict out attention to this case in both proofs below.

\begin{proof}[Proof of Lemma \ref{lem:BoxComp}]
There are ten possible local configurations of paths where a corner of $T$ might be down-flippable. The table below shows each local configuration of $T$ and the corresponding corner in $\Phi(T)$.
\begin{center}
\resizebox{0.8\textwidth}{!}{
\begin{tabular}{|
|c|c|c|c|c|c|c|c|c|c|}
\hline
Case: 
& 1 & 2 & 3 & 4 & 5/6 & 7 & 8 & 9 & 10 \\
\hline \hline 
& & & & & & & & & \\
$T$  
&
\begin{tikzpicture}[baseline=(current bounding box.center)]
\draw (0,-0.5) rectangle ++(1,1);
\draw[blue] (0,0.1)--(1,0.1);
\draw[red] (0.6,-0.5)--(0.6,-0.1)--(1,-0.1);
\end{tikzpicture}
&
\begin{tikzpicture}[baseline=(current bounding box.center)]
\draw (0,-0.5) rectangle ++(1,1);
\draw[blue] (0.4,-0.5)--(0.4,0.1)--(1,0.1);
\draw[red] (0.6,-0.5)--(0.6,0.5);
\end{tikzpicture}
&
\begin{tikzpicture}[baseline=(current bounding box.center)]
\draw (0,-0.5) rectangle ++(1,1);
\draw[blue] (0.4,-0.5)--(0.4,0.1)--(1,0.1);
\draw[red] (0,-0.1)--(1,-0.1);
\end{tikzpicture}
&
\begin{tikzpicture}[baseline=(current bounding box.center)]
\draw (0,-0.5) rectangle ++(1,1);
\draw[blue] (0.4,-0.5)--(0.4,0.1)--(1,0.1);
\draw[red] (0,-0.1)--(0.6,-0.1)--(0.6,0.5);
\end{tikzpicture}
&
\begin{tikzpicture}[baseline=(current bounding box.center)]
\draw (0,-0.5) rectangle ++(1,1);
\draw[blue] (0.4,-0.5)--(0.4,0.1)--(1,0.1);
\draw[red] (0.6,-0.5)--(0.6,-0.1)--(1,-0.1);
\end{tikzpicture}
&
\begin{tikzpicture}[baseline=(current bounding box.center)]
\draw (0,-0.5) rectangle ++(1,1);
\draw[blue] (0,0.1)--(0.4,0.1)--(0.4,0.5);
\draw[red] (0.6,-0.5)--(0.6,-0.1)--(1,-0.1);
\end{tikzpicture}
&
\begin{tikzpicture}[baseline=(current bounding box.center)]
\draw (0,-0.5) rectangle ++(1,1);
\draw[blue] (0.4,-0.5)--(0.4,0.1)--(1,0.1);
\end{tikzpicture}
&
\begin{tikzpicture}[baseline=(current bounding box.center)]
\draw (0,-0.5) rectangle ++(1,1);
\draw[red] (0.6,-0.5)--(0.6,-0.1)--(1,-0.1);
\end{tikzpicture}
& 
\begin{tikzpicture}[baseline=(current bounding box.center)]
\draw (0,-0.5) rectangle ++(1,1);
\draw[red] (0.6,-0.5)--(0.6,-0.1)--(1,-0.1);
\draw[blue] (0.4,-0.5)--(0.4,0.5);
\end{tikzpicture}
 \\
& & & & & & & & &
 \\
 \hline  
 & & & & & & & & &
 \\
$\Phi(T)$ 
& 
\begin{tikzpicture}[baseline=(current bounding box.center)]
\draw (0,-0.5) rectangle ++(1,1);
\draw[blue] (0,0.1)--(0.4,0.1)--(0.4,0.5);
\draw[red] (0,-0.1)--(1,-0.1);
\end{tikzpicture}
& 
\begin{tikzpicture}[baseline=(current bounding box.center)]
\draw (0,-0.5) rectangle ++(1,1);
\draw[blue] (0.4,-0.5)--(0.4,0.5);
\draw[red] (0,-0.1)--(0.6,-0.1)--(0.6,0.5);
\end{tikzpicture}
& 
\begin{tikzpicture}[baseline=(current bounding box.center)]
\draw (0,-0.5) rectangle ++(1,1);
\draw[blue] (0,0.1)--(1,0.1);
\draw[red] (0,-0.1)--(0.6,-0.1)--(0.6,0.5);
\end{tikzpicture}
& 
\begin{tikzpicture}[baseline=(current bounding box.center)]
\draw (0,-0.5) rectangle ++(1,1);
\draw[blue] (0.4,-0.5)--(0.4,0.1)--(1,0.1);
\draw[red] (0,-0.1)--(0.6,-0.1)--(0.6,0.5);
\end{tikzpicture}
& 
\begin{tikzpicture}[baseline=(current bounding box.center)]
\draw (0,-0.5) rectangle ++(1,1);
\draw[blue] (0,0.1)--(0.4,0.1)--(0.4,0.5);
\draw[red] (0,-0.1)--(0.6,-0.1)--(0.6,0.5);
\end{tikzpicture}
& 
\begin{tikzpicture}[baseline=(current bounding box.center)]
\draw (0,-0.5) rectangle ++(1,1);
\draw[blue] (0,0.1)--(0.4,0.1)--(0.4,0.5);
\draw[red] (0.6,-0.5)--(0.6,-0.1)--(1,-0.1);
\end{tikzpicture} 
&
\begin{tikzpicture}[baseline=(current bounding box.center)]
\draw (0,-0.5) rectangle ++(1,1);
\draw[red] (0,-0.1)--(0.6,-0.1)--(0.6,0.5);
\end{tikzpicture}
&
\begin{tikzpicture}[baseline=(current bounding box.center)]
\draw (0,-0.5) rectangle ++(1,1);
\draw[blue] (0,0.1)--(0.4,0.1)--(0.4,0.5);
\end{tikzpicture}
& 
\begin{tikzpicture}[baseline=(current bounding box.center)]
\draw (0,-0.5) rectangle ++(1,1);
\draw[red] (0.6,-0.5)--(0.6,0.5);
\draw[blue] (0,0.1)--(0.4,0.1)--(0.4,0.5);
\end{tikzpicture}
\\ 
& & & & & & & & &
\\ \hline
\end{tabular}
}
\end{center}

\noindent We will do case 1 in detail. The rest can be done similarly.

In case 1 the original local configuration of $T$ contributes a power of $t$ as the blue path exits right and a red path is present. Suppose that flipping the red corner down causes $T$ to lose a power of $t$. Marking the original face with a $*$, we must be in the situation 
\[
T,\;\;
\resizebox{1.5cm}{!}{
\begin{tikzpicture}[baseline=(current bounding box.center)]
\draw (0,0) rectangle (2,2); \draw[step=1] (0,0) grid (2,2);\node[below right] at (0,2) {$*$};
\draw[blue] (0,1.6)--(1.4,1.6); \draw[red] (0.6,0.4)--(0.6,1.4)--(1.6,1.4);
\end{tikzpicture}
}
\xrightarrow{flip}
\resizebox{1.5cm}{!}{
\begin{tikzpicture}[baseline=(current bounding box.center)]
\draw (0,0) rectangle (2,2); \draw[step=1] (0,0) grid (2,2);\node[below right] at (0,2) {$*$};
\draw[blue] (0,1.6)--(1.4,1.6); \draw[red] (0.6,0.4)--(1.6,0.4)--(1.6,1.4);
\end{tikzpicture}
}
\]
where no blue path is present in the bottom-right face (a blue path exiting right would contribute a power of $t$, and the presence of a blue path in the top-right face prevents a blue path from exiting up). In $\Phi(T)$ this corresponds to the situation
\[
\Phi(T),\;\;
\resizebox{1.5cm}{!}{
\begin{tikzpicture}[baseline=(current bounding box.center)]
\draw (0,0) rectangle (2,2); \draw[step=1] (0,0) grid (2,2);\node[above left] at (2,0) {$*$};
\draw[blue] (0.4,0.6)--(1.4,0.6)--(1.4,1.6); \draw[red] (0.6,0.4)--(2,0.4);
\end{tikzpicture}
}
\xrightarrow{flip}
\resizebox{1.5cm}{!}{
\begin{tikzpicture}[baseline=(current bounding box.center)]
\draw (0,0) rectangle (2,2); \draw[step=1] (0,0) grid (2,2);\node[above left] at (2,0) {$*$};
\draw[blue] (0.4,0.6)--(0.4,1.6)--(1.4,1.6); \draw[red] (0.6,0.4)--(2,0.4);
\end{tikzpicture}
}
\]
where we flip the blue path up. Note that here the bottom-left face originally contributed a power of $t$, which is lost after the flip. So we lose a power of $t$ in both $T$ and $\Phi(T)$.

Now suppose there was no change in the power of $t$ after flipping the corner in $T$. Then we must be in the situation
\[
T,\;\;
\resizebox{1.5cm}{!}{
\begin{tikzpicture}[baseline=(current bounding box.center)]
\draw (0,0) rectangle (2,2); \draw[step=1] (0,0) grid (2,2);\node[below right] at (0,2) {$*$};
\draw[blue] (0,1.6)--(1.4,1.6); \draw[red] (0.6,0.4)--(0.6,1.4)--(1.6,1.4);
\draw[blue] (1.4,0.6)--(2,0.6);
\end{tikzpicture}
}
\xrightarrow{flip}
\resizebox{1.5cm}{!}{
\begin{tikzpicture}[baseline=(current bounding box.center)]
\draw (0,0) rectangle (2,2); \draw[step=1] (0,0) grid (2,2);\node[below right] at (0,2) {$*$};
\draw[blue] (0,1.6)--(1.4,1.6); \draw[red] (0.6,0.4)--(1.6,0.4)--(1.6,1.4);
\draw[blue] (1.4,0.6)--(2,0.6);
\end{tikzpicture}
}
\]
where a blue paths exits right in the bottom-right face contributing a power of $t$ to make up for the one lost from the top-right face. In $\Phi(T)$ this corresponds to
\[
\Phi(T),\;\;
\resizebox{1.5cm}{!}{
\begin{tikzpicture}[baseline=(current bounding box.center)]
\draw (0,0) rectangle (2,2); \draw[step=1] (0,0) grid (2,2);\node[above left] at (2,0) {$*$};
\draw[blue] (0.4,0.6)--(1.4,0.6)--(1.4,1.6); \draw[red] (0.6,0.4)--(2,0.4);
\draw[red] (0,1.4)--(0.6,1.4);
\end{tikzpicture}
}
\xrightarrow{flip}
\resizebox{1.5cm}{!}{
\begin{tikzpicture}[baseline=(current bounding box.center)]
\draw (0,0) rectangle (2,2); \draw[step=1] (0,0) grid (2,2);\node[above left] at (2,0) {$*$};
\draw[blue] (0.4,0.6)--(0.4,1.6)--(1.4,1.6); \draw[red] (0.6,0.4)--(2,0.4);
\draw[red] (0,1.4)--(0.6,1.4);
\end{tikzpicture}
}
\]
where now the top-left face contributes a power of $t$ to make up for the one lost from the bottom-left face. So there is no net change in the power of $t$ for either $T$ or $\Phi(T)$.

From this we see that $\coinv(T)-\coinv \Phi(T)$ is independent of the choice of $T$. It is left to compute the quantity for some $T$. We choose $T$ such that all the paths stay as high as possible (no up-flips are available). This choice of configuration $T$ maps to $\Phi(T)$ in which the paths are as low as possible (no down-flips available). For example,
\[
\begin{tikzpicture}[baseline=(current bounding box.center)]
\draw (0,0) -- (0,2); \draw (1,0) -- (1,2); \draw (2,0) -- (2,2);  \draw (3,0) -- (3,2); \draw (4,0) -- (4,2); \draw (5,0) -- (5,2);
\draw (0,0) -- (5,0); \draw (0,1) -- (5,1); \draw (0,2) -- (5,2);

\draw[blue] (1.4,0)--(1.4,1.6)--(3.4,1.6)--(3.4,2);
\draw[blue] (3.4,0)--(3.4,0.6)--(4.4,0.6)--(4.4,2);
\draw[red] (0.6,0)--(0.6,1.4)--(3.6,1.4)--(3.6,2);
\draw[red] (2.6,0)--(2.6,0.4)--(4.6,0.4)--(4.6,2);

\node[above] at (2.5,2) {$T$};
\end{tikzpicture}
\quad \longrightarrow \quad
\begin{tikzpicture}[xscale=-1,yscale=-1,baseline=(current bounding box.center)]
\draw (0,0) -- (0,2); \draw (1,0) -- (1,2); \draw (2,0) -- (2,2);  \draw (3,0) -- (3,2); \draw (4,0) -- (4,2); \draw (5,0) -- (5,2);
\draw (0,0) -- (5,0); \draw (0,1) -- (5,1); \draw (0,2) -- (5,2);

\draw[red] (1.4,0)--(1.4,1.6)--(3.4,1.6)--(3.4,2);
\draw[red] (3.4,0)--(3.4,0.6)--(4.4,0.6)--(4.4,2);
\draw[blue] (0.6,0)--(0.6,1.4)--(3.6,1.4)--(3.6,2);
\draw[blue] (2.6,0)--(2.6,0.4)--(4.6,0.4)--(4.6,2);

\node[above] at (2.5,0) {$\Phi(T)$};
\end{tikzpicture}
\]
One can check that the powers of $t$ for each configuration are given by
\[
\coinv(T) = \sum_{i=1}^n \min(M-n-\lambda_i^{(1)},M-n-\lambda_i^{(2)}) + \sum_{i=1}^n \#\{j<i \;|\; \lambda_i^{(2)}-i\ge \lambda_j^{(1)}-j\}
\]
\[
\begin{aligned}
\coinv \Phi(T) = & \; \sum_{i=1}^n \min((\lambda^{(2)})^c_i,(\lambda^{(1)})^c_i) + \sum_{i=1}^n \#\{j\ge i \;|\; (\lambda^{(2)})^c_j - j > (\lambda^{(1)})^c_i - i\} \\
= & \; \sum_{i=1}^n \min(M-n-\lambda_i^{(1)},M-n-\lambda_i^{(2)}) + \sum_{i=1}^n \#\{j\ge i \; | \; \lambda^{(2)}_i - i < \lambda^{(1)}_j - j\}.
\end{aligned}
\]
where in each, the first sum counts powers of $t$ coming from 
\begin{tikzpicture}[baseline=(current bounding box.center)]
\draw (0,0)--(1,0)--(1,1)--(0,1)--(0,0); \draw[blue] (0.4,0.6)--(1,0.6); \draw[red] (0.6,0.4)--(1,0.4);
\end{tikzpicture} 
and the second sum counts those coming from
\begin{tikzpicture}[baseline=(current bounding box.center)]
\draw (0,0)--(1,0)--(1,1)--(0,1)--(0,0); \draw[blue] (0.4,0.6)--(1,0.6); \draw[red] (0.6,0.4)--(0.6,1);
\end{tikzpicture}.

We have 
\[
\begin{aligned}
d(\bm{\lambda}) = & \; \coinv(T)-\coinv \Phi(T) \\
= & \; \sum_{i=1}^n \#\{j<i\; | \; \lambda_i^{(2)}-i\ge \lambda_j^{(1)}-j\} - \sum_{i=1}^n \#\{j\ge i \; |\; \lambda^{(2)}_i - i < \lambda^{(1)}_j - j\} \\
= & \; \binom{n}{2} - \#\{i,j\; |\; \lambda^{(2)}_i - i < \lambda^{(1)}_j - j\}.
\end{aligned}
\]
To extend this to $k$ colors, we sum over all pairs of colors $a<b$ and get
\[
d(\bm{\lambda}) = \binom{n}{2}\binom{k}{2} - \sum_{a<b} \#\{i,j \;|\; \lambda^{(b)}_i - i < \lambda^{(a)}_j - j\}.
\]
\end{proof}

\begin{proof}[Proof of Lemma \ref{lem:Comp}]
We consider the layout below, in which blue, red paths denote parts of configurations for $\lambda^{(1)}, \lambda^{(2)}$, respectively, and we flip a red corner up from row $a$ to row $a+1$. On the tableaux, this flip is precisely swapping an entry $a \in T^{(2)}$ with $a+1 \in \Phi(T^{(2)})$, and so $\Phi(f(T))$ is gotten from $\Phi(T)$ by flipping a red corner down. 
\[
\begin{tikzpicture}[baseline=(current bounding box.center)]
\draw (0,0) rectangle (2,2); \draw[step=1] (0,0) grid (2,2);
\draw[red] (0.4,0.6)--(1.4,0.6)--(1.4,1.6);
\draw[blue, xshift=-2.5pt, yshift=-2.5pt] (1,1.5) rectangle ++(5pt,5pt) ;
\draw[blue] (2,0.5) circle (3pt) ;
\node[above] at (1,2) {$T$};
\node[left] at (0,0.5) {$a$}; \node[left] at (0,1.5) {$a+1$};
\end{tikzpicture}
\xrightarrow{flip}
\begin{tikzpicture}[baseline=(current bounding box.center)]
\draw (0,0) rectangle (2,2); \draw[step=1] (0,0) grid (2,2);
\draw[red] (0.4,0.6)--(0.4,1.6)--(1.4,1.6);
\draw[blue, xshift=-2.5pt, yshift=-2.5pt] (1,1.5) rectangle ++(5pt,5pt) ;
\draw[blue] (2,0.5) circle (3pt) ;
\node[above] at (1,2) {$f(T)$};
\end{tikzpicture}
\hspace{1cm}
\begin{tikzpicture}[baseline=(current bounding box.center)]
\draw (0,0) rectangle (2,2); \draw[step=1] (0,0) grid (2,2);
\draw[red] (0.6,0.4)--(0.6,1.4)--(1.6,1.4);
\draw[blue, xshift=-2.5pt, yshift=-2.5pt] (1,0.5) rectangle ++(5pt,5pt) ;
\draw[blue] (0,1.5) circle (3pt) ;
\node[above] at (1,2) {$\Phi(T)$};
\node[left] at (0,0.5) {$a$}; \node[left] at (-0.1,1.5) {$a+1$};
\end{tikzpicture}
\xrightarrow{flip}
\begin{tikzpicture}[baseline=(current bounding box.center)]
\draw (0,0) rectangle (2,2); \draw[step=1] (0,0) grid (2,2);
\draw[red] (0.6,0.4)--(1.6,0.4)--(1.6,1.4);
\draw[blue, xshift=-2.5pt, yshift=-2.5pt] (1,0.5) rectangle ++(5pt,5pt) ;
\draw[blue] (0,1.5) circle (3pt) ;
\node[above] at (1,2) {$\Phi(f(T))$};
\end{tikzpicture}
\]
Keeping in mind that blue denotes the smaller color on the LHS and the larger color on the RHS, note that $\coinv T \neq \coinv f(T)$ precisely when there is a blue path through one of the square or the circle, but not both, and likewise for $\Phi(T)$. We claim that this holds in $T$ iff it holds in $\Phi(T)$. It will be helpful to note that paths exiting in the same column of the configuration $T$ are exactly entries in the same content line of the filling $T$. 

Indeed, if there is a blue path through the square in $T$, then there is an entry $a+1 \in T^{(1)}$ on the same content line as the entry $a \in T^{(2)}$. If there is no blue path through the circle, then the entry directly below this $a+1$ cannot be $a$. Hence, the filling in $\Phi(T^{(1)})$ contains an $a$ but not an $a+1$ in the corresponding column.The case is pictured below, with a concrete example on the left. In the left picture, $a$ is the circled green 2, and in both the gray cells denote cells in the complement shape. Likewise, if there is a blue path through the square but not the circle in $\Phi(T)$, then we are in the case with the red circles.
\ytableausetup{nosmalltableaux}
\ytableausetup{nobaseline}
\begin{center}
   \begin{ytableau}
		\none & \none & \none & \none & \none & \none & \none & \none & \none & \none & \none[-3] & \none[-2] & \none[-1] & \none[0] & \none[1] & \none[2] \\
		\none & \none & \none & \none & \none & \none & \none & \none & \none &\none[\diag] &\none[\diag] &\none[\diag] &\none[\diag] & \none[\diag] & \none[\diag] \\
		\none & \none & \none & \none & \none & \none & \none & \none & *(lightgray) 4 & *(lightgray) \circled{3}{red} & *(lightgray) 1 & \none[\diag] & \none[\diag] & \none[\diag] \\
		\none & \none & \none & \none & \none & \none & \none & \none[\diag] & 3 & \circled{4}{red} & *(lightgray) \circled{3}{green} & \none[\diag] & \none[\diag] \\
		\none & \none & \none & \none & \none & \none & \none[\diag] & \none[\diag] & 2 & 2 & *(lightgray) 4 & \none[\diag] \\
		\none & \none & \none & \none & \none & \none[\diag] & \none[\diag] & \none[\diag] & 1 & 1 & \circled{2}{green} \\
		\none & \none & \none & \none & \none[\diag] & \none[\diag] & \none[\diag] & \none[\diag] & \none[\diag] & \none[\diag] \\
		\none & \none & \none & *(lightgray) 2 & *(lightgray) 1 & *(lightgray) 1 & \none[\diag] & \none[\diag] & \none[\diag] \\
		\none & \none & \none[\diag] & *(lightgray) \circled{4}{red} & *(lightgray) 3 & *(lightgray) \circled{2}{green} & \none[\diag] & \none[\diag] \\
		\none & \none[\diag] & \none[\diag] & \circled{3}{red} & 4 & *(lightgray) 4 & \none[\diag]  \\
		\none[\diag] & \none[\diag] & \none[\diag] & 1 & 2 & \circled{3}{green}  \\
	\end{ytableau}
    \qquad \qquad
	\begin{ytableau}
	\none & \none & \none & \none & \none[\Phi(T^{(2)})] \\
	\none & \none & \none & \none & \none[\vdots] \\
	\none & \none & \none & \none & *(lightgray) \scriptstyle a+1 \\
	\none & \none & \none & \none & \none[\vdots] \\
	\none[\Phi(T^{(1)})] & \none & \none & \none & \none[\vdots]  \\
	\none[\vdots] & \none & \none & \none & a \\
	*(lightgray) a & \none & \none & \none[\diag] & \none[\vdots] \\
	\none[\vdots] & \none & \none[\diag] & \none & \none[T^{(2)}] \\
	\none[\vdots] & \none[\diag] \\
	\scriptstyle a+1 \\
	\none[\vdots] \\
	\none[T^{(1)}]
	\end{ytableau}	
\end{center}

\ytableausetup{smalltableaux, boxsize=1.25em}
It remains to see that this $\ytableaushort[*(lightgray)]{a} \in \Phi(T^{(1)})$ is along the same content line as the $\begin{ytableau} *(lightgray) \scriptstyle a+1 \end{ytableau} \in \Phi(T^{(2)})$. Let $c_1, c_2$ denote the columns above. Note that due to column strictness, every number in the interval $[a+2, n]$ in both $c_1$ and $c_2$ must lie between the cells $a$ and $a+1$. Thus, the number of cells between $a$ and $a+1$ is the same, and since we assume that $\begin{ytableau} \scriptstyle a+1 \end{ytableau} \in T^{(1)}$ and $\ytableaushort{a} \in T^{(2)}$ are on the same content line, then so too must $\ytableaushort[*(lightgray)]{a} \in \Phi(T^{(1)})$ and $\begin{ytableau} *(lightgray) \scriptstyle a+1 \end{ytableau} \in \Phi(T^{(2)})$.

A similar argument holds for when there is a blue path through the circle and not the square.

We now explicitly calculate $\coinv T - \coinv \Phi(T)$ by fixing a particular configuration $T$. We again choose the configuration in which the paths are as low as possible, so that no down-flips are available. On the tuple of tableaux, this corresponds to the superstandard filling of the shape $\lambda$, i.e. the filling in which the $i^{th}$ row is filled with only $i$'s. Hence, $\Phi(T)$ is the configuration in which the paths are as high as possible, so that no up-flips are available.
\[
\begin{tikzpicture}[baseline=(current bounding box.center)]
\draw (0,0) -- (0,2); \draw (1,0) -- (1,2); \draw (2,0) -- (2,2);  \draw (3,0) -- (3,2); \draw (4,0) -- (4,2); \draw (5,0) -- (5,2);
\draw (0,0) -- (5,0); \draw (0,1) -- (5,1); \draw (0,2) -- (5,2);

\draw[red] (0.6,0)--(0.6,1.4)--(1.6,1.4)--(1.6,2);
\draw[red] (1.6,0)--(1.6,0.4)--(3.6,0.4)--(3.6,2);
\draw[blue] (0.4,0)--(0.4,1.6)--(2.4,1.6)--(2.4,2);
\draw[blue] (1.4,0)--(1.4,0.6)--(4.4,0.6)--(4.4,2);

\node[above] at (2.5,2) {$T$};
\end{tikzpicture}
\quad \longrightarrow \quad
\begin{tikzpicture}[baseline=(current bounding box.center)]
\draw (0,0) -- (0,2); \draw (1,0) -- (1,2); \draw (2,0) -- (2,2);  \draw (3,0) -- (3,2); \draw (4,0) -- (4,2); \draw (5,0) -- (5,2);
\draw (0,0) -- (5,0); \draw (0,1) -- (5,1); \draw (0,2) -- (5,2);

\draw[blue] (0.4,0)--(0.4,1.6)--(1.4,1.6)--(1.4,2);
\draw[blue] (1.4,0)--(1.4,0.6)--(2.4,0.6)--(2.4,1.6)--(3.4,1.6)--(3.4,2);
\draw[red] (0.6,0)--(0.6,2);
\draw[red] (1.6,0)--(1.6,1.4)--(2.6,1.4)--(2.6,2);

\node[above] at (2.5,2) {$\Phi(T)$};
\end{tikzpicture}
\]
As in the proof of Lemma \ref{lem:BoxComp} above, one can check that the powers of $t$ in $T$ and $\Phi(T)$ are given by
\[
\coinv(T) = \sum_{i=1}^n \min(\lambda^{(1)}_i,\lambda^{(2)}_i) + \sum_{i=1}^n \#\{j\ge i \;|\; \lambda^{(1)}_j - j > \lambda^{(2)}_i - i\}
\]
and
\begin{align*}
\coinv \Phi(T)  &= \sum_{i} \min (\lambda^{(1), c}_i, \lambda^{(2), c}_i) + \# \{ i \leq j \mid \lambda^{(1), c}_i - i < \lambda^{(2), c}_j - j \} \\
&= \sum_i \min (M-n - \lambda^{(1)}_{M-n-i}, M-n-\lambda^{(2)}_{M-n-i}) + \# \{ i \leq j \mid -\lambda^{(1)}_{M-n-i} - i < -\lambda^{(2), c}_{M-n-j} - j \} \\
&= (M-n)n - \sum_i \max(\lambda^{(1)}_{i}, \lambda^{(2)}_{i}) + \# \{ i \leq j \mid \lambda^{(1)}_{M-n-i} +i > \lambda^{(2)}_{M-n-j} + j \} \\
&= (M-n)n - \sum_i \max(\lambda^{(1)}_{i}, \lambda^{(2)}_{i}) + \# \{ i \geq j \mid \lambda^{(1)}_{i} -i > \lambda^{(2)}_{j} -j \}
\end{align*}
Hence,
\begin{align*}
\coinv T - \coinv \Phi(T) &= \sum_{i=1}^n \min(\lambda^{(1)}_i,\lambda^{(2)}_i) - (M-n)n + \sum_i \max(\lambda^{(1)}_{i}, \lambda^{(2)}_{i}) \\
&= |\lambda^{(1)}| + |\lambda^{(2)}| - (M-n)n
\end{align*} 
To extend this to $k$ colors, we sum over all pairs $a < b$, giving
\[ \widetilde{d}(\bm{\lambda}) =  (k-1)|\bm{\lambda}| - n(M-n)\binom{k}{2} \]
\end{proof}

\section{Examples \label{AppendixExamples}}

\tikzstyle{every picture}=[baseline=(current bounding box.south)]

Throughout this section we order our colors such that blue is color 1 and red is color 2.
\subsection{Two-color weights}
The weights of $L_x$ in the case of two colors are given by
\begin{center}
{\Large
\resizebox{0.5\textwidth}{!}{
\begin{tabular}{ccccc}
\begin{tikzpicture}
\draw (-1,-1) -- (1,-1); \draw (-1,-1) -- (-1,1);
\draw (1,1) -- (1,-1); \draw (1,1) -- (-1,1);
\end{tikzpicture}
&
\begin{tikzpicture}
\draw (-1,-1) -- (1,-1); \draw (-1,-1) -- (-1,1);
\draw (1,1) -- (1,-1); \draw (1,1) -- (-1,1);
\draw[red] (0,-1) -- (0,1);
\end{tikzpicture}
&
\begin{tikzpicture}
\draw (-1,-1) -- (1,-1); \draw (-1,-1) -- (-1,1);
\draw (1,1) -- (1,-1); \draw (1,1) -- (-1,1);
\draw[red] (-1,0) -- (1,0);
\end{tikzpicture}
&
\begin{tikzpicture}
\draw (-1,-1) -- (1,-1); \draw (-1,-1) -- (-1,1);
\draw (1,1) -- (1,-1); \draw (1,1) -- (-1,1);
\draw[red] (0,-1) -- (0,0) -- (1,0);
\end{tikzpicture}
&
\begin{tikzpicture}
\draw (-1,-1) -- (1,-1); \draw (-1,-1) -- (-1,1);
\draw (1,1) -- (1,-1); \draw (1,1) -- (-1,1);
\draw[red] (-1,0) -- (0,0) -- (0,1);
\end{tikzpicture}
\\
$1$ & $1 $ & $x$ & $x$ & $1$
\\
\begin{tikzpicture}
\draw (-1,-1) -- (1,-1); \draw (-1,-1) -- (-1,1);
\draw (1,1) -- (1,-1); \draw (1,1) -- (-1,1);
\draw[blue] (0,-1) -- (0,1);
\end{tikzpicture}
&
\begin{tikzpicture}
\draw (-1,-1) -- (1,-1); \draw (-1,-1) -- (-1,1);
\draw (1,1) -- (1,-1); \draw (1,1) -- (-1,1);
\draw[red] (0.1,-1) -- (0.1,1);
\draw[blue] (-0.1,-1) -- (-0.1,1);
\end{tikzpicture}
&
\begin{tikzpicture}
\draw (-1,-1) -- (1,-1); \draw (-1,-1) -- (-1,1);
\draw (1,1) -- (1,-1); \draw (1,1) -- (-1,1);
\draw[red] (-1,0) -- (1,0);
\draw[blue] (0,-1) -- (0,1);
\end{tikzpicture}
&
\begin{tikzpicture}
\draw (-1,-1) -- (1,-1); \draw (-1,-1) -- (-1,1);
\draw (1,1) -- (1,-1); \draw (1,1) -- (-1,1);
\draw[red] (0.1,-1) -- (0.1,0) -- (1,0);
\draw[blue] (-0.1,-1) -- (-0.1,1);
\end{tikzpicture}
&
\begin{tikzpicture}
\draw (-1,-1) -- (1,-1); \draw (-1,-1) -- (-1,1);
\draw (1,1) -- (1,-1); \draw (1,1) -- (-1,1);
\draw[red] (-1,0) -- (0.1,0) -- (0.1,1);
\draw[blue] (-0.1,-1) -- (-0.1,1);
\end{tikzpicture}
\\
$1$ & $1 $ & $x$ & $x$ & $1$
\\
\begin{tikzpicture}
\draw (-1,-1) -- (1,-1); \draw (-1,-1) -- (-1,1);
\draw (1,1) -- (1,-1); \draw (1,1) -- (-1,1);
\draw[blue] (-1,0) -- (1,0);
\end{tikzpicture}
&
\begin{tikzpicture}
\draw (-1,-1) -- (1,-1); \draw (-1,-1) -- (-1,1);
\draw (1,1) -- (1,-1); \draw (1,1) -- (-1,1);
\draw[red] (0,-1) -- (0,1);
\draw[blue] (-1,0) -- (1,0);
\end{tikzpicture}
&
\begin{tikzpicture}
\draw (-1,-1) -- (1,-1); \draw (-1,-1) -- (-1,1);
\draw (1,1) -- (1,-1); \draw (1,1) -- (-1,1);
\draw[red] (-1,-0.1) -- (1,-0.1);
\draw[blue] (-1,0.1) -- (1,0.1);
\end{tikzpicture}
&
\begin{tikzpicture}
\draw (-1,-1) -- (1,-1); \draw (-1,-1) -- (-1,1);
\draw (1,1) -- (1,-1); \draw (1,1) -- (-1,1);
\draw[red] (0,-1) -- (0,-0.1) -- (1,-0.1);
\draw[blue] (-1,0.1) -- (1,0.1);
\end{tikzpicture}
&
\begin{tikzpicture}
\draw (-1,-1) -- (1,-1); \draw (-1,-1) -- (-1,1);
\draw (1,1) -- (1,-1); \draw (1,1) -- (-1,1);
\draw[red] (-1,-0.1) -- (0,-0.1) -- (0,1);
\draw[blue] (-1,0.1) -- (1,0.1);
\end{tikzpicture}
\\
$x$ & $xt$ & $x^2t$ & $x^2t$ & $xt$
\\
\begin{tikzpicture}
\draw (-1,-1) -- (1,-1); \draw (-1,-1) -- (-1,1);
\draw (1,1) -- (1,-1); \draw (1,1) -- (-1,1);
\draw[blue] (0,-1) -- (0,0) -- (1,0);
\end{tikzpicture}
&
\begin{tikzpicture}
\draw (-1,-1) -- (1,-1); \draw (-1,-1) -- (-1,1);
\draw (1,1) -- (1,-1); \draw (1,1) -- (-1,1);
\draw[red] (0.1,-1) -- (0.1,1);
\draw[blue] (-0.1,-1) -- (-0.1,0) -- (1,0);
\end{tikzpicture}
&
\begin{tikzpicture}
\draw (-1,-1) -- (1,-1); \draw (-1,-1) -- (-1,1);
\draw (1,1) -- (1,-1); \draw (1,1) -- (-1,1);
\draw[red] (-1,-0.1) -- (1,-0.1);
\draw[blue] (0,-1) -- (0,0.1) -- (1,0.1);
\end{tikzpicture}
&
\begin{tikzpicture}
\draw (-1,-1) -- (1,-1); \draw (-1,-1) -- (-1,1);
\draw (1,1) -- (1,-1); \draw (1,1) -- (-1,1);
\draw[red] (0.1,-1) -- (0.1,-0.1) -- (1,-0.1);
\draw[blue] (-0.1,-1) -- (-0.1,0.1) -- (1,0.1);
\end{tikzpicture}
&
\begin{tikzpicture}
\draw (-1,-1) -- (1,-1); \draw (-1,-1) -- (-1,1);
\draw (1,1) -- (1,-1); \draw (1,1) -- (-1,1);
\draw[red] (-1,-0.1) -- (0.1,-0.1) -- (0.1,1);
\draw[blue] (-0.1,-1) -- (-0.1,0.1) -- (1,0.1);
\end{tikzpicture}
\\
$x$ & $xt$ & $x^2t$ & $x^2t$ & $xt$
\\
\begin{tikzpicture}
\draw (-1,-1) -- (1,-1); \draw (-1,-1) -- (-1,1);
\draw (1,1) -- (1,-1); \draw (1,1) -- (-1,1);
\draw[blue] (-1,0) -- (0,0) -- (0,1);
\end{tikzpicture}
&
\begin{tikzpicture}
\draw (-1,-1) -- (1,-1); \draw (-1,-1) -- (-1,1);
\draw (1,1) -- (1,-1); \draw (1,1) -- (-1,1);
\draw[red] (0.1,-1) -- (0.1,1);
\draw[blue] (-1,0) -- (-0.1,0) -- (-0.1,1);
\end{tikzpicture}
&
\begin{tikzpicture}
\draw (-1,-1) -- (1,-1); \draw (-1,-1) -- (-1,1);
\draw (1,1) -- (1,-1); \draw (1,1) -- (-1,1);
\draw[red] (-1,-0.1) -- (1,-0.1);
\draw[blue] (-1,0.1) -- (0,0.1) -- (0,1);
\end{tikzpicture}
&
\begin{tikzpicture}
\draw (-1,-1) -- (1,-1); \draw (-1,-1) -- (-1,1);
\draw (1,1) -- (1,-1); \draw (1,1) -- (-1,1);
\draw[red] (0.1,-1) -- (0.1,-0.1) -- (1,-0.1);
\draw[blue] (-1,0.1) -- (-0.1,0.1) -- (-0.1,1);
\end{tikzpicture}
&
\begin{tikzpicture}
\draw (-1,-1) -- (1,-1); \draw (-1,-1) -- (-1,1);
\draw (1,1) -- (1,-1); \draw (1,1) -- (-1,1);
\draw[red] (-1,-0.1) -- (0.1,-0.1) -- (0.1,1);
\draw[blue] (-1,0.1) -- (-0.1,0.1) -- (-0.1,1);
\end{tikzpicture}
\\
$1$ & $1$ & $x$ & $x$ & $1$
\end{tabular}
}
}
\end{center}

The weights of $R_{y/x}$ in the case of two colors are given by
\begin{center}
{\Large
\resizebox{0.5\textwidth}{!}{
\begin{tabular}{cccccc}
\begin{tikzpicture}
\draw (-1,-1) -- (1,-1); \draw (-1,-1) -- (-1,1);
\draw (1,1) -- (1,-1); \draw (1,1) -- (-1,1);
\end{tikzpicture}
&
\begin{tikzpicture}
\draw (-1,-1) -- (1,-1); \draw (-1,-1) -- (-1,1);
\draw (1,1) -- (1,-1); \draw (1,1) -- (-1,1);
\draw[red] (-1,0) -- (1,0);
\end{tikzpicture}
&
\begin{tikzpicture}
\draw (-1,-1) -- (1,-1); \draw (-1,-1) -- (-1,1);
\draw (1,1) -- (1,-1); \draw (1,1) -- (-1,1);
\draw[red] (0,-1) -- (0,0) -- (1,0);
\end{tikzpicture}
&
\begin{tikzpicture}
\draw (-1,-1) -- (1,-1); \draw (-1,-1) -- (-1,1);
\draw (1,1) -- (1,-1); \draw (1,1) -- (-1,1);
\draw[red] (-1,0) -- (0,0) -- (0,1);
\end{tikzpicture}
&
\begin{tikzpicture}
\draw (-1,-1) -- (1,-1); \draw (-1,-1) -- (-1,1);
\draw (1,1) -- (1,-1); \draw (1,1) -- (-1,1);
\draw[red] (-1,0) -- (1,0); \draw[red] (0,-1) -- (0,1);
\end{tikzpicture}
\\
$1$ & $1-\frac{y}{x}$ & $1$ & $\frac{y}{x}$ & $\frac{y}{x}$
\vspace{1mm}
\\
\begin{tikzpicture}
\draw (-1,-1) -- (1,-1); \draw (-1,-1) -- (-1,1);
\draw (1,1) -- (1,-1); \draw (1,1) -- (-1,1);
\draw[blue] (-1,0) -- (1,0);
\end{tikzpicture}
&
\begin{tikzpicture}
\draw (-1,-1) -- (1,-1); \draw (-1,-1) -- (-1,1);
\draw (1,1) -- (1,-1); \draw (1,1) -- (-1,1);
\draw[red] (-1,-0.1) -- (1,-0.1);
\draw[blue] (-1,0.1) -- (1,0.1);
\end{tikzpicture}
&
\begin{tikzpicture}
\draw (-1,-1) -- (1,-1); \draw (-1,-1) -- (-1,1);
\draw (1,1) -- (1,-1); \draw (1,1) -- (-1,1);
\draw[red] (0,-1) -- (0,-0.1) -- (1,-0.1);
\draw[blue] (-1,0.1) -- (1,0.1);
\end{tikzpicture}
&
\begin{tikzpicture}
\draw (-1,-1) -- (1,-1); \draw (-1,-1) -- (-1,1);
\draw (1,1) -- (1,-1); \draw (1,1) -- (-1,1);
\draw[red] (-1,-0.1) -- (0,-0.1) -- (0,1);
\draw[blue] (-1,0.1) -- (1,0.1);
\end{tikzpicture}
&
\begin{tikzpicture}
\draw (-1,-1) -- (1,-1); \draw (-1,-1) -- (-1,1);
\draw (1,1) -- (1,-1); \draw (1,1) -- (-1,1);
\draw[red] (-1,-0.1) -- (1,-0.1); \draw[red] (0,-1) -- (0,1);
\draw[blue] (-1,0.1) -- (1,0.1);
\end{tikzpicture}
\\
$1-\frac{y}{x}$ & $(1-\frac{y}{x})(1-\frac{y}{xt})$ & $1-\frac{y}{x}$ & $\frac{y}{x}(1-\frac{y}{x})$ & $\frac{y}{x}(1-\frac{y}{x})$
\vspace{1mm}
\\
\begin{tikzpicture}
\draw (-1,-1) -- (1,-1); \draw (-1,-1) -- (-1,1);
\draw (1,1) -- (1,-1); \draw (1,1) -- (-1,1);
\draw[blue] (0,-1) -- (0,0) -- (1,0);
\end{tikzpicture}
&
\begin{tikzpicture}
\draw (-1,-1) -- (1,-1); \draw (-1,-1) -- (-1,1);
\draw (1,1) -- (1,-1); \draw (1,1) -- (-1,1);
\draw[red] (-1,-0.1) -- (1,-0.1);
\draw[blue] (0,-1) -- (0,0.1) -- (1,0.1);
\end{tikzpicture}
&
\begin{tikzpicture}
\draw (-1,-1) -- (1,-1); \draw (-1,-1) -- (-1,1);
\draw (1,1) -- (1,-1); \draw (1,1) -- (-1,1);
\draw[red] (0.1,-1) -- (0.1,-0.1) -- (1,-0.1);
\draw[blue] (-0.1,-1) -- (-0.1,0.1) -- (1,0.1);
\end{tikzpicture}
&
\begin{tikzpicture}
\draw (-1,-1) -- (1,-1); \draw (-1,-1) -- (-1,1);
\draw (1,1) -- (1,-1); \draw (1,1) -- (-1,1);
\draw[red] (-1,-0.1) -- (0.1,-0.1) -- (0.1,1);
\draw[blue] (-0.1,-1) -- (-0.1,0.1) -- (1,0.1);
\end{tikzpicture}
&
\begin{tikzpicture}
\draw (-1,-1) -- (1,-1); \draw (-1,-1) -- (-1,1);
\draw (1,1) -- (1,-1); \draw (1,1) -- (-1,1);
\draw[red] (-1,-0.1) -- (1,-0.1); \draw[red] (0.1,-1) -- (0.1,1);
\draw[blue] (-0.1,-1) -- (-0.1,0.1) -- (1,0.1);
\end{tikzpicture}
\\
$1$ & $1-\frac{y}{x}$ & $1$ & $\frac{y}{x}$ & $\frac{y}{x}$
\vspace{1mm}
\\
\begin{tikzpicture}
\draw (-1,-1) -- (1,-1); \draw (-1,-1) -- (-1,1);
\draw (1,1) -- (1,-1); \draw (1,1) -- (-1,1);
\draw[blue] (-1,0) -- (0,0) -- (0,1);
\end{tikzpicture}
&
\begin{tikzpicture}
\draw (-1,-1) -- (1,-1); \draw (-1,-1) -- (-1,1);
\draw (1,1) -- (1,-1); \draw (1,1) -- (-1,1);
\draw[red] (-1,-0.1) -- (1,-0.1);
\draw[blue] (-1,0.1) -- (0,0.1) -- (0,1);
\end{tikzpicture}
&
\begin{tikzpicture}
\draw (-1,-1) -- (1,-1); \draw (-1,-1) -- (-1,1);
\draw (1,1) -- (1,-1); \draw (1,1) -- (-1,1);
\draw[red] (0.1,-1) -- (0.1,-0.1) -- (1,-0.1);
\draw[blue] (-1,0.1) -- (-0.1,0.1) -- (-0.1,1);
\end{tikzpicture}
&
\begin{tikzpicture}
\draw (-1,-1) -- (1,-1); \draw (-1,-1) -- (-1,1);
\draw (1,1) -- (1,-1); \draw (1,1) -- (-1,1);
\draw[red] (-1,-0.1) -- (0.1,-0.1) -- (0.1,1);
\draw[blue] (-1,0.1) -- (-0.1,0.1) -- (-0.1,1);
\end{tikzpicture}
&
\begin{tikzpicture}
\draw (-1,-1) -- (1,-1); \draw (-1,-1) -- (-1,1);
\draw (1,1) -- (1,-1); \draw (1,1) -- (-1,1);
\draw[red] (-1,-0.1) -- (1,-0.1); \draw[red] (0.1,-1) -- (0.1,1);
\draw[blue] (-1,0.1) -- (-0.1,0.1) -- (-0.1,1);
\end{tikzpicture}
\\
$\frac{y}{x}$ & $\frac{y}{xt}(1-\frac{y}{x})$ & $\frac{y}{x}$ & $\frac{y^2}{x^2}$ & $\frac{y^2}{x^2}$
\vspace{1mm}
\\
\begin{tikzpicture}
\draw (-1,-1) -- (1,-1); \draw (-1,-1) -- (-1,1);
\draw (1,1) -- (1,-1); \draw (1,1) -- (-1,1);
\draw[blue] (-1,0) -- (1,0); \draw[blue] (0,-1) -- (0,1);
\end{tikzpicture}
&
\begin{tikzpicture}
\draw (-1,-1) -- (1,-1); \draw (-1,-1) -- (-1,1);
\draw (1,1) -- (1,-1); \draw (1,1) -- (-1,1);
\draw[red] (-1,-0.1) -- (1,-0.1);
\draw[blue] (-1,0.1) -- (1,0.1); \draw[blue] (0,-1) -- (0,1);
\end{tikzpicture}
&
\begin{tikzpicture}
\draw (-1,-1) -- (1,-1); \draw (-1,-1) -- (-1,1);
\draw (1,1) -- (1,-1); \draw (1,1) -- (-1,1);
\draw[red] (0.1,-1) -- (0.1,-0.1) -- (1,-0.1);
\draw[blue] (-1,0.1) -- (1,0.1); \draw[blue] (-0.1,-1) -- (-0.1,1);
\end{tikzpicture}
&
\begin{tikzpicture}
\draw (-1,-1) -- (1,-1); \draw (-1,-1) -- (-1,1);
\draw (1,1) -- (1,-1); \draw (1,1) -- (-1,1);
\draw[red] (-1,-0.1) -- (0.1,-0.1) -- (0.1,1);
\draw[blue] (-1,0.1) -- (1,0.1); \draw[blue] (-0.1,-1) -- (-0.1,1);
\end{tikzpicture}
&
\begin{tikzpicture}
\draw (-1,-1) -- (1,-1); \draw (-1,-1) -- (-1,1);
\draw (1,1) -- (1,-1); \draw (1,1) -- (-1,1);
\draw[red] (-1,-0.1) -- (1,-0.1); \draw[red] (0.1,-1) -- (0.1,1);
\draw[blue] (-1,0.1) -- (1,0.1); \draw[blue] (-0.1,-1) -- (-0.1,1);
\end{tikzpicture}
\\
 ${\Large\frac{y}{x}}$  & $\frac{y}{xt}(1-\frac{y}{x})$ & $\frac{y}{x}$ & $\frac{y^2}{x^2}$ & $\frac{y^2}{x^2}$
\end{tabular}
} 
}
\end{center}

\subsection{Examples of $\mathcal{L}_{\bg}(X;t)$}
\indent We use Theorem \ref{LatticeModel} to compute two coinversion LLT polynomials.

First we compute $\mathcal{L}_\bg(x_1,...,x_n;t)$ where $\bg = ((3)/(0),(2)/(0))$ and $n = 2$.  There are 12 semistandard Young tableaux of shape $\bg$ with entries in $\{1,2\}$, which are shown below along with the corresponding lattices and polynomials.

\begin{center}
\resizebox{4in}{!}{
\begin{tabular}{c|c|c}
\extwo{1}{1}{1}{1}{1}{$x_1^5t^3$} & \extwo{1}{1}{1}{1}{2}{$x_1^4x_2t^2$} & \extwo{1}{1}{1}{2}{2}{$x_1^3x_2^2t$} \\ \hline
\extwo{1}{1}{2}{1}{1}{$x_1^4x_2t^3$} & \extwo{1}{1}{2}{1}{2}{$x_1^3x_2^2t^3$} & \extwo{1}{1}{2}{2}{2}{$x_1^2x_2^3t^2$} \\ \hline
\extwo{1}{2}{2}{1}{1}{$x_1^3x_2^2t^2$} & \extwo{1}{2}{2}{1}{2}{$x_1^2x_2^3t^3$} & \extwo{1}{2}{2}{2}{2}{$x_1x_2^4t^3$} \\ \hline
\extwo{2}{2}{2}{1}{1}{$x_1^2x_2^3t$} & \extwo{2}{2}{2}{1}{2}{$x_1x_2^4t^2$} & \extwo{2}{2}{2}{2}{2}{$x_2^5t^3$}
\end{tabular}}
\end{center}

\noindent Therefore the coinversion LLT polynomial is
$$
\begin{aligned}
\mathcal{L}_{((3)/(0),(2)/(0))}(x_1,x_2;t) = \enspace &t(x_1^2x_2^3+x_1^3x_2^2) 
+ t^2(x_1x_2^4+x_1^2x_2^3+x_1^3x_2^2+x_1^4x_2) \\
&+ t^3(x_2^5+x_1x_2^4+x_1^2x_2^3+x_1^3x_2^2+x_1^4x_2+x_1^5).
\end{aligned}
$$

\indent Next, we compute $\mathcal{L}_\bg(x_1,...,x_n;t)$ where $\bg = ((3,3)/(2,1), (3,1)/(1,0))$ and $n = 2$.  Again there are 12 semistandard Young tableaux, shown below along with the corresponding lattices and polynomials.

\begin{center}
\resizebox{5in}{!}{
\begin{tabular}{c|c|c}
\exone{1}{1}{2}{1}{1}{1}{$x_1^5x_2t^2$} & \exone{1}{1}{2}{1}{2}{1}{$x_1^4x_2^2t^2$} & \exone{1}{1}{2}{2}{2}{1}{$x_1^3x_2^3t^2$} \\ \hline
\exone{1}{1}{2}{1}{1}{2}{$x_1^4x_2^2t$} & \exone{1}{1}{2}{1}{2}{2}{$x_1^3x_2^3t$} & \exone{1}{1}{2}{2}{2}{2}{$x_1^2x_2^4t$} \\ \hline 
\exone{1}{2}{2}{1}{1}{1}{$x_1^4x_2^2t^2$} & \exone{1}{2}{2}{1}{2}{1}{$x_1^3x_2^3t^2$} & \exone{1}{2}{2}{2}{2}{1}{$x_1^2x_2^4t^2$} \\ \hline
\exone{1}{2}{2}{1}{1}{2}{$x_1^3x_2^3t^2$} & \exone{1}{2}{2}{1}{2}{2}{$x_1^2x_2^4t^2$} & \exone{1}{2}{2}{2}{2}{2}{$x_1x_2^5t^2$}
\end{tabular}}
\end{center}

\noindent Therefore the coinversion LLT polynomial is
$$
\begin{aligned}
\mathcal{L}_{((3,3)/(2,1), (3,1)/(1,0))}(x_1,x_2;t) = \enspace &t(x_1^2x_2^4+x_1^3x_2^3+x_1^4x_2^2) \\
&+ t^2(x_1x_2^5 + 2x_1^2x_2^4 + 3x_1^3x_2^3 + 2x_1^4x_2^2 + x_1^5x_2). 
\end{aligned}
$$

\bibliographystyle{abbrv}
\bibliography{VertexLLT}

\end{document}